\newtheorem{thm}{Theorem}[section]
\newtheorem{lem}[thm]{Lemma}
\newtheorem{prop}[thm]{Proposition}
\newtheorem{conj}[thm]{Conjecture}
\theoremstyle{definition}
\newtheorem{defn}[thm]{Definition}
\theoremstyle{remark}
\newtheorem{rem}[thm]{Remark}
\numberwithin{equation}{section}
\newcommand{\eps}{\varepsilon}
\newcommand{\lsm}{\lesssim}
\newcommand{\C}{{\mathbb{C}}}
\newcommand{\R}{{\mathbb{R}}}
\newcommand{\rrf}{\R\times\R^4}
\newcommand{\srf}{[0,\infty)\times\R^4}
\newcommand{\srt}{[0,\infty)\times\R^2}
\newcommand{\srs}{[0,\infty)\times\R^3}
\newcommand{\srd}{[0,\infty)\times\R^d}
\newcommand{\ltrf}{L^2_x(\R^4)}
\newcommand{\ltrd}{L^2_x (\R^d)}
\newcommand{\propgto}{e^{-i\tau\Delta}}
\newcommand{\propt}{e^{it\Delta}}
\newcommand{\pn}{P_N}
\newcommand{\pntd}{\tilde P_N}
\newcommand{\pnf}{P_N^{12}}
\newcommand{\pnp}{P_N^{12+}}
\newcommand{\pnn}{P_N^{12-}}
\newcommand{\pns}{P_{\le N}^{34}}
\newcommand{\xf}{x^{12}}
\newcommand{\yf}{y^{12}}
\newcommand{\xs}{x^{34}}
\newcommand{\xft}{x_1,x_2}
\newcommand{\xst}{x_3,x_4}
\newcommand{\phr}{\phi_{>R}}
\newcommand{\phrt}{\phi_{>\frac R2}}
\newcommand{\midd}{\phi_{\frac 1R<|\xf|\le R^c}}
\newcommand{\frc}{\phi_{>\frac 12 R^c}}
\newcommand{\frcs}{\phi_{\le \frac 12R^c}}
\newcommand{\bnto}{\phi_{>\frac{N\tau}{200}}}
\newcommand{\snto}{\phi_{\le\frac{N\tau}{200}}}
\newcommand{\otg}{P_N^+}
\newcommand{\icm}{P_N^-}
\newcommand{\kmid}{\phi_{\frac 1{N^3}<|\xf|\le \frac 1N}}
\newcommand{\pd}{P_N^{d_1}}
\newcommand{\xd}{x^{d_1}}
\newcommand{\yd}{y^{d_1}}
\newcounter{smalllist}
\newenvironment{SL}{\begin{list}{{$($\roman{smalllist}\/$)$\hss}}{%
\setlength{\topsep}{0mm}\setlength{\parsep}{0mm}\setlength{\itemsep}{0mm}%
\setlength{\labelwidth}{2.0em}\setlength{\itemindent}{2.5em}\setlength{\leftmargin}{0em}\usecounter{smalllist}%
}}{\end{list}}
\newcommand{\bit}{\noindent$\bullet$ }
\title[Solitary wave conjecture]{On the rigidity of solitary waves for the
 focusing mass-critical NLS in
dimensions $d\ge 2$}
\author{Dong Li}
\address{Institute for Advanced Study, Princeton, NJ}
\author{Xiaoyi Zhang}
\address{Academy of Mathematics and System Sciences, Beijing, and Institute for Advanced Study, Princeton, NJ}
\subjclass[2000]{35Q55}
\begin{document}

\maketitle

\begin{abstract}
For the focusing mass-critical NLS $iu_t+\Delta u=-|u|^{\frac 4d}u$,
it is conjectured that the only global non-scattering solution with
ground state mass must be a solitary wave up to symmetries of the
equation. In this paper, we settle the conjecture for $H^1_x$
initial data in dimensions $d=2,3$ with spherical symmetry and $d\ge
4$ with certain splitting-spherically symmetric initial data.
\end{abstract}

\tableofcontents
\section{Introduction}

\subsection{Background and main results}We consider the focusing mass-critical nonlinear Schr\"odinger equation
\begin{equation}\label{nls}
iu_t+\Delta u=-|u|^{\frac 4d}u
\end{equation}
in dimensions $d\geq 2$; here $u(t,x)$ is a complex-valued function
on $\mathbb R\times \R^d$. The equation is invariant under a number
of symmetries,
\begin{align}
u(t,x)&\mapsto u(t+t_0,x+x_0), \ t_0\in \R, x_0\in\R^d,\label{trans}\\
u(t,x)&\mapsto e^{i\theta_0}u(t,x), \ \theta_0\in \R,\label{phase}\\
u(t,x)&\mapsto  \lambda_0^{\frac
d2}u(\lambda_0^{2}t,\lambda_0 x)\label{scaling}, \ \lambda_0>0,\\
u(t,x)&\mapsto e^{\xi_0\cdot (x-\xi_0t)}u(t,x-2\xi_0t), \
\xi_0\in\R^d.\label{galilean}
\end{align}
From Ehrenfest's law, they lead to the following conserved
quantities:
\begin{align}
\mbox{Mass: } &M(u(t))=\int_{\R^d} |u(t,x)|^2 dx=M(u_0),\notag\\
\mbox{Energy: } &E(u(t))=\frac 12 \int_{\R^d} |\nabla u(t,x)|^2
dx-\frac d{2(d+2)} |u(t,x)|^{\frac {2(d+2)} d} dx \label{eq_Eudef}  \\
&\qquad\quad =E(u_0), \notag\\
\mbox{Momentum: } & P(u(t))=Im \int_{\R^d} \nabla u\bar u(t,x)dx=P(u_0)
\notag
\end{align}

The equation is called mass-critical since mass is invariant under
the scaling symmetry \eqref{scaling}. It is also critical in the
sense that the power of the nonlinearity is the smallest to admit
finite time blowup solutions.

Equation \eqref{nls} also preserves some other symmetries in space
due to the fact the Laplacian is an isotropic operator which is
invariant under orthogonal change of coordinates. For example, the
spherical symmetry is preserved under the NLS flow. The Cauchy
problem of \eqref{nls} for large spherically symmetric $L_x^2(\R^d)$
initial data has been intensively studied in recent works
\cite{ktv:2d}, \cite{kvz:blowup}. The advantage of using the
spherical symmetry ultimately stems from the fact that the solution
has to localize at the spatial origin $x=0$ and frequency origin
$\xi=0$, and also has strong decay as $|x|\to \infty$.

A natural generalization of the radial symmetry is
"splitting-spherical symmetry" which is also preserved under the
flow. To set the stage for later discussions, we now introduce:

\begin{defn}[Splitting-spherical symmetry]\label{spit}
Let $d\ge 2$. A function $f:\R^d\mapsto \mathbb C$ is said to be
splitting-spherically symmetric if there exists $k\ge 1$ and
$d_1,\cdots d_k$ with $d_i\ge 2$, $\sum_{i=1}^k  d_i=d$ such that
$\R^d=\R^{d_1}\times\cdots\times\R^{d_k}$ and $f$ is spherically
symmetric when restricted to each of the $\R^{d_i}$ subspaces. To
ensure the uniqueness, we require the fold number $k$ is minimal.
\end{defn}

Clearly when $k=1$, splitting-spherical symmetry coincides with the
usual notion of spherical symmetry. In this work, we will study the
Cauchy problem of \eqref{nls} in dimensions $d\ge 2$ with general
splitting-spherically symmetric initial data. We first make the
notion of a solution to this Cauchy problem more precise:

\begin{defn}[Solution]\label{D:solution}
A function $u: I \times \R^d \to \C$ on a non-empty time interval $I
\subset \R$ (possibly infinite or semi-infinite) is a \emph{strong
$L^2_x(\R^d)$ solution} (or \emph{solution} for short) to
\eqref{nls} if it lies in the class $C^0_t L^2_x(K \times \R^d) \cap
L^{2(d+2)/d}_{t,x}(K \times \R^d)$ for all compact $K \subset I$
and obeys the Duhamel formula
\begin{align}\label{old duhamel}
u(t_1) = e^{i(t_1-t_0)\Delta} u(t_0) + i \int_{t_0}^{t_1}
e^{i(t_1-t)\Delta} \bigl(|u|^{\frac 4d}u\bigr)(t)\, dt
\end{align}
for all $t_0, t_1 \in I$.  We refer to the interval $I$ as the
\emph{lifespan} of $u$. We say that $u$ is a \emph{maximal-lifespan
solution} if the solution cannot be extended to any strictly larger
interval. We say that $u$ is a \emph{global solution} if $I = \R$.
\end{defn}

The condition that $u$ belongs to $L_{t,x}^{2(d+2)/d}$ locally in
time is natural for several reasons. From the Strichartz estimate
(see Lemma \ref{L:strichartz}), we see that solutions to the linear
equation lie in this space. Moreover, the existence of solutions
that belong to this space is guaranteed by the local theory (see
Theorem~\ref{T:local} below). This condition is also necessary in
order to ensure uniqueness of solutions. Solutions to \eqref{nls} in
this class have been intensively studied; see, for example,
\cite{BegoutVargas, cwI, hmidi-keraani, keraani, ktv:2d, kvz:blowup,
merle1, merle2, compact, W1, weinstein:charact} and the many
references therein.

The local theory for the Cauchy problem of \eqref{nls} in the
critical $L_x^2(\R^d)$ space is established by Cazenave and Weissler
in \cite{cwI}. We record their results in the following

\begin{thm}[Local wellposedess, \cite{cwI, caz}] \label{T:local}
Given $u_0\in L_x^2(\R^d)$ and $t_0\in \R$, there exists a unique
maximal-lifespan solution $u$ to \eqref{nls} with $u(t_0)=u_0$. Let
$I$ denote the maximal lifespan.  Then,

\bit (Local existence) $I$ is an open neighborhood of $t_0$.

\bit (Mass conservation) The solution obeys $M(u(t))=M(u_0)$.

\bit (Blowup criterion) If\/ $\sup I$ or $\inf I$ are finite, then
\begin{align*}
\|u\|_{L_{t,x}^{\frac{2(d+2)}d}([t, \sup
I)\times\R^d)}=\infty;\mbox{ or }
\|u\|_{L_{t,x}^{\frac{2(d+2)}d}((\inf I, t]\times\R^d)}=\infty; \
t\in I.
\end{align*}

\bit (Continuous dependence) The map that takes initial data to the
corresponding strong solution is uniformly continuous on compact
time intervals for bounded sets of initial data.

\bit (Scattering) If\/ $\sup I=\infty$ and $u$ has finite spacetime
norm forward in time:
$\|u\|_{L_{t,x}^{\frac{2(d+2)}d}([t,\infty))}<\infty$,
 then $u$ scatters in that direction,
that is, there exists a unique $u_+\in L_x^2(\R^d)$ such that
\begin{align}\label{like u+}
\lim_{t\to\infty}\|u(t)-e^{it\Delta}u_+\|_2= 0.
\end{align}
Conversely, given $u_+ \in L^2_x(\R^d)$ there is a unique solution
to \eqref{nls} in a neighborhood of infinity so that \eqref{like u+}
holds.  Analogous statements hold in the negative time direction.

\bit (Small data global existence and scatter) If\/ $M(u_0)$ is
sufficiently small depending on the dimension $d$, then $u$ is a
global solution with finite $L_{t,x}^{2(d+2)/d}$-norm.
\end{thm}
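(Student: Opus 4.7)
The plan is to follow the Cazenave--Weissler contraction mapping argument in the critical Strichartz space $L_{t,x}^{2(d+2)/d}$. Given $u_0 \in L_x^2(\R^d)$ and $t_0 \in \R$, I would define the Duhamel map
\[
\Phi(u)(t) := e^{i(t-t_0)\Delta} u_0 + i\int_{t_0}^t e^{i(t-s)\Delta}\bigl(|u|^{4/d}u\bigr)(s)\,ds
\]
and show it is a contraction on a small ball in $L_{t,x}^{2(d+2)/d}(I \times \R^d)$ for a suitably chosen interval $I \ni t_0$. The key point is that the pair $(2(d+2)/d,2(d+2)/d)$ is Strichartz-admissible and the dual exponents match the nonlinearity of power $1+4/d$ perfectly, giving, via Strichartz (Lemma \ref{L:strichartz}) and H\"older,
\[
\|\Phi(u)\|_{L_{t,x}^{2(d+2)/d}(I\times\R^d)} \lesssim \|e^{i(t-t_0)\Delta} u_0\|_{L_{t,x}^{2(d+2)/d}(I\times\R^d)} + \|u\|_{L_{t,x}^{2(d+2)/d}(I\times\R^d)}^{1 + 4/d},
\]
together with an analogous Lipschitz bound for $\Phi(u)-\Phi(v)$.

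Small-data global existence is then immediate: if $\|u_0\|_2$ is small, Strichartz gives a small free-evolution Strichartz norm on all of $\R\times\R^d$ and $\Phi$ contracts globally. For large data one still has $\|e^{it\Delta}u_0\|_{L_{t,x}^{2(d+2)/d}(\R\times\R^d)}<\infty$, so by absolute continuity of this norm one finds an open $I\ni t_0$ on which the free evolution is sufficiently small, yielding local existence. Iterating and gluing with uniqueness produces the maximal-lifespan solution $u$, so $I$ is automatically open. Mass conservation is obtained first for Schwartz data (where \eqref{nls} is classical and integration by parts applies) and then transferred to general $L_x^2$ data by continuous dependence; continuous dependence itself is a direct consequence of the Lipschitz contraction estimate, chained across a finite partition of any common compact time interval on which the iterates stay in a Strichartz ball.

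For the blowup criterion I would argue by contrapositive: if $T := \sup I < \infty$ but $\|u\|_{L_{t,x}^{2(d+2)/d}([t_0,T)\times\R^d)} < \infty$, then absolute continuity partitions $[t_0,T)$ into finitely many intervals on each of which $u$ has small Strichartz norm; Strichartz applied to $e^{-it_2\Delta} u(t_2) - e^{-it_1\Delta} u(t_1) = i\int_{t_1}^{t_2} e^{-is\Delta} (|u|^{4/d} u)(s)\,ds$ then shows $e^{-it\Delta}u(t)$ is Cauchy in $L_x^2$ as $t\to T^-$, and the limit provides data from which one solves forward past $T$, contradicting maximality. The same Cauchy argument yields scattering: finite Strichartz norm on $[T,\infty)$ forces $e^{-it\Delta}u(t)\to u_+$ in $L_x^2$ as $t\to\infty$, and the converse (solving the final-data problem near $+\infty$) is a second application of the contraction scheme on a neighborhood of infinity where $\|e^{it\Delta}u_+\|_{L_{t,x}^{2(d+2)/d}}$ is small.

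The only step where the contraction mapping by itself is not enough is the blowup criterion, whose heart is promoting \emph{a priori} control of the critical spacetime norm on $[t_0,T)$ to genuine extendability past $T$; this is precisely the Cauchy-in-$L^2$ step above. The rest is mechanical once Strichartz is in hand, and the entire argument is dimension-independent beyond the fact that $(2(d+2)/d,2(d+2)/d)$ is admissible for $d\ge 1$.
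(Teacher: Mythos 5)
The paper does not prove Theorem~\ref{T:local}; it is stated as a black box with citation to Cazenave--Weissler \cite{cwI} and Cazenave \cite{caz}. Your sketch is precisely the standard Cazenave--Weissler contraction-mapping argument in the symmetric Strichartz space $L_{t,x}^{2(d+2)/d}$ that those references use, and the steps you outline (contraction on small intervals via absolute continuity of the free evolution's Strichartz norm, gluing to a maximal interval, transferring mass conservation from dense smooth data, the Cauchy-in-$L^2$ argument for both the blowup criterion and scattering, and final-data solvability near infinity) are the right ones. One point worth flagging as more delicate than your one-line treatment suggests: uniform continuity of the data-to-solution map on compact time subintervals of the reference solution's lifespan requires a stability/perturbation lemma rather than a bare chaining of Lipschitz contraction constants, because the local intervals on which the contraction closes are data-dependent; the standard fix is to partition the interval so that the \emph{reference} solution's $L_{t,x}^{2(d+2)/d}$-norm is small on each piece and then propagate the perturbation across pieces, exactly as in \cite{cwI}.
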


By Theorem \ref{T:local}, all solutions with sufficiently small mass
are global and scatter both forward and backward in time. In that
regime, the dispersion effect of the free evolution dominates the
focusing nonlinearity. However for solutions with large mass, there
is competition between the two and solutions may display different
behaviors: they can exist globally and scatter, or blow up at finite
time, or persist like a solitary wave, or even be the superposition
of them \cite{tao:attact,tao:asym}. For this mass-critical problem
\eqref{nls}, there exists soliton solutions of the type
$e^{it}R(x)$, where $R(x)$ solves the elliptic equation
\begin{align} \label{eq_R}
\Delta R-R+|R|^{\frac 4d}R=0.
\end{align}
This equation has infinitely many solutions, but only one positive,
spherically symmetric solution which is Schwartz and has minimal
mass within the set of solutions. This unique solution $Q(x)$ is
known as the "ground state" of \eqref{nls}. It is widely believed
that the mass of $Q$ serves as the borderline between the scattering
solutions and possible non-scattering solutions. \footnote{By
"non-scattering" we mean the $L_{t,x}^{\frac{2(d+2)}d}$-norm of the
solution is infinite, so the solution can blows up at finite time or
exist globally but does not scatter.} If the mass of the initial
data is less than that of the ground state, then it is conjectured
that the corresponding solution exists globally and scatters. A more
precise statement is the following

\begin{conj}[Scattering conjecture]\label{scat}
Let $u_0\in L_x^2(\R^d)$ and $M(u_0)<M(Q)$. Then there exists unique
global solution $u(t,x)$ such that
\begin{align*}
\|u\|_{L_{t,x}^{\frac{2(d+2)}d}(\R\times\R^d)}\le C(M(u))<\infty.
\end{align*}
\end{conj}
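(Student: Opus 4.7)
The plan is to follow the now-standard concentration-compactness and rigidity road map initiated by Kenig--Merle and adapted to the mass-critical setting by Killip--Tao--Visan and others. The argument is by contradiction: suppose the conjecture fails, and let $M_c$ be the infimum of masses $M(u_0)<M(Q)$ for which the scattering bound fails. By Theorem~\ref{T:local} (small data scattering), $M_c>0$; by definition $M_c\le M(Q)$. The goal is to extract a minimal, structurally rigid counterexample at mass $M_c$ and then exclude it.

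The first step is to produce the minimal counterexample. Using a linear profile decomposition adapted to $L^2_x(\R^d)$ (Bégout--Vargas in $d\ge 2$, building on Merle--Vega and Carles--Keraani) together with the inductive hypothesis, I would show the existence of a maximal-lifespan solution $u$ with $M(u)=M_c$, non-finite $L^{2(d+2)/d}_{t,x}$ norm on at least one time direction, and with the orbit $\{u(t)\}_{t\in I}$ precompact in $L^2_x$ modulo the full symmetry group \eqref{trans}--\eqref{galilean}. Concretely, there exist functions $N(t)>0$, $x(t)\in\R^d$, $\xi(t)\in\R^d$ such that the family
\begin{equation*}
\set{N(t)^{-d/2} e^{-ix\cdot\xi(t)} u\bigl(t,N(t)^{-1}x+x(t)\bigr):\ t\in I}
\end{equation*}
is precompact in $L^2_x(\R^d)$. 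Call such $u$ an \emph{almost periodic solution modulo symmetries}.

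The second step is to classify such solutions by the asymptotic behavior of the frequency scale $N(t)$ and momentum $\xi(t)$. Standard arguments (again Killip--Tao--Visan) reduce matters to three scenarios: (i) a \emph{soliton-like} solution, where $I=\R$, $N(t)\equiv 1$, and $\xi(t)\equiv 0$; (ii) a \emph{double high-to-low frequency cascade}, where $I=\R$ and $\liminf_{t\to\pm\infty}N(t)=0$; and (iii) a \emph{self-similar} solution, where $I=(0,\infty)$ and $N(t)\sim t^{-1/2}$. Cases (ii) and (iii) are excluded by the by now classical additional-regularity/negative-regularity arguments combined with mass conservation: one upgrades the almost periodic $L^2$ orbit to $H^s$ for some $s>0$, and then to negative Sobolev regularity, at which point mass conservation forces the solution to be zero.

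The real obstacle is case (i), the soliton-like scenario, and this is where the strict inequality $M(u_0)<M(Q)$ has to do genuine work. Here I would first use the Galilean and translation reductions built into almost periodicity to assume $P(u)=0$ and the spatial center $x(t)$ stays bounded. The variational characterization of $Q$ (sharp Gagliardo--Nirenberg, Weinstein~\cite{W1}) then gives a coercive energy bound $E(u(t))\ge c\|\nabla u(t)\|_2^2$ with $c>0$ strictly positive because $M(u)<M(Q)$; combined with $P(u)=0$, this yields a localized virial / interaction Morawetz inequality whose time-averaged form contradicts the uniform $H^1$ control coming from almost periodicity. Making this Morawetz argument work in full generality, particularly producing enough $H^1$ regularity from an a priori only $L^2$ compact orbit and controlling the frequency-localized error terms, is the hardest step; this is exactly the route eventually completed by Dodson, and it is the part I would expect to absorb the bulk of the work.
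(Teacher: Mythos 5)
The statement you were asked to prove is labeled \textbf{Conjecture}~\ref{scat} in the paper; the authors do not prove it. At the time of writing it was open except under spherical symmetry (Killip--Tao--Visan for $d=2$, Killip--Visan--Zhang for $d\ge 3$), which the paper cites, and the Remark following Theorem~\ref{main} makes explicit that in the splitting-spherically symmetric case the main result of the paper is stated \emph{conditionally} on this very conjecture. There is therefore no proof in the paper against which your proposal can be compared; the paper uses the statement as a hypothesis, not as a theorem.

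As a survey of the concentration-compactness/rigidity road map your sketch is broadly accurate and appropriately honest (it credits Dodson for the hardest step). Two imprecisions are nonetheless worth flagging if you intend it as a self-contained outline. First, the exclusion of the cascade and self-similar scenarios by ``negative regularity plus mass conservation'' does not transfer verbatim from the defocusing or energy-critical settings: in the focusing mass-critical problem one must first use the strict gap $M(u)<M(Q)$ together with the sharp Gagliardo--Nirenberg inequality (Proposition~\ref{P:variational}) to convert the regularity upgrade into uniform $H^1_x$ bounds before those exclusions can run. Second, in the soliton-like scenario there is no a priori $H^1_x$ control to feed a virial or interaction Morawetz inequality; almost periodicity modulo symmetries gives only $L^2_x$-compactness, and manufacturing the needed regularity and spatial localization (long-time Strichartz estimates, double Duhamel, frequency-localized Morawetz) is precisely the content you defer to Dodson. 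These are not gaps relative to the paper, which proves nothing here, but they are gaps in the sketch read as a stand-alone argument.
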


By this conjecture, non-scattering solutions must have at least
ground state mass. Two examples of the non-scattering solutions are
generated by the ground state. One is the solitary wave solution
$e^{it}Q$ which exists globally but does not scatter on both sides.
Applying pseudo-conformal transformation:
\begin{align}
u(t,x)\to \frac 1{|t|^{\frac d2}}e^{\frac{i|x|^2}{4t}}\bar u(
\frac1t,\frac x
t),\label{pct}
\end{align}
 which is invariant
for the mass-critical NLS, one obtains a finite time blowup solution
\begin{align}
\frac 1{|t|^{\frac d2}}e^{\frac{i(|x|^2-4)}{4t}}Q(\frac x
t).\label{pcq}
\end{align}
These two examples are believed to be the only two obstructions to
scattering when the solution has ground state mass. In particular,
solitary wave is conjectured to be the only global non-scattering
solution. We formulate this as the following
\begin{conj}[Solitary wave conjecture]\label{swc} Let $d\ge 1$. For general initial data $u_0\in
L_x^2(R^d)$ with ground state mass, the corresponding non-scattering
global solution must be the solitary wave up to symmetries
\eqref{trans}-\eqref{galilean}.
\end{conj}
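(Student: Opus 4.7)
The plan is to argue by contradiction using a concentration--compactness/rigidity scheme of Kenig--Merle type together with Weinstein's variational characterization of the ground state. Suppose $u$ is a global $L^2_x$ solution with $M(u)=M(Q)$ and infinite $L^{2(d+2)/d}_{t,x}$-norm; the goal is to show that $u$ coincides, after applying \eqref{trans}--\eqref{galilean}, with the solitary wave $e^{it}Q$. The first step is to apply the $L^2$ profile decomposition of B\'egout--Vargas and Merle--Vega to the orbit $\{u(t)\}_{t\in\R}$: at the critical mass, the non-scattering hypothesis together with the (assumed) validity of Conjecture~\ref{scat} for strictly subcritical mass forces all but one linear profile to vanish. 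This yields modulation parameters $\gamma(t)\in\R$, $x(t),\xi(t)\in\R^d$, $N(t)>0$ such that
\[
 v(t,x):=N(t)^{-d/2}e^{-i\gamma(t)}e^{-ix\cdot\xi(t)}u\bigl(t,N(t)^{-1}x+x(t)\bigr)
\]
has orbit precompact in $L^2_x(\R^d)$.

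Next I would normalize the modulation. The non-scattering hypothesis rules out $N(t)\to 0$ along any sequence, since compact $L^2$ data at a vanishing frequency scale disperse in $L^{2(d+2)/d}_{t,x}$ via a short Duhamel argument; a Merle--Rapha\"el-style concentration argument rules out $N(t)\to\infty$ in the global regime, as such a scale divergence forces finite-time blow-up. After rescaling by \eqref{scaling} one may take $N(t)\equiv 1$, and the Galilean symmetry \eqref{galilean} together with momentum conservation permits $\xi(t)\equiv 0$. With $N(t)$ pinned and the orbit precompact, I would then run the double Duhamel / negative-regularity bootstrap of Killip--Tao--Visan: iterate the Duhamel formula between $\pm\infty$ and exploit the precompactness to gain fractional derivatives, ultimately placing $v$ in $L^\infty_t H^1_x(\R^d)$. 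The energy $E(u)$ is now finite and conserved.

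The rigidity step invokes Weinstein's sharp Gagliardo--Nirenberg inequality: any $H^1_x$ function of mass $M(Q)$ satisfies $E\ge 0$, with equality only on the orbit of $Q$ under \eqref{trans}, \eqref{phase}, \eqref{scaling}. Combining conservation of energy with a virial argument on $\int |x-x(t)|^2|u|^2\,dx$ (using the boundedness of $x(t)$ afforded by the precompactness and $N(t)\equiv 1$) forces $E(u)=0$. Hence $v(t)\equiv Q$ for a.e. $t$, and substituting this ansatz back into \eqref{nls} determines $\gamma(t)=t+\gamma_0$ and $x(t),\xi(t)$ constant, giving exactly the solitary wave.

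The main obstacle is the regularity upgrade: the $L^2$-critical nonlinearity has no intrinsic $H^1_x$ coercivity, and the negative-regularity bootstrap requires frequency-localized weighted estimates that are presently available only under decay hypotheses such as spherical or splitting-spherical symmetry, or when the initial data are a priori in $H^1_x$. Secondarily, ruling out $N(t)\to\infty$ for a globally existing almost periodic solution without such decay remains open. Together with the still-open scattering Conjecture~\ref{scat} that underpins the one-profile reduction, these obstructions are precisely what confine the present paper's theorems to the symmetric $H^1_x$ regime, rather than settling the full $L^2_x$ conjecture.
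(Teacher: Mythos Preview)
The statement you are addressing is a \emph{conjecture} in the paper, not a theorem; the paper does not prove it in general and instead establishes only the restricted Theorem~\ref{main} under $H^1_x$ and admissible-symmetry hypotheses. Your final paragraph already concedes the three obstructions (the $L^2\to H^1$ regularity upgrade, ruling out $N(t)\to\infty$, and the scattering conjecture) that keep the full statement open, so your proposal should be read as a strategic outline rather than a proof.

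Two of your intermediate steps are incorrect even as heuristics. First, the assertion that ``a Merle--Rapha\"el-style concentration argument rules out $N(t)\to\infty$ \ldots as such a scale divergence forces finite-time blow-up'' is unjustified for $L^2_x$ data: those arguments require the energy to be defined and controlled, and in any case the paper's own proof of Theorem~\ref{main} does \emph{not} exclude unbounded kinetic energy. Instead, via the non-sharp decomposition (Proposition~\ref{prop_nonsharp} and Lemma~\ref{lem_uni_bdd}), it shows that even when $\|\nabla u(t)\|_2$ is unbounded the solution splits into a concentrating rescaled copy of $Q$ plus an $H^1_x$-bounded remainder, yielding only the weak localization $\|\phi_{>c}\nabla u(t)\|_2\lesssim 1$. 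Second, your virial step needs far more than $L^2$-precompactness and bounded $x(t)$: making the error terms in the truncated virial identity small requires uniform-in-time localization of the \emph{kinetic energy} (Theorems~\ref{T:kinetic_loc} and~\ref{T:kine_hd}), and this is where the bulk of the paper's work lies, via the in-out decomposition and weighted Strichartz estimates that depend essentially on the symmetry. The double-Duhamel/negative-regularity bootstrap you invoke plays no role in the paper's argument.
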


Our purpose of the paper is to settle the solitrary wave conjecture under
additional assumptions on the initial data. More precisely, we will
establish the conjecture for $H_x^1$ initial data in $2,3$ dimensions
with spherical symmetry and in dimensions $d\ge 4$ with some splitting-spherical symmetry. For
simplicity, we name these "certain symmetries" as the following

\begin{defn}[Admissible symmetry]\label{admis}
In dimensions $d=2,3$, the admissible symmetry refers to the
spherical symmetry; in dimensions $d\ge 4$, it refers to the
splitting-spherical symmetry with $k=2$, and $d_1=[\frac d2]$ or
$d_1>\frac d3$ for sufficiently large $d$. Here $[x]$
denotes the integer part of a real number $x$.
\end{defn}

Then our main result reads as follows.

\begin{thm}[Non-scattering solutions must coincide with the solitary wave] \label{main}
Let $d\ge 2$. Let $u_0\in H_x^1(\R^d)$ and have the admissible
symmetry. Suppose also the corresponding solution exist globally,
then only the following two scenarios can occur

1. The solution scatters in both time direction, i.e.,
$\|u\|_{L_{t,x}^{\frac{2(d+2)}d}(\R\times \R^d)}<\infty.$

2. The solution is spherically symmetric and there exist $\theta_0,
\lambda_0$ such that
\begin{align*}
u(t,x)=e^{i\theta_0}\lambda_0^{\frac d2}Q(\frac x{\lambda_0}).
\end{align*}
\end{thm}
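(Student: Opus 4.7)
The plan is to follow the concentration-compactness-plus-rigidity paradigm: extract a minimal counterexample whose orbit is almost periodic modulo the natural symmetry group, use admissible symmetry to freeze out as many modulation parameters as possible, and then run a virial/Morawetz argument to force $E(u_0)=0$, at which point Weinstein's rigidity in the sharp Gagliardo--Nirenberg inequality identifies the solution with a rescaled ground state.

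First, since $H^1_x$ regularity and admissible symmetry are preserved by the flow and since $M(u_0)\le M(Q)$ is the interesting range, Weinstein's sharp Gagliardo--Nirenberg inequality gives $E(u_0)\ge 0$, with equality if and only if $u_0$ is a symmetry image of $Q$. Below ground state mass one expects scenario 1 by independent scattering arguments, so I would focus on $M(u_0)=M(Q)$ with $E(u_0)>0$ and derive a contradiction. A Keraani-type profile decomposition, applied to a non-scattering $L^2$ solution of minimal mass, yields parameters $N(t), x(t), \xi(t), \gamma(t)$ so that $\{e^{i\gamma(t)}e^{ix\cdot\xi(t)}N(t)^{-d/2}u(t, N(t)^{-1}x+x(t))\}$ is precompact in $L^2_x$; uniform $H^1_x$ control from conservation of mass and energy upgrades the compactness to $H^1_x$.

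Admissible symmetry now collapses most of the modulation parameters. In the spherical case $d=2,3$, $O(d)$-invariance of $u(t,\cdot)$ forces $x(t)\equiv 0$ and $\xi(t)\equiv 0$. In the splitting-spherical case $d\ge 4$, applying the same argument factor-by-factor to each $\R^{d_i}$ forces $x(t)$ and $\xi(t)$ to vanish in each subspace, since a nontrivial translation or Galilean boost would break the spherical symmetry in that factor. The uniform $H^1_x$ bound gives $N(t)\lesssim 1$, and $N(t_n)\to 0$ along a sequence would force $\|\nabla u(t_n)\|_{L^2}\to 0$ and hence, via the equality case of Gagliardo--Nirenberg at ground state mass, directly realize scenario 2. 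So I may assume $N(t)\asymp 1$ and after rescaling $N(t)\equiv 1$, making $\{u(t)\}$ precompact in $H^1_x$ modulo only a phase.

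With the compact $H^1_x$ orbit I would close the argument via a truncated virial/Morawetz identity. Setting $V_R(t)=\int_{\R^d}\phi(x/R)|x|^2|u(t,x)|^2\,dx$ for a smooth radial cutoff $\phi$, compactness gives $V_R(t)\le CR^2 M(u_0)$ uniformly in $t$, while a standard virial computation yields $V_R''(t)=16E(u_0)+o_R(1)$, where the error $o_R(1)\to 0$ as $R\to\infty$ by the compactness. For $R$ large enough this forces $V_R$ to grow quadratically, contradicting boundedness unless $E(u_0)=0$; Weinstein's rigidity then gives $u_0=e^{i\theta_0+ix\cdot\xi_0}\lambda_0^{d/2}Q(\lambda_0(x-x_0))$, and admissible symmetry forces $x_0=\xi_0=0$. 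The main obstacle will be obtaining the error estimate $o_R(1)$ in the splitting-spherical setting: the classical radial Sobolev decay $|u(r)|\lesssim r^{-(d-1)/2}$ that controls the Morawetz error is available only in each $\R^{d_i}$ factor separately, and the hypotheses $d_1=[d/2]$ or $d_1>d/3$ are presumably tuned precisely so that the combined bi-radial decay makes the error tractable. The $d=2$ case will moreover require a radial interaction Morawetz in place of the plain Lin--Strauss version.
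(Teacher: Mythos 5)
There is a genuine gap at the heart of your argument: the claim that conservation of mass and energy yields a uniform $H^1_x$ bound, upgrading $L^2_x$-almost-periodicity to $H^1_x$-precompactness and forcing $N(t)\lesssim 1$. At exactly ground state mass the sharp Gagliardo--Nirenberg inequality is degenerate: it gives only $E(u)\ge 0$ and places \emph{no upper bound} on $\|\nabla u(t)\|_{L^2}$ in terms of the conserved quantities. So $\|\nabla u(t)\|_{L^2}$ may diverge along a sequence of times (equivalently, $N(t)$ may be unbounded above), the orbit need not be $H^1_x$-precompact, and you cannot rescale to $N(t)\equiv 1$. As a consequence, your $o_R(1)$ estimate for the virial error cannot be obtained from compactness: even though the mass localizes on scale $1/N(t)$, a significant and possibly growing portion of $\|\nabla u(t)\|_{L^2}^2$ could in principle live at large radii. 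Controlling this is precisely the difficulty the paper addresses, and it occupies the bulk of the work.

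The paper's route around this is as follows. It first establishes the non-sharp decomposition of Proposition \ref{prop_nonsharp}, which decomposes $u(t)$ as a rescaled ground state (possibly at huge frequency) plus an $H^1_x$-bounded error; this yields the \emph{weak} localization Lemma \ref{lem_uni_bdd}, namely $\|\phi_{\gtrsim 1}\nabla u(t)\|_{L^2}\lesssim 1$ uniformly in $t$, even when $\|\nabla u(t)\|_{L^2}$ is unbounded. It then upgrades this to the \emph{strong} localization of Theorems \ref{T:kinetic_loc} and \ref{T:kine_hd} via the reduced Duhamel formula, the in-out decomposition, and the frequency and spatial decay estimates of Propositions \ref{fre_decay} and \ref{local}. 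Only after this machinery is the truncated virial argument available; the error terms are then bounded by the localization theorems, not by compactness. Your final virial computation and the observation that admissible symmetry forces $x(t)\equiv\xi(t)\equiv 0$ agree with the paper, but without a replacement for the kinetic energy localization theorems your proposal does not close. As a minor point, the paper does not use a radial interaction Morawetz in $d=2$; it uses the same truncated virial in all dimensions, with the decay supplied by the (sub-dimensional) in-out decomposition and weighted Strichartz rather than by pointwise radial Sobolev decay.
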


\begin{rem}
For the splitting-spherical symmetric initial data, Theorem \ref{main} holds under
the conditional assumption that the corresponding scattering conjecture holds for such $L^2$
intial data. In the spherical symmetric case for dimensions $d\ge 2$, the scattering conjecture
has been proved in recent works \cite{ktv:2d} \cite{kvz:blowup}. In a future publication we will address the scattering
problem for the splitting-spherical symmetric case for dimensions $d\ge 4$.
\end{rem}

In the following, we will first discuss the connections between
this result and previous ones. Then at the end of this
section, we introduce the main steps of the proof.

For this mass critical problem, there has been lots of work
addressing the wellposedness theory of the solutions.
We will mainly discuss the results related
to the aforementioned two conjectures, with a little extension on the blowup
theory. In the following discussions, we distinguish three different
cases when the solution has subcritical, critical and supercritical
mass respectively.

\vspace{0.2cm}

\textbf{Case 1}. The solution has subcritical mass $M(u)<M(Q)$.

\vspace{0.2cm}

As shown in the scattering conjecture \ref{scat}, in this regime,
the dispersion of the linear flow dominates and solutions are all
believed to exist globally and scatter. The first result toward this
conjecture is due to Weinstein. In \cite{W1}, he established the
following variational characterization of the ground state which
says that the ground state $Q$ extremizes the Gagliardo-Nirenberg
inequality
\begin{prop}[Sharp Gagliardo--Nirenberg inequality, \cite{W1}]\label{P:variational}
For $f\in H_x^1(\mathbb R^d)$,
\begin{equation}\label{sharp-gn}
\|f\|_{\tfrac{2(d+2)}d}^{\tfrac{2(d+2)}d}\le
\frac{d+2}d\left(\frac{\|f\|_2}{\|Q\|_2} \right)^{\frac 4d}\|\nabla
f\|_2^2,
\end{equation}
with equality if and only if
\begin{equation}\label{GNeq}
f(x)= c e^{i\theta_0}\lambda_0^{\frac d2}Q(\lambda_0 (x-x_0))
\end{equation}
for some $\theta\in[0,2\pi)$, $x_0\in\R^d$, and
$c,\lambda_0\in(0,\infty)$.  In particular, if $M(f)=M(Q)$, then
$E(f)\geq 0$ with equality if and only \eqref{GNeq} holds with
$c=1$.
\end{prop}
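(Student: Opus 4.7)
The plan is to follow Weinstein's original variational approach. Define the Weinstein quotient
\[
J(f) = \frac{\|\nabla f\|_2^2 \, \|f\|_2^{4/d}}{\|f\|_{2(d+2)/d}^{2(d+2)/d}}, \qquad f \in H^1_x(\R^d)\setminus\{0\},
\]
which is invariant under phase rotations, translations, and under the two-parameter rescaling $f(x)\mapsto \mu f(\lambda x)$. The sharp inequality \eqref{sharp-gn} is equivalent to the identity
\[
K := \inf_{f\in H^1_x\setminus\{0\}} J(f) = \tfrac{d}{d+2}\|Q\|_2^{4/d},
\]
so the task splits naturally into showing that this infimum is attained and then computing its value.

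To produce a minimizer, take a minimizing sequence $\{f_n\}$. Using Schwarz symmetrization, which preserves every $L^p_x$ norm and cannot increase $\|\nabla\cdot\|_2$, assume each $f_n$ is nonnegative, spherically symmetric, and radially decreasing. Exploiting the symmetries of $J$, normalize $\|f_n\|_2 = \|\nabla f_n\|_2 = 1$. The sequence is then bounded in $H^1_x$, and by the Strauss-type compact embedding $H^1_{\mathrm{rad}}(\R^d)\hookrightarrow L^{2(d+2)/d}_x(\R^d)$ (available because $2<\tfrac{2(d+2)}{d}<\tfrac{2d}{d-2}$ for $d\ge 3$, and trivially when $d=2$) a subsequence converges strongly in $L^{2(d+2)/d}_x$ to a nonzero radial limit $R$. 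Weak lower semicontinuity of the $H^1_x$ norms then forces $J(R)=K$, so a minimizer exists.

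The Euler--Lagrange equation satisfied by $R$, after one further rescaling to normalize both Lagrange multipliers to $1$, is precisely \eqref{eq_R}. Since $R$ is a positive radial $H^1_x$ solution, the uniqueness theorem of Kwong (building on the symmetry work of Gidas--Ni--Nirenberg) gives $R=Q$ up to translation and scaling; restoring the phase and the positive scalar $c$ that the invariances absorbed recovers the full set of extremizers \eqref{GNeq}. To evaluate $K$ I would test \eqref{eq_R} against $Q$ and against $x\cdot\nabla Q$ and integrate by parts to obtain the system
\[
\|\nabla Q\|_2^2 + \|Q\|_2^2 = \|Q\|_{2(d+2)/d}^{2(d+2)/d}, \qquad \tfrac{d-2}{2}\|\nabla Q\|_2^2 + \tfrac{d}{2}\|Q\|_2^2 = \tfrac{d^2}{2(d+2)}\|Q\|_{2(d+2)/d}^{2(d+2)/d},
\]
which solves to $\|\nabla Q\|_2^2 = \tfrac{d}{d+2}\|Q\|_{2(d+2)/d}^{2(d+2)/d}$ and hence $J(Q)=\tfrac{d}{d+2}\|Q\|_2^{4/d}=K$.

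Finally, when $M(f)=M(Q)$ the sharp bound inserted into \eqref{eq_Eudef} yields
\[
E(f) \geq \tfrac12\|\nabla f\|_2^2 - \tfrac{d}{2(d+2)}\cdot\tfrac{d+2}{d}\|\nabla f\|_2^2 = 0,
\]
with equality iff \eqref{sharp-gn} is saturated, i.e., iff $f$ is of the form \eqref{GNeq}; the mass constraint $\|f\|_2=\|Q\|_2$ then forces $c=1$. The principal obstacle in this program is the compactness step for the minimizing sequence: without the reduction to radial data, a minimizing sequence can concentrate, spread, or escape to spatial infinity, and only the combination of Schwarz symmetrization with Strauss-type compactness (or, alternatively, a Lions concentration--compactness argument) rules those scenarios out. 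The remaining ingredients---the Euler--Lagrange derivation, the Kwong uniqueness input, and the Pohozaev computation---are then essentially algebraic.
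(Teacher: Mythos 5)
The paper does not prove this proposition; it cites Weinstein's original 1983 paper \cite{W1} and uses the result as a black box. Your proposal is essentially a reconstruction of Weinstein's own argument (define the functional $J$, symmetrize a minimizing sequence, pass to a radial minimizer via Strauss compactness, identify the Euler--Lagrange equation with \eqref{eq_R}, invoke Kwong's uniqueness, and evaluate the constant via the Pohozaev system), and the computations you display are all correct: the two integral identities do solve to $\|\nabla Q\|_2^2 = \tfrac{d}{d+2}\|Q\|_{2(d+2)/d}^{2(d+2)/d}$, and the energy corollary follows immediately.

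There is, however, one genuine gap in the extremizer characterization. Your symmetrization step replaces the minimizing sequence by nonnegative radially decreasing rearrangements and produces \emph{a} radial minimizer $R=Q$. That shows functions of the form \eqref{GNeq} are extremizers and pins down the sharp constant, but it does not show that \emph{every} $f$ achieving equality must be of the form \eqref{GNeq}. When you write that restoring phase and scaling ``recovers the full set of extremizers,'' you have only recovered those extremizers that become $Q$ after translation, dilation, and multiplication by a scalar---which is what you are trying to prove, not what you have established. To close this, one either (a) observes that any extremizer $f$ is a critical point of $J$, derives the Euler--Lagrange equation directly for $f$ (not its rearrangement), shows $|f|>0$ with constant phase, and then applies Gidas--Ni--Nirenberg to conclude $|f|$ is radial up to translation before invoking Kwong; or (b) appeals to the equality case of P\'olya--Szeg\H{o} (Brothers--Ziemer) to show that an extremizer with $\|\nabla f\|_2=\|\nabla f^*\|_2$ must already be a translate of a radially decreasing function. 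Option (a) is closer to Weinstein's own treatment. A second, smaller point: the compact embedding $H^1_{\mathrm{rad}}\hookrightarrow L^{2(d+2)/d}$ is not ``trivial'' in $d=2$ (it is the Strauss lemma there too), and in $d=1$---which the proposition formally covers and which Lemma \ref{lem_nonquant} uses---``radial'' must be read as ``even'' and the decay bound $u(r)\lsm r^{-1/2}\|u\|_2$ for even decreasing $u$ is what supplies compactness; your phrasing excludes this case.
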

As a consequence of the inequality \eqref{sharp-gn}, Weinstein
\cite{W1} showed that if $u_0\in H_x^1(\R^d)$ with $M(u_0)<M(Q)$,
then the corresponding solution satisfies $\|\nabla
u(t)\|_{\ltrd}<Const\cdot E(u(t))$ for all $t$ in the maximal
lifespan, by which standard local theory in $H^1_x$-space yields the
global wellposedness.  Therefore, no finite time singularities can
form in this subcritical regime at least for $H^1_x$ initial data.
However, this result does not address the scattering issue of the
solution.

A great breakthrough toward the scattering conjecture was recently made by
Killip-Tao-Visan \cite{ktv:2d} where they settled the conjecture in
two dimensions with spherical symmetry. This result was later
extended to high dimensions $d\ge 3$ by Killip-Visan-Zhang
\cite{kvz:blowup}. The spherical symmetry in these results is used
in an essential way. For example, with the spherical symmetry, the
center of mass will freeze at the origin in both physical and
frequency spaces. Moreover, spherical symmetry forces decay in space
when $|x|\to \infty$. The techniques developed in these works take
advantage of decay property and will break down when such a decay is
not available.

Therefore, it is quite challenging to prove the scattering
conjecture \ref{scat} without spherical symmetry. As a first step
forward, we would like to understand the case when the solution has
some symmetry which is enough to freeze the center of mass, and at the
same time, provide some averaging effect. The splitting symmetry
then fits this motivation. In a future publication, we will show that the scattering
conjecture holds true in high dimensions under these weaker symmetries.

We make two remarks before completing this  discussion. First of
all, as we shall explain, with the additional $H_x^1$ assumption,
the main part of the proof, which is also a
necessity in proving Theorem \ref{main}, is to show the localization
of kinetic energy for the minimal non-scattering solutions.
Secondly, it is possible to build the scattering conjecture with this
splitting-spherical symmetry for $L_x^2$ initial data. We
will address this issue in the future works. We turn now to

\vspace{0.2cm}

\textbf{Case 2.} The solution has super-critical mass $M(u)>M(Q)$.

\vspace{0.2cm}

In this case, the focusing nonlinearity dominates the dispersion
effect of the linear evolution and finite time blowups may occur.
The existence of finite time blowup solutions was first obtained by
Glassey \cite{glassey} for $\Sigma=\{f\in H_x^1(\R^d),xf\in
L_x^2(\R^d)\}$ initial data with negative energy. The argument is
based on a simple application of the virial identity
\begin{align}
\frac {d^2}{dt^2}\int |x|^2 |u(t,x)|^2 dx=8E(u_0),
\end{align}
This identity essentially expresses the conservation of
pseudo-conformal energy which comes from the pseudo-conformal
symmetry \eqref{pct}.

Concerning the quantitative theory of blowups, there has been a lot
of work addressing the dynamics of the blowup solutions. For
example, it is possible to show that any finite time blowup solution
will concentrate at least ground state mass near the blowup time.
Results in this direction were established by
Merle-Tsutsumi\cite{merle-tsu} and Nawa\cite{nawa},
Weinstein\cite{weinstein:contem} for the finite energy blowup
solutions. For merely finite mass blowup solutions, such results
were proved by Bourgain\cite{bourgain:concen},
Keraani\cite{keraani}, Killip-Tao-Visan\cite{ktv:2d} and
Killip-Visan-Zhang\cite{kvz:blowup}.

To understand the structure of blowups, an important first step is
to study the optimal blowup rate of an $H_x^1$ blowup solution. If
an $H_x^1$ solution $u(t,x)$ blows up at a finite time $T$, then by
scaling arguments, the kinetic energy blows up at least at a power
like rate:
\begin{align*}
\|\nabla u(t)\|_{\ltrd}\gtrsim \frac 1{|t-T|^{\frac 12}}.
\end{align*}
In dimensions $d=1,2$, the existence of blowup solution with rate
$\frac 1{|t-T|}$ was proved by Bourgain-Wang in
\cite{bourgain-wang}. Perelman in \cite{perelman} constructed a
solution blowing up at rate $ \sqrt{\frac{\ln(\ln|t-T|)}{|t-T|}}$ in
one dimension. In \cite{merle-raph}, Merle-Raphael proved that blowup
rate has an upper bound $\sqrt{\frac{|\ln(T-t)|^{\frac d2}}{T-t}}$
for solution with negative energy, and having mass slightly bigger
than that of the ground state.

\vspace{0.2cm}

\textbf{Case 3}. The solution has the ground state mass $M(u)=M(Q)$.

\vspace{0.2cm}

As shown in the scattering conjecture, $M(Q)$ is conjectured to be
the minimal mass for all the non-scattering solutions. The
ground state provides two non-scattering solutions at this minimal
mass: the solitary wave and the pseudo-conformal transformation of
the solitary wave. They are believed to be the only two thresholds
for scattering at minimal mass. In particular, as indicated in the
solitary conjecture, the only non-scattering solution is conjectured
to be the solitary wave up to symmetries of the equation.

 The first work which addressed
the description of the minimal non-scattering solutions is due to F.
Merle in \cite{merle1}. He proved that any finite time blowup
solution with $H^1_x$ initial data must be pseudo-conformal ground
state solution \eqref{pcq} up to symmetries. The proof in
\cite{merle1}, which was later simplified by
Hmidi-Keraani\cite{hmidi-keraani} relies heavily on the finiteness
of the blowup time. To see the connections between his work and the
solitary wave conjecture, we simply use the pseudo-conformal
transformation to transform a global non-scattering solution to a
finite time blowup solution. Therefore, when the initial data
belongs to $\Sigma$ space(thus the transformed finite time blowup
solution belongs to $H_x^1(\R^d)$), solitary wave conjecture follows
directly from Merle's results.

Without the strong decay assumption, the solitary wave conjecture
remains totally open. In \cite{klvz}, Killip-Li-Visan-Zhang first
proved this conjecture in dimension $d\ge 4$ for $H_x^1(\R^d)$
initial data with spherical symmetry. In our main theorem
\ref{main}, we establish the result for $H_x^1(\R^d)$ initial data
in 2,3-dimensions with spherical symmetry and in dimensions $d\ge 4$
with certain splitting-spherical symmetry. Combining our results
with Merle's results, we conclude that the solitary wave, the
pseudo-conformal ground state and scattering are the only three
possible states for solutions with minimal mass.

As we shall see, the techniques we are going to use in this paper
rely on the fact that $u_0\in H_x^1$ and the splitting-spherical
symmetry. For example, we need the regularity to define the energy,
which then allows us to use virial-type argument. We also need this
regularity to conduct the spectral analysis around the ground state
$Q$. Similar to the previous work \cite{klvz,ktv:2d,kvz:blowup}, the
decay property stemming from the splitting-spherical symmetry plays
a very important role. It is not clear to us how to extend the
result to the rough initial data and to the case without the
splitting-spherical symmetry.

Another challenging problem is to consider the conjecture in one
dimension, for example, with the evenness assumption. Since the one
dimensional function does not have any spatial decay, this problem
is indeed equally hard with the above mentioned ones.

We now introduce the

\subsection{Outline of the proof}

We first consider the case $d=2,3$. Let $u_0\in H_x^1$,
$M(u_0)=M(Q)$ be spherically symmetric. If the corresponding
solution $u(t,x)$ exists globally and scatter, then Theorem
\ref{main} holds vacuously. Therefore we assume $u(t,x)$ does not
scatter at least in one time direction, for example,
$\|u\|_{L_{t,x}^{\frac{2(d+2)}d}(\srd)}=\infty$. Our goal is then to
show that $u_0=Q$ up to phase rotation and scaling. The coincidence
of the solution with the solitary wave follows from the uniqueness
of the solution.

From the variational characterization of the ground state (see
Proposition \ref{P:variational}),
 ground state $Q$ minimizes the energy. Therefore, if the solution has zero energy,
 we conclude the initial data coincides with the ground state $Q$. This leaves us
to consider the positive energy case and we will get a contradiction
in this case.

The contradiction will follow from a suitable truncated version of
the virial identity as we now explain. On the one hand, a simple
computation shows that the truncated virial has a uniform upper
bound. On the other hand, its second derivative in time will have a
positive lower bound as long as we can show \emph{the kinetic energy
concentrates at the origin uniformly in time}. These two facts
ultimately yields the desired contradiction.

Hence, our task is reduced to showing that the kinetic energy of the
solution is uniformly localized in time. Suppose by contradiction
that the kinetic energy is not localized, then there are always
significant portion of kinetic energy ripples live on an ever large
radii. This scenario is what we need to preclude, on the other
hand, looks very "consistent" with the focusing nature of
our problem. More precisely, due to the focusing nature, the
solution may have asymptotic infinite kinetic energy, hence it is
quite possible to shed them on the ever large radius.

To preclude this dangerous scenario, our first step is to understand
how the total kinetic energy ripples are distributed away from the origin. If
the kinetic energy of the solution is uniformly bounded, then so
will be these total ripples. Then comes the interesting case where
the kinetic energy goes to infinity along a subsequence. In this
case, a qualitative argument shows that after rescaling, the
solution converges to the ground state in $H_x^1(\R^d)$. Therefore
along this subsequence the solution can be decomposed into a
rescaled copy of the ground state (ever concentrated) plus an error.
A surprising result from our non-sharp decomposition Proposition
\ref{prop_nonsharp} essentially gives a good control of the error. In
all, we have the following

\begin{prop}[Weak localization of kinetic energy]\label{uni_bdd}
Let $u_0\in H_x^1$ have admissable symmetry and  $M(u_0)=M(Q)$. Let $u(t,x)$ be the
corresponding maximal-lifespan solution on $I$. Then for all $t\in
I$, we have
\begin{align}
\|\phi_{\gtrsim 1} \nabla u(t)\|_{\ltrd}\lsm 1.
\end{align}
\end{prop}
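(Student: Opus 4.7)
The plan is to argue by contradiction: suppose that for some fixed radius $R_0\gtrsim 1$ and some sequence of times $t_n\in I$ one has $\|\phi_{>R_0}\nabla u(t_n)\|_{L^2_x}\to\infty$. Since $\|\phi_{>R_0}\nabla u\|_{L^2_x}\le\|\nabla u\|_{L^2_x}$, this forces the full kinetic energy to blow up along $t_n$, so I can introduce the scale $\lambda_n:=\|\nabla u(t_n)\|_{L^2_x}/\|\nabla Q\|_{L^2_x}\to\infty$ and pass to the mass-critical rescaling
\begin{equation*}
v_n(x):=\lambda_n^{-d/2}\,u(t_n,x/\lambda_n).
\end{equation*}
The rescaled functions obey $M(v_n)=M(Q)$, $\|\nabla v_n\|_{L^2_x}=\|\nabla Q\|_{L^2_x}$, and $E(v_n)=\lambda_n^{-2}E(u_0)\to 0$, so $\{v_n\}$ is a minimizing sequence for the sharp Gagliardo--Nirenberg inequality of Proposition \ref{P:variational}.

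The next step is to promote this into an actual $H^1_x$ convergence. A standard concentration-compactness / profile decomposition argument gives parameters $\theta_n\in\R$ and $x_n\in\R^d$ such that, along a subsequence, $e^{-i\theta_n}v_n(\cdot+x_n)\to Q$ in $H^1_x$. The admissible symmetry hypothesis plays its role here: $v_n$ inherits splitting-spherical symmetry from $u(t_n)$, so the concentration point $x_n$ must respect the decomposition $\R^d=\R^{d_1}\times\cdots\times\R^{d_k}$ and therefore vanish. Hence $e^{-i\theta_n}v_n\to Q$ in $H^1_x$, and for $n$ large $v_n$ sits near the two-parameter orbit of $Q$ under phase and scaling.

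Now I would invoke the non-sharp decomposition (Proposition \ref{prop_nonsharp}) to extract a quantitative remainder: $v_n=e^{i\theta_n}Q+w_n$ with $\|w_n\|_{H^1_x}^2\lesssim E(v_n)\lesssim \lambda_n^{-2}$, using coercivity of the energy functional around $Q$ on the admissible-symmetry class, modulo the phase and scaling modulation parameters (which one can slightly adjust $\theta_n$ and by introducing a $\mu_n\to 1$ scale to absorb). Unrescaling produces
\begin{equation*}
\|\phi_{>R_0}\nabla u(t_n)\|_{L^2_x}^2=\lambda_n^2\,\|\phi_{>\lambda_n R_0}\nabla v_n\|_{L^2_x}^2\lesssim \lambda_n^2\,\|\phi_{>\lambda_n R_0}\nabla Q\|_{L^2_x}^2+\lambda_n^2\,\|\nabla w_n\|_{L^2_x}^2.
\end{equation*}
The first summand tends to zero because $Q$ is Schwartz and $\lambda_n R_0\to\infty$, while the second is $O(1)$ by the decomposition bound. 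This contradicts $\|\phi_{>R_0}\nabla u(t_n)\|_{L^2_x}\to\infty$.

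The main obstacle is supplying the quantitative rate $\|w_n\|_{H^1_x}\lesssim E(v_n)^{1/2}$ in the last paragraph: it must replace the purely qualitative $o(1)$ coming from concentration compactness, and it must survive the multiplication by $\lambda_n^2$ on unrescaling. This is exactly what the non-sharp decomposition is built to provide, and its proof rests on spectral analysis of the linearized operator $\mathcal L$ around $Q$. The admissible symmetry is decisive here, since it eliminates the translation generators from the kernel of $\mathcal L$ and leaves only the phase and scaling directions to be explicitly modulated out before one can invoke coercivity.
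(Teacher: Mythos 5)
Your proof is correct in substance and rests on the same key input as the paper's — the modulation/coercivity analysis around $Q$ — but it takes a more roundabout route that forces you to use a sharper quantitative bound than the one you actually cite.

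The paper does not argue by contradiction or rescale at all: it proves Lemma \ref{lem_uni_bdd}, a static statement about a single even $H^1_x$ function with $M=M(Q)$ and fixed energy, and then applies it at each time $t$ using conservation of mass and energy (and persistence of evenness). For a fixed such $u$, Proposition \ref{prop_nonsharp} writes $u=\lambda^{d/2}e^{i\theta_0}Q(\lambda\cdot)+\epsilon$ with $\lambda\sim\|\nabla u\|_2/\|\nabla Q\|_2$ and only the \emph{weak} bound $\|\epsilon\|_{H^1}\lesssim\sqrt{E(u)}+1$; the $\epsilon$ piece is then harmless, and the ground-state piece is handled by $\lambda^2\int_{|y|>\lambda c_0/100}|\nabla Q|^2\,dy\lesssim 1$ since $Q$ is Schwartz. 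No sharp rate on $\epsilon$ is needed because the whole estimate is done at the original scale, where the $\lambda^2$ gain from $Q$'s decay already balances the large $\lambda$.

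Your version first rescales to $v_n$ with $\|\nabla v_n\|_2=\|\nabla Q\|_2$ and $E(v_n)\to 0$, re-derives convergence $v_n\to Q$ via concentration compactness (this reproves Lemma \ref{lem_nonquant}, which is itself an ingredient of Proposition \ref{prop_nonsharp} — so this step is redundant once you invoke the decomposition), and then needs $\|\nabla w_n\|_2\lesssim\sqrt{E(v_n)}\sim\lambda_n^{-1}$ to survive the multiplication by $\lambda_n^2$ on unrescaling. As you yourself note, the qualitative $o(1)$ from compactness is not enough here — but neither is Proposition \ref{prop_nonsharp} as literally stated: applied to $v_n$ it only gives $\|\epsilon_n\|_{H^1}\lesssim\sqrt{E(v_n)}+1\lesssim 1$, and the $+1$ is fatal after multiplying by $\lambda_n^2$. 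What you actually need is Lemma \ref{lem_modulate} (the quantitative rigidity with no additive constant, valid once $E(v_n)<\eta$), which is the internal engine of Proposition \ref{prop_nonsharp}. So the lemma you should cite for the bound $\|w_n\|_{H^1}\lesssim\sqrt{E(v_n)}$ is Lemma \ref{lem_modulate}, not Proposition \ref{prop_nonsharp} verbatim. With that substitution your argument closes; but the paper's direct static estimate avoids both the contradiction framework and the need for the sharper modulation bound, and is the cleaner route.
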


A more precise form of Proposition \ref{uni_bdd} is given by Lemma \ref{lem_uni_bdd} which
is proved in Section 3. We still have to upgrade this weak localization to stronger one.
That is to say, instead of being bounded, the ripples far away from
the origin are actually very small. This fact is heavily used in
the aforementioned virial argument.

The solution does enjoy some certain strong concentration property,
however, not expressed in terms of the kinetic energy, but in terms
of the mass. In the spherically symmetric case, this is direct result of
\cite{BegoutVargas,keraani-2,compact} together with the
identification of $M(Q)$ as the minimal mass, which was done in
\cite{ktv:2d}, \cite{kvz:blowup}. In the case with admissable symmetry,
this follows from the resolution of the corresponding scattering conjecture.

We state the following result in the general dimensions $d\ge2$
with admissable symmetry. In the general case without symmetry, we
need to assume the scattering conjecture in order to identify $M(Q)$
as the minimal mass.

\begin{thm}[Almost periodicity modulo symmetries]
Let $d\ge 2$. Let $u:[t_0,\infty) \times \mathbb R^d\to\C$ be a
solution to \eqref{nls} which satisfies $M(u)=M(Q)$ and
$$\|u\|_{L_{t,x}^{\frac{2(d+2)}d}([t_0,\infty)\times\R^d)}=\infty,$$
where $u$ have the admissable symmetry.
In the general case without symmetry, we also assume the scattering conjecture. Then $u$
is \emph{almost periodic modulo symmetries} in the following sense:
there exist functions $N:[t_0,\infty)\to \mathbb R^+$ and $C:\mathbb
R^+\to\mathbb R^+$, $x,\xi:\R^d\to\R^d$ such that
\begin{equation}\label{eq_mass_local}
\int_{|x-x(t)|\ge C(\eta)/N(t)}|u(t,x)|^2 dx\le \eta \quad
\text{and} \quad  \int_{|\xi-\xi(t)|\ge C(\eta)N(t)}|\hat
u(t,\xi)|^2 d\xi\le \eta
\end{equation}
for all $t\in [t_0, \infty)$ and $\eta>0$. Equivalently, the orbit
$\{N(t)^{-\frac d2}e^{ix\cdot\xi(t)}u(t,\frac{x-x(t)}{N(t)})\}$
falls into a compact set in $L_x^2(\R^d)$. If $u$ is spherically
symmetric or splitting-spherically symmetric, then $x(t)=\xi(t)=0$.
\end{thm}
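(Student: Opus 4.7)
The plan is to establish almost-periodicity modulo symmetries via the standard concentration-compactness method, following the pattern of \cite{compact, ktv:2d, kvz:blowup} adapted to the splitting-spherical setting. The decisive input is the scattering conjecture \ref{scat}: for the spherically symmetric cases $d=2,3$ this is proved in \cite{ktv:2d, kvz:blowup}, while for the splitting-spherically symmetric case in $d\ge 4$ it is assumed in the hypothesis of the theorem. In either form, this identifies $M(Q)$ as the minimal $L_x^2$ mass among non-scattering maximal-lifespan solutions.

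I would then prove that the orbit $\{u(t)\}_{t\in[t_0,\infty)}$ is precompact in $L_x^2$ modulo the symmetries \eqref{trans}--\eqref{galilean} by contradiction. Given any sequence $t_n\in[t_0,\infty)$, apply the linear profile decomposition of B\'egout--Vargas \cite{BegoutVargas} and Keraani \cite{keraani} to the $L_x^2$-bounded sequence $\{u(t_n)\}$:
\[
u(t_n)=\sum_{j=1}^{J} g_n^j\, e^{it_n^j\Delta}\phi^j + w_n^J,
\]
with asymptotic decoupling of mass, asymptotic orthogonality of the parameter tuples across $j$, and $\limsup_{n\to\infty}\|e^{it\Delta}w_n^J\|_{L^{2(d+2)/d}_{t,x}(\R\times\R^d)}\to 0$ as $J\to\infty$. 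Let $v^j$ be the maximal-lifespan nonlinear solution attached to the $j$-th profile; mass decoupling gives $\sum_j M(v^j)\le M(u)=M(Q)$.

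The heart of the argument is to exclude the presence of more than one nontrivial profile and of a nonvanishing error. If two or more profiles were nontrivial, each would carry mass strictly less than $M(Q)$, so by the scattering conjecture each $v^j$ would have finite $L_{t,x}^{2(d+2)/d}$-norm on $\R\times\R^d$. The stability and perturbation theory for \eqref{nls} (cf. \cite{keraani, kvz:blowup}) then allows pasting the scattering profiles into a global approximate solution on $[t_n,\infty)\times\R^d$ that forces $u$ itself to have finite spacetime norm for some large $n$---contradicting the non-scattering hypothesis. The same perturbative argument forces $w_n^J\to 0$ in $L_x^2$. Consequently exactly one profile survives, $u(t_n)$ converges in $L_x^2$ up to symmetries, and precompactness of the orbit yields \eqref{eq_mass_local} by reading off $N(t), x(t), \xi(t)$ from the extracted symmetries and recovering the modulus $C(\eta)$ from uniform continuity of the compact set.

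The main obstacle I anticipate is the symmetry-equivariant reduction of the profile decomposition that enforces $x(t)=\xi(t)=0$ under the admissible symmetries. In the purely spherical case the profiles can be taken spherical with $x_n^j=\xi_n^j=0$, as carried out in \cite{ktv:2d, kvz:blowup}. In the splitting-spherical case with $k=2$ and admissible $d_1$, any translation or Galilean parameter with a nonzero component in an $\R^{d_i}$ factor is incompatible with spherical symmetry within that factor (up to orthogonal rotations one may absorb), so these parameters must degenerate in the limit; rigorously justifying this requires an equivariant version of the B\'egout--Vargas decomposition tailored to the partial symmetry, and is precisely the place where the assumed splitting-spherical scattering conjecture and the restriction on $d_1$ in Definition~\ref{admis} enter.
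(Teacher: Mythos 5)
Your proposal reconstructs the standard concentration-compactness argument (profile decomposition plus stability theory plus the identification of $M(Q)$ as the minimal non-scattering mass) which is exactly what the paper invokes by citing \cite{BegoutVargas, keraani-2, compact} together with \cite{ktv:2d, kvz:blowup}; the paper does not reprove this theorem but delegates to those references, and your outline matches their approach, including your correct identification of the symmetry-equivariant reduction of the profile decomposition (forcing $x(t)=\xi(t)=0$) as the one nontrivial point needing adaptation in the splitting-spherical case. The only minor imprecision is the dichotomy: one must rule out not just "two or more nontrivial profiles" but any configuration in which every profile has mass strictly below $M(Q)$ (e.g.\ a single subcritical profile with mass leaking into the error term), though the same perturbative argument disposes of that case as well.
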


\begin{rem}\label{R:bdd N} The parameter $N(t)$ measures the frequency scale of the solution at time $t$ while $1/N(t)$ measures its spatial scale.
Further properties of the function $N(t)$ are discussed in
\cite{ktv:2d, compact}.
\end{rem}

One important consequence of the fact that $u$ is almost periodic
modulo scaling (near positive infinity) is the following Duhamel
formula, where the free evolution term disappears:

\begin{lem}[{\cite[Section 6]{compact}}]\label{duhamel L}
Let $u$ be an almost periodic solution to \eqref{nls} on
$[t_0,\infty)$.  Then, for all $t\in [t_0,\infty)$,
\begin{equation}\label{duhamel}
u(t) = - \lim_{T\nearrow\,\infty}i\int_t^T e^{i(t-t')\Delta}
\bigl(|u|^{\frac 4d}u\bigr)(t')\,dt'
\end{equation}
as a weak limit in $L_x^2$.
\end{lem}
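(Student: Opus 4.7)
The plan is to reduce to showing that the free-evolution term in the standard Duhamel identity vanishes weakly as $T\to\infty$. Taking $t_0=T$ and $t_1=t$ in the Duhamel formula of Definition \ref{D:solution} and solving for $u(t)$ gives
\begin{equation*}
u(t) = e^{i(t-T)\Delta}u(T) - i\int_t^T e^{i(t-s)\Delta}\bigl(|u|^{\frac 4d}u\bigr)(s)\,ds,
\end{equation*}
so \eqref{duhamel} will follow once I show that $e^{i(t-T)\Delta}u(T)\rightharpoonup 0$ in $L_x^2(\R^d)$ as $T\to\infty$.

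Since $\|u(T)\|_2$ is conserved, density of $\mathcal S(\R^d)$ in $L_x^2$ reduces the task to pairing against a Schwartz $\phi$ whose Fourier transform is supported in a fixed ball $\{|\xi|\le R\}$. Rewriting $\langle e^{i(t-T)\Delta}u(T),\phi\rangle=\langle u(T),e^{-i(t-T)\Delta}\phi\rangle$ and using precompactness of the orbit modulo symmetries, I decompose $u(T)=g_T v_T$, where $g_T$ is the unitary symmetry determined by the modulation parameters $(x(T),\xi(T),N(T))$ together with a phase, and $\{v_T\}$ ranges in a precompact subset of $L_x^2$. Since $\{v_T\}$ is precompact, a weak-strong convergence argument reduces the problem to establishing
\begin{equation*}
g_T^{-1}\,e^{-i(t-T)\Delta}\phi\rightharpoonup 0 \quad\text{in }L_x^2(\R^d)\text{ as }T\to\infty.
\end{equation*}

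To verify this weak vanishing, I commute the scaling and Galilean parts of $g_T^{-1}$ through the free Schr\"odinger propagator via the standard intertwining identities: rescaling converts $(t-T)$ into the effective time $(t-T)N(T)^2$, and Galilean conjugation shifts the spatial center by a multiple of $(t-T)\xi(T)$ up to a harmless unimodular phase. One then runs a trichotomy on $N(T)$: if $N(T)\asymp 1$, the effective time diverges and the classical linear dispersion $e^{is\Delta}\phi\rightharpoonup 0$ as $|s|\to\infty$ gives the desired convergence; if $N(T)\to\infty$ or $N(T)\to 0$, the rescaled test function concentrates or spreads out and a direct change-of-variables argument shows it tends weakly to $0$. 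Divergent translation or Galilean parameters fit into the same mechanism, since diverging unitary translations and modulations of any fixed $L^2$ function tend weakly to zero.

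The main obstacle is the careful bookkeeping of this trichotomy together with the simultaneous behavior of $x(T)$ and $\xi(T)$: one must exclude, in each sub-regime, any way the symmetry $g_T^{-1}$ could \emph{un-disperse} the evolved test function and pin a positive quantum of its $L^2$ mass inside a fixed compact set in either physical or Fourier space. Once weak vanishing is verified along every subsequence $T_n\to\infty$, the full weak limit of $e^{i(t-T)\Delta}u(T)$ is zero, and inserting this back into the rearranged Duhamel identity yields \eqref{duhamel}.
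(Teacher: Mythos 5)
Your high-level strategy — rearranging Duhamel to isolate the free term, reducing weak vanishing of $e^{i(t-T)\Delta}u(T)$ to weak vanishing of $g_T^{-1}e^{-i(t-T)\Delta}\phi$ by invoking precompactness of the orbit, and then analyzing the modulation parameters case by case — is exactly the route taken in [compact, Section~6], which the paper cites rather than reproves. However, the write-up leaves a genuine gap at the heart of the argument: the weak vanishing of $g_T^{-1}e^{-i(t-T)\Delta}\phi$ is asserted via a trichotomy on $N(T)$, but the trichotomy as stated does not suffice. When $N(T)\to 0$, the ``effective time'' $(T-t)N(T)^2$ need not diverge; you must further split into sub-regimes (say $(T-t)N(T)^2$ bounded vs.\ unbounded) and, in the unbounded sub-case, combine the $L^\infty$ dispersive estimate with the scaling to get the precise decay $N(T)^{-d/2}(T-t)^{-d/2}\to 0$, which holds only after checking that $N(T)(T-t)\to\infty$. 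None of this is carried out.

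The treatment of $x(T)$ and $\xi(T)$ is weaker still. The remark that ``diverging unitary translations and modulations of any fixed $L^2$ function tend weakly to zero'' does not apply here, because $g_T^{-1}$ is not acting on a fixed function: it acts on $e^{-i(t-T)\Delta}\phi$, which itself moves, spreads, and oscillates in a $T$-dependent way. In particular, Galilean conjugation through the propagator produces a translation by $2(t-T)\xi(T)$ which can precisely cancel the translation by $-x(T)$ built into $g_T^{-1}$; ruling out such cancellations is exactly the ``un-dispersion'' phenomenon you name. You explicitly flag this as ``the main obstacle,'' but then the proof proceeds as if it had been dispatched. This is the one step where the entire difficulty of the lemma is concentrated, and as written it is a gap rather than a proof. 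To close it you would need to carry out the commutation identities in full, enumerate the sub-regimes of $(N(T),x(T),\xi(T))$ (extracting subsequential limits in $[0,\infty]\times\overline{\R^d}\times\overline{\R^d}$), and verify the weak limit is zero in each, as [compact] does.
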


We will use this Duhamel formula with the in-out decomposition
technique as used in \cite{klvz} to upgrade the weak localization of kinetic
energy to the desired stronger one. Note that the in-out decomposition
technique exploits heavily the spherical symmetry of the solution. Since
Proposition \ref{uni_bdd} already gives us the uniform $H_x^1$ boundedness of the
solution away from the spatial origin, the main obstacle is the control of the nonlinearity
near the origin. In this regime due to the high degree of nonlinearity and lack of
$H_x^1$ control, the contribution of this part seems difficult to handle. We shall
overcome this difficulty by introducing a linear flow trick.
More precisely, we decompose the solution into incoming and outgoing waves; this
serves to minimize the contribution from the nonlinearity near the
origin where we do not have uniform control on the kinetic energy.
As was already mentioned, the part close to the origin is the most problematic, since the kinetic energy
may grow out of control as $t\to \infty$. For this regime, we
decrease the power of nonlinearity by substituting the nonlinearity
by the linear flow. After integration in time and using some kernel estimates, we succeed to control it
by the mere mass of $u$. At large radii we can take advantage of spherical symmetry to
obtain smallness. Since we have the uniform control on the kinetic
energy at this regime, the high power of nonlinearity does not cause
trouble. The key point of the in-out decomposition is to use the
Duhamel formula into the future to control the outgoing portion of
$u$ and the Duhamel formula into the past to control the incoming
portion. The particular decomposition we use is taken from
\cite{ktv:2d,kvz:blowup}; the tool we use to exploit the spherical
symmetry is a weighted Strichartz inequality Lemma~\ref{L:wes} and
the weighted Sobolev embedding from \cite{ktv:2d}, \cite{kvz:blowup}
and \cite{tvz:hd}.

As a consequence of the above analysis, we not only prove the
frequency decay estimate Proposition \ref{fre_decay}, but also obtain a
spatial decay estimate Proposition \ref{local} with the spatial
scale independent of $t$. Combining these two propositions yields a
\emph{uniform} kinetic energy localization result. This is very
surprising comparing with the mass localization property where mass
is localized with spatial scale varying with time $t$. Specifically,
we have the following

\begin{thm}[Kinetic energy localization in $2,3$-dimensions]\label{T:kinetic_loc}
Let $d=2,\ 3$. Let $u_0\in H_x^1(\R^d)$ be spherically symmetric and
satisfy $M(u)=M(Q)$. In particular, as a consequence of Corollary
\ref{uni_bdd}, the solution satisfies
\begin{align*}
\|\phi_{\gtrsim 1} \nabla u(t)\|_{\ltrd}\lsm 1.
\end{align*}
Let the corresponding solution $u(t,x)$ exists globally forward in
time and satisfy the Duhamel formula \eqref{duhamel L}. The for
any $\eta>0$, there exists $C(\eta)$ such that
\begin{align*}
\|\phi_{>C(\eta)}\nabla u(t)\|_{\ltrd}\le \eta, \ \forall t\ge 0.
\end{align*}
\end{thm}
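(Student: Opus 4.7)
The plan is to derive two complementary decay estimates and then combine them with the weak bound of Proposition~\ref{uni_bdd}. Specifically, I will first establish a frequency-decay estimate (Proposition~\ref{fre_decay}) asserting that $\|P_{>N} u(t)\|_{\ltrd}\to 0$ as $N\to\infty$, uniformly in $t$, and separately a spatial-decay estimate (Proposition~\ref{local}) asserting that $\|\phi_{>R} u(t)\|_{\ltrd}\to 0$ as $R\to\infty$, uniformly in $t$ and crucially with $R$ \emph{independent of $t$}. Given both, I split $\phi_{>C}\nabla u = \phi_{>C}\nabla P_{\le N}u + \phi_{>C}\nabla P_{>N}u$: Bernstein combined with the spatial decay handles the low-frequency piece (paying a factor of $N$ against the small localized mass $\|\phi_{>C/2}u\|_{\ltrd}$, modulo harmless commutators with $\phi_{>C}$), while the frequency decay together with the weak $H_x^1$ bound $\|\phi_{\gtrsim 1}\nabla u\|_{\ltrd}\lsm 1$ handles the high-frequency piece. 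Choosing $N$ and then $C$ sufficiently large relative to $\eta$ produces the claimed bound.

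The two decay estimates are both proved via the Duhamel identity \eqref{duhamel} combined with the radial in-out decomposition $u = P^+ u + P^- u$ from \cite{ktv:2d}. I will apply the forward Duhamel to $P^+ u$ and the analogous backward version to $P^- u$ (the latter is available because almost-periodicity rules out scattering in both time directions), taking advantage of the fact that the kernel of $P^\pm e^{-i\tau\Delta}$ decays strongly precisely in the regime where the corresponding wave propagates outward. The nonlinearity is split spatially as $|u|^{\frac 4d}u = \phi_{>1}|u|^{\frac 4d}u + \phi_{\le 1}|u|^{\frac 4d}u$. For the outer region, Proposition~\ref{uni_bdd} provides uniform $H_x^1$ control, and the weighted radial Sobolev embedding (spherically symmetric $H_x^1$ functions decay like $|x|^{-(d-1)/2}$) together with the weighted Strichartz estimate (Lemma~\ref{L:wes}) converts this into genuine smallness at large radii and high frequencies.

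The main obstacle is the near-origin contribution $\phi_{\le 1}|u|^{\frac 4d}u$: here no uniform kinetic-energy bound holds, since the solution may concentrate ever more sharply at the origin as $t\to\infty$, and the high power of the nonlinearity would otherwise be uncontrollable. My plan to circumvent this is a linear flow trick: one factor of $u$ inside the nonlinearity is replaced by $e^{i(t-t')\Delta}$ acting on a time slice, effectively reducing the nonlinear power by one. After carrying out the resulting $t'$-integration and invoking the kernel estimates for the in-out-projected propagator, the near-origin contribution is estimated purely in terms of the conserved mass $M(u)=M(Q)$, which is a fixed finite number; picking the cutoffs large enough then extracts the required smallness. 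The principal technical difficulty will be to balance the loss in the kernel bounds near the origin against the gain from lowering the nonlinear power, while keeping all the time integrations absolutely convergent and preserving enough structure to exploit spherical symmetry in low dimensions $d=2,3$, where the radial Sobolev decay is weakest.
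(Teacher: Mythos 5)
Your high-level architecture matches the paper's: reduce to a frequency-decay estimate and a spatial-decay estimate, prove both via the radial in-out decomposition with forward/backward Duhamel, use weighted Strichartz and radial Sobolev embedding for the outer region, and handle the near-origin nonlinearity by substituting $F(u)=(i\partial_\tau+\Delta)u$ so that integration by parts in $\tau$ (Lemma~\ref{replace}) leaves only boundary terms controlled by mass and commutator terms supported where $\phi_{>1}\nabla u$ is uniformly bounded. However, your stated forms of the two intermediate propositions are wrong in ways that matter. The frequency decay cannot be $\|P_{>N}u(t)\|_{\ltrd}\to 0$ uniformly in $t$: that would force uniform $L^2$ equicontinuity of the whole orbit, which fails whenever the characteristic frequency $N(t)$ escapes to infinity --- and a minimal-mass, positive-energy $H^1_x$ solution may well have $\|\nabla u(t)\|_{\ltrd}\to\infty$ along a sequence of times. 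The correct statement (Proposition~\ref{fre_decay}) is $\|\phi_{>1}P_N u(t)\|_{\ltrd}\lesssim\|\tilde P_N u_0\|_{\ltrd}+N^{-1-\epsilon}$; the spatial cutoff $\phi_{>1}$ is essential, reflecting precisely that kinetic energy is only controlled away from the origin. Similarly, the spatial decay (Proposition~\ref{local}) is frequency-localized: it bounds $\|\phi_{>R}P_N u(t)\|_{\ltrd}$ for $N$ in a fixed dyadic window $(N_1,N_2]$, with the threshold $R_0=R_0(N_1,N_2,u)$ depending on that window. It does not sum over unboundedly many dyadic frequencies to the unlocalized $\|\phi_{>R}u(t)\|_{\ltrd}\to 0$ that you invoke.

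Because of this, your two-piece split $\phi_{>C}\nabla u=\phi_{>C}\nabla P_{\le N}u+\phi_{>C}\nabla P_{>N}u$ does not close. To make the high piece small you must take $N$ large, but then on the low piece Bernstein loses a factor $N$ and you are left needing $\|\phi_{>C/2}u(t)\|_{\ltrd}$ to be small uniformly in $t$ --- exactly the unlocalized spatial mass decay you cannot establish. The paper uses a three-way split instead: the lowest range $P_{\le N_1}$ is killed by bare Bernstein $\lesssim N_1\|u\|_{\ltrd}$ with $N_1$ chosen small (no localization is needed there); the medium range $N_1<N\le N_2$ is handled by the frequency-localized Proposition~\ref{local}, whose constant $C(N_1,N_2)$ and threshold $R_0(N_1,N_2,u)$ are harmless only because the range is finite; and the high range $P_{>N_2}$ is handled by Proposition~\ref{fre_decay} together with $u_0\in H^1_x$, which makes $\|P_{\gtrsim N_2}\nabla u_0\|_{\ltrd}$ small. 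You should adopt this three-way decomposition and restate the two propositions with the correct spatial and frequency cutoffs; as written, your plan attempts to prove one statement that is false and another that your method does not deliver.
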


As explained before, this proposition together with the cheap
localized virial argument establishes Theorem \ref{main}.

To conclude, the spherical symmetry is used in an essential way to
establish the kinetic energy localization in 2,3 dimensions.
First of all, it forces the solution to localize at the origin, weak
localization result Proposition \ref{uni_bdd} cannot hold without
this assumption since the center of the solution can move with time
$t$. Secondly, the spherical symmetry forces decay as $|x|\to
\infty$ which contributes directly to the additional smoothness
(Proposition \ref{fre_decay}) and the additional decay estimates(Proposition
\ref{local}), hence the kinetic energy localization Theorem
\ref{T:kinetic_loc} follows.

It is then interesting to consider the case when the solution is not
spherically symmetric, but has some symmetry enough to localize the
solution at the origin, at the same time, provides enough averaging
effect. This motives us to consider the splitting-spherical
symmetry(see Definition \ref{spit}). For example, in four
dimensional case, the only nontrivial splitting-spherical symmetry
simply requires the function to be spherically symmetric in each of
the two 2-dimensional subspaces.

In the splitting-spherical symmetric case, by the same argument as
in lower dimensions with radial assumption, all the matter is
reduced to showing the kinetic energy localization. Since this
symmetry also forces the solution to stay at the origin, weak
localization of kinetic energy (Proposition \ref{uni_bdd}) still
holds in this case. Therefore all we have to do is to upgrade this
weak localization to a stronger one. More precisely, we will prove
the following

\begin{thm}[Kinetic energy localization in $d\ge 4$ with admissible symmetry]\label{T:kine_hd}
Let $d\ge 4$ and $u_0\in H_x^1(\R^d)$ with $M(u_0)\le M(Q)$. Let
$u_0$ have the admissible symmetry. Let $u(t,x)$ be corresponding
global solution forward in time satisfying the Duhamel formula
\eqref{duhamel}. Assume also
\begin{equation}
\|\phi_{\gtrsim 1} \nabla u(t)\|_{\ltrd}\lsm 1.\label{unibdd}
\end{equation}
Then for any $\eta>0$, there exists $C(\eta)>0$ such that
\begin{equation*}
\|\phi_{>C(\eta)}\nabla u(t)\|_{\ltrd}\le \eta.
\end{equation*}
\end{thm}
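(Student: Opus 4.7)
The plan is to adapt, coordinate by coordinate, the in-out decomposition argument developed for the spherically symmetric case (Theorem~\ref{T:kinetic_loc}) to the splitting-spherical setting, working entirely in the $\xf \in \R^{d_1}$ variable, where $d_1 \ge 2$ is the admissible splitting dimension ($d_1 = [d/2]$ or $d_1 > d/3$ for large $d$). By admissibility $u(t)$ is spherically symmetric in both $\xf$ and $\xs$, so the mass localization \eqref{eq_mass_local} holds with $x(t)=0$, and the hypothesis \eqref{unibdd} already controls $\|\phi_{\gtrsim 1}\nabla u\|_{\ltrd}$; what must be upgraded is this qualitative $O(1)$ bound into quantitative smallness at large $|\xf|$, uniformly in $t$.

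First I would reduce the problem to a frequency decay estimate of the shape $\|\phi_{>R}\, \pnf u(t)\|_{\ltrd}\lsm N^{-\delta}R^{-\delta}$ for some $\delta>0$, uniformly in $t$ and for $N\gtrsim 1$. The low-frequency piece $\nabla P_{\le 1}u$ restricted to $|\xf|>R$ is controlled by Bernstein together with mass localization: once $R\ge C(\eta)$, the quantity $\|u(t)\|_{L^2(|\xf|>R)}$ is smaller than $\eta$. The high-frequency contribution is then summed dyadically via $\|\nabla \pn f\|_2 \sim N\|\pn f\|_2$, with summability in $N$ supplied by the frequency decay above.

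To produce the frequency decay I would decompose $\pnf u = \pnp u + \pnn u$ into incoming and outgoing waves in the $\xf$ coordinate. By the Duhamel formula of Lemma~\ref{duhamel L}, the outgoing piece satisfies
\begin{equation*}
\pnp u(t) = -i\int_t^\infty e^{i(t-t')\Delta}\, \pnp\bigl(|u|^{4/d}u\bigr)(t')\, dt',
\end{equation*}
and the incoming piece is treated analogously backward in time. For each integrand I would split the nonlinearity spatially in $\xf$ using the cutoffs $\frc$, $\midd$, $\frcs$, further refining the near-origin contribution by the $(N,\tau)$ time-cutoff $\snto + \bnto$. In the far region $|\xf|>R^c$ the weak localization \eqref{unibdd} supplies local $H^1$ control in $\xf$, and the weighted Strichartz inequality of Lemma~\ref{L:wes} on $\R^{d_1}$ (carrying the radial decay $|\xf|^{-(d_1-1)/2}$) converts it into decay in $R$. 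In the intermediate range $1/N<|\xf|\le R^c$, partial kinetic control is paired with weighted Sobolev embedding in $\R^{d_1}$.

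The main obstacle is the near region $|\xf|\le 1/N$, where no kinetic information is available and the high-power nonlinearity $|u|^{4/d+1}$ is not controlled by the mass alone. Here the \emph{linear flow trick} is essential: inside the nonlinearity one copy of $u$ is rewritten via a short-time Duhamel identity as a free-propagated piece plus a remainder, so that after integrating against the smoothed kernel of $e^{i(t-t')\Delta}\pnp$ restricted to $\R^{d_1}$ (via explicit stationary-phase bounds for the radial in-out projector on the cutoffs $\kmid$ or $\phi_{\le 1/N}$), the expression can be estimated by the conserved mass $\|u\|_2 = \|Q\|_2$ with a positive decay gain in both $R$ and $N$. The admissibility condition $d_1=[d/2]$ or $d_1>d/3$ is precisely what renders the resulting space-time integral absolutely convergent with the required decay exponents. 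Summing the bounds over dyadic $N\gtrsim 1$ and combining with the low-frequency estimate yields $\|\phi_{>C(\eta)}\nabla u(t)\|_{\ltrd}\le \eta$ uniformly in $t$, proving the theorem.
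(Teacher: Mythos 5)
Your proposal captures the correct high-level structure — reduce to frequency/spatial decay, use the in-out decomposition in the sub-dimensional $x^{d_1}$ variable, treat near-origin and far-from-origin pieces separately, and sum dyadically — but it departs from the paper's actual argument at several concrete points, and one of them is a genuine misidentification of the key tool.

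The most significant issue is your claim that the ``linear flow trick'' (replacing $F(u)$ by $(i\partial_t+\Delta)u$ via a Duhamel-type identity, as in Lemma~\ref{replace}) is \emph{essential} for the region $|\xf|\lesssim 1/N$. This is what the paper does for $d=2,3$, but precisely \emph{not} what it does for $d\ge4$. In high dimensions the nonlinearity power $1+\tfrac4d$ is small enough that $F(u)$ near the origin is controlled directly by conserved mass: e.g.\ in Section~\ref{twotwo} the term \eqref{lt} is estimated using the small-$x$ kernel bound of Proposition~\ref{Psmall_properties}, Young's inequality, and simply $\|F(u)\|_{L^1_x}\lesssim\|u\|_{L^2_x}^2$ (for $d=4$), and in Section~\ref{highd} the analogue uses $\|F(u)\|_{L^{2d/(d+4)}_x}\lesssim\|u\|_{L^2_x}^{1+4/d}$. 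The paper explicitly highlights this as the simplification available in $d\ge4$. Building the near-origin argument around the linear flow trick overcomplicates the case at hand and misses the structural reason the high-dimensional case is lighter in computation.

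A second issue is the weighted Strichartz reference. Lemma~\ref{L:wes} is stated for functions that are spherically symmetric on all of $\R^d$, with weight $|x|^{-2(d-1)/q}$; you cannot simply ``apply it on $\R^{d_1}$'' because the Schr\"odinger flow on $\R^d$ mixes all variables. The correct tool is the sub-dimensional weighted Strichartz of Lemma~\ref{lem_wei_4d} (for $d=4$) and Lemma~\ref{lem_wei_d} (for $d\ge5$), proved by a new dispersive estimate $\||\xd|^{(d_1-1)/2}e^{it\Delta}|\yd|^{(d_1-1)/2}f\|_{L^\infty_x}\lesssim|t|^{-(d+1-d_1)/2}\|f\|_{L^1_x}$ for data spherically symmetric only in $\xd$, with indices and weights different from Lemma~\ref{L:wes}. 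Your proposal needs these, and they are not a routine restatement of the radial case.

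Finally, your proposed single joint estimate $\|\phi_{>R}\pnf u(t)\|_{\ltrd}\lesssim N^{-\delta}R^{-\delta}$ has a numerological gap: to sum $\sum_N N\|\phi_{>R}\pnf u\|$ over dyadic $N$ you would need $\delta>1$ in the frequency exponent, but you posit the same small $\delta>0$ for both. The paper avoids this by proving two separate estimates with different scaling: Proposition~\ref{fre_decay} gives $N^{-1-\varepsilon}$ decay (the exponent strictly larger than $1$ is what supplies summability after Bernstein), while Proposition~\ref{local} gives $R^{-\delta(d)}$ decay only on a fixed frequency window $N_1<N\le N_2$ and only for $R\ge R_0(N_1,N_2,u)$. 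This two-proposition structure, combined as in Section~\ref{main_body}, is where the theorem actually comes from, and a uniform joint decay of the shape you propose is not established (and would be harder to prove, if true).
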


We remark that in the case when $M(u)<M(Q)$, \eqref{unibdd} is a
direct consequence of the sharp Gagliardo- Nirenberg inequality; in the case
when $M(u)=M(Q)$, it is ensured by Proposition \ref{uni_bdd} which
is a consequence of the non-sharp decomposition Proposition
\ref{prop_nonsharp}.

The proof of Theorem \ref{T:kine_hd} is technically more involved.
It is ultimately reduced to understanding the decay of a single
frequency of the solution: $P_N u(t)$ in both frequency and physical
space. However, not like the radial case where we have only one
preferred direction (namely, the radial direction), in this case, we
will have two. Due to this anisotropicity, the waves that travel at
certain speed $N$ may have the same speed in one direction, but stay
static in the other. It is not hard to imagine that the desired
smoothness and the decay will only comes from the traveling part of
waves. To confirm this, we will apply the sub-dimensional in-out
decomposition technique as we shall explain in Section \ref{twotwo}
and Section \ref{highd}. However, since the spatial cutoff does not
necessarily induce cutoffs of the waves in the preferred direction,
this decomposition is not directly useful to minimize the
contribution of these waves near the origin. The outcome turns out
to be a detailed discussion of various mixture terms with different
sub-dimensional spatial cutoff and the sub-dimensional
incoming/outgoing projection operators. Same with the 2,3
dimensional case, the contribution of the nonlinearity away from the
origin is the dominate part, and the proof relies heavily on the
spherical symmetry in that sub-dimension. It is here the restriction
on the fold $k$ and the minimal dimension $d_1$ of the splitting
spherical symmetry appears. It would be interesting to extend this
result to the more general splitting-spherical symmetry case.

\subsection*{Acknowledgements}
Both authors were supported in part by the National Science Foundation under agreement No. DMS-0635607.
  X.~Zhang was also supported by NSF grant No.~10601060
and project 973 in China.

%
%
%
%

\section{Preliminaries}

\subsection{Some notation}
We write $X \lesssim Y$ or $Y \gtrsim X$ to indicate $X \leq CY$ for some constant $C>0$.  We use $O(Y)$ to denote any quantity $X$
such that $|X| \lesssim Y$.  We use the notation $X \sim Y$ whenever $X \lesssim Y \lesssim X$.  The fact that these constants
depend upon the dimension $d$ will be suppressed.  If $C$ depends upon some additional parameters, we will indicate this with
subscripts; for example, $X \lesssim_u Y$ denotes the assertion that $X \leq C_u Y$ for some $C_u$ depending on $u$.
We sometimes write $C=C(Y_1, \cdots, Y_n)$ to stress that the constant $C$ depends on quantities $Y_1$, $\cdots$, $Y_n$.
We denote by $X\pm$ any quantity of the form $X\pm \epsilon$ for any $\epsilon>0$.

We use the `Japanese bracket' convention $\langle x \rangle := (1 +|x|^2)^{1/2}$.

We write $L^q_t L^r_{x}$ to denote the Banach space with norm
$$ \| u \|_{L^q_t L^r_x(\R \times \R^d)} := \Bigl(\int_\R \Bigl(\int_{\R^d} |u(t,x)|^r\ dx\Bigr)^{q/r}\ dt\Bigr)^{1/q},$$
with the usual modifications when $q$ or $r$ are equal to infinity, or when the domain $\R \times \R^d$ is replaced by a smaller
region of spacetime such as $I \times \R^d$.  When $q=r$ we abbreviate $L^q_t L^q_x$ as $L^q_{t,x}$.

Throughout this paper, we will use $\phi\in C^\infty(\R^d)$ be a
radial bump function supported in the ball $\{ x \in \R^d: |x| \leq
\frac{25} {24} \}$ and equal to one on the ball $\{ x \in \R^d: |x|
\leq 1 \}$.  For any constant $C>0$, we denote $\phi_{\le C}(x):=
\phi \bigl( \tfrac{x}{C}\bigr)$ and $\phi_{> C}:=1-\phi_{\le C}$.

\subsection{Basic harmonic analysis}\label{ss:basic}

For each number $N > 0$, we define the Fourier multipliers
\begin{align*}
\widehat{P_{\leq N} f}(\xi) &:= \phi_{\leq N}(\xi) \hat f(\xi)\\
\widehat{P_{> N} f}(\xi) &:= \phi_{> N}(\xi) \hat f(\xi)\\
\widehat{P_N f}(\xi) &:= (\phi_{\leq N} - \phi_{\leq N/2})(\xi) \hat
f(\xi)
\end{align*}
and similarly $P_{<N}$ and $P_{\geq N}$.  We also define
$$ P_{M < \cdot \leq N} := P_{\leq N} - P_{\leq M} = \sum_{M < N' \leq N} P_{N'}$$
whenever $M < N$.  We will usually use these multipliers when $M$ and $N$ are \emph{dyadic numbers} (that is, of the form $2^n$
for some integer $n$); in particular, all summations over $N$ or $M$ are understood to be over dyadic numbers.  Nevertheless, it
will occasionally be convenient to allow $M$ and $N$ to not be a power of $2$.  As $P_N$ is not truly a projection, $P_N^2\neq P_N$,
we will occasionally need to use fattened Littlewood-Paley operators:
\begin{equation}\label{PMtilde}
\tilde P_N := P_{N/2} + P_N +P_{2N}.
\end{equation}
These obey $P_N \tilde P_N = \tilde P_N P_N= P_N$.

Like all Fourier multipliers, the Littlewood-Paley operators commute with the propagator $e^{it\Delta}$, as well as with
differential operators such as $i\partial_t + \Delta$. We will use basic properties of these operators many many times,
including

\begin{lem}[Bernstein estimates]\label{Bernstein}
 For $1 \leq p \leq q \leq \infty$,
\begin{align*}
\bigl\| |\nabla|^{\pm s} P_N f\bigr\|_{L^p_x(\R^d)} &\sim N^{\pm s} \| P_N f \|_{L^p_x(\R^d)},\\
\|P_{\leq N} f\|_{L^q_x(\R^d)} &\lesssim N^{\frac{d}{p}-\frac{d}{q}} \|P_{\leq N} f\|_{L^p_x(\R^d)},\\
\|P_N f\|_{L^q_x(\R^d)} &\lesssim N^{\frac{d}{p}-\frac{d}{q}} \| P_N f\|_{L^p_x(\R^d)}.
\end{align*}
\end{lem}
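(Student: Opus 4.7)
The plan is to reduce all three estimates to standard convolution and scaling arguments, using the fact that the Littlewood--Paley cutoffs are convolutions with Schwartz kernels whose $L^r$ norms can be computed exactly by scaling.

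First I would write $P_{\le N}f = K_{\le N}*f$ where $K_{\le N}(x) = N^d K(Nx)$ and $K = \widecheck{\phi}$ is a fixed Schwartz function; similarly $P_N f = \widetilde K_N * f$ with $\widetilde K_N(x) = N^d \widetilde K(Nx)$ for a fixed Schwartz $\widetilde K$ whose Fourier transform is $\phi_{\le 1} - \phi_{\le 1/2}$. A direct computation gives $\|K_{\le N}\|_{L^r} = N^{d(1-1/r)}\|K\|_{L^r}$, and similarly for $\widetilde K_N$. Young's convolution inequality with $1 + \tfrac 1q = \tfrac 1p + \tfrac 1r$ then yields
\begin{equation*}
\|P_{\le N}f\|_{L^q_x} \le \|K_{\le N}\|_{L^r_x}\|f\|_{L^p_x} \lesssim N^{d/p - d/q}\|f\|_{L^p_x},
\end{equation*}
and the same for $P_N$. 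To promote $\|f\|_{L^p_x}$ on the right-hand side to $\|P_{\le N}f\|_{L^p_x}$ (respectively $\|P_Nf\|_{L^p_x}$), I would apply the bound to the already-localized function, using that $P_{\le N}^2 = (K_{\le N}*K_{\le N})*f$ where $K_{\le N}*K_{\le N}$ has an $L^r$ norm of the same order by the same scaling.

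For the derivative estimate, I would observe that $|\nabla|^{\pm s}P_N$ has symbol $|\xi|^{\pm s}(\phi_{\le N} - \phi_{\le N/2})(\xi)$, which by scaling equals $N^{\pm s}$ times the multiplier with symbol $m^{\pm}(\xi/N)$, where $m^{\pm}(\eta) := |\eta|^{\pm s}(\phi_{\le 1} - \phi_{\le 1/2})(\eta)$ is smooth and compactly supported in an annulus $\{|\eta|\sim 1\}$. Hence $|\nabla|^{\pm s}P_N f = N^{\pm s}(K^{\pm}_N * f)$ with $K^{\pm}_N(x) = N^d \widecheck{m^\pm}(Nx)$ and $\widecheck{m^\pm}$ Schwartz. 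Young with $r=1$ gives
\begin{equation*}
\||\nabla|^{\pm s}P_N f\|_{L^p_x} \lesssim N^{\pm s}\|P_N f\|_{L^p_x}.
\end{equation*}
To get the reverse inequality $\|P_N f\|_{L^p_x}\lesssim N^{\mp s}\||\nabla|^{\pm s}P_N f\|_{L^p_x}$, I would insert a fattened projector and write $P_N = \widetilde P_N P_N$, then use $\widetilde P_N = |\nabla|^{\mp s}\cdot(|\nabla|^{\pm s}\widetilde P_N)$ and apply the upper bound just derived with $\widetilde P_N$ in place of $P_N$ (whose symbol still lives in an annulus $|\xi|\sim N$, so the same scaling argument works).

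No step is really an obstacle: the only thing to be slightly careful about is keeping the symbols of the auxiliary multipliers supported in $\{|\xi|\sim N\}$ so that the scaling recipe applies uniformly in $N$, and using the fattening $\widetilde P_N$ from \eqref{PMtilde} to legitimately invert $|\nabla|^{\pm s}$ in the annular regime. Everything else is Young's inequality and a change of variables.
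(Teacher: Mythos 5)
The paper states this Bernstein lemma without proof, treating it as a standard textbook fact, so there is no authorial argument to compare against; your proposal is the canonical proof and is essentially correct. The convolution structure $P_N f = \tilde K_N * f$ with $\tilde K_N(x) = N^d\tilde K(Nx)$, the exact scaling $\|\tilde K_N\|_{L^r} = N^{d(1-1/r)}\|\tilde K\|_{L^r}$, and Young's inequality with $1+\tfrac 1q = \tfrac 1p + \tfrac 1r$ give the $L^p\to L^q$ bounds, and the derivative estimate follows from the observation that $|\xi|^{\pm s}(\phi_{\le 1}-\phi_{\le 1/2})(\xi)$ is a smooth compactly supported symbol living on an annulus, so $|\nabla|^{\pm s}P_N$ is $N^{\pm s}$ times a convolution with a Schwartz kernel of unit $L^1$ mass; the reverse inequality via inserting $\tilde P_N$ and inverting $|\nabla|^{\pm s}$ on the annulus is exactly right.

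One small imprecision worth flagging: when you promote $\|f\|_{L^p}$ on the right-hand side to $\|P_{\le N}f\|_{L^p}$, the argument ``apply the bound to the already-localized function'' with $P_{\le N}^2$ does not literally close, because $P_{\le N}$ is not idempotent and you would end up bounding $\|P_{\le N}^2 f\|_{L^q}$ rather than $\|P_{\le N}f\|_{L^q}$. The correct move is the one you use for the derivative estimate: since $\phi_{\le N}$ is supported in $\{|\xi|\le \tfrac{25}{24}N\}$ and $\phi_{\le 2N}\equiv 1$ there, $P_{\le 2N}P_{\le N} = P_{\le N}$, so write $\|P_{\le N}f\|_{L^q} = \|P_{\le 2N}(P_{\le N}f)\|_{L^q}$ and apply the Young estimate for $P_{\le 2N}$. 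Likewise for $P_N$, use $\tilde P_N P_N = P_N$ from \eqref{PMtilde}. With that substitution the argument is complete.
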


While it is true that spatial cutoffs do not commute with Littlewood-Paley operators, we still have the following:

\begin{lem}[Mismatch estimates in real space]\label{L:mismatch_real}
Let $R,N>0$.  Then
\begin{align*}
\bigl\| \phi_{> R} \nabla P_{\le N} \phi_{\le\frac R2} f \bigr\|_{L_x^p(\R^d)}  &
\lsm_m N^{1-m} R^{-m} \|f\|_{L_x^p(\R^d)} \\
\bigl\| \phi_{> R}  P_{\leq N} \phi_{\le\frac R2} f \bigr\|_{L_x^p(\R^d)}
&\lsm_m N^{-m} R^{-m} \|f\|_{L_x^p(\R^d)}
\end{align*}
for any $1\le p\le \infty$ and $m\geq 0$.
\end{lem}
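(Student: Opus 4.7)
The plan is to exploit the spatial separation between the supports of $\phi_{>R}$ and $\phi_{\le R/2}$: by the defining properties of $\phi$, these cutoffs are disjointly supported with distance $\gtrsim R$ between them, while $P_{\le N}$ is convolution with a kernel that decays rapidly on scale $1/N$.

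Write $P_{\le N} g = K_N * g$, where $K_N(z) = N^d K(Nz)$ and $K$ is the inverse Fourier transform of $\phi_{\le 1}$, a fixed Schwartz function. The second operator in the lemma has integral kernel
\[
T(x,y) = \phi_{>R}(x)\, K_N(x-y)\, \phi_{\le R/2}(y).
\]
Whenever $T(x,y) \ne 0$ we have $|x| > R$ and $|y| \le \tfrac{25R}{48}$, so $|x - y| \ge R/3$. Using Schwartz decay $|K(w)| \lsm_M \langle w \rangle^{-M}$, I would then estimate
\[
\sup_x \int |T(x,y)|\,dy \;\le\; \int_{|z| \ge R/3} |K_N(z)|\,dz \;=\; \int_{|w| \ge NR/3} |K(w)|\,dw \;\lsm_M\; (1+NR)^{d-M}
\]
whenever $M > d$, and symmetrically for $\sup_y \int |T(x,y)|\,dx$. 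Choosing $M = d+m$ and using $(1+NR)^{-m} \le (NR)^{-m}$ in the regime $NR \gtrsim 1$, this gives $\lsm_m (NR)^{-m}$; in the complementary regime $NR \lsm 1$, the trivial $L^p$-bound $\|P_{\le N}\|_{L^p\to L^p} \lsm 1$ yields $\lsm 1 \le (NR)^{-m}$. Schur's test (interpolating the $L^1\to L^1$ and $L^\infty\to L^\infty$ bounds that follow from these kernel estimates) then produces the desired $L^p \to L^p$ bound for all $1 \le p \le \infty$.

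For the first estimate the integral kernel is $\nabla_x K_N(x-y) = N^{d+1}(\nabla K)(N(x-y))$; since $\nabla K$ is also Schwartz, rescaling introduces one extra factor of $N$ throughout, leading to $\lsm_m N(NR)^{-m} = N^{1-m} R^{-m}$. In the small-$NR$ regime, the trivial Bernstein bound $\|\nabla P_{\le N}\|_{L^p\to L^p} \lsm N$ suffices, since $N \le N(NR)^{-m}$ whenever $NR \le 1$.

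The only genuine subtlety is the case split $NR \gtrsim 1$ versus $NR \lsm 1$, which is routine bookkeeping rather than a real obstacle. The core of the argument — that the disjointness of the supports forces evaluation of the convolution kernel only in its Schwartz tail, where it contributes an arbitrary power of $(NR)^{-1}$ — is immediate once formulated.
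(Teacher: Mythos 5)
Your proof is correct and follows essentially the same route as the paper: both derive a pointwise bound on the kernel of $\phi_{>R}\nabla P_{\le N}\phi_{\le R/2}$ from the Schwartz decay of the Littlewood--Paley kernel together with the fact that the two cutoffs force $|x-y|\gtrsim R$, and both then pass to the operator bound via Young's inequality (your Schur/interpolation step is the same thing for a convolution-dominated kernel). The only cosmetic difference is that the paper phrases the kernel decay as ``an exercise in non-stationary phase,'' which is precisely the Schwartz estimate $|K(w)|\lsm_M\langle w\rangle^{-M}$ you invoke; your $NR\lesssim 1$ versus $NR\gtrsim 1$ case split is harmless but in fact unneeded, since $(1+NR)^{-m}\le (NR)^{-m}$ for all $N,R>0$.
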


\begin{proof}
We will only prove the first inequality; the second follows similarly.

It is not hard to obtain kernel estimates for the operator $\phi_{>
R}\nabla P_{\le N}\phi_{\le\frac R2}$. Indeed, an exercise in
non-stationary phase shows
\begin{align*}
\bigl|\phi_{> R}\nabla P_{\le N}\phi_{\le\frac R2}(x,y)\bigr|
\lesssim N^{d+1-2k} |x-y|^{-2k}\phi_{|x-y|>\frac R2}
\end{align*}
for any $k\geq 0$.  An application of Young's inequality yields the claim.
\end{proof}

Similar estimates hold when the roles of the frequency and physical spaces are interchanged.  The proof is easiest when
working on $L_x^2$, which is the case we will need; nevertheless, the following statement holds on $L_x^p$ for any $1\leq p\leq \infty$.

\begin{lem}[Mismatch estimates in frequency space]\label{L:mismatch_fre}
For $R>0$ and $N,M>0$ such that $\max\{N,M\}\geq 4\min\{N,M\}$,
\begin{align*}
\bigl\|  P_N \phi_{\le{R}} P_M f \bigr\|_{\ltrd} &\lsm_m \max\{N,M\}^{-m} R^{-m} \|f\|_{\ltrd} \\
\bigl\|  P_N \phi_{\le {R}} \nabla P_M f \bigr\|_{\ltrd} &\lsm_m M
\max\{N,M\}^{-m} R^{-m} \|f\|_{\ltrd}.
\end{align*}
for any $m\geq 0$.  The same estimates hold if we replace $\phi_{\le
R}$ by $\phi_{>R}$.
\end{lem}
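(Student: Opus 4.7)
The plan is to mirror the physical-space kernel argument of Lemma \ref{L:mismatch_real}, but carried out on the Fourier side, which is natural given that we only need $L^2$ bounds. Writing $\psi_N, \psi_M$ for the symbols of $P_N, P_M$, Plancherel reduces the first inequality to an $L^2$ bound for the integral operator with kernel
\begin{align*}
K(\xi,\eta) \;=\; \psi_N(\xi)\, \widehat{\phi_{\le R}}(\xi-\eta)\, \psi_M(\eta),
\end{align*}
and the second inequality to the same problem with an extra factor $i\eta$ inserted. I would then apply Schur's test, so the task reduces to showing that $\sup_{\xi}\int |K(\xi,\eta)|\,d\eta$ and $\sup_{\eta}\int |K(\xi,\eta)|\,d\xi$ are both controlled by $\max\{N,M\}^{-m}R^{-m}$, with an extra factor $M$ in the gradient version.

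The main analytic input is the Schwartz decay $|\widehat{\phi_{\le R}}(\zeta)|\lsm_k R^d(1+R|\zeta|)^{-k}$. The separation hypothesis $\max\{N,M\}\ge 4\min\{N,M\}$ guarantees that the supports of $\psi_N$ and $\psi_M$ are not only disjoint but separated by a definite fraction of $\max\{N,M\}$, so that $|\xi-\eta|\sim \max\{N,M\}$ on $\mathrm{supp}\,\psi_N\times\mathrm{supp}\,\psi_M$. In the nontrivial regime $R\max\{N,M\}\ge 1$ this yields the pointwise bound
\begin{align*}
|K(\xi,\eta)| \;\lsm_k\; R^{-k}\max\{N,M\}^{-k-d}\, \psi_N(\xi)\,\psi_M(\eta),
\end{align*}
and integrating in $\eta$ over $|\eta|\lsm M$ contributes only a volume factor $\lsm M^d\le \max\{N,M\}^d$, which cancels the excess $\max\{N,M\}^{-d}$ and leaves the Schur constant $R^{-k}\max\{N,M\}^{-k}$. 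The integration in $\xi$ over $|\xi|\lsm \max\{N,M\}$ is even better. The opposite regime $R\max\{N,M\}\le 1$ is trivial, as each of $P_N$, $\phi_{\le R}$, $P_M$ is an $L^2$-contraction and the target bound $R^{-m}\max\{N,M\}^{-m}$ is then at least $1$. Taking $k=m$ (and $k=m+1$, together with $|\eta|\lsm M$ on $\mathrm{supp}\,\psi_M$, for the gradient case) gives both stated estimates.

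The last assertion, with $\phi_{\le R}$ replaced by $\phi_{>R}$, is then essentially free: the frequency separation forces $P_NP_M=0$ and $P_N\nabla P_M=0$, so
\begin{align*}
P_N\phi_{>R}P_M \;=\; -P_N\phi_{\le R}P_M,
\end{align*}
and similarly with $\nabla$. The only mildly fiddly part I anticipate is verifying the uniform two-sided bound $|\xi-\eta|\sim \max\{N,M\}$ with the explicit constants coming from the specific Littlewood--Paley bump $\phi$; this is where the factor $4$ in the separation hypothesis is used, and it is what ensures that the Schwartz decay of $\widehat{\phi_{\le R}}$ is actually activated. Everything else is bookkeeping.
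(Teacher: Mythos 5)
Your proof is correct and takes essentially the same Fourier-side approach as the paper: the paper obtains the first bound from Lemma~\ref{L:mismatch_real} via Plancherel (which itself rests on a kernel estimate plus Young's inequality), while you carry out the equivalent Schur-test kernel estimate directly, with the same inputs (rapid decay of $\widehat{\phi_{\le R}}$ and the frequency separation forcing $|\xi-\eta|\sim\max\{N,M\}$), and you insert the extra factor $M$ at the kernel level where the paper writes $\nabla P_M=P_M\nabla\tilde P_M$. Your reduction of the $\phi_{>R}$ case to $P_N\phi_{>R}P_M=-P_N\phi_{\le R}P_M$ using $P_NP_M=0$ is a clean shortcut the paper leaves unstated.
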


\begin{proof}
The first claim follows from Plancherel's Theorem and Lemma~\ref{L:mismatch_real} and its adjoint.  To obtain the second claim from this, we write
$$
P_N \phi_{\le {R}} \nabla P_M = P_N \phi_{\le {R}} P_M \nabla \tilde
P_M
$$
and note that $\|\nabla \tilde P_M\|_{L_x^2\to L_x^2}\lesssim M$.
\end{proof}

We will need the following radial Sobolev embedding to exploit the
decay property of a radial function. For the proof and the more
complete version, one refers to see \cite{tvz:hd}.

\begin{lem}[Radial Sobolev embedding, \cite{tvz:hd}]\label{L:radial_embed}
Let dimension $d\ge 2$. Let $s>0$, $\alpha>0$, $1<p,q<\infty$ obeys
the scaling restriction: $\alpha+s=d(\frac 1q-\frac 1p)$. Then the
following holds:
\begin{align*}
\||x|^{\alpha} f\|_{L^p(\R^d)}\lsm \||\nabla|^s f\|_{L^q(\R^d)},
\end{align*}
where the implicit constant depends on $s,\alpha,p,q$. When
$p=\infty$, we have
\begin{align*}
\||x|^{\frac{d-1}2}P_N f\|_{L^\infty(\R^d)}\lsm N^{\frac 12}\|P_N
f\|_{\ltrd}.
\end{align*}
\end{lem}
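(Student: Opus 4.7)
The plan is to first establish the $L^\infty$ endpoint estimate for $P_N f$, which carries the essential radial content, and then derive the weighted $L^p$ inequality from it via Littlewood--Paley decomposition.

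For the $L^\infty$ bound, I would exploit that $f$ radial implies $\hat f$ radial and write
\[
P_N f(x) = c_d \int_0^\infty \phi_N(\rho)\,\hat f(\rho)\, \rho^{d-1}\, W(|x|\rho)\, d\rho,
\]
where $W(r) = \int_{S^{d-1}} e^{i r\,\omega\cdot e_1}\, d\sigma(\omega)$ is proportional to $r^{-(d-2)/2} J_{(d-2)/2}(r)$. Standard Bessel asymptotics (equivalently, stationary phase on the sphere) give $|W(r)| \lesssim r^{-(d-1)/2}$ for $r \gtrsim 1$. In the regime $|x| N \gtrsim 1$, inserting this bound and applying Cauchy--Schwarz with respect to the radial measure $\rho^{d-1}\, d\rho$ on the shell $\rho\sim N$ produces
\[
|P_N f(x)| \lesssim |x|^{-(d-1)/2}\!\int_{\rho\sim N}\!|\hat f(\rho)|\, \rho^{(d-1)/2}\,d\rho \lesssim |x|^{-(d-1)/2}\, N^{1/2}\,\|P_N f\|_{L^2_x},
\]
since $(\int_{\rho\sim N} \rho^{-(d-1)}\cdot \rho^{d-1}\,d\rho)^{1/2}\sim N^{1/2}$. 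In the complementary regime $|x|N \lesssim 1$, Bernstein yields $\|P_N f\|_{L^\infty}\lesssim N^{d/2}\|P_N f\|_{L^2}$, which combines with the trivial bound $|x|^{(d-1)/2}\lesssim N^{-(d-1)/2}$ to produce the same conclusion.

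For the first inequality, the plan is to Littlewood--Paley decompose $f = \sum_N P_N f$ and work on dyadic annuli $\{|x|\sim R\}$. On such an annulus the endpoint estimate gives $\|P_N f\|_{L^\infty(|x|\sim R)}\lesssim R^{-(d-1)/2}N^{1/2}\|P_N f\|_{L^2}$; interpolating this with the Bernstein bound $\|P_N f\|_{L^q}\lesssim N^{d(1/2-1/q)}\|P_N f\|_{L^2}$ one gets control on $\|P_N f\|_{L^p(|x|\sim R)}$ with the right $R$-weight, and then relating $\|P_N f\|_{L^2}$ to $\||\nabla|^s P_N f\|_{L^q}$ via another application of Bernstein inserts a factor $N^{-s}$. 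The scaling constraint $\alpha+s = d(1/q - 1/p)$ ensures that the resulting per-frequency estimates on $\||x|^\alpha P_N f\|_{L^p(|x|\sim R)}$ form geometric series in both $R$ and $N$; summing them (in $R$ first, then in $N$, or using the Littlewood--Paley square-function equivalence on $L^p$ for $1<p<\infty$) yields the weighted Sobolev inequality.

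The main technical obstacle is the interpolation step: the radial pointwise bound is $L^2$-based, while the target estimate is an $L^p$ inequality, so some care is needed to combine the two without logarithmic losses at the scaling-critical exponent relation. The positivity of both $\alpha$ and $s$ is essential, since it is exactly what makes both dyadic sums converge geometrically and avoids the need for any endpoint refinement; without $\alpha>0$ the summation in $R$ would diverge at spatial infinity, and without $s>0$ the summation in $N$ would diverge at high frequencies.
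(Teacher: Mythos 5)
The paper does not prove this lemma; it is cited from \cite{tvz:hd}, so there is no in-paper proof to compare against. Your argument for the $p=\infty$ endpoint is correct and is the standard one: write $P_Nf$ for radial $f$ via the Fourier--Bessel transform, use $|J_\nu(r)|\lesssim r^{-1/2}$ for $r\gtrsim 1$ and Cauchy--Schwarz over the shell $|\xi|\sim N$, and use Bernstein for $|x|\lesssim N^{-1}$.

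Your derivation of the general $L^p$ inequality, however, would not close as sketched. The two Bernstein applications pull in opposite directions: $\|P_Nf\|_{L^q}\lesssim N^{d(1/2-1/q)}\|P_Nf\|_{L^2}$ needs $q\geq 2$, while relating $\|P_Nf\|_{L^2}$ back to $\||\nabla|^sP_Nf\|_{L^q}$ via Bernstein needs $q\leq 2$, so the sketch only closes at $q=2$. More seriously, the assertion that the scaling relation $\alpha+s=d(\tfrac1q-\tfrac1p)$ together with $\alpha,s>0$ makes the dyadic sums geometric is false. Carrying out your annular interpolation $\|P_Nf\|_{L^p(A_R)}\le\|P_Nf\|_{L^q(A_R)}^{q/p}\|P_Nf\|_{L^\infty(A_R)}^{1-q/p}$ with the endpoint bound on the second factor produces the $R$-power $\alpha-\tfrac{d-1}{2}(1-\tfrac qp)$, so the sum over $R\gtrsim N^{-1}$ requires $\alpha<\tfrac{d-1}{2}(1-\tfrac qp)$, a genuinely stronger numerical hypothesis than what is stated. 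Indeed, testing the inequality on a radial bump equal to $1$ on the annulus $||x|-R|\le 1$ and letting $R\to\infty$ already forces $\alpha\le(d-1)(\tfrac1q-\tfrac1p)$; some such upper bound on $\alpha$ is necessary, appears among the hypotheses of the actual lemma in \cite{tvz:hd} (which the statement quoted here abbreviates), and cannot be avoided. Finally, your closing heuristic has the sign reversed: $\alpha>0$ makes the weight $R^\alpha$ vanish near the origin, where the volume factor $R^d$ already gives convergence, and makes it grow at spatial infinity; the large-$R$ tail is controlled by the endpoint's $R^{-(d-1)/2}$ decay under the missing upper bound on $\alpha$, not by the positivity of $\alpha$.
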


We will need the following fractional chain rule lemma.
\begin{lem}[Fractional chain rule for a $C^1$ function, \cite{chris:weinstein}\cite{staf}\cite{taylor}]\label{lem_chain}
Let $F\in C^1(\mathbb C)$, $\sigma \in (0,1)$, and $1<r,r_1,r_2<\infty$ such that $\frac 1r=\frac 1{r_1}+\frac 1{r_2}$.
Then we have
\begin{align*}
\||\nabla|^{\sigma}F(u)\|_r\lesssim \|F'(u)\|_{r_1}\||\nabla|^{\sigma}u\|_{r_2}.
\end{align*}
\end{lem}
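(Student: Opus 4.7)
The plan is to follow the classical Christ--Weinstein--Taylor approach via Littlewood-Paley decomposition and the square function characterization of fractional derivatives. For $1<r<\infty$ one has the equivalence
\[
\||\nabla|^\sigma g\|_{L^r(\R^d)}\sim\Big\|\Big(\sum_{N\in 2^{\Z}} N^{2\sigma}|P_N g|^2\Big)^{1/2}\Big\|_{L^r(\R^d)},
\]
so it suffices to bound the square function on the right applied to $g=F(u)$.

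For each dyadic $N$, I would exploit that the convolution kernel $K_N$ of $P_N$ has mean zero and obeys $|K_N(z)|\lesssim_M N^d\langle N|z|\rangle^{-M}$. This lets me write
\[
P_N F(u)(x)=\int K_N(x-y)\bigl[F(u(y))-F(u(x))\bigr]\,dy,
\]
and by the fundamental theorem of calculus applied to $F\in C^1$,
\[
|F(u(y))-F(u(x))|\leq|u(y)-u(x)|\int_0^1\bigl|F'\bigl(u(x)+\theta(u(y)-u(x))\bigr)\bigr|\,d\theta.
\]
After a dyadic decomposition of the spatial kernel into annuli $|x-y|\sim 2^k/N$, the inner $\theta$-integral is controlled pointwise by the Hardy-Littlewood maximal function $M[F'(u)](x)$, while the factor $|u(y)-u(x)|$ is absorbed into a maximal variant $(\tilde P_N u)^{*}(x)$ of the Littlewood-Paley piece at scale $N$, giving
\[
|P_N F(u)(x)|\lesssim M[F'(u)](x)\cdot(\tilde P_N u)^{*}(x).
\]

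Finally, I square, multiply by $N^{2\sigma}$, sum in $N$, take the $L^r$ norm, and apply H\"older's inequality with exponents $r_1,r_2$ to obtain
\[
\||\nabla|^\sigma F(u)\|_{r}\lesssim\|M[F'(u)]\|_{r_1}\cdot\Big\|\Big(\sum_N N^{2\sigma}|(\tilde P_N u)^{*}|^2\Big)^{1/2}\Big\|_{r_2}\lesssim\|F'(u)\|_{r_1}\||\nabla|^\sigma u\|_{r_2},
\]
where the final step uses the $L^{r_1}$-boundedness of $M$ (since $r_1>1$) together with the Fefferman-Stein vector-valued maximal inequality (since $r_2>1$) and the square function characterization again. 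The main obstacle is the pointwise domination of $\int_0^1|F'(u(x)+\theta(u(y)-u(x)))|\,d\theta$ by a maximal function of $F'(u)$: since the average is taken along a line segment in the target space $\C$ rather than in $\R^d$, one has no direct Hardy-Littlewood control. The workaround, standard in the Christ-Weinstein/Taylor treatment, is to use the continuity of $F'$ together with the localization at scale $1/N$ provided by $K_N$ to relate values of $F'$ on the segment $[u(x),u(y)]$ to values of $F'(u)$ in a physical ball of radius $\sim 1/N$ around $x$; it is precisely here that the $C^1$ hypothesis on $F$ enters crucially.
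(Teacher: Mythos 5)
The paper does not actually prove this lemma; its ``proof'' is the one-line citation to Christ--Weinstein, Staffilani, and Taylor. So any substantive argument is, by construction, new content rather than a variant of what the paper presents. Your skeleton is the right one for the Christ--Weinstein approach: the Littlewood--Paley square-function characterization of $|\nabla|^\sigma$ for $1<r<\infty$, the mean-zero trick for the kernel $K_N$ of $P_N$, the fundamental theorem of calculus applied to $F\in C^1$, and the concluding H\"older, Hardy--Littlewood maximal, and Fefferman--Stein steps are all correctly identified.

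The genuine gap is the one you yourself flag at the end, and your suggested workaround does not close it. Continuity of $F'$, even combined with localization to physical scale $1/N$, does not allow you to dominate $\int_0^1 |F'((1-\theta)u(x)+\theta u(y))|\,d\theta$ by $M[F'(u)](x)$: the image of $u$ over a ball of radius $\sim 1/N$ about $x$ is merely some connected subset of $\C$, and it need not contain, or even come close to, the straight segment $[u(x),u(y)]$ in the target space. Averages of $F'$ along that segment are therefore not comparable to averages of $F'(u)$ over any spatial neighborhood, and the $C^1$ hypothesis alone gives you no leverage here. In the cited references the step is closed by other means. Christ--Weinstein treat power nonlinearities $F(z)=|z|^{p}z$, for which one has the elementary pointwise bound
\[
\int_0^1 \bigl|F'\bigl((1-\theta)a+\theta b\bigr)\bigr|\,d\theta \lesssim |F'(a)|+|F'(b)|,
\]
valid because $|(1-\theta)a+\theta b|\le|a|+|b|$ and $(|a|+|b|)^p\lesssim|a|^p+|b|^p$; that pointwise estimate is precisely what plugs into the maximal-function machinery you set up. The more general treatments either impose a convexity or H\"older-type condition on $F'$ that yields the same segment bound, or bypass it entirely via a paraproduct decomposition comparing $F(u)$ with $F(P_{<N}u)$. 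One of those ingredients is needed to finish your argument; invoking ``continuity of $F'$ and the $1/N$ localization'' is not a correct substitute.
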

\begin{proof}
See \cite{chris:weinstein}, \cite{staf} and \cite{taylor}.
\end{proof}

\subsection{Strichartz estimates}

The free Schr\"odinger flow has the explicit expression:
\begin{align*}
e^{it\Delta } f(x)=\frac 1{(4\pi t)^{d/2}}\int_{\R^d}
e^{i|x-y|^2/4t}f(y)dy,
\end{align*}
from which we can derive the kernel estimate of the frequency
localized propagator. We record the following

\begin{lem}[Kernel estimate\cite{ktv:2d,kvz:blowup}]\label{L:kernel}
For any $m\ge 0$, we have
\begin{align*}
|(\pn e^{it\Delta}(x,y)|\lsm_m
\begin{cases}
|t|^{-d/2},&: |x-y|\sim Nt;\\
\frac{N^d}{|N^2 t|^m\langle N|x-y|\rangle^m}&: \mbox{otherwise}
\end{cases}
\end{align*}
for $|t|\ge N^{-2}$ and
\begin{align*}
|(\pn e^{it\Delta})(x,y)|\lsm_m N^d\langle N|x-y|\rangle^{-m}
\end{align*}
for $|t|\le N^{-2}$.
\end{lem}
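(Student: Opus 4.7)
\textbf{Proof proposal for Lemma \ref{L:kernel}.} The plan is a standard stationary/non-stationary phase analysis applied to the explicit oscillatory integral representation of the kernel. Writing $\psi(\xi) := (\phi_{\le 1}-\phi_{\le 1/2})(\xi)$, so that $\widehat{P_N f}(\xi) = \psi(\xi/N)\hat f(\xi)$, the Fourier inversion formula gives
\begin{equation*}
\bigl(P_N e^{it\Delta}\bigr)(x,y) = (2\pi)^{-d}\int_{\R^d} e^{i(x-y)\cdot\xi - it|\xi|^2}\,\psi(\xi/N)\,d\xi.
\end{equation*}
Rescaling $\xi = N\eta$ brings this to
\begin{equation*}
(P_N e^{it\Delta})(x,y) = (2\pi)^{-d} N^d \int_{\R^d} e^{i\Phi(\eta)}\,\psi(\eta)\,d\eta, \qquad \Phi(\eta) := N(x-y)\cdot\eta - tN^2 |\eta|^2,
\end{equation*}
with $\psi$ supported in $\{|\eta|\sim 1\}$.

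For the regime $|t|\ge N^{-2}$, I would split into two cases according to the location of the critical point $\eta^\ast = \frac{x-y}{2tN}$ of $\Phi$. When $|x-y|\sim Nt$ the critical point lies in the support of $\psi$; the Hessian equals $-2tN^2 I$, so its determinant has size $(tN^2)^d$, and the classical stationary phase formula produces the bound
\begin{equation*}
\bigl|(P_N e^{it\Delta})(x,y)\bigr|\lesssim N^d \cdot (tN^2)^{-d/2} = |t|^{-d/2}.
\end{equation*}
When instead $|x-y|\not\sim Nt$ (either $|x-y|\gg Nt$ or $|x-y|\ll Nt$), one checks that $|\nabla\Phi(\eta)|\gtrsim N|x-y| + tN^2$ on the support of $\psi$. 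Thus the differential operator $L = |\nabla\Phi|^{-2}\overline{\nabla\Phi}\cdot\nabla$ satisfies $L(e^{i\Phi}) = -i e^{i\Phi}$, and $m$ integrations by parts yield, for every $m\ge 0$,
\begin{equation*}
\bigl|(P_N e^{it\Delta})(x,y)\bigr| \lesssim_m \frac{N^d}{(N|x-y|+tN^2)^m} \lesssim_m \frac{N^d}{|N^2 t|^{m/2}\langle N|x-y|\rangle^{m/2}};
\end{equation*}
since $m$ is arbitrary, by relabeling $m\mapsto 2m$ one recovers exactly the form in the statement (using $\langle N|x-y|\rangle^{-m}\sim (N|x-y|)^{-m}$ in the non-stationary regime where $N|x-y|\gtrsim 1$ is automatic).

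For the regime $|t|\le N^{-2}$, the quadratic term $tN^2 |\eta|^2$ is bounded on the support of $\psi$, so it no longer competes with the linear term. When $N|x-y|\lesssim 1$ the trivial bound $N^d$ suffices; when $N|x-y|\gtrsim 1$ one has $|\nabla\Phi|\sim N|x-y|$, and $m$ non-stationary integrations by parts give $N^d (N|x-y|)^{-m}$, which combine into the claimed $N^d\langle N|x-y|\rangle^{-m}$. The only mild subtlety is making sure that the derivatives of $|\nabla\Phi|^{-2}\overline{\nabla\Phi}$ picked up by repeated integration by parts do not degrade the estimate; this is routine because $\Phi$ is quadratic and $\psi$ has all derivatives bounded on a set where $|\eta|\sim 1$. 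Since the argument is entirely stationary/non-stationary phase for a Gaussian-type kernel and no step requires novel input, I expect no real obstacle; the main care is bookkeeping the two regimes $|t|\ge N^{-2}$ versus $|t|\le N^{-2}$ and the subdivision $|x-y|\sim Nt$ versus $|x-y|\not\sim Nt$.
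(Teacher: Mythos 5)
The paper states Lemma \ref{L:kernel} as a citation to \cite{ktv:2d,kvz:blowup} and provides no proof, so there is no in-paper argument to compare against; your proposal supplies the standard stationary/non-stationary phase proof one finds in those references, and it is correct in substance. Rescaling to the phase $\Phi(\eta)=N(x-y)\cdot\eta-tN^2|\eta|^2$, identifying the critical point $\eta^\ast=\frac{x-y}{2tN}$, applying stationary phase when $|\eta^\ast|\sim 1$ and integration by parts otherwise, and using $|\nabla\Phi|\gtrsim N|x-y|+tN^2$ in the non-stationary regime, is exactly the right route; the Gaussian structure (constant Hessian, quadratic phase) makes the derivative bookkeeping benign as you note.

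One small inaccuracy worth fixing: your parenthetical asserts that $N|x-y|\gtrsim 1$ is automatic in the non-stationary regime with $|t|\ge N^{-2}$. It is not -- one can have $|x-y|$ arbitrarily small (even zero) while $|t|\ge N^{-2}$ is large, which is certainly non-stationary. The displayed inequality you wrote is nevertheless valid without that remark: if $N|x-y|\lesssim 1$ then $\langle N|x-y|\rangle\sim 1$, and since $tN^2\ge 1$ the bound $\frac{N^d}{(N|x-y|+tN^2)^m}\lesssim\frac{N^d}{(tN^2)^m}\le\frac{N^d}{(tN^2)^{m/2}}\sim\frac{N^d}{(tN^2)^{m/2}\langle N|x-y|\rangle^{m/2}}$ still holds, while if $N|x-y|\gtrsim 1$ the AM-GM step gives the same conclusion. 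So the relabeling $m\mapsto 2m$ closes the argument uniformly, but the justification should not appeal to $N|x-y|\gtrsim 1$ being automatic.
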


We will frequently use the standard Strichartz estimate:

\begin{lem}[Strichartz]\label{L:strichartz} Let $d\ge 2$. Let $I$ be an interval, $t_0 \in I$, and let $u_0 \in L^2_x(\R^d)$
and $F \in L^{2(d+2)/(d+4)}_{t,x}(I \times \R^d)$.  Then, the function $u$ defined by
$$ u(t) := e^{i(t-t_0)\Delta} u_0 - i \int_{t_0}^t e^{i(t-t')\Delta} F(t')\ dt'$$
obeys the estimate
$$
\|u \|_{L^\infty_t L^2_x} + \| u \|_{L^{\frac{2(d+2)}d}_{t,x}}
    \lesssim \| u_0 \|_{L^2_x} + \|F\|_{L^{\frac{2(d+2)}{d+4}}_{t,x}},
$$
where all spacetime norms are over $I\times\R^d$.
\end{lem}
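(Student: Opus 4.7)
The plan is to prove this as a standard application of the $TT^*$ method, starting from the explicit dispersive estimate for the free Schr\"odinger propagator and reducing to a Hardy--Littlewood--Sobolev inequality. The first step is the pointwise kernel bound
\[
e^{it\Delta} f(x)=\frac{1}{(4\pi i t)^{d/2}} \int_{\R^d} e^{i|x-y|^2/4t} f(y)\,dy,
\]
which is already recorded in the excerpt and immediately yields the dispersive estimate $\|e^{it\Delta}f\|_{L^\infty_x}\lesssim |t|^{-d/2}\|f\|_{L^1_x}$. Interpolating this against the trivial mass conservation $\|e^{it\Delta}f\|_{L^2_x}=\|f\|_{L^2_x}$ gives
\[
\|e^{it\Delta}f\|_{L^r_x}\lesssim |t|^{-d(\frac 12-\frac 1r)}\|f\|_{L^{r'}_x}, \qquad 2\le r\le\infty.
\]

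Next I would apply the $TT^*$ machinery to the pair $(q,r)=(\frac{2(d+2)}{d},\frac{2(d+2)}{d})$, which is admissible and self-dual in space-time (note $\frac{d}{2(d+2)}+\frac{d+4}{2(d+2)}=1$). Setting $T f := e^{it\Delta}f$, the adjoint $T^*$ and composition $TT^*$ are
\[
T^* F=\int_\R e^{-is\Delta}F(s)\,ds,\qquad TT^* F(t)=\int_\R e^{i(t-s)\Delta}F(s)\,ds,
\]
so by duality the homogeneous estimate $\|Tu_0\|_{L^q_{t,x}}\lesssim\|u_0\|_2$ is equivalent to $\|TT^*F\|_{L^q_{t,x}}\lesssim\|F\|_{L^{q'}_{t,x}}$. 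Using the dispersive bound inside the integral and Minkowski, one estimates
\[
\|TT^*F(t)\|_{L^r_x}\lesssim\int_\R |t-s|^{-d(\frac 12-\frac 1r)}\|F(s)\|_{L^{r'}_x}\,ds,
\]
and, since $d(\frac 12-\frac 1r)=\frac{d}{d+2}\in(0,1)$ and $q>q'$, the Hardy--Littlewood--Sobolev fractional integration theorem (with the scaling identity $\frac{1}{q'}-\frac{1}{q}=\frac{d}{d+2}=1-(1-\frac{d}{d+2})$) closes the bound and yields $\|TT^*F\|_{L^q_{t,x}}\lesssim\|F\|_{L^{q'}_{t,x}}$.

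The homogeneous bound together with the trivial energy identity already gives $\|e^{i(t-t_0)\Delta}u_0\|_{L^\infty_tL^2_x\cap L^{q}_{t,x}}\lesssim\|u_0\|_2$. For the inhomogeneous term I would first obtain the \emph{untruncated} bound
\[
\Bigl\|\int_\R e^{i(t-s)\Delta}F(s)\,ds\Bigr\|_{L^\infty_tL^2_x\cap L^q_{t,x}}\lesssim\|F\|_{L^{q'}_{t,x}},
\]
where the $L^q_{t,x}$ piece is $TT^*$ above and the $L^\infty_tL^2_x$ piece follows from the already-proved homogeneous estimate applied to $T^*F\in L^2_x$ together with duality. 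To pass from the untruncated integral to the retarded integral $\int_{t_0}^t e^{i(t-s)\Delta}F(s)\,ds$ featured in the statement, I would invoke the Christ--Kiselev lemma, which applies precisely because the exponents satisfy $q>q'$ (the diagonal admissible pair is not the endpoint). The main --- and really only --- obstacle is this retarded reduction: the $TT^*$ argument natively handles the bilateral integral, and one must verify that the Christ--Kiselev lemma is applicable in the form needed for both the $L^\infty_tL^2_x$ and $L^q_{t,x}$ outputs; fortunately this is a standard black-box application since we are not at the endpoint Keel--Tao case.
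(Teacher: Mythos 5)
The paper does not supply a proof of this lemma; it simply cites the original references (Ginibre--Velo, Strichartz, and Keel--Tao for the endpoint). Your proposal gives the standard $TT^*$ argument --- dispersive estimate, interpolation against mass conservation, duality to the untruncated bilinear estimate, Hardy--Littlewood--Sobolev for the diagonal admissible pair $q=\tfrac{2(d+2)}{d}$, and Christ--Kiselev for the retarded Duhamel term --- and this is indeed the textbook route. Since the diagonal pair is strictly non-endpoint ($q>2$, $q>q'$), you correctly observe that Keel--Tao is not needed and Christ--Kiselev applies.

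Two small remarks. First, there is an arithmetic slip in the parenthetical ``scaling identity'': one has
\[
\frac{1}{q'}-\frac{1}{q}=\frac{d+4}{2(d+2)}-\frac{d}{2(d+2)}=\frac{2}{d+2}=1-\frac{d}{d+2},
\]
not $\frac{d}{d+2}=1-(1-\frac{d}{d+2})$ (the latter is a tautology and mislabels the value of the difference). The corrected identity still matches the HLS requirement $\frac{1}{q'}-\frac{1}{q}=1-\alpha$ with $\alpha=\frac{d}{d+2}$, so the argument closes exactly as you intend. Second, Christ--Kiselev is only genuinely needed for the retarded $L^{q}_{t,x}$ output; the retarded $L^\infty_t L^2_x$ bound follows more directly by pairing $\int_{t_0}^t e^{-is\Delta}F(s)\,ds$ against an arbitrary $g\in L^2_x$, using $\langle F(s), e^{is\Delta}g\rangle$ and the homogeneous Strichartz bound on $e^{is\Delta}g$, without invoking the transference lemma at all. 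Either way your proof is correct and complete in outline.
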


\begin{proof}
See, for example, \cite{gv:strichartz, strichartz}.  For the endpoint see \cite{tao:keel}.
\end{proof}

We will also need a weighted Strichartz estimate, which exploits heavily the spherical symmetry in order to obtain spatial decay.

\begin{lem}[Weighted Strichartz, \cite{ktv:2d, kvz:blowup}]\label{L:wes} Let $I$ be an interval, $t_0 \in I$, and let
$F:I\times\R^d\to \C$ be spherically symmetric.  Then,
$$ \biggl\|\int_{t_0}^t e^{i(t-t')\Delta} F(t')\, dt' \biggr\|_{L_x^2}
\lesssim \bigl\||x|^{-\frac{2(d-1)}q}F \bigr\|_{L_t^{\frac{q}{q-1}}L_x^{\frac{2q}{q+4}}(I \times \R^d)}
$$
for all $4\leq q\leq \infty$.
\end{lem}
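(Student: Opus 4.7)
The plan is to combine the Christ--Kiselev lemma with the $TT^*$ method and the improved radial dispersive bound for the Schr\"odinger propagator, then interpolate between the two endpoints $q=4$ and $q=\infty$, following the approach of \cite{ktv:2d, kvz:blowup}.

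First, since $q\ge 4$ forces $q'<q<\infty$, I apply the Christ--Kiselev lemma to drop the retarded truncation: it suffices to prove the untruncated version with $\int_I$ in place of $\int_{t_0}^t$. By the $TT^*$ principle, this is equivalent to the bilinear estimate
$$\Bigl|\iint\langle e^{i(s-t)\Delta}F(t),G(s)\rangle\,ds\,dt\Bigr|\lesssim\bigl\||x|^{-a}F\bigr\|_{L^{q'}_tL^{r'}_x}\bigl\||x|^{-a}G\bigr\|_{L^{q'}_tL^{r'}_x}$$
for spherically symmetric $F,G$, where $a=\tfrac{2(d-1)}{q}$ and $r'=\tfrac{2q}{q+4}$.

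The strategy now is to verify this bilinear bound at the two endpoints and interpolate. At $q=\infty$ (so $a=0$, $r'=2$), it is immediate from the $L^2$-unitarity of $e^{i(s-t)\Delta}$ and Cauchy--Schwarz: the left-hand side is bounded by $\|F\|_{L^1_tL^2_x}\|G\|_{L^1_tL^2_x}$. At $q=4$ (so $a=(d-1)/2$, $r'=1$) I would invoke the radial-improved pointwise bound on the Schr\"odinger kernel: representing the kernel of $e^{it\Delta}$ between radial functions via the Bessel function $J_{(d-2)/2}$ and using $|J_\nu(u)|\lesssim u^{-1/2}$ for $u\ge 1$ gives
$$|K_t(x,y)|\lesssim|t|^{-1/2}(|x||y|)^{-(d-1)/2}.$$
Substituting this into the bilinear form separates the spatial integrations and reduces matters to a double time integral with kernel $|t-s|^{-1/2}$; the Hardy--Littlewood--Sobolev inequality in time (with $1/p_1+1/p_2+1/2=2$ and $p_1=p_2=4/3$) then delivers the $q=4$ estimate with the sharp exponent $q'=4/3$.

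The intermediate cases $4<q<\infty$ will follow by complex interpolation between these two endpoints: at parameter $\theta=4/q\in(0,1)$ one checks that $1/q'=1-\theta/4$, $1/r'=1/2+\theta/2$, and $a=\theta(d-1)/2$ match exactly the claimed exponents. Because the weight $|x|^{-a}$ varies with $a$, the cleanest execution uses Stein's interpolation theorem for analytic families, applied to the bilinear form with $|x|^{-a}$ replaced by the analytic family $|x|^{-z(d-1)/2}$. The main technical obstacle is the $q=4$ endpoint: the improved kernel bound there relies essentially on the spherical symmetry of $F$ and $G$, since without it only the standard dispersive bound $|K_t|\lesssim|t|^{-d/2}$ is available, which alone does not produce the full range of weighted exponents --- explaining why the lemma is formulated for radial data only.
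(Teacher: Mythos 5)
Your argument follows the same route as the cited references \cite{ktv:2d, kvz:blowup} (and as the paper's own proof of the splitting-spherical analogue, Lemma \ref{lem_wei_4d}): Christ--Kiselev to remove the retardation, $TT^*$ to pass to a bilinear form, the radial-improved kernel bound $|K_t(x,y)|\lesssim |t|^{-1/2}(|x||y|)^{-(d-1)/2}$ from the Bessel-function representation, Hardy--Littlewood--Sobolev in time, and weighted interpolation. The only organizational difference is that you interpolate the bilinear estimates after applying HLS at both endpoints via Stein's analytic-family theorem, whereas the paper interpolates the weighted fixed-time dispersive estimate first (via changing-of-measures interpolation, \cite{BL}) and then applies HLS once --- these are equivalent, and your version is correct.
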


\subsection{The in-out decomposition}
We will need an incoming/outgoing decomposition; we will use the one developed in \cite{ktv:2d, kvz:blowup}.
As there, we define operators $P^{\pm}$ by
\begin{align*}
[P^{\pm} f](r) :=\tfrac12 f(r)\pm \tfrac{i}{\pi} \int_0^\infty \frac{r^{2-d}\,f(\rho)\,\rho^{d-1}\,d\rho}{r^2-\rho^2},
\end{align*}
where the radial function $f: \R^d\to \C$ is written as a function of radius only.
We will refer to $P^+$ is the projection onto outgoing spherical waves; however, it is not a true projection as it is neither idempotent
nor self-adjoint.  Similarly, $P^-$ plays the role of a projection onto incoming spherical waves; its kernel is the complex
conjugate of the kernel of $P^+$ as required by time-reversal symmetry.

For $N>0$ let $P_N^{\pm}$ denote the product $P^{\pm}P_N$ where $P_N$ is the Littlewood-Paley projection.
We record the following properties of $P^{\pm}$ from \cite{ktv:2d, kvz:blowup}:

\begin{prop}[Properties of $P^\pm$, \cite{ktv:2d, kvz:blowup}]\label{P:P properties}\leavevmode
\begin{SL}
\item $P^+ + P^- $ represents the projection from $L^2$ onto
$L^2_{\text{rad}}$.

\item Fix $N>0$.  Then
$$
\bigl\| \chi_{\gtrsim\frac 1N} P^{\pm}_{\geq N} f\bigr\|_{L^2(\R^d)}
\lesssim \bigl\| f \bigr\|_{L^2(\R^d)}
$$
with an $N$-independent constant.

\item If the dimension $d=2$, then the $P^{\pm}$ are bounded on $L^2(\R^2)$.
\item For $|x|\gtrsim N^{-1}$ and $t\gtrsim N^{-2}$, the
integral kernel obeys
\begin{equation*}
\bigl| [P^\pm_N e^{\mp it\Delta}](x,y) \bigr| \lesssim \begin{cases}
    (|x||y|)^{-\frac {d-1}2}|t|^{-\frac 12}  &: \  |y|-|x|\sim  Nt \\[1ex]
     \frac{N^d}{(N|x|)^{\frac{d-1}2}\langle N|y|\rangle^{\frac{d-1}2}}
     \bigl\langle N^2t + N|x| - N|y| \bigr\rangle^{-m}
            &: \  \text{otherwise}\end{cases}
\end{equation*}
for all $m\geq 0$.

\item For $|x|\gtrsim N^{-1}$ and $|t|\lesssim N^{-2}$, the
integral kernel obeys
\begin{equation*}
\bigl| [P^\pm_N e^{\mp it\Delta}](x,y) \bigr|
    \lesssim   \frac{N^d}{(N|x|)^{\frac{d-1}2}\langle N|y|\rangle^{\frac{d-1}2}}
     \bigl\langle N|x| - N|y| \bigr\rangle^{-m}
\end{equation*}
for any $m\geq 0$.
\end{SL}
\end{prop}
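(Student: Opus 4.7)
The plan is to exploit the fact that on radial functions the Fourier transform reduces to a Hankel transform of order $\nu = (d-2)/2$, and to decompose the Bessel kernel $J_\nu$ into its outgoing and incoming pieces via the Hankel functions $H_\nu^{(1)}$ and $H_\nu^{(2)}$. The integral formula defining $P^\pm$ in the statement is precisely the spatial realization of this decomposition: the diagonal term $\tfrac12 f(r)$ comes from the symmetric splitting $J_\nu = \tfrac12(H_\nu^{(1)}+H_\nu^{(2)})$, while the singular principal-value integral encodes the Hilbert-transform-type structure coming from the large-argument asymptotic $H_\nu^{(1,2)}(x)\sim x^{-1/2} e^{\pm i(x-\nu\pi/2-\pi/4)}$.

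Part (i) then follows by inspection: the principal-value integrals in $P^+$ and $P^-$ have opposite signs and cancel, leaving $(P^++P^-)f=f$ on radial $f$, while $P^\pm f$ is radial by construction. For part (iii) with $d=2$, the factor $r^{2-d}$ disappears and after the substitution $s=r^2$, $t=\rho^2$ the operator reduces to $\tfrac12 I \pm \tfrac{i}{2\pi} H$ where $H$ is a one-sided Hilbert transform on the half-line applied to $g(t):=f(\sqrt t)$; boundedness on $L^2$ then follows from the classical Calder\'on--Zygmund theory together with the identity $\|f\|_{L^2(\R^2)}^2 \sim \int_0^\infty |g(t)|^2\,dt$.

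For the frequency-localized estimate (ii) I would write $P_N^\pm$ as an oscillatory integral by inserting $\phi_{\sim N}(|\xi|)$ into the Hankel inversion formula and substituting the asymptotic expansion of $H_\nu^{(1,2)}(|\xi|r)$ valid for $|\xi|r\gtrsim 1$, which is the regime forced by the cutoff $|r|\gtrsim N^{-1}$. The resulting kernel carries the bound $\frac{N^d}{(N|x|)^{(d-1)/2}\langle N|y|\rangle^{(d-1)/2}}\langle N|x|-N|y|\rangle^{-m}$ after integrating by parts in $|\xi|$ against the non-degenerate phase $|\xi|(|x|-|y|)$; Schur's test then yields the desired $L^2$ bound, with the $\chi_{\gtrsim 1/N}$ cutoff on the $x$ side and the Bernstein-type estimate $\|P_{\geq N}f\|_{L^2(|y|\lesssim 1/N)}\lesssim \|f\|_{L^2}$ on the $y$ side handling the near-field mismatch.

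The main obstacle is the refined kernel estimates (iv) and (v): here one must estimate the composition $P_N^\pm e^{\mp it\Delta}$, whose kernel combines the Hankel phase with the free-propagator phase $e^{-i|\xi|^2 t}$. After factoring the net phase $\mp i|\xi|^2 t \pm i|\xi|(|y|-|x|)$, the critical point in $|\xi|\sim N$ satisfies $2|\xi|t=|y|-|x|$, i.e.\ $|y|-|x|\sim Nt$; at this point stationary phase produces the Gaussian-type bound $(|x||y|)^{-(d-1)/2}|t|^{-1/2}$ appearing in (iv). Away from the stationary regime one integrates by parts repeatedly against the non-degenerate derivative of the phase to extract the rapid-decay factor $\langle N^2 t+N|x|-N|y|\rangle^{-m}$. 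In the short-time regime $|t|\lesssim N^{-2}$ treated in (v) there is no stationary point at all and the same integration-by-parts argument directly delivers the simpler bound. The delicate bookkeeping is in the transition near the stationary point and in the verification that the Hankel asymptotic is valid throughout the support $|x|,|y|\gtrsim N^{-1}$; the near-diagonal contribution where $|\xi||y|\lesssim 1$ must be isolated and estimated separately using the power-series expansion of $J_\nu$.
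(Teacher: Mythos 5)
The paper does not prove this proposition; it is imported verbatim (with citation) from \cite{ktv:2d, kvz:blowup}, and your sketch correctly reconstructs the argument given there: Hankel-function splitting $J_\nu = \tfrac12(H_\nu^{(1)}+H_\nu^{(2)})$, the one-sided Hilbert transform after the $s=r^2$ substitution for $d=2$, stationary phase in $|\xi|$ for the frequency-localized kernels with critical point at $|y|-|x|\sim Nt$, and repeated integration by parts for the $\langle N^2t+N|x|-N|y|\rangle^{-m}$ tail. Your account of (ii) is a bit loose --- one should really establish the kernel bound (v) first (including the near-field $|\xi||y|\lesssim 1$ region via the small-argument expansion of $J_\nu$) and then run Schur's test, handling the sum over dyadic scales $M\geq N$ by almost orthogonality rather than by the vague ``Bernstein-type estimate'' you invoke --- and there is a stray sign ($e^{\mp it\Delta}$ acts as $e^{\pm it|\xi|^2}$, not $e^{\mp it|\xi|^2}$), but neither affects the soundness of the approach.
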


We will also need the following Proposition concerning the
properties of $P^{\pm}$ in the small $x$ regime (i.e. $|x| \lsm
N^{-1}$) where Bessel functions have logarithmic singularities. More
precisely, we have
\begin{prop}[Properties of $P^\pm$, small $x$ regime] \label{Psmall_properties}
\

Let the dimension $d=2$.
 \begin{SL}
 \item For $t \gtrsim N^{-2}$, $N^{-3} \lsm |x| \lsm N^{-1}$, $|y|\ll Nt$  or $|y| \gg Nt$, the integral
kernel satisfies
\begin{align*}
 \bigl|  [P^\pm_N e^{\mp it\Delta}](x,y) \bigr| \lsm \frac {N^2 \log N}{\langle N|y| \rangle^{1/2} }
\langle N^2 t + N|y| \rangle^{-m}, \quad \forall\, m\ge 0.
\end{align*}

\item For $t \gtrsim N^{-2}$, $N^{-3} \lsm |x| \lsm N^{-1}$, $|y|\sim Nt$, the integral
kernel satisfies
\begin{align*}
 \bigl|  [P^\pm_N e^{\mp it\Delta}](x,y) \bigr| \lsm \frac {N^2 \log N}{\langle N|y| \rangle^{1/2} }.
\end{align*}
 \end{SL}
\end{prop}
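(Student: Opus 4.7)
The plan is to extend the WKB/stationary phase analysis used in Proposition~\ref{P:P properties}(iv)--(v) (originally from \cite{ktv:2d, kvz:blowup}) to the new range $N^{-3}\lesssim |x|\lesssim N^{-1}$, in which the outgoing Hankel function $H_0^{(1)}$ no longer sits in its oscillatory large-argument regime but instead exhibits a mild logarithmic singularity. Write $r=|x|$, $s=|y|$. For a radial input in $d=2$, $P_N^{\pm}$ is built from the Hankel transform, so the kernel of $P_N^{\pm}e^{\mp it\Delta}$ takes the form
$$[P_N^{\pm}e^{\mp it\Delta}](x,y)=C\int_0^{\infty}\rho\,\varphi_N(\rho)\,e^{\mp it\rho^2}\,H_0^{(\pm)}(r\rho)\,J_0(s\rho)\,d\rho,$$
where $\varphi_N$ is a Littlewood--Paley bump on $\rho\sim N$ and $H_0^{(+)}=H_0^{(1)}$, $H_0^{(-)}=H_0^{(2)}$.

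The first step is a symbol-type bound on the Hankel factor. From the small-$z$ expansion $H_0^{(1)}(z)=\tfrac{2i}{\pi}\log(z/2)+h(z)$ with $h$ smooth one reads off $|H_0^{(1)}(r\rho)|\lesssim |\log(r\rho)|+1\lesssim \log N$ uniformly for $r\in[N^{-3},N^{-1}]$ and $\rho\sim N$. Differentiating via $\tfrac{d}{dz}H_0^{(1)}=-H_1^{(1)}$ together with $H_1^{(1)}(z)=-\tfrac{2i}{\pi z}+O(z\log z)$ gives $|\partial_\rho^k H_0^{(1)}(r\rho)|\lesssim \rho^{-k}\lesssim N^{-k}$ for every $k\ge 1$. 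Thus, apart from the overall factor of $\log N$, the Hankel piece behaves like a classical symbol of order zero: each $\rho$-derivative costs only a harmless $N^{-1}$.

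For the Bessel factor we split according to whether $s\rho\lesssim 1$ or $s\rho\gg 1$. In the first case $|J_0(s\rho)|\lesssim 1\sim\langle N|y|\rangle^{-1/2}$ and $|\partial_\rho^k J_0(s\rho)|\lesssim s^k\lesssim N^{-k}$. In the second we substitute the oscillatory expansion $J_0(s\rho)=c_+(s\rho)^{-1/2}e^{is\rho}+c_-(s\rho)^{-1/2}e^{-is\rho}+O((s\rho)^{-3/2})$, keeping sufficiently many terms so that the slowly varying remainder decays faster than any prescribed polynomial power of $\langle N|y|\rangle$. The oscillatory amplitudes are then $\lesssim (N|y|)^{-1/2}$. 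Multiplying by the $\log N$ bound on the Hankel factor and integrating trivially in $\rho\sim N$ (measure $\rho\,d\rho\sim N^2$) already establishes part (ii).

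For part (i), the additional decay $\langle N^2t+N|y|\rangle^{-m}$ is obtained by iterated integration by parts against the phase $\phi(\rho)=\mp t\rho^2\pm s\rho$ (the second sign is that of the branch of $J_0$ under consideration; in the sub-case $s\rho\lesssim 1$ the phase is simply $\mp t\rho^2$). Under the hypothesis $|y|\ll Nt$ or $|y|\gg Nt$, at $\rho\sim N$ we have $|\phi'(\rho)|=|\mp 2t\rho\pm s|\gtrsim tN+s\sim N^{-1}(N^2t+N|y|)$; each IBP therefore gains a factor $\lesssim N(N^2t+N|y|)^{-1}$, which exactly absorbs the $N^{-1}$ paid each time a derivative falls on the amplitude (the cutoff $\varphi_N$, the Hankel factor, or the $(s\rho)^{-1/2}$ amplitude of the $J_0$ expansion). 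Iterating $m$ times yields the claimed decay, while boundary contributions vanish by the compact support of $\varphi_N$. The principal technical obstacle is the non-oscillatory logarithmic character of the Hankel factor in the small-$r\rho$ regime, which forces it to be treated as part of the amplitude rather than absorbed into a WKB phase; the derivative bounds in the second paragraph confirm that this logarithm is harmless under differentiation, after which the non-stationary phase scheme proceeds along the lines of Proposition~\ref{P:P properties}(iv)--(v).
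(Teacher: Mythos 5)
Your proposal is correct and follows essentially the same route as the paper's proof: write the kernel as a radial Hankel-transform integral, use the small-argument expansion of $H_0^{(1)}$ to extract the $\log N$ amplitude, expand $J_0(s\rho)$ oscillatorily when $s\rho\gg 1$, and then apply non-stationary phase (IBP against $\phi(\rho)=\mp t\rho^2\pm s\rho$) for part (i), with the trivial $L^1$ bound giving part (ii). The only cosmetic difference is that the paper explicitly exploits $\log(|\xi||x|)=\log|\xi|+\log|x|$ to pull the $x$-dependent logarithm outside the $\xi$-integral before treating $\log|\xi|$ as a symbol of order zero, whereas you state the equivalent derivative bounds $|\partial_\rho^k H_0^{(1)}(r\rho)|\lesssim\rho^{-k}$ ($k\ge 1$) directly from the recurrences for $H_1^{(1)}$; both bookkeeping devices lead to the same IBP gain of $N(N^2t+N|y|)^{-1}$ per step with no uncontrolled $\log N$ accumulation.
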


\begin{proof}
 We shall only provide the proof for $P_N^{+} e^{-it \Delta}$ since the other kernel is its complex conjugate.
The first claim is an exercise in stationary phase. By definition we have the following formula for the kernel
\begin{align}\label{kernel}
[P^+_N e^{-it\Delta}](x,y) = \tfrac12 (2\pi)^{-2}\int_{\R^2} H^{(1)}_0( |\xi| |x|) e^{ it|\xi|^2} J_0\bigl(|\xi||y|\bigr)
    \psi\bigl(\tfrac{\xi}N\bigr)\,d\xi,
\end{align}
and for the Hankel function $H_0^{(1)}$ we have
\begin{align} \label{eq_tmp911_1}
H_0^{(1)}(r)= J_0(r)+ i Y_0(r).
\end{align}
Observe that in the regime $|\xi | \sim N$, $|x| \lsm N^{-1}$, we have $r=|\xi|\cdot|x| \lsm 1$. Since
\begin{align*}
 J_0(r) = \sum_{m=0}^{\infty} \frac {(-1)^m} {(m!)^2} \left( \frac r 2 \right)^{2m},
\end{align*}
it is easy to see that
\begin{align*}
 \left| \frac {\partial^m J_0(r)}{\partial r^m} \right| \lsm 1, \quad \forall\, m\ge 0, \, r\lsm 1.
\end{align*}
For $Y_0$ we have
\begin{align*}
 Y_0(r) = \frac 2 {\pi} \bigl( \log (\tfrac 12 r) + \gamma \bigr) J_0(r) + \frac 2 {\pi} \sum_{k=1}^\infty
(-1)^{k+1} H_k \cdot \frac {(\frac 14 r^2)^k}{ (k!)^2},
\end{align*}
where $\gamma$ is the Euler-Masheroni constant and $H_k$ is a harmonic number ($H_k = \sum_{n=1}^k \frac 1n$).
Clearly we can then write
\begin{align} \label{eq_tmp911_2}
 Y_0(r)= \frac 2 {\pi} ( \log r) \cdot J_0(r) +b(r),
\end{align}
where $b(r)$ obeys
\begin{align*}
 \left| \frac {\partial^m b(r) }{\partial r^m} \right| \lsm 1, \quad \forall\, m\ge 0, \, r\lsm 1.
\end{align*}
We also need the following information about Bessel functions
\begin{align} \label{eq_tmp911_3}
 J_0(r) = \frac{a(r) e^{ir}}{\langle r\rangle^{1/2}} + \frac{\bar a(r) e^{-ir}}{\langle r\rangle^{1/2}},
\end{align}
where $a(r)$ obeys the estimates
\begin{align*}
\Bigr| \frac{\partial^m a(r)}{\partial r^m} \Bigr| \lesssim \langle r \rangle^{-m}
    \quad \text{for all $m\geq0$.}
\end{align*}
Substitute \eqref{eq_tmp911_1}, \eqref{eq_tmp911_2}, \eqref{eq_tmp911_3} into \eqref{kernel}, we
obtain that a stationary point can only occur at $|y| \sim Nt$. Observe that
$\log (|\xi| \cdot |x|) = \log |\xi| + \log |x|$ and $\log |x|$ can be taken outside
of the integral for $\xi$, the logarithmetic singularity of $Y_0$ is ok for us. Now since we
assume $|y| \ll Nt$ or $|y| \gg Nt$, the desired claim follows by integrating by parts.
This establishes the first claim. Finally the second claim follows easily from a $L^1$ estimate.
\end{proof}

\begin{rem}
In Proposition \ref{Psmall_properties}, we set the lower regime of $|x|$ to be $N^{-3}$ only for
simplicity of presentation. The same result holds if one changes $N^{-3}$ to be $N^{-\alpha}$ where
$\alpha\ge 1$ (with the implied constant depending on $\alpha$).
\end{rem}

\section{A non-sharp decomposition}
In this section we establish the following non-sharp decomposition
for $H_x^1$ functions with ground state mass.
\begin{prop}[Non-sharp decomposition of $H_x^1$ functions with ground state mass]
\label{prop_nonsharp}
 Fix the dimension $d\ge 1$. Let $Q$ be the ground state defined in \eqref{eq_R}. There
exist constants $C_1>0$, $C_2>0$ which depend only on the dimension
$d$ such that the following holds: for any $u\in H_x^1(\R^d)$ (not
necessarily radial) with $M(u)=M(Q)$, there exists $x_0 = x_0(u) \in
\R^d$, $\theta_0 = \theta_0(u) \in \R$, $\epsilon = \epsilon(u) \in
H_x^1(\R^d)$ for which we have
\begin{align*}
 u = \lambda^{\frac d2} e^{i\theta_0} Q( \lambda (\cdot - x_0)) + \epsilon,
\end{align*}
where
\begin{align} \label{3_39c}
          \frac 1{C_2}  \cdot \frac {\| \nabla u\|_{L_x^2}} {\| \nabla Q\|_{L_x^2}}
\le \lambda \le C_2 \cdot \frac {\| \nabla u\|_{L_x^2}} {\| \nabla
Q\|_{L_x^2}}, \qquad \text{if $ {\|\nabla u\|_{L_x^2}^2}  \ge C_1
E(u) $},
\end{align}
and
\begin{align*}
 \lambda =1, \qquad \text{if $ {\|\nabla u\|_{L_x^2}^2}  < C_1 E(u) $}.
\end{align*}
The term $\epsilon$ satisfies the bound:
\begin{align} \label{3_39d}
 \| \epsilon \|_{H_x^1} \lesssim_d \sqrt{E(u)} +1.
\end{align}
Here the energy $E(u)$ is the same as defined in \eqref{eq_Eudef}.
If in addition $u$ is an even function (i.e. $u(x)=u(-x)$ for any
$x\in \R^d$), then we can take $x_0 =0$.
\end{prop}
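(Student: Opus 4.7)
I would split on the ratio $\|\nabla u\|_{L^2}^2/E(u)$. In the first case $\|\nabla u\|_{L^2}^2 < C_1 E(u)$, I simply take $\lambda = 1$, $\theta_0 = 0$, $x_0 = 0$, so $\epsilon = u - Q$. Mass conservation gives $\|\epsilon\|_{L^2} \le 2\sqrt{M(Q)}$, and the triangle inequality yields $\|\nabla \epsilon\|_{L^2} \le \|\nabla u\|_{L^2} + \|\nabla Q\|_{L^2} \le \sqrt{C_1 E(u)} + \|\nabla Q\|_{L^2} \lesssim_d \sqrt{E(u)} + 1$. Since $u$ even forces the choice $x_0 = 0$ to still be admissible, the even case is free here.

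In the remaining case $\|\nabla u\|_{L^2}^2 \ge C_1 E(u)$, I set $\lambda := \|\nabla u\|_{L^2}/\|\nabla Q\|_{L^2}$ (so the ratio condition holds with $C_2 = 1$) and rescale $v(x) := \lambda^{-d/2} u(x/\lambda)$. The scaling invariance gives $M(v) = M(Q)$, $\|\nabla v\|_{L^2} = \|\nabla Q\|_{L^2}$, and $E(v) = E(u)/\lambda^2 \le \|\nabla Q\|_{L^2}^2/C_1$, which is arbitrarily small once $C_1$ is chosen large; Proposition~\ref{P:variational} ensures $E(v) \ge 0$. The proposition then reduces to producing $\theta_0, y_0$ such that $\|v - e^{i\theta_0} Q(\cdot - y_0)\|_{H^1} \lesssim \sqrt{E(v)}$: setting $x_0 := y_0/\lambda$ and unscaling back gives $\|\nabla\epsilon\|_{L^2} = \lambda \|\nabla(v - e^{i\theta_0} Q(\cdot - y_0))\|_{L^2} \lesssim \lambda\sqrt{E(v)} = \sqrt{E(u)}$, while $\|\epsilon\|_{L^2} = \|v - e^{i\theta_0}Q(\cdot - y_0)\|_{L^2} \le 2\sqrt{M(Q)}$.

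The bound on $v$ is obtained in two steps. Qualitatively, any sequence $\{v_n\}$ with $M(v_n)=M(Q)$, $\|\nabla v_n\|_{L^2}=\|\nabla Q\|_{L^2}$, and $E(v_n)\to 0$ is a minimizing sequence for the sharp Gagliardo--Nirenberg inequality; by Lions concentration--compactness (equivalently the $H^1$ profile decomposition of \cite{hmidi-keraani}), up to translation $v_n(\cdot+y_n)$ converges strongly in $H^1$ to some $e^{i\theta^*}Q$, so a contradiction argument shows that for $E(v)\le \delta_0$ small, there exist $\theta_0, y_0$ with $\|v-e^{i\theta_0}Q(\cdot-y_0)\|_{H^1}$ arbitrarily small. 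Quantitatively, I choose $\theta_0, y_0$ to minimize $\|v-e^{i\theta_0}Q(\cdot-y_0)\|_{L^2}$, so $w := v - e^{i\theta_0}Q(\cdot-y_0)$ is $L^2$-orthogonal to the phase and translation generators $iQ(\cdot-y_0)$ and $\partial_j Q(\cdot-y_0)$. A Taylor expansion of $E$ around $Q$, absorbing the linear term into $M(v)=M(Q)$, produces
\[
E(v)=\tfrac12\langle L_+ w_1,w_1\rangle+\tfrac12\langle L_- w_2,w_2\rangle+O(\|w\|_{H^1}^3),
\]
with $L_+ = -\Delta+1-\tfrac{d+4}{d}Q^{4/d}$ and $L_- = -\Delta+1-Q^{4/d}$. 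The kinetic constraint $\|\nabla v\|_{L^2}=\|\nabla Q\|_{L^2}$, together with $-\Delta Q + Q = Q^{1+4/d}$, forces $\langle Q^{1+4/d}, w_1\rangle = O(\|w\|_{H^1}^2)$; since the mass-critical identity $L_+\Lambda Q = -2Q$ (with $\Lambda = \tfrac{d}{2}+x\cdot\nabla$) relates $Q$ to $\Lambda Q$, this controls the $\Lambda Q$-component of $w_1$. Together with the orthogonalities removing the kernel directions of $L_\pm$, Weinstein's coercivity yields $E(v) \gtrsim \|w\|_{H^1}^2 - O(\|w\|_{H^1}^3)$, and the qualitative step makes $\|w\|_{H^1}$ small enough to absorb the cubic error, giving $\|w\|_{H^1} \lesssim \sqrt{E(v)}$. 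For even $u$, $v$ is even; the translation orthogonality is automatic since $\partial_j Q$ is odd and $w$ is even, so one restricts the minimization to $y_0 = 0$ and obtains $x_0 = 0$. The hard part is verifying that the two integral constraints plus the chosen orthogonalities indeed eliminate all negative and null directions of $L_+$; this is precisely where the critical-case Weinstein spectral theory of $L_\pm$ is indispensable.
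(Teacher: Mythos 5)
Your two-case split, the choice $\lambda=1$ when $\|\nabla u\|_2^2<C_1E(u)$, the rescaling to normalize $\|\nabla v\|_2=\|\nabla Q\|_2$, and the initial qualitative compactness step all match the paper's argument. Where you diverge is in the quantitative rigidity step, and this is a genuine difference, not a cosmetic one. The paper modulates in all of phase, translation \emph{and} scaling, using the implicit function theorem to produce a $\lambda_0$ for which $(\mathrm{Re}\,\epsilon,W)=0$ holds \emph{exactly}, alongside $(\mathrm{Re}\,\epsilon,Q_{x_j})=0$ and $(\mathrm{Im}\,\epsilon,Q)=0$; Lemma~\ref{lem_coer}(i) is then applicable as stated, because its hypotheses are precisely orthogonality to $W$ and to the $Q_{x_j}$. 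You instead freeze the scale at $\lambda=\|\nabla u\|_2/\|\nabla Q\|_2$, modulate only $\theta_0,y_0$ (getting $(w_1,Q_{x_j})=0$ and $(w_2,Q)=0$ from the $L^2$ minimization), and attempt to trade the missing $W$-orthogonality and the missing scaling degree of freedom for the two approximate constraints $(w_1,Q)=O(\|w\|_{H^1}^2)$ and $(w_1,Q^{1+4/d})=O(\|w\|_{H^1}^2)$ coming from mass and kinetic-energy normalization. Your Taylor expansion of $E+\tfrac12M$ and the resulting formula $E(v)=\tfrac12\langle L_+w_1,w_1\rangle+\tfrac12\langle L_-w_2,w_2\rangle+O(\|w\|_{H^1}^3)$ are correct, and the bookkeeping under unscaling ($\|\nabla\epsilon\|_2=\lambda\|\nabla w\|_2$, $E(v)=E(u)/\lambda^2$) is fine. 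The even case ($x_0=0$) is handled the same way as the paper does.

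The gap is the coercivity of $L_+$. You acknowledge this yourself by calling it ``the hard part,'' but you do not supply it, and it cannot be read off from the coercivity lemma the paper cites. Lemma~\ref{lem_coer}(i) requires $(\epsilon,W)=0$; you do not have that, and Proposition~\ref{prop_orPos} only gives $\inf_{(f,Q)=0}(L_+f,f)=0$, i.e.\ \emph{non-negativity} modulo the mass constraint, with the infimum degenerating along $\Lambda Q$ (since $(L_+\Lambda Q,\Lambda Q)=-2(Q,\Lambda Q)=0$ in the critical case). What you actually need is a \emph{new} coercivity statement: that $L_+$ is strictly positive (up to quadratically small corrections) on the codimension-$(d+2)$ subspace cut out by $(f,Q_{x_j})=0$, $(f,Q)=0$, and $(f,Q^{1+4/d})=0$. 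This is true --- the key points being that a purported zero-minimizer $f_*$ on $\{(f,Q)=0,(f,Q_{x_j})=0\}$ must solve $L_+f_*=\alpha Q$, hence $f_*\in-\tfrac{\alpha}{2}\Lambda Q+\mathrm{span}\{Q_{x_j}\}$, and that $(\Lambda Q,Q^{1+4/d})=\|\nabla Q\|_2^2\neq 0$ together with $(Q_{x_j},Q^{1+4/d})=0$ forces $\alpha=0$ and then $f_*=0$ --- but none of this appears in your sketch. Until that lemma is stated and proved (together with an argument that the $O(\|w\|_{H^1}^2)$ errors in the constraints can be absorbed, which is routine once the exact-constraint coercivity is available), the central inequality $\|w\|_{H^1}\lesssim\sqrt{E(v)}$ is unsupported.

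A secondary comparison worth noting: the paper also needs to control the cubic-and-higher error $\int|F(\epsilon)|$, and it devotes a full lemma (Lemma~\ref{bdd_Feps}) to showing that this is $O(\|\epsilon\|_{L^{2(d+2)/d}}^{2(d+2)/d})$ (plus an $L^3$ term in low dimensions), which is what lets $\eta$-smallness close the estimate. Your ``$O(\|w\|_{H^1}^3)$'' label glosses over exactly this: for $1\le d\le 3$ the nonlinearity $|1+z|^{2(d+2)/d}$ has a fractional power $>3$ in the exponent, and the remainder is not a clean cubic. This is a second, smaller gap, but again one the paper handles explicitly and your outline does not.
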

The next lemma is essentially a corollary of Proposition
\ref{prop_nonsharp}. For the convenience of presentation, we
postpone the proofs of both Proposition \ref{prop_nonsharp} and
Lemma \ref{lem_uni_bdd} till the end of this section.
\begin{lem}[Uniform $H_x^1$ boundedness away from the origin] \label{lem_uni_bdd}
 Fix the dimension $d\ge 1$. Let $Q$ be the ground state in \eqref{eq_R}. Let
$c\ge c_0>0$, $E_0 \ge 0$ be given numbers. Then for any even
function $u \in H_x^1(\R^d)$ with $M(u)=M(Q)$, $E(u)=E_0$, we have
\begin{align} \label{3_40a}
 \| \phi_{>c} \nabla u \|_{L_x^2(\R^d)} + \| \phi_{>c} u \|_{H_x^1(\R^d)} \lesssim_{c_0, E_0,d} 1.
\end{align}
\end{lem}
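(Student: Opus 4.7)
The strategy is to apply the non-sharp decomposition (Proposition \ref{prop_nonsharp}) to $u$ and show that each of the two pieces (the rescaled ground state profile and the $H_x^1$ error) has a bounded tail outside the ball $\{|x|\le c\}$. Since $u$ is even, the decomposition takes the form
\begin{equation*}
u(x) = \lambda^{d/2} e^{i\theta_0} Q(\lambda x) + \epsilon(x),
\end{equation*}
with $\|\epsilon\|_{H_x^1} \lesssim \sqrt{E_0}+1$. I will split into the two cases governed by the comparison $\|\nabla u\|_{L^2}^2$ vs.\ $C_1 E(u)$.

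\textbf{Case 1: the ``bounded kinetic energy'' regime.} If $\|\nabla u\|_{L^2_x}^2 < C_1 E_0$, then Proposition \ref{prop_nonsharp} gives $\lambda = 1$, and moreover $\|u\|_{H^1_x}^2 \le M(Q) + C_1 E_0 \lesssim_{E_0} 1$. Hence $\|\phi_{>c} u\|_{H^1_x} \lesssim \|u\|_{H^1_x} + c^{-1}\|u\|_{L^2_x} \lesssim_{c_0,E_0} 1$ just by distributing the derivative and using $c \ge c_0$.

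\textbf{Case 2: the ``large kinetic energy'' regime.} Here $\lambda \sim \|\nabla u\|_{L^2_x}/\|\nabla Q\|_{L^2_x}$, which may be arbitrarily large, but this is exactly where the Schwartz (in fact, exponential) decay of $Q$ comes to the rescue. Writing $y=\lambda x$ we compute
\begin{align*}
\|\phi_{>c}\, \lambda^{d/2} Q(\lambda \cdot)\|_{L^2_x}^2 &= \int_{|y|>c\lambda}|Q(y)|^2\,dy, \\
\|\phi_{>c}\, \nabla[\lambda^{d/2} Q(\lambda \cdot)]\|_{L^2_x}^2 &= \lambda^2 \int_{|y|>c\lambda}|\nabla Q(y)|^2\,dy.
\end{align*}
For $\lambda \le 1$ both integrals are trivially bounded by $\|Q\|_{H^1_x}^2$. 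For $\lambda \ge 1$ the exponential decay $|Q(y)|+|\nabla Q(y)|\lesssim e^{-|y|}$ for $|y|\gtrsim 1$ yields exponential smallness in $c\lambda \ge c_0$, which easily absorbs the $\lambda^2$ prefactor. Either way, the rescaled ground state contributes $O_{c_0}(1)$ to $\|\phi_{>c}\,\cdot\,\|_{H^1_x}$.

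\textbf{Controlling the error.} For the remainder $\epsilon$ we simply commute the cutoff with the gradient:
\begin{equation*}
\|\phi_{>c}\epsilon\|_{H^1_x} \le \|\epsilon\|_{L^2_x} + \|(\nabla\phi_{>c})\epsilon\|_{L^2_x} + \|\phi_{>c}\nabla \epsilon\|_{L^2_x} \lesssim (1+c_0^{-1})\|\epsilon\|_{H^1_x} \lesssim_{c_0,E_0} 1,
\end{equation*}
using \eqref{3_39d}. Adding this to the contribution of the rescaled ground state and the trivial bound from Case 1 gives \eqref{3_40a}.

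\textbf{Main obstacle.} The only nontrivial point is handling the regime where $\lambda\to \infty$; the tail bounds would fail without quantitative decay of $Q$, and the factor $\lambda^2$ coming from differentiation is dangerous. This is resolved cleanly by exponential decay (a property of the ground state) rather than any subtle analysis. Modulo that, the proof is a direct unpacking of Proposition \ref{prop_nonsharp}.
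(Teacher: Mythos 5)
Your proof is correct and takes essentially the same route as the paper: apply Proposition \ref{prop_nonsharp}, bound the rescaled ground-state piece by changing variables $y=\lambda x$ and exploiting the decay of $Q$, and control the error $\epsilon$ directly from \eqref{3_39d}. The only cosmetic difference is that you invoke exponential decay of $Q$ with an explicit $\lambda\le 1$ versus $\lambda\ge 1$ split, while the paper uses the Schwartz property via a Chebyshev bound $\int_{|y|>\lambda c_0/100}(\cdot)\,dy \le (\lambda c_0/100)^{-2}\int |y|^2(\cdot)\,dy$; your version handles the small-$\lambda$ regime slightly more transparently, but the substance is identical.
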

To establish Proposition \ref{prop_nonsharp} and Lemma
\ref{lem_uni_bdd}, we will prepare some elementary lemmas and some
general discussions. To this end, consider the $d$ dimensional
focusing NLS of the form
\begin{align*}
 i \partial_t u + \Delta u+ |u|^p u=0,
\end{align*}
where we assume $p \le \frac 4 d$. Let $Q$ be the associated ground
state which is a positive radial Schwartz function. The linearized
operators are defined by
\begin{align*}
 L_+ &= -\Delta +1 - (p+1) Q^p, \\
L_- &= -\Delta +1 -Q^p.
\end{align*}
Set $w=u+iv$ and
\begin{align*}
 L =\begin{pmatrix}
     &0 \quad &L_-\\
     &-L_+ \quad &0
    \end{pmatrix}.
\end{align*}

We need the following fact from Weinstein \cite{W2}.
\begin{prop}[Conditional positivity of $L_+$, \cite{W2}]  \label{prop_orPos}
Let $p\le \frac 4d$. Then
 \begin{align*}
 \inf_{(f,Q)=0} (L_+f,f) =0.
\end{align*}
\end{prop}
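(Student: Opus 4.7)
The claim splits into matching upper and lower bounds, and I would tackle them separately.

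For the upper bound, the plan is to exhibit a concrete test vector. Differentiating the ground state equation $-\Delta Q + Q - Q^{p+1} = 0$ in $x_i$ gives $L_+(\partial_i Q) = 0$. Because $Q$ is radial, $\partial_i Q$ is odd in $x_i$, and in particular $(\partial_i Q, Q) = 0$. Hence $f = \partial_i Q$ is admissible and achieves $(L_+ f, f) = 0$, so $\inf \le 0$.

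The substantive direction is to show $(L_+ f, f) \ge 0$ for every $f \perp Q$. I would first record the spectral structure of $L_+ = -\Delta + 1 - (p+1) Q^p$: it is a self-adjoint Schr\"odinger operator with Schwartz potential, so it has essential spectrum $[1, \infty)$. A Perron--Frobenius / nodal-domain argument, combined with the positivity of the ground state $Q$, shows that in the radial sector $L_+$ has a unique simple negative eigenvalue $-\lambda_0 < 0$ with strictly positive eigenfunction $\chi_0$ and no zero eigenvalue; in the full space $\ker L_+ = \operatorname{span}\{\partial_1 Q, \ldots, \partial_d Q\}$, all of which are orthogonal to both $Q$ and $\chi_0$ by parity and radiality. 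In particular $a_0 := (\chi_0, Q) > 0$.

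The crucial computation, and the only place where the hypothesis $p \le 4/d$ enters, is the sign of $(L_+^{-1} Q, Q)$. I would introduce the family $Q_\omega(x) := \omega^{1/p} Q(\sqrt{\omega}\, x)$, which solves $-\Delta Q_\omega + \omega Q_\omega - Q_\omega^{p+1} = 0$. Differentiating at $\omega = 1$ yields $L_+(\partial_\omega Q|_{\omega=1}) = -Q$, so
\begin{align*}
(L_+^{-1} Q, Q) \;=\; -\tfrac{1}{2}\,\bigl. \tfrac{d}{d\omega} \|Q_\omega\|_2^2 \bigr|_{\omega = 1} \;=\; -\tfrac{1}{2}\Bigl(\tfrac{2}{p} - \tfrac{d}{2}\Bigr)\|Q\|_2^2 \;\le\; 0,
\end{align*}
the sign being precisely the condition $p \le 4/d$.

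With this input the lower bound follows by a spectral Cauchy--Schwarz. Decompose $f = c_0 \chi_0 + \sum_i d_i \partial_i Q + f_1$ with $f_1 \perp \chi_0$ and $f_1 \perp \partial_i Q$; similarly $Q = a_0 \chi_0 + Q_1$. The constraint $(f, Q) = 0$ forces $c_0 = -(f_1, Q_1)/a_0$, and since $L_+$ is strictly positive on the orthogonal complement of $\chi_0$ and $\ker L_+$, one has $(f_1, Q_1)^2 \le (L_+ f_1, f_1)(L_+^{-1} Q_1, Q_1)$. Combining,
\begin{align*}
(L_+ f, f) \;=\; -\lambda_0 c_0^2 + (L_+ f_1, f_1) \;\ge\; (L_+ f_1, f_1)\Bigl[\,1 - \tfrac{\lambda_0}{a_0^2}(L_+^{-1} Q_1, Q_1)\,\Bigr],
\end{align*}
and the bracket is nonnegative because $(L_+^{-1} Q, Q) = -a_0^2/\lambda_0 + (L_+^{-1} Q_1, Q_1) \le 0$. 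The main obstacle is the spectral input in step two, especially the uniqueness of the negative eigenvalue in the radial sector; this is standard for attractive Schr\"odinger operators and follows from Sturm oscillation after reduction to the radial ODE, together with the observation that $Q$ itself is a negative-energy test vector since $(L_+ Q, Q) = -p\int Q^{p+2} < 0$.
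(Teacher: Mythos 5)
Your argument is correct and reconstructs the proof in Weinstein's \cite{W2} (Proposition~2.7 there), which the paper cites without reproducing. The two directions you give are the standard route: the upper bound via $f=\partial_i Q\in\ker L_+$ with $(\partial_i Q,Q)=0$ by parity, and the lower bound via the spectral Cauchy--Schwarz together with the scaling identity $(L_+^{-1}Q,Q)=-\tfrac12\bigl(\tfrac{2}{p}-\tfrac{d}{2}\bigr)\|Q\|_{2}^{2}\le 0$, which is exactly where $p\le 4/d$ enters. The one point that deserves more than a wave of the hand is the assertion that $L_+$ has exactly one negative eigenvalue on all of $L^2(\R^d)$, not merely in the radial sector. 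Sturm oscillation (plus the negative test direction $Q$, since $(L_+Q,Q)=-p\int Q^{p+2}<0$) handles $\ell=0$, and Perron--Frobenius gives simplicity of the lowest eigenvalue; but to exclude negative eigenvalues in the sectors $\ell\ge1$ you need the separate observations that in $\ell=1$ the radial profile of $\partial_i Q$ is $Q'(r)$, which is nodeless since $Q'<0$ on $(0,\infty)$, hence the lowest eigenvalue there is exactly $0$, and that for $\ell\ge2$ the centrifugal term strictly raises the spectrum. These facts ultimately rest on Kwong's nondegeneracy theorem and are the substance of the paper's Lemma~\ref{lem_Lspec}; invoking them here is not circular, because the paper derives only the simplicity (multiplicity one), not the count of negative eigenvalues, from Proposition~\ref{prop_orPos}.
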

\begin{proof}
 See Proposition 2.7 of \cite{W2}.
\end{proof}

\begin{lem}[Spectral properties of $L$, \cite{W3},\cite{W2},\cite{kwong}] \label{lem_Lspec}
\

Assume $p\le \frac 4 d$. Then
\begin{SL}
 \item $L_-$ is a nonnegative self-adjoint operator in $L^2(\R^d)$ with the null space $N(L_-)=\text{span}\{Q\}$.
\item $L_+$ is a self-adjoint operator in $L^2(\R^d)$ with null space $N(L_+)= \text{span}\{Q_{x_i}:\, 1\le i\le d\}$.
\item $L_+$ has exactly one negative eigenvalue and its multiplicity is $1$.
\end{SL}
\end{lem}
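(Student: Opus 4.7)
The plan is to derive all three items from the ground-state equation $-\Delta Q + Q = Q^{p+1}$, Kwong's uniqueness theorem \cite{kwong} for positive radial solutions, the Perron-Frobenius principle for Schr\"odinger ground states, and Proposition \ref{prop_orPos} already at our disposal. Since $Q^p$ is bounded, smooth, and rapidly decaying, both $L_\pm$ are self-adjoint on $H_x^2(\R^d)$ with essential spectrum $[1,\infty)$ and purely discrete spectrum of finite multiplicity in $(-\infty,1)$; this reduces matters to identifying the nullspaces and counting the negative eigenvalues.

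For (i), rewriting the ground-state equation yields $L_- Q = 0$, so $Q \in N(L_-)$. Since $Q$ is strictly positive, Perron-Frobenius applied to the Schr\"odinger operator $L_-$ forces $Q$ to be the lowest eigenfunction (unique up to scalars), so the lowest eigenvalue is $0$. This immediately gives $L_- \ge 0$ and $N(L_-) = \text{span}\{Q\}$.

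For (ii), differentiating the ground-state equation in $x_i$ gives $L_+ Q_{x_i} = 0$, so $\text{span}\{Q_{x_i}\} \subseteq N(L_+)$. For the reverse inclusion, expand any $f \in N(L_+)$ into spherical harmonics $f(x) = \sum_{\ell \ge 0} f_\ell(r) Y_\ell(\omega)$, so each radial profile satisfies $L_+^\ell f_\ell = 0$ with $L_+^\ell = L_+^{\mathrm{rad}} + \ell(\ell+d-2)/r^2$. In the $\ell=1$ sector, $Q'(r)$ does not change sign on $(0,\infty)$ and already lies in the kernel, so Perron-Frobenius (in the appropriate Sturm-Liouville setting with the regularity boundary condition at $r=0$) identifies it as the ground state and forces $N(L_+^1) = \text{span}\{Q'\}$; this accounts for the $d$-dimensional span $\{Q_{x_i}\}$. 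For $\ell \ge 2$, the min-max principle together with the strictly positive centrifugal contribution gives $L_+^\ell > L_+^1 \ge 0$, excluding a kernel. The $\ell=0$ sector is the delicate case: any radial element of $N(L_+^0)$ would generate an infinitesimal deformation inside the family of positive radial solutions of $-\Delta Q + Q = Q^{p+1}$, which by Kwong's uniqueness theorem \cite{kwong} consists of a single point; hence the $\ell=0$ kernel is trivial.

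For (iii), a direct computation using the ground-state equation yields
\[
(L_+ Q, Q) = \int Q\bigl(-\Delta Q + Q - (p+1) Q^{p+1}\bigr)\,dx = -p \int Q^{p+2}\,dx < 0,
\]
so $L_+$ admits at least one negative eigenvalue. To rule out higher multiplicity, suppose that $\phi_1, \phi_2$ were linearly independent eigenfunctions with (possibly equal) negative eigenvalues. Then a nonzero linear combination $f = a\phi_1 + b\phi_2$ can be arranged to satisfy $(f,Q)=0$ (two free parameters against one linear constraint), while $(L_+ f, f) < 0$ throughout the span of $\{\phi_1,\phi_2\}$, contradicting Proposition \ref{prop_orPos}. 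The step I expect to be most delicate is linking the triviality of the radial $\ell=0$ kernel in (ii) to Kwong's uniqueness theorem, which requires a careful implicit function-type reduction to the one-parameter family of positive radial ground states.
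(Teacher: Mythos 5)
The paper disposes of items (i), (ii), and the existence part of (iii) by citation to Weinstein \cite{W2,W3} and Kwong \cite{kwong}, and only proves the uniqueness of the negative eigenvalue from Proposition \ref{prop_orPos}. You instead attempt to re-derive the full statement, which is a much more ambitious route. Your argument for (i), for (iii), and for the $\ell\ge 1$ sectors in (ii) is sound: the Perron--Frobenius step for $L_-$, the identification of $Q'(r)$ as the sign-definite ground state in the $\ell=1$ sector, the min-max comparison for $\ell\ge 2$, and the variational exclusion of a second negative direction via Proposition \ref{prop_orPos} (which is also the paper's argument for the multiplicity) are all correct.

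The one genuine gap is the $\ell=0$ step in (ii). The inference ``a nonzero radial element of $N(L_+^0)$ would generate an infinitesimal deformation inside the family of positive radial solutions, contradicting Kwong's uniqueness'' does not hold as stated. A nontrivial kernel of the linearization at an isolated solution does not produce a curve of nearby solutions; it merely means the standard implicit function theorem fails to apply, and the solution can still be isolated. So Kwong's \emph{uniqueness} statement alone does not preclude a radial kernel. What one actually needs is the \emph{nondegeneracy} of $Q$, which is a separate conclusion obtained from the ODE shooting and oscillation analysis in \cite{kwong} (and the radial Sturm--Liouville/Wronskian arguments in \cite{W2,W3}), not a soft corollary of the uniqueness theorem. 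Since you flag this step yourself as delicate, the fix is either to cite the nondegeneracy result directly, as the paper does, or to supply the Wronskian/oscillation argument for the radial ODE $L_+^0 v = 0$ showing that no solution is simultaneously regular at $r=0$ and decaying at $r=\infty$.
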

\begin{proof}
Claim (i), (ii) and first part of Claim (iii) follows directly from
\cite{W2}, \cite{kwong} and \cite{W3}. The second part in Claim
(iii) concerning the multiplicity of the negative eigenvalue is a
simple consequence of Proposition \ref{prop_orPos}.
\end{proof}

By Lemma \ref{lem_Lspec} we can denote the negative eigenvalue of
$L_+$ as $-\lambda_0$ ($\lambda_0>0$) and the corresponding
eigenfunction with unit $L^2$ mass as $W$.

\begin{lem}[Coercivity of $L$, \cite{W2}, \cite{MM01}] \label{lem_coer}
\

Let $p=\frac 4d$. There exist constants $\sigma_1$, $\sigma_2$
depending only on the dimension $d$ such that the following holds:
\begin{SL}
 \item For any $\epsilon \in H^1$, if $(\epsilon, W)=0$ and $(\epsilon,Q_{x_i})=0$ for any $1\le i\le d$, then
\begin{align*}
 (L_+\epsilon, \epsilon) \ge \sigma_1 (\epsilon,\epsilon).
\end{align*}

\item For any $\epsilon \in H^1$, if $(\epsilon, Q)=0$, then
\begin{align*}
 (L_-\epsilon, \epsilon) \ge \sigma_2 (\epsilon, \epsilon).
\end{align*}
\end{SL}
\end{lem}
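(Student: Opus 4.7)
My plan is the standard spectral gap argument: use Weyl's theorem on the essential spectrum of $L_\pm$, then apply the spectral theorem after projecting out the finitely many nonpositive eigenfunctions identified in Lemma~\ref{lem_Lspec}.

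First I would write $L_\pm = (-\Delta+1)+V_\pm$ where $V_\pm\in\{-Q^{4/d},\,-(p+1)Q^{4/d}\}$. Since $Q$ is a positive Schwartz solution of \eqref{eq_R} (by Kwong's uniqueness theorem and standard elliptic decay), multiplication by $V_\pm$ is relatively compact with respect to $-\Delta$ on $L^2(\R^d)$. Weyl's theorem then gives $\sigma_{\text{ess}}(L_\pm)=[1,\infty)$, so the spectrum of each $L_\pm$ below $1$ consists of finitely many eigenvalues, each of finite multiplicity.

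For part (ii), Lemma~\ref{lem_Lspec}(i) says $L_-\ge 0$ with $N(L_-)=\operatorname{span}\{Q\}$. I would define $\sigma_2$ as the smallest strictly positive eigenvalue of $L_-$ in $(0,1)$, or $\sigma_2=1$ if none exists. For $\epsilon\in H^1$ with $(\epsilon,Q)=0$, the spectral measure of $L_-$ associated to $\epsilon$ is supported in $[\sigma_2,\infty)$, so the spectral theorem yields $(L_-\epsilon,\epsilon)\ge \sigma_2(\epsilon,\epsilon)$. Part (i) proceeds analogously: Lemma~\ref{lem_Lspec}(ii)--(iii) identifies the entire nonpositive discrete spectrum of $L_+$ as $\{-\lambda_0\}$ (simple, eigenfunction $W$) together with $\{0\}$ (eigenspace $\operatorname{span}\{Q_{x_i}\}_{i=1}^d$). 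Taking $\sigma_1$ to be the smallest eigenvalue of $L_+$ in $(0,1)$, or $\sigma_1=1$ if none exists, the orthogonality conditions $(\epsilon,W)=0$ and $(\epsilon,Q_{x_i})=0$ kill the spectral masses at $-\lambda_0$ and at $0$, and the spectral theorem delivers $(L_+\epsilon,\epsilon)\ge \sigma_1(\epsilon,\epsilon)$.

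The main obstacle is not the spectral bookkeeping above but the structural input from Lemma~\ref{lem_Lspec}: that the negative eigenvalue $-\lambda_0$ of $L_+$ is simple (which uses Proposition~\ref{prop_orPos} to rule out a second negative eigenvalue orthogonal to $Q$), and that the zero eigenspace has dimension exactly $d$ (i.e., the nondegeneracy of $Q$, due to Kwong). Without the correct count of nonpositive modes, extra null directions in $\{W,Q_{x_i}\}^\perp$ could appear and coercivity would fail; with these facts in hand, the argument reduces to the routine spectral gap estimate above.
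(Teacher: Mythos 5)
Your proof is correct, and it makes explicit the standard spectral-gap argument that the paper itself does not spell out---the paper's ``proof'' of Lemma~\ref{lem_coer} consists solely of the citation to \cite{W2} and \cite{MM01}. Your chain of Weyl's theorem (essential spectrum $[1,\infty)$ since $Q^{4/d}$ is Schwartz hence relatively compact), combined with the discrete-spectrum count from Lemma~\ref{lem_Lspec} (one simple negative eigenvalue $-\lambda_0$ with eigenfunction $W$, kernel exactly $\mathrm{span}\{Q_{x_i}\}$), and the spectral theorem applied to the orthogonal complement, is exactly the argument underlying those references, so there is nothing methodologically different to compare. The one point worth flagging is that you correctly sidestep the critical-case degeneracy: Proposition~\ref{prop_orPos} gives $\inf_{(f,Q)=0}(L_+f,f)=0$, attained by $\Lambda Q=\tfrac d2 Q+x\cdot\nabla Q$ (since $L_+\Lambda Q=-2Q$ and $(\Lambda Q,Q)=0$); but $(\Lambda Q,W)=\tfrac{2}{\lambda_0}(Q,W)\ne 0$, so $\Lambda Q$ is excluded by the hypothesis $(\epsilon,W)=0$, and the coercivity you claim is not contradicted. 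Your argument implicitly relies on this, and it would strengthen the write-up to say so, since it is precisely the reason the lemma's orthogonality conditions are phrased in terms of $W$ rather than $Q$.
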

\begin{proof}
 See \cite{W2} and the improvement in \cite{MM01} for the critical case $p=\frac 4d$.
\end{proof}

We need the following compactness lemma from Hmidi and Keraani
\cite{hmidi-keraani}.
\begin{lem}[Compactness lemma, \cite{hmidi-keraani}] \label{lem_compact_nonradial}
 Let $\{v_n\}_{n=1}^\infty$ be a bounded family of $H^1(\R^d)$ such that
\begin{align*}
 \limsup_{n\to \infty} \|\nabla v_n \|_{L_x^2} \le M \quad \text{and}\quad
\limsup_{n\to \infty} \|v_n\|_{L_x^{\frac{2(d+2)}d}} \ge m.
\end{align*}
 Then there exists $\{x_n\}_{n=1}^\infty \subset \R^d$ such that, up to a subsequence
\begin{align*}
 v_n ( \cdot + x_n) \rightharpoonup V \quad \text{weakly},
\end{align*}
with
\begin{align*}
 \| V\|_{L_x^2} \ge \left( \frac d {d+2} \right)^{\frac d 4} \cdot \frac {m^{\frac d2+1}}{M^{\frac d2}}
\|Q\|_{L_x^2}.
\end{align*}
\end{lem}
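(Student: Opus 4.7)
The plan is to combine a refined Sobolev (Littlewood--Paley) embedding with G\'erard's profile decomposition and the sharp Gagliardo--Nirenberg inequality of Proposition~\ref{P:variational}, following the approach of Hmidi--Keraani.

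\emph{Locating translation parameters.} First I would use the refined Sobolev embedding
\begin{equation*}
\|f\|_{L^{2(d+2)/d}}^{2(d+2)/d} \lesssim \|\nabla f\|_{L^2}^{2}\,\sup_{N>0}\bigl(N^{-d/2}\|P_N f\|_{L^\infty}\bigr)^{4/d},
\end{equation*}
which follows from a Littlewood--Paley square function together with Bernstein. Applying this to $v_n$ and using the two hypotheses produces, for all sufficiently large $n$, a scale $N_n>0$ and a point $x_n\in\R^d$ with $|P_{N_n}v_n(x_n)|\gtrsim N_n^{d/2}\,m^{(d+2)/2}/M^{d/2}$. The two-sided Bernstein bounds $\|P_{N_n}v_n\|_{L^\infty}\lesssim \min\bigl(N_n^{d/2}\|v_n\|_{L^2},\,N_n^{d/2-1}\|\nabla v_n\|_{L^2}\bigr)$, together with the $H^1$-boundedness of $\{v_n\}$, confine $N_n$ to a compact subinterval of $(0,\infty)$; after extraction we may assume $N_n\to N_\infty\in(0,\infty)$.

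\emph{Nontrivial weak limit.} Set $\tilde v_n:=v_n(\cdot+x_n)$. By $H^1$-boundedness a further subsequence satisfies $\tilde v_n\rightharpoonup V$ weakly in $H^1$ and a.e.\ pointwise (Rellich). Since the functional $f\mapsto(P_Nf)(0)$ is realized by pairing against a Schwartz convolution kernel depending continuously on $N$ in $L^2$-norm, weak convergence transfers the pointwise lower bound:
\begin{equation*}
|P_{N_\infty}V(0)|=\lim_{n\to\infty}|P_{N_n}v_n(x_n)|\gtrsim N_\infty^{d/2}\,m^{(d+2)/2}/M^{d/2},
\end{equation*}
so $V\not\equiv 0$. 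A direct Bernstein bound at this point already yields a lower bound on $\|V\|_{L^2}$, but not with the optimal constant.

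\emph{Recovering the sharp constant.} To obtain the precise constant $(d/(d+2))^{d/4}\|Q\|_{L^2}$, the translation must be chosen so that $V$ carries the \emph{maximal} $L^2$-mass. I would iterate the above construction on the residuals $\tilde v_n-V$ to obtain G\'erard's profile decomposition: profiles $\{V_k\}_k$ with mutually orthogonal translations ($|x_n^k-x_n^{k'}|\to\infty$ whenever $k\neq k'$) and a remainder $R_n^K$ satisfying $\limsup_{K\to\infty}\limsup_{n\to\infty}\|R_n^K\|_{L^{2(d+2)/d}}=0$, together with the asymptotic Pythagorean and decoupling identities
\begin{equation*}
\|\nabla v_n\|_{L^2}^2 = \sum_k\|\nabla V_k\|_{L^2}^2 + \|\nabla R_n^K\|_{L^2}^2 + o_n(1),\qquad \|v_n\|_{L^{2(d+2)/d}}^{2(d+2)/d} = \sum_k\|V_k\|_{L^{2(d+2)/d}}^{2(d+2)/d} + o_{n,K}(1).
\end{equation*}
Applying the sharp Gagliardo--Nirenberg of Proposition~\ref{P:variational} to each profile and letting $k^\star$ be the index maximizing $\|V_k\|_{L^2}$, the estimates $\|V_k\|_{L^2}^{4/d}\le\|V_{k^\star}\|_{L^2}^{4/d}$ and $\sum_k\|\nabla V_k\|_{L^2}^2\le M^2+o(1)$ give
\begin{equation*}
m^{2(d+2)/d} \le \tfrac{d+2}{d}\|Q\|_{L^2}^{-4/d}\|V_{k^\star}\|_{L^2}^{4/d}\,M^2 + o(1),
\end{equation*}
which rearranges to the required bound on $\|V_{k^\star}\|_{L^2}$. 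Taking $x_n:=x_n^{k^\star}$ and $V:=V_{k^\star}$ completes the argument.

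\emph{Main obstacle.} The principal technical step is establishing G\'erard's profile decomposition — in particular, proving asymptotic orthogonality of the extracted translation parameters and the vanishing of the nonlinear interaction cross-terms in the $L^{2(d+2)/d}$-norm. Once this framework is in place, the final sharp-constant estimate is a short algebraic manipulation exploiting the sharp Gagliardo--Nirenberg inequality and the monotonicity of $a\mapsto a^{4/d}$.
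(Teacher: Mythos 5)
Your proposal is correct and is essentially the argument of Hmidi--Keraani that the paper itself invokes without reproducing (the paper's ``proof'' is just the citation to Theorem~1 of \cite{hmidi-keraani}): extract profiles via the refined Sobolev embedding, decouple the $L^{2(d+2)/d}$ and $\dot H^1$ norms across orthogonal translations, apply the sharp Gagliardo--Nirenberg inequality to each profile, and select the profile of maximal mass; your final exponent bookkeeping reproduces the stated constant exactly. The only step you defer --- the profile decomposition with its orthogonality and decoupling identities --- is indeed the substantive ingredient, and you correctly identify both its statement and the iteration scheme that produces it.
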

\begin{proof}
 See Theorem 1 of \cite{hmidi-keraani}.
\end{proof}

\begin{lem}[Rigidity of the ground state, non-quantitative version] \label{lem_nonquant}
 Let the dimension $d\ge 1$ and $Q$ be the ground state corresponding to $p=4/d$. Assume
$u\in H^1(\R^d)$ satisfies
\begin{align} \label{eq_tmp_nqt_0}
 \| u\|_{L_x^2} = \|Q\|_{L_x^2} \quad\text{and} \quad \|\nabla u\|_{L_x^2} = \|\nabla Q\|_{L_x^2}.
\end{align}
Then the energy $E(u)\ge 0$ and there exist $\gamma_0 = \gamma_0(u)
\in \R$, $x_0 = x_0(u) \in \R^d$ such that
\begin{align} \label{eq_tmp_nqt_1}
 \| Q- e^{i\gamma_0} u(\cdot +x_0) \|_{H^1(\R^d)} \le \delta (E(u)),
\end{align}
where $\delta(E(u)) \to 0$ if $E(u) \to 0$.
\end{lem}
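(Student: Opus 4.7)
The plan is to combine the sharp Gagliardo--Nirenberg inequality with a concentration-compactness argument. The nonnegativity $E(u)\ge 0$ is immediate from Proposition \ref{P:variational}: since $M(u)=M(Q)$, the sharp inequality \eqref{sharp-gn} gives
\begin{align*}
\|u\|_{L_x^{2(d+2)/d}}^{2(d+2)/d} \le \tfrac{d+2}{d}\|\nabla u\|_{L_x^2}^2,
\end{align*}
which is exactly the assertion $E(u)\ge 0$.

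For the quantitative-in-$E(u)$ rigidity I would argue by contradiction. Suppose there exist $\eps_0>0$ and a sequence $\{u_n\}\subset H^1(\R^d)$ satisfying \eqref{eq_tmp_nqt_0} with $E(u_n)\to 0$, yet
\begin{align*}
\inf_{\gamma\in\R,\,y\in\R^d}\bigl\|Q-e^{i\gamma}u_n(\cdot+y)\bigr\|_{H^1(\R^d)}\ge\eps_0 \quad \text{for every } n.
\end{align*}
From $E(u_n)\to 0$ together with the normalization of mass and gradient I deduce
\begin{align*}
\|u_n\|_{L_x^{2(d+2)/d}}^{2(d+2)/d}\longrightarrow \tfrac{d+2}{d}\|\nabla Q\|_{L_x^2}^2,
\end{align*}
so the sequence asymptotically saturates the sharp Gagliardo--Nirenberg inequality. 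Applying Lemma \ref{lem_compact_nonradial} with $M=\|\nabla Q\|_{L_x^2}$ and the above lower bound on the $L^{2(d+2)/d}$ norm produces translations $x_n\in\R^d$ and a subsequence such that $u_n(\cdot+x_n)\rightharpoonup V$ weakly in $H^1$ with $\|V\|_{L_x^2}\ge\|Q\|_{L_x^2}$.

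Weak lower semicontinuity together with $\|u_n\|_{L_x^2}=\|Q\|_{L_x^2}$ forces $\|V\|_{L_x^2}=\|Q\|_{L_x^2}$, so the $L^2$ convergence is actually strong by the Radon--Riesz property. A Brezis--Lieb decomposition of the $L^{2(d+2)/d}$ norm transfers the asymptotic GN-saturation from $u_n(\cdot+x_n)$ to $V$, while weak lower semicontinuity gives $\|\nabla V\|_{L_x^2}\le\|\nabla Q\|_{L_x^2}$. Combining these in the equality case of \eqref{sharp-gn} identifies $V$ as an extremizer, and the mass/gradient normalizations pin its scale and amplitude parameters to $1$, yielding $V(x)=e^{i\gamma_0}Q(x-y_0)$ for some $\gamma_0\in\R$, $y_0\in\R^d$. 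Finally, $\|\nabla u_n(\cdot+x_n)\|_{L_x^2}=\|\nabla V\|_{L_x^2}$ combined with the weak convergence of gradients yields strong $L^2$ convergence of $\nabla u_n(\cdot+x_n)$ to $\nabla V$; together with the earlier strong $L^2$ convergence of the functions themselves, this gives $u_n(\cdot+x_n)\to V$ in $H^1$, contradicting the assumed lower bound $\eps_0$.

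The main obstacle is the middle step: one must propagate the asymptotic equality in the sharp Gagliardo--Nirenberg inequality from the sequence to the weak limit (using strong $L^2$ convergence plus a Brezis--Lieb decomposition of $\|\,\cdot\,\|_{L_x^{2(d+2)/d}}^{2(d+2)/d}$), and then invoke the rigidity in the equality case of Proposition \ref{P:variational} to exclude any nontrivial scaling or amplitude parameter in the identification $V=e^{i\gamma_0}Q(\cdot-y_0)$. Everything else reduces to standard weak-convergence bookkeeping.
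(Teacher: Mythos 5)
Your argument is correct and follows essentially the same route as the paper's proof: contradiction, apply Lemma \ref{lem_compact_nonradial} to extract a translated weak $H^1$ limit $V$ of mass $\ge M(Q)$, upgrade to strong $L^2$ (and hence $L^{\frac{2(d+2)}{d}}$) convergence, invoke the equality case of the sharp Gagliardo--Nirenberg inequality to pin $V$ as a translated phase rotation of $Q$, and finish by upgrading to strong $H^1$ convergence of the gradients. The only cosmetic difference is that you pass the $L^{\frac{2(d+2)}{d}}$ norm to the limit via Brezis--Lieb whereas the paper does so by interpolation against the $H^1$ bound; both are fine.
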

\begin{proof}
 That $E(u)$ is nonnegative follows easily follow the sharp Gagliardo-Nirenberg interpolation
inequality. Assume \eqref{eq_tmp_nqt_1} is false. Then there exists
$\epsilon_0 >0$ and a sequence $\{u_n\}_{n=1}^\infty \subset
H^1(\R^d)$ satisfying \eqref{eq_tmp_nqt_0} such that
\begin{align} \label{eq_tmp_nqt_2}
 \inf_{\substack{\gamma \in \R \\ x \in \R^d}} \| Q- e^{i\gamma} u_n (\cdot +x) \|_{H^1(\R^d)} \ge  \epsilon_0,
\end{align}
and $E(u_n) \to 0$ as $n\to \infty$. By \eqref{eq_tmp_nqt_0} and the
fact $E(u_n) \to 0$, we obtain
\begin{align*}
 \limsup_{n\to \infty} \| u_n \|_{L_x^{\frac{2(d+2)} d}} &
\ge \| \nabla Q\|_{L_x^2}^{\frac d{d+2}} \cdot \left( \frac{d+2} d \right)^{\frac d {2(d+2)}} \\
& = \| Q\|_{L_x^{\frac {2(d+2)} d}}.
\end{align*}
By Lemma \ref{lem_compact_nonradial}, there exists
$\{x_n\}_{n=1}^\infty \subset \R^d$ such  that up to a subsequence
\begin{align*}
 u_n (\cdot + x_n) \rightharpoonup V \quad \text{weakly},
\end{align*}
and
\begin{align*}
 \| V\|_{L_x^2} \ge \| Q \|_{L_x^2}.
\end{align*}
It follows that $u_n( \cdot +x_n) \to V$ strongly in $L_x^2$ and
also in $L_x^{\frac {2(d+2)} d}$ by interpolation. We then obtain
$\|V\|_{L_x^2} = \|Q\|_{L_x^2}$, $\|V\|_{L_x^{\frac{2(d+2)} d}} =
\|Q\|_{L_x^{\frac{2(d+2)}d}}$ and $\|\nabla V \|_{L_x^2} \le
\|\nabla Q\|_{L_x^2}$. By the sharp Gagliardo-Nirenberg
interpolation inequality, we conclude that $V= e^{-i \gamma_0}
Q(\cdot - x_0)$ for some $\gamma_0 \in \R$, $x_0 \in \R^d$. It then
follows that
\begin{align*}
 e^{i\gamma_0} u_n ( \cdot + x_n+x_0) \to Q\quad \text{strongly in $H^1$},
\end{align*}
which is an obvious contradiction to \eqref{eq_tmp_nqt_2}. The lemma
is proved.
\end{proof}

The next lemma is central to our non-sharp decomposition.
\begin{lem}[Rigidity of the ground state, quantitative version] \label{lem_modulate}
Let $d\ge 1$ be the dimension, $p=4/d$ and $Q$ be the corresponding
ground state. There exist constants $\eta>0$, $C> 1$, $K>0$
depending only on the dimension $d$ such that the following is true:

Let $u\in H^1(\R^d)$ be such that
\begin{align*}
\|u\|_{L_x^2} =\|Q\|_{L_x^2}, \quad \|\nabla u\|_{L_x^2} = \|\nabla
Q\|_{L_x^2},
\end{align*}
and
\begin{align*}
0 \le E(u) \le \eta.
\end{align*}
(The condition $E(u)\ge 0$ is actually unnecessary by the sharp
Gagliardo-Nirenberg inequality). Then there exist $\gamma_0 =
\gamma_0(u) \in \R$, $x_0 = x_0(u) \in \R^d$, $\lambda_0
=\lambda_0(u)>0$ with
\begin{align} \label{eq_modulate_10_1}
\frac 1 C \le \lambda_0 \le C
\end{align}
such that
\begin{align} \label{eq_modulate_10_2}
\epsilon (x) = e^{i\gamma_0} \lambda_0^{\frac d2} u(\lambda_0 x+x_0)
- Q(x)
\end{align}
satisfies the following:
\begin{enumerate}
\item $Re(\epsilon)$ is orthogonal to the negative and neutral directions of $L_+$:
\begin{align} \label{eq_modulate_10_3}
(Re(\epsilon), W) =0 \quad \text{and}\quad (Re(\epsilon), Q_{x_j} )
=0,\quad  \text{for any $1\le j\le d$}.
\end{align}
Here $W$ is the eigenfunction corresponding to the negative
eigenvalue of the linear operator $L_+$ (see Lemma \ref{lem_Lspec}
and Lemma \ref{lem_coer}).

\item $Im(\epsilon)$ is orthogonal to the neutral directions of $L_-$:
\begin{align} \label{eq_modulate_10_4}
   (Im(\epsilon), Q)=0.
      \end{align}

\item The $H^1$ norm of $\epsilon$ is small, more precisely:
\begin{align} \label{eq_modulate_10_55}
\| \epsilon\|_{H^1} \le K \sqrt {E(u)}.
\end{align}

\item If $u$ is an even function of $x$ (i.e. $u(x)=u(-x)$ for all $x\in \R^d$), then we can take
$x_0=x_0(u)=0$.

\end{enumerate}
\end{lem}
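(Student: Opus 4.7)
The plan is a standard three-step modulation argument: first, use Lemma~\ref{lem_nonquant} to reduce to the case where $u$ is $H^1$-close to $Q$; second, apply the implicit function theorem to select $(\gamma_0,x_0,\lambda_0)$ enforcing the orthogonality conditions \eqref{eq_modulate_10_3}--\eqref{eq_modulate_10_4}; third, combine the coercivity of $L_\pm$ on the appropriate orthogonal complements (Lemma~\ref{lem_coer}) with a Taylor expansion of $E$ around $Q$ to obtain the bound \eqref{eq_modulate_10_55}. The first step is immediate: for $\eta$ small, Lemma~\ref{lem_nonquant} furnishes preliminary parameters $(\tilde\gamma,\tilde x)$ (with $\tilde x=0$ in the even case) such that $v:=e^{i\tilde\gamma}u(\cdot+\tilde x)$ is within $H^1$-distance $\delta(E(u))$ of $Q$, where $\delta(\cdot)\to 0$ at $0$.

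For the second step, write $-\alpha$ ($\alpha>0$) for the unique negative eigenvalue of $L_+$, so that $L_+W=-\alpha W$. Define
\[
\Phi(\gamma,y,\lambda;v):=\Bigl((Re(\tilde v),W),\,\bigl\{(Re(\tilde v),Q_{x_j})\bigr\}_{j=1}^{d},\,(Im(\tilde v),Q)\Bigr), \quad \tilde v:=e^{i\gamma}\lambda^{d/2}v(\lambda\cdot+y)-Q,
\]
so the desired orthogonality reads $\Phi=0$. At the base point $(0,0,1;Q)$ one has $\tilde v\equiv 0$; using that $W$ is radial (Perron--Frobenius for the ground state of $L_+$) and $Q_{x_j}$ is odd in $x_j$, the Jacobian of $\Phi$ in $(\gamma,y,\lambda)$ at this base point has determinant $\pm\|Q\|_{L^2}^2\prod_{j=1}^{d}\|Q_{x_j}\|_{L^2}^2\,(\Lambda Q,W)$, where $\Lambda Q:=\tfrac{d}{2}Q+x\cdot\nabla Q$. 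The only nontrivial point is $(\Lambda Q,W)\ne 0$. Starting from $L_+Q=-(4/d)Q^{p+1}$ (with $p=4/d$) and the commutator $[-\Delta,x\cdot\nabla]=-2\Delta$, one verifies the mass-critical identity $L_+\Lambda Q=-2Q$; pairing with $W$ gives $(\Lambda Q,W)=\tfrac{2}{\alpha}(Q,W)$, reducing matters to $(Q,W)\ne 0$. To establish this, decompose $Q=aW+Q_0$ with $(Q_0,W)=0$ (and $(Q_0,Q_{x_j})=0$ automatically by parity). On the one hand, $(L_+^{-1}Q,Q)=-\tfrac{1}{2}(\Lambda Q,Q)=0$, using $(x\cdot\nabla Q,Q)=-\tfrac{d}{2}\|Q\|_{L^2}^2$ by integration by parts. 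On the other hand, $L_+^{-1}W=-W/\alpha$ combined with strict positivity of $L_+^{-1}$ on the positive spectral subspace gives $(L_+^{-1}Q,Q)=-\tfrac{a^2}{\alpha}\|W\|_{L^2}^2+(L_+^{-1}Q_0,Q_0)$. Since $Q_0\ne 0$ (otherwise $L_+Q=-\alpha Q$ would contradict $L_+Q=-(4/d)Q^{p+1}$ for a decaying $Q$), this forces $a\ne 0$, so $(Q,W)\ne 0$ and the Jacobian is invertible. The IFT then produces $(\gamma_0,x_0,\lambda_0)$ depending smoothly on $v$ with $(\gamma_0,x_0,\lambda_0)\to(0,0,1)$ as $v\to Q$; applied to $v$ from Step~1, this yields the required modulation. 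In the even case, $(Re(\tilde v),Q_{x_j})\equiv 0$ whenever $y=0$, so one solves only the reduced $2\times 2$ system in $(\gamma,\lambda)$, whose Jacobian remains invertible by the same computation.

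For the third step, note $M(Q+\epsilon)=M(Q)$ and $E(Q+\epsilon)=\lambda_0^2 E(u)$. Expanding $E$ around $Q$ and using $E(Q)=0$, the ground state equation $-\Delta Q+Q-Q^{p+1}=0$, and the mass constraint $2(Q,Re(\epsilon))+\|\epsilon\|_{L^2}^2=0$ to cancel the linear terms, one obtains
\[
\lambda_0^2 E(u)=\tfrac{1}{2}\langle L_+Re(\epsilon),Re(\epsilon)\rangle+\tfrac{1}{2}\langle L_-Im(\epsilon),Im(\epsilon)\rangle+o(\|\epsilon\|_{H^1}^{2}).
\]
Lemma~\ref{lem_coer} gives $L^2$-coercivity of both quadratic forms on the relevant orthogonal complements; since $L_\pm=-\Delta+1-V_\pm$ with bounded $V_\pm$, this upgrades immediately to $H^1$-coercivity. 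For $\eta$ small enough the lower-order error is absorbed and one obtains $\|\epsilon\|_{H^1}^2\lsm E(u)$, i.e.\ \eqref{eq_modulate_10_55}. The bound $\lambda_0\in[1/C,C]$ then follows from $\|\nabla(Q+\epsilon)\|_{L^2}=\lambda_0\|\nabla Q\|_{L^2}$ combined with $\|\epsilon\|_{H^1}\ll 1$.

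\textbf{Main obstacle.} The delicate point is verifying $(Q,W)\ne 0$; without it the implicit function theorem degenerates in the scaling direction. The argument rests crucially on the mass-critical identities $L_+\Lambda Q=-2Q$ and $(\Lambda Q,Q)=0$ (Pohozaev-type), together with the spectral structure of $L_+$ recorded in Lemma~\ref{lem_Lspec}.
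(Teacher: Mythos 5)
Your overall structure is the same as the paper's: reduce to a neighborhood of $Q$ via the non-quantitative rigidity Lemma~\ref{lem_nonquant}, select modulation parameters by the implicit function theorem enforcing the orthogonality conditions, and then combine coercivity of $L_\pm$ with the mass and energy constraints to absorb the error and obtain the $\sqrt{E(u)}$ bound. The one place where you diverge substantively is the verification that the Jacobian of the modulation map is invertible, i.e.\ that $(Q,W)\ne 0$. You derive this from scratch: using the mass-critical identity $L_+\Lambda Q=-2Q$ together with the Pohozaev identity $(\Lambda Q,Q)=0$ to get $(L_+^{-1}Q,Q)=0$, then decomposing $Q=aW+Q_0$ and invoking strict positivity of $L_+^{-1}$ on the positive spectral subspace to force $a\ne 0$. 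This argument is correct (it is worth noting, as you implicitly do, that $(L_+^{-1}Q_0,Q_0)>0$ because $L_+g=Q_0\ne0$ cannot hold with $(L_+g,g)=0$ and $g\perp\mathrm{span}\{W,Q_{x_j}\}$). The paper, however, gets $(Q,W)\ne0$ as a one-line consequence of Proposition~\ref{prop_orPos} (Weinstein's result $\inf_{(f,Q)=0}(L_+f,f)=0$): if $(Q,W)=0$ then $W$ would be admissible in that minimization and give a strictly negative value, a contradiction. That route is considerably shorter and is what the paper actually uses; your derivation is a self-contained alternative that re-proves, in effect, a special consequence of Weinstein's proposition. The only other (minor) difference is in the error control: where you write $o(\|\epsilon\|_{H^1}^2)$ after the Taylor expansion, the paper proves a concrete pointwise/integral estimate for the cubic-and-higher remainder $F(\epsilon)$ (Lemma~\ref{bdd_Feps}), which is what makes the constant $K$ explicit and dimension-dependent. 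Your $o(\cdot)$ suffices for the qualitative statement but the paper's version is what is actually needed to extract the quantitative constant cleanly; this is a presentational rather than a mathematical gap.
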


\begin{proof}

\texttt{Step 1}: We show that Claim (1) and (2) holds. As in
\cite{merle-raph}, the idea is to use the implicit function theorem.
For any $\alpha>0$, define the neighborhood
\begin{align*}
 U_\alpha = \{ u \in H^1(\R^d):\quad \|u-Q\|_{H^1} < \alpha \}.
\end{align*}
For any $\gamma \in \R$, $x\in \R^d$, $u \in H^1(\R^d)$, define
\begin{align} \label{ep111}
 \epsilon_{\lambda, \gamma, x}(y) = e^{i\gamma} \lambda^{\frac d 2} u(\lambda y +x) - Q(y).
\end{align}
We first claim that there exist $\alpha_0 >0$ and a unique $C^1$
map: $U_{\alpha_0} \to \R^+_{\lambda} \times \R_\gamma \times
\R_x^d$ such that for any $u \in U_{\alpha_0}$, there exist unique
$\lambda=\lambda(u)>0$, $\gamma=\gamma(u) \in \R$, $x=x(u) \in \R^d$
satisfying the following properties:
\begin{enumerate}
 \item The real part of $\epsilon_{\lambda,\gamma,x}$ is orthogonal to the negative and neutral directions of $L_+$:
\begin{align} \label{eq_tmp32_1}
        (Re(\epsilon_{\lambda,\gamma,x}), W)=0, \quad \text{and} \quad (Re(\epsilon_{\lambda,\gamma,x}), Q_{x_i}) =0,
\quad \forall\, 1\le i\le d.
       \end{align}

\item The imaginary part of $\epsilon_{\lambda, \gamma,x}$ is orthogonal to the neutral direction of $L_-$:
\begin{align} \label{eq_tmp32_2}
 (Im(\epsilon_{\lambda, \gamma,x}), Q)=0.
\end{align}
\end{enumerate}
Furthermore there is a constant $K_1>0$ such that if $0<\alpha<\bar
\alpha$, $u\in U_\alpha$, then
\begin{align} \label{eq_tmp32_3}
 \| \epsilon_{\lambda,\gamma,x} \|_{H^1} + |\lambda-1| + |\gamma| + |x| \le K_1 \alpha.
\end{align}
To establish the above claims, we define the following functionals
\begin{align*}
&\rho_1(u,\lambda,\gamma,x) = \int_{\R^d} Re(\epsilon_{\lambda,\gamma,x}(y)) W(y) dy,\\
&\rho_j(u,\lambda,\gamma,x) = \int_{\R^d} Re(\epsilon_{\lambda,\gamma,x}(y)) Q_{x_{i-1}}(y) dy,\quad 2\le j\le d+1,\\
&\rho_{d+2}(u,\lambda,\gamma,x) = \int_{\R^d}
Im(\epsilon_{\lambda,\gamma,x}(y)) Q(y) dy.
\end{align*}
We then compute at $(\lambda, \gamma,x ) =(1,0,0)$,
\begin{align}
& \frac {\partial(\epsilon_{\lambda,\gamma,x})}{\partial \lambda} = \frac d 2 u +y\cdot \nabla u, \label{eq_a32_1} \\
& \frac{\partial(\epsilon_{\lambda,\gamma,x})}{\partial \gamma} = iu, \notag \\
& \frac {\partial( \epsilon_{\lambda,\gamma,x})} {\partial_{x_i}}
=u_{x_i}, \quad \forall\, 1\le i\le d. \label{eq_a32_1a}
\end{align}
To compute $\frac{\partial \rho_1}{\partial \lambda}$ at $(\lambda,
\gamma, x,u) = (1,0,0,Q)$, we use the algebraic identity:
\begin{align} \label{eq_a32_2}
L_+ \left( \frac d 2 Q + y \cdot \nabla Q \right) = -2Q,
\end{align}
which can be easily obtained by using scaling invariance. By
\eqref{eq_a32_1}, \eqref{eq_a32_2} and the fact that
\begin{align*}
L_+ W = -\lambda_0 W,
\end{align*}
we have
\begin{align*}
\left. \frac{\partial \rho_1} { \partial \lambda}
\right|_{(1,0,0,Q)} & = \int_{\R^d}
\left( \frac d2 Q+ y \cdot \nabla Q \right) \cdot W(y) dy \\
& = - \frac 1 {\lambda_0} \int_{\R^d} \left( \frac d 2 Q +y \cdot \nabla Q \right) \cdot L_+ W(y) dy \\
& = \frac 2 {\lambda_0} \int_{\R^d} Q(y) W(y) dy \ne 0,
\end{align*}
where the last integral does not vanish by Proposition
\ref{prop_orPos}. We then compute for $2\le i \le d+2$,
\begin{align*}
\left. \frac {\partial \rho_i} { \partial \lambda}
\right|_{(1,0,0,Q)} =0.
\end{align*}
It is easy to find that for $1\le i \le d+1$
\begin{align*}
\left. \frac {\partial \rho_i} {\partial \gamma} \right|_{(1,0,0,Q)}
=0,
\end{align*}
and also
\begin{align*}
\left. \frac{\partial \rho_{d+2}} {\partial \gamma}
\right|_{(1,0,0,Q)} = \| Q\|_{L_x^2}^2.
\end{align*}
Lastly by \eqref{eq_a32_1a}, we obtain
\begin{align*}
\left. \frac {\partial \rho_i } {\partial x_j} \right|_{(1,0,0,Q)} =
\delta_{j,i-1} \| Q_{x_j}\|_{L_x^2}^2, \quad \forall\, 2\le i\le
d+1, \; \text{and} \; 1\le j\le d,
\end{align*}
where $\delta_{j,i-1}$ is the usual Kronecker delta function. It
follows easily that the Jacobian
\begin{align*}
&\left| \frac{\partial(\rho_1, \rho_{d+2},\rho_2, \cdots,
\rho_{d+1})}{ \partial (\lambda,\gamma,
x_1,\cdots, x_d)} \right|_{(1,0,0,Q)} \\
= & \frac 2 {\lambda_0} \left( \int_{\R^d} Q(y) W(y) dy \right)
\cdot \|Q \|_{L_x^2}^2
\cdot \prod_{j=1}^d \| Q_{x_j} \|_{L_x^2}^2 \\
\ne & 0.
\end{align*}
Therefore by the implicit function theorem, there exist $\alpha_0
>0$, a neighborhood of $(\lambda,\gamma,x)=(1,0,0)$ in
$\R_\lambda^{+} \times \R_\gamma \times \R^d_x$, and a unique $C^1$
map: $U_{\alpha_0} \to \R_\lambda^+ \times \R_\gamma \times \R^d_x$
such that \eqref{eq_tmp32_1}, \eqref{eq_tmp32_2} holds. Furthermore
by Lemma \ref{lem_nonquant} and choosing $\eta$ sufficiently small,
we have \eqref{eq_tmp32_3} holds. Finally Claim (1) and (2) follows
easily from applying the previous result and Lemma
\ref{lem_nonquant}.

\texttt{Step 2}: We show that Claim (3) holds. By step 1, we can
choose $\lambda_0>0$, $x_0 \in \R^d$, $\gamma_0 \in \R$ such that
\eqref{eq_modulate_10_1}-\eqref{eq_modulate_10_4} holds. By Lemma
\ref{lem_coer} and denoting $\sigma=\min\{\sigma_1,\, \sigma_2\}$,
we obtain
\begin{align}
\sigma \|\epsilon \|_{L_x^2}^2 & \le (L_+ Re(\epsilon), Re(\epsilon)) + (L_- Im(\epsilon), Im(\epsilon)) \notag \\
& = \| \epsilon\|_{H^1}^2 - \frac {d+4} d (Q^{\frac 4d}
Re(\epsilon), Re(\epsilon)) -(Q^{\frac 4d} Im(\epsilon),
Im(\epsilon)). \label{eq_Nov14_1}
\end{align}
By conservation of mass, we have
\begin{align} \label{eq_Nov14_2}
\| \epsilon \|_{L_x^2}^2 + 2 (Re(\epsilon), Q) =0.
\end{align}
From conservation of energy, we get
\begin{align*}
\lambda_0^2 E(u) & = \frac 12 \| \nabla Q+ \nabla \epsilon \|^2_{L_x^2} - \frac d {2(d+2)} \| Q+ \epsilon\|^{\frac{2(d+2)}d}_{L_x^{\frac {2(d+2)} d}} \\
& = \frac 12 \| \nabla Q\|_{L_x^2}^2 - (\Delta Q, Re(\epsilon)) + \frac 12 \|\nabla \epsilon \|_{L_x^2}^2 \\
& \qquad -\frac d {2(d+2)} \| Q+ \epsilon \|_{L_x^{\frac{2(d+2)}
d}}^{\frac {2(d+2)} d}.
\end{align*}
Since the ground state $Q$ satisfies
\begin{align*}
-\Delta Q = Q^{1+\frac 4d} -Q,
\end{align*}
and by the sharp Gagliardo-Nirenberg inequality
\begin{align*}
\| \nabla Q\|^2_{L_x^2} = \frac d {d+2} \| Q \|^{\frac{2(d+2)}
d}_{L_x^{\frac{2(d+2)} d}},
\end{align*}
we obtain
\begin{align}
2 \lambda_0^2 E(u) & = \frac d{d+2} \left( \|
Q\|^{\frac{2(d+2)}d}_{L_x^{\frac{2(d+2)}d}}
-\|Q+\epsilon \|^{\frac{2(d+2)}d}_{L_x^{\frac{2(d+2)}d}} \right)+ \| \nabla \epsilon \|^2_{L_x^2} \notag \\
& \qquad - 2 (Q, Re(\epsilon)) + 2 (Q^{1+\frac 4d}, Re(\epsilon)).
\label{eq_Nov14_3}
\end{align}
Adding together \eqref{eq_Nov14_2} and \eqref{eq_Nov14_3}, we get
\begin{align}
2 \lambda_0^2 E(u) & = \frac d{d+2} \left( \|
Q\|^{\frac{2(d+2)}d}_{L_x^{\frac{2(d+2)}d}}
-\|Q+\epsilon \|^{\frac{2(d+2)}d}_{L_x^{\frac{2(d+2)}d}} \right)+ \| \epsilon \|^2_{H^1} \notag \\
& \qquad  + 2 (Q^{1+\frac 4d}, Re(\epsilon)). \label{eq_Nov14_4}
\end{align}
Substitute \eqref{eq_Nov14_4} into RHS of \eqref{eq_Nov14_1}, we
have
\begin{align} \label{eq_Nov14_5}
\sigma \| \epsilon \|_{L_x^2}^2 \le (L\epsilon, \epsilon) \le 2
\lambda_0^2 E(u) + \frac d{d+2} \int_{\R^d} |F(\epsilon)(y)| dy,
\end{align}
where
\begin{align*}
 F(\epsilon) &= |Q +\epsilon|^{\frac{2(d+2)}d} - Q^{\frac{2(d+2)} d} - \frac {d+2} d Q^{\frac 4d} (Im(\epsilon))^2 \\
& \qquad - \frac {2(d+2)} d Q^{1+\frac 4d} (Re(\epsilon)) - \frac
{d+2} d \cdot \frac {d+4} d \cdot Q^{\frac 4d} (Re(\epsilon))^2.
\end{align*}
At this moment, we need the following lemma which gives a bound of
$F(\epsilon)$.
\begin{lem} \label{bdd_Feps}
 There exists some constant $B_1>0$ depending only on the dimension $d$ such that
\begin{align} \label{eq_bddFeps_1}
 \int_{\R^d} |F(\epsilon)(y) |d y \le B_1 \cdot \| \epsilon \|^{\frac{2(d+2)} d}_{L_x^{\frac{2(d+2)} d}},
\quad \text{if $d\ge 4$},
\end{align}
and
\begin{align} \label{eq_bddFeps_1a}
 \int_{\R^d} |F(\epsilon)(y)|d y \le B_1 \cdot \left( \| \epsilon \|^{\frac{2(d+2)} d}_{L_x^{\frac{2(d+2)} d}}
+ \| \epsilon\|^3_{L_x^3} \right), \quad \text{if $1\le d\le 3$}.
\end{align}
\end{lem}
Postponing the proof of Lemma \ref{bdd_Feps} for the moment, we now
show how to finish the proof of step 2. Let first $d\ge 4$. Then by
Lemma \ref{bdd_Feps} and \eqref{eq_Nov14_5}, we obtain
\begin{align}
 \sigma \| \epsilon \|_{L_x^2}^2 & \le ( L\epsilon, \epsilon) \notag \\
& \le 2 \lambda_0^2 E(u) + B_1 \cdot \| \epsilon
\|_{L_x^{\frac{2(d+2)} d}}^{\frac{2(d+2)} d}. \label{eq_Nov14_6}
\end{align}
By definition of $L_+$, $L_-$, we have
\begin{align*}
 \| \epsilon \|^2_{H^1} & \le (L\epsilon,\epsilon)+ \left( \frac 4d+1 \right) \int_{\R^d} Q^{\frac 4d} Re(\epsilon)^2 dy
+ \int_{\R^d} Q^{\frac 4d} Im(\epsilon)^2 dy \\
& \le C(d) \cdot \| \epsilon \|_{L_x^2}^2 + 2\lambda_0^2 E(u) +B_1 \cdot \| \epsilon \|_{L_x^{\frac{2(d+2)} d}}^{\frac{2(d+2)} d}\\
& \le \lambda_0^2 \cdot \left( \frac{2 C(d)} {\sigma} +1 \right)
E(u) +B_1 \cdot \left( \frac{C(d)}{\sigma} +1 \right) \cdot \|
\epsilon \|_{L_x^{\frac{2(d+2)} d}}^{\frac{2(d+2)} d},
\end{align*}
where $C(d)$ is a constant depending on the dimension $d$, and the
last inequality follows from \eqref{eq_Nov14_6} together with
Sobolev embedding. Clearly now \eqref{eq_modulate_10_55} follows by
taking $\eta$ sufficiently small and using Lemma \ref{lem_nonquant}.
This finishes the proof of Claim 3 in the case $d\ge 4$. The case
$1\le d\le 3$ is similar by using Lemma \ref{bdd_Feps}. We omit the
details.

\texttt{Step 3}: We show that claim (4) holds. If $u$ is even, then
instead of \eqref{ep111} we define
\begin{align*}
 \epsilon_{\lambda,\gamma}(y)=e^{i\gamma} \lambda^{\frac d2} u(\lambda y) -Q(y).
\end{align*}
Since $u$ is even, by direct computation we have
$Re(\epsilon_{\lambda,\gamma})$ is orthogonal to the neutral
directions of $L_+$:
\begin{align*}
 (Re(\epsilon_{\lambda,\gamma}),\, Q_{x_i}) =0, \quad\forall\, 1\le i\le d.
\end{align*}
Therefore we only need to take care of two directions: the negative
direction of $L_+$ and the neutral direction of $L_-$. Similar to
Step 1, we define two functionals
\begin{align*}
 &\rho_1(u,\lambda,\gamma) = \int_{\R^d} Re(\epsilon_{\lambda,\gamma}(y)) W(y) dy, \\
& \rho_2(u,\lambda,\gamma) = \int_{\R^d}
Im(\epsilon_{\lambda,\gamma}(y)) Q(y) dy.
\end{align*}
By same calculations, we find that the Jacobian
\begin{align*}
 & \left |\frac{\partial(\rho_1,\rho_2)}{\partial(\lambda,\gamma)} \right|_{(\lambda,\gamma,u)=(1,0,Q)} \\
= & \frac 2 {\lambda_0} \int_{\R^d} W(y) Q(y) dy \cdot \|
Q\|^2_{L_x^2} \ne 0.
\end{align*}
The rest of the argument now follows essentially the same as in Step
1. We omit the details. This finishes the proof of Claim (4).
\end{proof}
Next we complete the
\begin{proof}[Proof of Lemma \ref{bdd_Feps}]
 We separate the integral on the LHS of \eqref{eq_bddFeps_1} into two regions. In the first region
$\{x:\; |\epsilon (x) | \gtrsim Q(x) \}$, we have the pointwise
estimate
\begin{align*}
 |F(\epsilon)(x) | \lsm | \epsilon (x) |^{\frac{2(d+2)} d},
\end{align*}
and therefore
\begin{align} \label{eq_bddFeps_2}
 \int_{|\epsilon(x) | \gtrsim Q(x)} |F(\epsilon)(x) | dx \lsm \| \epsilon \|^{\frac{2(d+2)}d}_{L_x^{\frac{2(d+2)}d}},
\end{align}
which is clearly good for us.

In the second region $\{x:\; |\epsilon(x) | \ll Q(x) \}$, we can
write
\begin{align*}
 F(\epsilon)(x) = Q(x)^{\frac{2(d+2)} d} \cdot G(Q(x)^{-1} \epsilon (x)),
\end{align*}
where
\begin{align*}
 G(z) &= |1+z|^{\frac{2(d+2)}d} - 1 -\frac{d+2}d (Im(z))^2 - \frac{2(d+2)} d Re(z) \\
& \qquad - \frac{d+2}d \cdot \frac {d+4} d (Re(z))^2.
\end{align*}
In the region $|z| \ll 1$, by expanding the function
$|1+z|^{\frac{2(d+2)} d}$ into power series, it is not difficult to
check that
\begin{align*}
 |G(z) | \lsm |z|^3, \quad \text{if $|z| \ll 1$}.
\end{align*}
Therefore we obtain
\begin{align} \label{eq_bddFeps_4}
 \int_{|\epsilon(x)| \ll Q(x)} |F(\epsilon)(x) | dx
\lsm \int_{|\epsilon (x) | \ll Q(x)} Q(x)^{\frac{2(d+2)} d} \cdot
\left| Q(x)^{-1} \epsilon(x) \right|^3 dx.
\end{align}
Now we discuss two cases. If $d\ge 4$, then since $\frac{2(d+2)}d
\le 3$,
\begin{align*}
 | \text{RHS of \eqref{eq_bddFeps_4} }| & \lsm \int_{|\epsilon(x)| \ll Q(x)}
|Q(x)^{-1} \epsilon (x) |^{3-\frac{2(d+2)} d} \cdot |\epsilon (x) |^{\frac{2(d+2)} d} dx \\
& \lsm \| \epsilon \|^{\frac{2(d+2)} d}_{\frac{2(d+2)} d}.
\end{align*}
This together with \eqref{eq_bddFeps_2} gives \eqref{eq_bddFeps_1}.
Finally if $1\le d\le 3$, then we have
\begin{align*}
 | \text{RHS of \eqref{eq_bddFeps_4} }| & \lsm \int_{|\epsilon(x)| \ll Q(x)} Q(x)^{\frac 4d-1} |\epsilon(x)|^3 dx \\
& \lsm \| \epsilon \|_{L_x^3}^3,
\end{align*}
which together with \eqref{eq_bddFeps_2} gives
\eqref{eq_bddFeps_1a}. The lemma is proved.
\end{proof}

We now complete the
\begin{proof}[Proof of Proposition \ref{prop_nonsharp}]
Let $u \in H_x^1(\R^d)$ and $M(u) =M(Q)$. Define the rescaled
function $\tilde u = \mu^{\frac d2} u(\mu x)$ where $\mu =\|\nabla
Q\|_2 / \|\nabla u\|_2$. Clearly then
\begin{align*}
 \| \tilde u\|_2 =\| Q\|_2, \quad \text{and} \quad \| \nabla \tilde u\|_2 =\| \nabla Q\|_2,
\end{align*}
also
\begin{align*}
 E(\tilde u) = \frac{\| \nabla Q\|_2^2} {\| \nabla u\|_2^2} E(u).
\end{align*}
Now let the constants $\eta=\eta(d)>0$, $C=C(d)>1$, $K=K(d)>0$ be
the same as in Lemma \ref{lem_modulate}. Define $C_1=\|\nabla
Q\|_{2}^2 /\eta$. Then obviously if $C_1 E(u) \le \|\nabla u\|_2^2$,
then $E(\tilde u) \le \eta$. By Lemma \ref{lem_modulate}, we can
then write
\begin{align} \label{3_39a}
 \tilde \epsilon (x) = e^{i \tilde \gamma_0} {\tilde \lambda_0}^{\frac d2} \tilde u (\tilde \lambda_0 x +\tilde x_0)
-Q(x),
\end{align}
where $\frac 1 C \le \tilde \lambda_0 \le C$, and
\begin{align} \label{3_39b}
 \| \tilde \epsilon \|_{H_x^1} \le K \sqrt{E(\tilde u)}.
\end{align}
Expressing \eqref{3_39a}, \eqref{3_39b} in terms of the original
function $u$, we then obtain
\begin{align*}
 u(x) & = e^{- i \tilde \gamma_0} (\tilde \lambda_0 \mu)^{-\frac d2} Q(\frac{x-\mu \tilde x_0} {\tilde \lambda_0 \mu})
+e^{- i \tilde \gamma_0} (\tilde \lambda_0 \mu)^{-\frac d2} \tilde \epsilon (\frac{x-\mu \tilde x_0} {\tilde \lambda_0 \mu}) \\
& = e^{i\theta_0} \lambda^{\frac d2} Q(\lambda(x-x_0)) +\epsilon,
\end{align*}
where $\theta_0 = -\tilde \gamma_0$, $x_0 = \mu \tilde x_0$,
$\lambda= (\tilde \lambda_0 \mu)^{-1} = {\tilde \lambda_0}^{-1} \|
\nabla u\|_2/\|\nabla Q\|_2$, and
\begin{align*}
 \| \epsilon \|_{H_x^1} & \le (1+\lambda) K \sqrt{E(\tilde u)} \\
& \le \frac {1+\lambda} {\lambda} \cdot K \cdot \sqrt{E(u)} \\
& \le K \cdot \frac {C \cdot \|\nabla Q\|_2} {\sqrt{C_1}} + K \sqrt{E(u)} \\
& \lsm_d 1+\sqrt{E(u)}.
\end{align*}
Choose $C_2=C$, and it is clear that \eqref{3_39c} holds. Next if
$\|\nabla u\|_2^2 < C_1 E(u)$, then we just set $\theta_0 =0$,
$x_0=0$, $\lambda=1$ and $\epsilon = u-Q$. It is obvious that
\eqref{3_39d} holds in this case. Finally if $u$ is an even function
of $x$, then one can repeat the previous steps with the observation
that Lemma \ref{lem_modulate} holds in this case for $x_0=0$. We
omit the details.
\end{proof}

Finally we finish the
\begin{proof}[Proof of Lemma \ref{lem_uni_bdd}]
Let $u\in H_x^1(\R^d)$ be an even function of $x$ and $M(u)=M(Q)$,
$E(u)=E_0$. Let $C_1=C_1(d)>0$, $C_2=C_2(d)>0$ be the same constants
as in Proposition \ref{prop_nonsharp}. By Proposition
\ref{prop_nonsharp},  we can write
\begin{align} \label{3_39dd}
 u(x) = \lambda^{\frac d2} e^{i\theta_0} Q( \lambda x) + \epsilon (x),
\end{align}
where
\begin{align*}
          \frac 1{C_2}  \cdot \frac {\| \nabla u\|_{L_x^2}} {\| \nabla Q\|_{L_x^2}}
\le \lambda \le C_2 \cdot \frac {\| \nabla u\|_{L_x^2}} {\| \nabla
Q\|_{L_x^2}}, \qquad \text{if $ {\|\nabla u\|_{L_x^2}^2}  \ge C_1
E_0 $},
\end{align*}
and
\begin{align*}
 \lambda =1, \qquad \text{if $ {\|\nabla u\|_{L_x^2}^2}  < C_1 E_0 $},
\end{align*}
also
\begin{align} \label{3_39d1}
 \| \epsilon \|_{H_x^1} \lesssim_{d,E_0} 1.
\end{align}
Now if ${\|\nabla u\|_{L_x^2}^2}  < C_1 E_0 $, then \eqref{3_40a}
holds trivially in this case. If ${\|\nabla u\|_{L_x^2}^2}  \ge C_1
E_0 $, then due to the decomposition \eqref{3_39dd} and the bound
\eqref{3_39d1}, we only need to estimate the LHS of \eqref{3_40a}
assuming $\epsilon=0$ in \eqref{3_39dd}, that is
\begin{align*}
\text{LHS of \eqref{3_40a}} & \lsm_{c_0,d} \left(
\int_{|x|>\frac{c_0}{100}} \lambda^{d+2} |(\nabla Q)(\lambda x)|^2
dx \right)^{\frac 12}
+ \left( \int_{|x| > \frac{c_0}{100}} \lambda^d |Q(\lambda x)|^2 dx \right)^{\frac 12} \\
& \lsm_{c_0,d} 1 + (\lambda^2+1) \int_{|y|>\frac{\lambda c_0}{100}} (|\nabla Q(y)|^2+ |Q(y)|^2) dy \\
& \lsm_{c_0,d} 1 + \frac {\lambda^2+1}{\lambda^2} \int_{\R^d} |y|^2 (|\nabla Q(y)|^2+ |Q(y)|^2) dy \\
& \lsm_{c_0,d,E_0} 1,
\end{align*}
where we have used the fact that $Q$ is a Schwartz function. This
finishes the proof of the lemma.
\end{proof}

%
%
%
%
\section{Reduction of the proof}\label{main_body}

In this section, we first prove Theorem
\ref{main} by assuming the kinetic energy localization Theorem
\ref{T:kinetic_loc}, \ref{T:kine_hd}. Then we will explain how the
kinetic energy localization result Theorem \ref{T:kinetic_loc},
Theorem \ref{T:kine_hd} can be derived from two Propositions:
Proposition \ref{local} and Proposition \ref{fre_decay}. Therefore
our task is reduced to proving the two Propositions in all dimensions
$d\ge 2$ with admissable symmetry, which will be done in next
sections.

\vspace{0.2cm}

{\textbf{Proof of Theorem \ref{main}}
\begin{proof}
Let $u$ be the solution of \eqref{nls} satisfying the conditions in
Theorem \ref{main}, that is

$\cdot$ $M(u)=M(Q)$;\\

$\cdot$ $u$ is a global solution and does not scatter forward in
time in the sense that
\begin{align*}
\|u\|_{L_{t,x}^{\frac{2(d+2)}d}(\srd)}=\infty.
\end{align*}

$\cdot$ $u_0$ verifies the admissible symmetry.

Then our aim is to show that actually, the corresponding solution
has to be spherically symmetric and there exists $\lambda_0>0$,
$\theta_0\in\R$ such that
\begin{align*}
u(t,x)=\lambda_0^{-\frac d2} e^{i\theta_0}e^{it/\lambda_0^2}Q(\frac
x{\lambda_0}).
\end{align*}

From the sharp Gagliardo-Nirenberg inequality, we observe that the
energy of $u$ is always non-negative. If $E(u_0)=0$, the variational
characterization of the ground state Proposition \ref{P:variational}
together with the condition $M(u)=M(Q)$ yields that: $u_0$ is
spherically symmetric and there exists $\lambda_0>0,\theta_0\in\R$
such that
\begin{align*}
u_0(x)=\lambda_0^{-\frac d2}e^{i\theta_0}Q(\frac x{\lambda_0}).
\end{align*}
The uniqueness of the solution then implies that the corresponding
solution is spherically symmetric, moreover
\begin{align*}
u(t,x)=\lambda_0^{-\frac d2}e^{i\theta_0}e^{it/\lambda^2_0}Q(\frac
x{\lambda_0}).
\end{align*}

So we only need to consider the positive energy case and try to get
a contradiction in this case. 
First we can show the strong localization of the kinetic energy.
From the results in \cite{ktv:2d,kvz:blowup} for the radial case and
assuming the scattering conjecture holds for the splitting-spherical
symmetric case, we conclude that the ground state $M(Q)$ can be
identified as the minimal mass in all dimensions $d\ge 2$ with
admissible symmetry. Therefore $u$ satisfies the Duhamel formula
\eqref{duhamel}. Moreover, from Lemma \ref{lem_uni_bdd}, $u$
satisfies the weak localization of the kinetic energy since
$M(u)=M(Q)$ and $u\in H_x^1$. The strong localization of the kinetic
energy then follows immediately from Theorem \ref{T:kinetic_loc},
Theorem \ref{T:kine_hd}. That is, $\forall\ \eta>0$, there exists
$C(\eta)>0$ such that
\begin{align}
\|\phi_{>C(\eta)}\nabla u(t)\|_{\ltrd}<\eta.\label{sca_kine}
\end{align}

Now we get the contradiction from the truncated virial argument.

Let $\phi_{\le R}$ be the smooth cutoff function, we define the
truncated virial as
\begin{align*}
V_R(t):=\int {\phi_{\le R}}(x)|x|^2 |u(t,x)|^2 dx.
\end{align*}
Obviously,
\begin{align}
V_R(t)\lsm R^2,\; \ \forall\, t\in \R.\label{vr_bdd}
\end{align}
On the other hand, we compute the second derivative of $V_R(t)$,
this gives
\begin{align}
&\partial_{tt} V_R(t)= 8E(u)\notag\\
&+O\biggl(\int_{|x|>R}|\nabla u(t,x)|^2+|u(t,x)|^{\frac{2(d+2)}d}
dx+\frac 1{R^2}\int_{|x|>R}|u(t,x)|^{2}dx\biggr).\label{vir}
\end{align}
Since $E(u)>0$, from \eqref{sca_kine} and by taking $R$ large
enough, we have
\begin{align*}
\int_{|x|>R}|\nabla u(t,x)|^2dx+\frac 1{R^2}\int_{|x|>R}|u(t,x)|^2
dx\le \frac 1{100}E(u),\ \forall\; t\ge 0.
\end{align*}
Moreover, from the Gagliardo-Nirenberg inequality, we have
\begin{align*}
\int_{|x|>R} |u(t,x)|^{\frac{2(d+2)}d}dx&\lsm
\|\phi_{>R/2}u(t)\|_{L_x^{\frac{2(d+2)}d}(\R^d)}^{\frac{2(d+2)}d}\\
&\lsm \|\phi_{>R/2} u(t)\|_{\ltrd}^{\frac
4d}\|\nabla(\phi_{>R/2}u(t))\|_{\ltrd}^2\\
&\lsm \frac 1{100} E(u).
\end{align*}
Therefore \eqref{vir} finally gives
\begin{align*}
\partial_{tt} V_R(t)\ge 4 E(u)>0, \ \forall t\ge 0
\end{align*}
which is a contradiction to \eqref{vr_bdd}.

The proof of Theorem \ref{main} is then finished.
\end{proof}

Now let us explain how the kinetic energy localization property
Theorem \ref{T:kinetic_loc} and Theorem \ref{T:kine_hd} can be
derived from the following two

\begin{prop}(Frequency decay estimate)\label{fre_decay}
Let $d\ge 2$. Let $u(t,x)\in H_x^1(\R^d)$ be a global solution to
\eqref{nls} forward in time and have the admissible symmetry.
Assume also $u$ satisfy the Duhamel formula \eqref{duhamel} and is
weakly localized in $H_x^1(\R^d)$ in the following sense
\begin{align}
\|\phi_{\gtrsim 1} \nabla u(t)\|_{\ltrd}\lsm 1, \ \forall\, t\,\ge
0.\label{weak_com}
\end{align}
Then there exists $\eps=\eps(d)$ such that, for any dyadic number
$N\ge 1$, we have
\begin{align*}
\|\phi_{>1}\pn u(t)\|_{\ltrd}\lsm \|\pntd u_0\|_{\ltrd}+N^{-1-\eps}.
\end{align*}
\end{prop}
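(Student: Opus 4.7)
The plan is to prove Proposition~\ref{fre_decay} by pairing an in-out decomposition adapted to the admissible symmetry with the Duhamel formula in the appropriate time direction, following the framework of \cite{ktv:2d, kvz:blowup, klvz}. I would first write
\[
\phi_{>1} \pn u(t) = \phi_{>1} P_N^+ u(t) + \phi_{>1} P_N^- u(t),
\]
and then, using the forward Duhamel formula \eqref{duhamel} for the outgoing piece and the Duhamel formula from $t_0 = 0$ for the incoming piece, express these as
\begin{align*}
\phi_{>1} P_N^+ u(t) &= -i \lim_{T\to\infty}\int_t^T \phi_{>1} e^{i(t-s)\Delta} P_N^+ (|u|^{4/d}u)(s)\,ds, \\
\phi_{>1} P_N^- u(t) &= \phi_{>1} P_N^- e^{it\Delta} u_0 + i\int_0^t \phi_{>1} e^{i(t-s)\Delta} P_N^- (|u|^{4/d}u)(s)\,ds.
\end{align*}
The linear data contribution is bounded by $\|\pntd u_0\|_{\ltrd}$ after writing $P_N^- = P_N^- \pntd$ and applying Proposition~\ref{P:P properties}\,(ii), since $|x|>1\gtrsim 1/N$. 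The crucial feature of this pairing is that it matches the time direction of each Duhamel integral with the in-out projection, placing us in the regime where parts (iv)--(v) of Proposition~\ref{P:P properties} deliver the strong off-cone decay for the kernels of $P_N^+ e^{-i\tau\Delta}$ and $P_N^- e^{i\tau\Delta}$ with $\tau\ge 0$.

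For each nonlinear Duhamel integral I would then split the source term spatially,
\[
|u|^{4/d}u = \phi_{\le R(\tau)}|u|^{4/d}u + \phi_{>R(\tau)}|u|^{4/d}u,
\]
choosing the threshold $R(\tau)\sim 1+N|\tau|$ with $\tau = t-s$ so as to separate the part propagating on the light cone toward $\{|x|\sim 1\}$ from the rest. On the far-from-origin region the weak localization hypothesis \eqref{weak_com} yields uniform $H^1_x$ control, and the full nonlinear power $1+4/d$ can be estimated by the weighted Strichartz inequality Lemma~\ref{L:wes} combined with the radial Sobolev embedding Lemma~\ref{L:radial_embed}, both of which exploit the admissible symmetry; the weight $|x|^{-2(d-1)/q}$ produces a quantitative gain in $N$ because $|x|$ is large on the support. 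The Littlewood--Paley projections are commuted past spatial cutoffs via the mismatch estimates Lemmas~\ref{L:mismatch_real}--\ref{L:mismatch_fre}, and the fractional chain rule Lemma~\ref{lem_chain} lets me distribute $|\nabla|^s$ gains across the product.

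For the near-origin region $|x|\le R(\tau)$, where uniform $H^1_x$ control is unavailable and the full nonlinear power is dangerous, I would apply the \emph{linear flow trick} announced in the introduction: one factor of $u(s)$ inside $|u|^{4/d}u$ is replaced by its own Duhamel expansion, so that effectively a linear propagator acts on one of the arguments and, after an additional time integration, the contribution is controlled by mere mass. The in-out kernel bounds of Lemma~\ref{L:kernel} and Proposition~\ref{P:P properties}\,(iv)--(v) then yield a decay factor $N^{-m}$ that dominates the combinatorial loss from the high power. For $d=2$ the logarithmic corrections recorded in Proposition~\ref{Psmall_properties} are needed to handle the Bessel singularity in the regime $|x|\sim N^{-1}$.

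The main obstacle will be this last step: quantitatively balancing the gains from the in-out kernel bounds, the mismatch estimates, and the mass-smallness from the linear flow trick against the losses from the high nonlinear power and from the fact that $P^\pm$ is only bounded away from the origin. A careful dyadic decomposition in the threshold $R(\tau)$ and a separate treatment of short time $|\tau|\lesssim N^{-2}$ versus long time $|\tau|\gtrsim N^{-2}$ will be needed so that the time integration converges with a net gain of $N^{-1-\eps}$ for some explicit $\eps=\eps(d)>0$. In the splitting-spherical setting of $d\ge 4$ the anisotropy forces a further decomposition with sub-dimensional cutoffs and sub-dimensional in-out projections, and the hypotheses on the fold number $k$ and on the smallest block $d_1$ in Definition~\ref{admis} are dictated precisely by the bookkeeping required at this stage.
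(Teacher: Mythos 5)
Your overall architecture is the paper's: in-out decomposition matched with forward Duhamel for the outgoing wave and backward Duhamel (from $t_0=0$) for the incoming wave, a spatial threshold $\sim 1+N\tau$ to split the source, weighted Strichartz and radial Sobolev on the far region using \eqref{weak_com}, mismatch estimates to commute cutoffs with Littlewood--Paley projections, the $d=2$ logarithmic correction for $|x|\sim N^{-1}$, and the sub-dimensional variant for $d\ge 4$. The gain mechanism you identify on the far region (weighted Strichartz plus Bernstein on $P_{>N/8}F$) is also exactly how the paper obtains $N^{-3/2}$.

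The genuine gap is in your description of the ``linear flow trick'' for the near-origin piece $|x|\le R(\tau)$. You propose to replace one factor of $u$ inside $|u|^{4/d}u$ by its own Duhamel expansion so that ``a linear propagator acts on one of the arguments.'' This does not reduce the nonlinear power: the remaining $|u(s)|^{4/d}$ factor is still present and supported precisely where \eqref{weak_com} gives you no control, and the Duhamel-expanded factor contributes a second nonlinear integral that is worse, not better. What the paper actually does (Lemma~\ref{replace} and Lemma~\ref{replace_new}) is to use the equation to replace the \emph{entire} nonlinearity by the linear operator, $F(u)=(i\partial_\tau+\Delta)u$, and then integrate by parts in $\tau$: the identity
\begin{align*}
e^{-i\tau\Delta}\bigl(\phi(i\partial_\tau+\Delta)u\bigr)
= i\partial_\tau\bigl(e^{-i\tau\Delta}\phi u\bigr) - e^{-i\tau\Delta}\bigl(u\Delta\phi+2\nabla\phi\cdot\nabla u\bigr)
\end{align*}
turns $\int e^{-i\tau\Delta}\phi_{\le R(\tau)}F(u)\,d\tau$ into boundary terms of the form $e^{-ia\Delta}(\phi_{\le R(a)}u(t+a))$ plus commutator terms in which the derivatives fall on the cutoff, so the integrand is \emph{linear} in $u$ (or $\nabla u$ against a cutoff supported at radius $\gtrsim 1$). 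These are controlled by $\|u\|_{L^2}$ and by \eqref{weak_com}, and the in-out kernel estimates of Proposition~\ref{P:P properties} then give the rapid decay. When the cutoff radius depends on $\tau$, an extra transport term $\frac{y}{N\tau^2}\cdot(\nabla\phi)(\frac{y}{N\tau})u$ appears (Lemma~\ref{replace_new}), which you would also need to account for. Without this step, the near-origin contributions in \eqref{spt}, \eqref{mpt} and \eqref{lpt} cannot be closed.
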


\begin{prop}(Spatial decay estimate)\label{local} Let $u$ satisfy
the same conditions as in Proposition \ref{fre_decay}. Let $N_1,
N_2$ be two dyadic numbers. Then there exist $\delta=\delta(d)$ and
$R_0=R_0(N_1,N_2,u)$ such that for any $N_1<N\le N_2$ and $R>R_0$,
\begin{align*}
\|\phi_{>R}\pn u(t)\|_{\ltrd}\lsm \|\phi_{>\sqrt
R}u_0\|_{\ltrd}+R^{-\delta(d)}, \ \forall\, t\ge 0.
\end{align*}
\end{prop}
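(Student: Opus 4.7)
The strategy mirrors the in-out decomposition argument already used for the frequency decay estimate (Proposition \ref{fre_decay}), adapted to produce pointwise-in-time \emph{spatial} decay. I will describe the spherically symmetric case ($d=2,3$) in detail; the splitting-spherical case requires analogous but more involved sub-dimensional decompositions as advertised in the introduction. Since $N_1 < N \le N_2$, constants depending on $N$ are harmless and can be absorbed into the threshold $R_0(N_1, N_2, u)$.

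First, using Proposition \ref{P:P properties}(i), write $P_N u(t) = P_N^+ u(t) + P_N^- u(t)$. For the outgoing piece I invoke the future Duhamel formula of Lemma \ref{duhamel L}, so that
\[
\phi_{>R}\, P_N^+ u(t) \;=\; -\,i\,\phi_{>R} \int_t^{\infty} e^{i(t-s)\Delta} P_N^+ \bigl(|u|^{\frac 4d} u\bigr)(s)\, ds.
\]
I then split the $y$-integration of the kernel $[P_N^+ e^{-i(s-t)\Delta}](x,y)$ into $|y|\le R/2$ (``mismatch'') and $|y|>R/2$ (``good''). In the mismatch region, Proposition \ref{P:P properties}(iv) with $\tau=s-t\ge 0$ gives $N^2\tau+N|x|-N|y|\gtrsim NR$, so the kernel decays like $\langle NR\rangle^{-m}$ for any $m$, trivially producing $O(R^{-m})$. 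In the good region, the weak localization hypothesis \eqref{weak_com} and mass conservation give uniform $H^1$ control on $\phi_{>R/2} u(s)$; radial Sobolev (Lemma \ref{L:radial_embed}) converts this into a pointwise bound $|u(s,y)|\lesssim |y|^{-(d-1)/2}$, so that $|u|^{1+4/d}$ decays like $|y|^{-(d-1)(d+4)/(2d)}$ and is integrable against the outgoing kernel along the traveling ray $|y|\sim|x|+N\tau$, yielding a contribution $\lsm R^{-\delta(d)}$.

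Second, for the incoming piece I use the past Duhamel formula
\[
\phi_{>R}\, P_N^- u(t) \;=\; \phi_{>R}\, P_N^- e^{it\Delta} u_0 \;+\; i\,\phi_{>R} \int_0^{t} e^{i(t-s)\Delta} P_N^- \bigl(|u|^{\frac 4d} u\bigr)(s)\, ds.
\]
The kernel $[P_N^- e^{+i(t-s)\Delta}](x,y)$ is concentrated on $|y|-|x|\sim N(t-s)\ge 0$, i.e.\ at $|y|>R$ whenever $|x|>R$. For the linear term I split $u_0$ at radius $\sqrt R$: the contribution from $|y|>\sqrt R$ is bounded by $\|\phi_{>\sqrt R} u_0\|_{\ltrd}$ using the $L^2$-boundedness of $P_N^-$ in Proposition \ref{P:P properties}(ii), while the contribution from $|y|\le \sqrt R$ lies in the mismatch regime with $N|x|-N|y|\gtrsim NR/2$, producing $O_{N_2,m}(R^{-m}\|u_0\|_{\ltrd})$. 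The nonlinear term is treated exactly as in the outgoing case above (mismatch vs. good region split).

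\textbf{Main obstacle.} The real difficulty is producing a \emph{quantitative} decay rate $R^{-\delta(d)}$ in the good region $|y|>R/2$ from only the qualitative bound $\|\phi_{>R/2}\nabla u(s)\|_{\ltrd}\lsm 1$. The radial Sobolev embedding and the spacetime kernel decay have to be combined carefully so as to integrate in $\tau=|s-t|$ uniformly in $t$, and the in-out split must be chosen to avoid invoking pointwise decay in the region $|y|\lesssim N^{-1}$ where Bessel-type singularities appear (cf.\ Proposition \ref{Psmall_properties}). For the weighted estimates, Lemma \ref{L:wes} together with Lemma \ref{L:radial_embed} provides the needed integrability. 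In the splitting-spherical setting the decay on $u$ is only radial within each $\R^{d_i}$-factor, so the above ``mismatch vs.\ good'' dichotomy must be performed independently in each subspace; the resulting cross-terms are the principal source of technical complication and force the restrictions on the fold $k=2$ and on $d_1$ in Definition \ref{admis}.
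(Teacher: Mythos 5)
Your overall framework is correct and matches the paper's: decompose $P_N u = P_N^+ u + P_N^- u$, use the forward Duhamel formula (Lemma \ref{duhamel L}) for the outgoing part and the backward Duhamel formula for the incoming part plus the linear term, split at $\sqrt R$ for the linear term, and then handle the nonlinearity via a mismatch/good dichotomy using the kernel estimates of Proposition \ref{P:P properties} and the weighted Strichartz inequality. Your treatment of the linear term and the mismatch ($|y|$ small) region is essentially right.

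However there is a genuine gap in the treatment of the good region, and it is precisely at the point you flag as the "main obstacle." You propose a \emph{time-independent} split of the nonlinearity at radius $R/2$. This does not produce integrability in $\tau$. Concretely, dyadically decompose the time integral over $\tau\in[2^k,2^{k+1}]$ and apply the weighted Strichartz estimate (Lemma \ref{L:wes}) in $d=2$ with the cutoff $\phi_{>R/2}$: one gets a factor $R^{-1/2}$ from the weight, but the $L^{4/3}_\tau L^1_y$ norm of $F(\phi_{>R/4}u)$ over an interval of length $2^k$ scales like $2^{3k/4}$ (using Lemma \ref{lem_wsbdd}), so the $k$-th piece is of size $R^{-1/2}2^{3k/4}$, which \emph{diverges} when summed in $k$. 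The same failure appears if you instead try the pointwise route you describe: $|u(s,y)|\lsm |y|^{-(d-1)/2}$ for $|y|\gtrsim 1$ gives $|F(u)|\lsm |y|^{-(d-1)(d+4)/(2d)}$, and inserting this into the travelling-ray kernel $(|x||y|)^{-(d-1)/2}\tau^{-1/2}$ and integrating over the shell $|y|-|x|\sim N\tau$, one finds the $\tau$-integral behaves like $\int^\infty \tau^{1/2-2(d-1)/d}d\tau$, which diverges for $d=2$ and $d=3$ (the integrand is $\tau^{-1/2}$ and $\tau^{-5/6}$ respectively).

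The missing ingredient is a $\tau$-\emph{dependent} spatial cutoff for the long-time pieces. The paper splits $F(u)$ at radius $A_k := \gamma R^{1/8}N^{7/8}2^{7k/8}$ on the $k$-th dyadic time interval, chosen so that (i) $A_k \ll R + N2^k$ by Young's inequality, which keeps $|y|\le A_k$ genuinely mismatched against the wavefront at $|y|\sim|x|+N\tau$, and (ii) $A_k^{-1}2^{3k/4}\sim R^{-1/8}N^{-7/8}2^{-k/8}$ is summable in $k$. In addition, with this growing cutoff one can no longer dismiss the tail region by kernel decay alone: the paper replaces $F(u)$ by $(i\partial_\tau+\Delta)u$ in that regime (Lemmas \ref{replace}, \ref{replace_new}) to trade the high-power nonlinearity for a commutator term controlled by the mass and the weak localization hypothesis \eqref{weak_com}, then applies Young with the mismatched kernel. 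Your sketch omits both of these devices; without the growing cutoff the good-region sum diverges, and without the substitution the tail region cannot be controlled uniformly in $\tau$ once the cutoff radius grows. These are not cosmetic refinements but the actual source of the quantitative $R^{-\delta(d)}$ rate.
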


The proofs of Proposition \ref{fre_decay}, \ref{local} will be presented in
later sections. Assuming the two propositions hold for the moment, we now prove the strong kinetic
localization. That is, for any $\eta>0$, there exists $C(\eta)>0$
such that
\begin{align}
\|\phi_{>C(\eta)}\nabla u(t)\|_{\ltrd}\le \eta, \ \forall t\ge
0.\label{our_aim}
\end{align}

Let $N_1(\eta), N_2(\eta)$ be dyadic numbers and $C(\eta)$ a large
constant to be specified later. We estimate the LHS of
\eqref{our_aim} by splitting it into low, medium and high
frequencies:
\begin{align*}
\|\phi_{>C(\eta)}\nabla u(t)\|_{\ltrd}&\lsm \|\phi_{>C(\eta)}\nabla
P_{\le N_1(\eta)}u(t)\|_{\ltrd}\\
&\qquad+\|\phi_{>C(\eta)}\nabla P_{N_1(\eta)<\cdot\le
N_2(\eta)}u(t)\|_{\ltrd} \\
&\qquad+\|\phi_{>C(\eta)}\nabla P_{>N_2(\eta)}
u(t)\|_{\ltrd}
\end{align*}
For the low frequencies, we simply discard the cutoff and use
Bernstein,
\begin{align}\label{low_est}
\|\phi_{>C(\eta)}\nabla P_{\le N_1(\eta)}u(t)\|_{\ltrd}\lsm
N_1(\eta)\|u(t)\|_{\ltrd}\lsm N_1(\eta).
\end{align}
To estimate the medium frequencies, we use Bernstein, mismatch
estimate Lemma \ref{L:mismatch_real} and Proposition \ref{local} to
obtain
\begin{align*}
&\qquad \|\phi_{>C(\eta)}\nabla P_{N_1(\eta)<\cdot\le N_2(\eta)}
u(t)\|_{\ltrd}\\
&\le \sum_{N_1(\eta)<N\le N_2(\eta)}\|\phi_{>C(\eta)}\nabla P_N
u(t)\|_{\ltrd}\\
&\le C(N_1(\eta), N_2(\eta))\max_{N_1(\eta)<N\le
N_2(\eta)}\|\phi_{>C(\eta)}\nabla \pn u(t)\|_{\ltrd}\\
&\le C(N_1(\eta), N_2(\eta))\max_{N_1(\eta)<N\le
N_2(\eta)}\biggl(\|\phi_{>C(\eta)}\nabla P_N\phi_{\le
\frac{C(\eta)}2}\pntd u(t)\|_{\ltrd} \\
&\qquad\qquad+\|\phi_{>C(\eta)}\nabla
P_N\phi_{>\frac{C(\eta)}2}\pntd u(t)\|_{\ltrd}\biggr)\\
&\le C(N_1(\eta), N_2(\eta))\biggl(\max_{N_1(\eta)<N\le N_2(\eta)}
N^{-1}C(\eta)^{-2}+N\|\phi_{\gtrsim\sqrt{C(\eta)}}u_0\|_{\ltrd}+NC(\eta)^{-\delta(d)}\biggr)\\
&\le C(N_1(\eta), N_2(\eta))\biggl(C(\eta)^{-2}+\|\phi_{\gtrsim\sqrt
{C(\eta)}}
u_0\|_{\ltrd}+C(\eta)^{-\delta(d)}\biggr)\\
&\le
C(N_1(\eta),N_2(\eta))\biggl(\|\phi_{\gtrsim\sqrt{C(\eta)}}u_0\|_{\ltrd}+C(\eta)^{-\delta}\biggr).
\end{align*}

For the high frequencies, we first use
\begin{align*}
\|\phi_{>C(\eta)}\nabla P_{>N_2(\eta)}u(t)\|_{\ltrd} \le
\|\nabla(\phi_{>C(\eta)}P_{>N_2(\eta)}u(t))\|_{\ltrd}+\frac
1{C(\eta)}\|u(t)\|_{\ltrd}
\end{align*}
to reduce matters to estimating $\|\nabla
(\phi_{>C(\eta)}P_{>N_2(\eta)}u(t))\|_{\ltrd}$, for which we use
dyadic decomposition, Bernstein, and mismatch estimate Lemma
\ref{L:mismatch_fre} and Proposition \ref{fre_decay} to estimate
\begin{align*}
&\qquad\|\nabla(\phi_{>C(\eta)}P_{>N_2(\eta)}u(t))\|_{\ltrd}^2\\
& \le \sum_{N>\frac{N_2(\eta)}4}\|\nabla
P_N(\phi_{>C(\eta)}P_{>N_2(\eta)}u(t))\|_{\ltrd}^2+\|\nabla P_{\le
\frac{N_2(\eta)}4}(\phi_{>C(\eta)}P_{>N_2(\eta)}u(t))\|_{\ltrd}^2\\
&\lsm \sum_{N>\frac{N_2(\eta)}4}\|\nabla
P_N(\phi_{>C(\eta)}P_{\frac N4<\cdot \le 4N} u(t))\|_{\ltrd}^2\\
&\qquad+\sum_{N>\frac{N_2(\eta)}4}\|\nabla
P_N(\phi_{>C(\eta)}(P_{\le \frac N4}+P_{>4N})u(t))\|_{\ltrd}^2\\
&\qquad
+N_2(\eta)\|P_{\le\frac{N_2(\eta)}2}(\phi_{>C(\eta)}P_{>N_2(\eta)}
u(t))\|_{\ltrd}^2\\
&\lsm \sum_{N\gtrsim{N_2(\eta)}}(N^2\|\pn
u_0\|_{\ltrd}^2+N^{-2\eps})
+\sum_{N>\frac{N_2(\eta)}4}N(C(\eta)N)^{-10}+C(\eta)^{-10}N_2(\eta)^{-9}\\
&\le \|P_{\gtrsim{N_2(\eta)}}\nabla
u_0\|_{\ltrd}^2+N_2(\eta)^{-2\eps}+C(\eta)^{-10}N_2(\eta)^{-9}.
\end{align*}
Therefore, the high frequencies give
\begin{align*}
\|\phi_{>C(\eta)}\nabla
P_{>N_2(\eta)}u(t)\|_{\ltrd}\lsm\|P_{\gtrsim{N_2(\eta)}}\nabla
u_0\|_{\ltrd}+N_2(\eta)^{-\eps}+C(\eta)^{-1}.
\end{align*}
Adding the estimates of three pieces together we obtain
\begin{align*}
\|\phi_{>C(\eta)}\nabla u(t)\|_{\ltrd}&\lsm
N_1(\eta)+C(N_1(\eta),N_2(\eta))(C(\eta)^{-\delta(d)}+\|\phi_{\gtrsim\sqrt{C(\eta)}}u_0\|_{\ltrd})\\
&+\|P_{\gtrsim{N_2(\eta)}}\nabla
u_0\|_{\ltrd}+N_2(\eta)^{-\eps}+C(\eta)^{-1}.
\end{align*}
Now first taking $N_1(\eta)$ sufficiently small, $N_2(\eta)$
sufficiently large depending on $\eta,u_0$, then choosing $C(\eta)$
sufficiently large depending on $\eta$, $N_1(\eta)$, $N_2(\eta)$,
$u_0$, we obtain
\begin{align*}
\|\phi_{>C(\eta)}\nabla u(t)\|_{\ltrd}\le \eta,
\end{align*}
as desired.

\section{Proof of Proposition \ref{fre_decay}, Proposition \ref{local} in 2,3
dimensions}\label{fre_23}

In this Section, we prove Proposition \ref{fre_decay}, \ref{local}
in two and three dimensions. We remind the readers that the only
information we need is that the solution verifies the admissible
symmetry and satisfies the Duhamel formula \eqref{duhamel}, the weak
compactness of the kinetic energy \eqref{weak_com}. In the proof
that follows, we will use these properties many times. We first give

\subsection{Proof of Proposition \ref{fre_decay} in 2,3 dimensions}

Let $u$ satisfy the conditions in Proposition \ref{fre_decay}, we
need to show $\forall N\ge 1$,
\begin{equation*}
\|\phi_{>1}\pn u(t)\|_{\ltrd}\lsm \|\pn u_0\|_{\ltrd}+N^{-1-\frac
1{4}}, \forall t\ge 0.
\end{equation*}
We first remark the following proof will also apply if we perturb
the cutoff function by $\phi_{>c}$ for a fixed constant $c$ and the
frequency projection $\pn$ by the fattened one $\pntd$.

We begin by projecting $u(t)$ onto incoming and outgoing waves. For
the incoming wave, we use Duhamel formula backward in time, for the
outgoing waves, we use Duhamel formula forward in time. We thus
write
\begin{align}
\phi_{>1}\pn u(t)&=\phi_{>1}\icm u(t)+\phi_{>1}\otg u(t)\notag\\
&=\phi_{>1}\icm\propt u_0\label{linear}\\
&\quad-i\phi_{>1}\int_0^t\icm
e^{i\tau\Delta}F(u(t-\tau))d\tau\label{in}\\
&\quad+i\phi_{>1}\int_0^{\infty}\otg \propgto
F(u(t+\tau))d\tau.\label{out}
\end{align}
The last integral should be understood in the weak $\ltrd$ sense. We
first control the linear term by Strichartz estimate:
\begin{equation*}
\eqref{linear}\lsm \|\pn u_0\|_{\ltrd}.
\end{equation*}

Now we look at the last two terms. Since the contribution from
\eqref{in} and \eqref{out} will be estimated in the same way, we
only give the details of the estimate of \eqref{out}. To proceed, we
first split it into different time pieces
\begin{align*}
\eqref{out}&=i\phi_{>1}\int_0^{\frac 1{N}}\otg\propgto
F(u(t+\tau))d\tau\\
&\quad+i\phi_{>1}\int_{\frac 1{N}}^1\otg\propgto
F(u(t+\tau))d\tau\\
&\quad+i\sum_{0\le k\le
\infty}\phi_{>1}\int_{2^k}^{2^{k+1}}\otg\propgto F(u(t+\tau))d\tau,
\end{align*}
then introducing cutoff functions in front of $F(u)$ to further
write \eqref{out} into
\begin{align}
\eqref{out}&=i\phi_{>1}\int_0^{\frac 1N}\otg \propgto\phi_{>\frac
12}F(u(t+\tau))d\tau.\label{spm}\\
&\quad+i\phi_{>1}\int_0^{\frac 1N}\otg \propgto\phi_{\le\frac
12}F(u(t+\tau))d\tau.\label{spt}\\
&\quad+i\phi_{>1}\int_{\frac 1N}^1\otg \propgto\phi_{>\frac{N\tau}
2}F(u(t+\tau))d\tau.\label{mpm}\\
&\quad+i\phi_{>1}\int_{\frac 1N}^1\otg
\propgto\phi_{\le\frac{N\tau}2}F(u(t+\tau))d\tau.\label{mpt}\\
&\quad+i\sum_{0\le k\le\infty}\phi_{>1}\int_{2^k}^{2^{k+1}}\otg
\propgto\phi_{>N2^{k-1}}F(u(t+\tau))d\tau.\label{lpm}\\
&\quad+i\sum_{0\le k\le \infty}\phi_{>1}\int_{2^k}^{2^{k+1}}\otg
\propgto\phi_{\le N2^{k-1}}F(u(t+\tau))d\tau.\label{lpt}
\end{align}
The remaining part of the proof is devoted to estimating these six
pieces.

\textit{Estimate of \eqref{spm}}:

For \eqref{spm}, we write $\phi_{>\frac 12}F(u)=\phi_{>\frac
12}F(u\phi_{>\frac 14})$, then use Strichartz estimate and mismatch
estimate Lemma \ref{L:mismatch_fre} to obtain,
\begin{align*}
\eqref{spm}&\lsm\|\pntd\phi_{>\frac 12}F(u\phi_{>\frac
14})\|_{L_{\tau}^1L_x^2([t,t+\frac 1N]\times\R^d)}\\
&\lsm \|\pntd\phi_{>\frac 12}P_{>\frac 18 N}F(u\phi_{>\frac
14})\|_{L_{\tau}^1L_x^2([t,t+\frac
1N]\times\R^d)}+\|\pntd\phi_{>\frac
12}P_{\le \frac 18N}F(u\phi_{>\frac 14})\|_{L_{\tau}^{1}L_x^2(\srd)}\\
&\lsm \frac 1N\|P_{>\frac 18 N}F(u\phi_{>\frac
14})\|_{L_t^{\infty}L_x^2(\srd)}+\frac 1N N^{-10}\|F(u\phi_{>\frac
14})\|_{L_t^{\infty}L_x^2(\srd)}\\
&\lsm N^{-\frac 54}\||\nabla|^{\frac 14}F(u\phi_{>\frac
14})\|_{L_t^{\infty}L_x^2(\srd)}+N^{-10}\|u\phi_{>\frac
14}\|_{L_t^{\infty}L_x^{\frac{2(d+4)}d}(\srd)}^{\frac{d+4}d}.
\end{align*}
Note in dimensions $d=2,3$, from Sobolev embedding, fractional chain
rule and \eqref{weak_com}, we have
\begin{align*}
\|u\phi_{>\frac 14}\|_{L_t^{\infty}L_x^{\frac{2(d+4)}d}(\srd)}&\lsm
\|\phi_{>\frac 14} u\|_{L_t^{\infty}H_x^1(\srd)}\lsm 1.\\
\||\nabla|^{\frac 14} F(u\phi_{>\frac
14})\|_{L_t^{\infty}L_x^2(\srd)}&\lsm \||\nabla|^{\frac
14}(\phi_{>\frac 14}u)\|_{L_t^{\infty}L_x^{4}(\srd)}\||\phi_{>\frac
14}u\|_{L_t^{\infty}L_x^{\frac
{16}{d}}(\srd)}^{\frac 4d}\\
&\lsm \|\phi_{>\frac 14} u\|_{L_t^{\infty}H_x^1(\srd)}^{1+\frac
4d}\lsm 1.
\end{align*}
We conclude the estimate of \eqref{spm} and obtain
\begin{equation}
\eqref{spm}\lsm N^{-\frac 54}. \label{est_spm}
\end{equation}

\textit{Estimate of \eqref{spt}}

For \eqref{spt}, since we do not have uniform control on $\|\nabla
u(t)\|_{\ltrd}$ inside the unit ball, we use the equation
\eqref{nls} to replace $F(u)$ by $(i\partial_t+\Delta )u$. We thus
need the following lemma:

\begin{lem}\label{replace}
For any $a,b\in\R$, and $C>0$, we have
\begin{align*}
&\qquad\int_a^b e^{-i\tau\Delta}(\phi_{\le
C}(i\partial_{\tau}+\Delta)u(t+\tau))d\tau\\
&=i e^{-ib\Delta}(\phi_{\le C}u(t+b))-ie^{-ia\Delta}(\phi_{\le
C}u(t+a)) \\
&\qquad
-\int_a^b \propgto(u\Delta \phi_{\le C}+2\nabla
u\cdot\nabla\phi_{\le C})(t+\tau)d\tau.
\end{align*}
\end{lem}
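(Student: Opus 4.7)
The plan is to read the identity as a time-integration by parts combined with the product rule for the Laplacian; no oscillatory or dispersive input is needed. First I would expand
\[
\phi_{\le C}(i\partial_\tau + \Delta) u(t+\tau) = i\phi_{\le C}\partial_\tau u(t+\tau) + \phi_{\le C}\Delta u(t+\tau),
\]
and absorb $\phi_{\le C}$ into the Laplacian via
\[
\phi_{\le C}\Delta u = \Delta(\phi_{\le C} u) - u\,\Delta\phi_{\le C} - 2\nabla u\cdot\nabla\phi_{\le C}.
\]
This peels off precisely the two error terms $u\Delta\phi_{\le C}$ and $2\nabla u\cdot\nabla\phi_{\le C}$ that appear in the last line of the statement.

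Second, I would use the fact that $\propgto$ commutes with $\Delta$ and satisfies $\partial_\tau \propgto = -i\Delta\propgto$. Applied to $\phi_{\le C} u(t+\tau)$, which also depends on $\tau$ through $u$, the chain rule gives
\[
\partial_\tau\bigl[\propgto(\phi_{\le C} u(t+\tau))\bigr] = -i\propgto\Delta(\phi_{\le C} u(t+\tau)) + \propgto(\phi_{\le C}\partial_\tau u(t+\tau)).
\]
Multiplying by $i$ and rearranging yields the key algebraic identity
\[
\propgto\bigl[i\phi_{\le C}\partial_\tau u + \Delta(\phi_{\le C} u)\bigr] = i\partial_\tau\bigl[\propgto(\phi_{\le C} u(t+\tau))\bigr].
\]

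Third, I would integrate the last display in $\tau$ from $a$ to $b$. The left-hand side is exactly $\int_a^b \propgto[\phi_{\le C}(i\partial_\tau+\Delta)u(t+\tau)]\,d\tau$ plus the already-identified error terms $\int_a^b \propgto(u\Delta\phi_{\le C}+2\nabla u\cdot\nabla\phi_{\le C})(t+\tau)\,d\tau$, while the right-hand side telescopes by the fundamental theorem of calculus into $i e^{-ib\Delta}(\phi_{\le C} u(t+b)) - i e^{-ia\Delta}(\phi_{\le C} u(t+a))$. Rearranging gives the claimed formula.

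There is no real obstacle: the sole point to be slightly careful about is regularity/justification of differentiating $\propgto(\phi_{\le C} u(t+\tau))$ in $\tau$, but this is fine under the standing assumption $u\in C^0_t H^1_x$ together with $\phi_{\le C}$ being a compactly supported smooth cutoff, so that all integrands lie in $L^1_\tau L^2_x$ on the bounded interval $[a,b]$. The value of the identity for what follows is that it trades the high-power nonlinearity $F(u)$ in the region $\{|x|\le 1/2\}$, where we have no uniform control of $\nabla u$, for a boundary term at the free-evolution/mass level and for commutator terms supported on the annulus $\{1/2\lesssim |x|\lesssim 1\}$, where the weak kinetic energy localization \eqref{weak_com} does apply.
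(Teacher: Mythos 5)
Your proof is correct and is essentially the argument the paper has in mind: the paper's one-line proof invokes the fundamental theorem of calculus, and you have simply written out the product rule for $\Delta(\phi_{\le C}u)$ and the $\tau$-differentiation of $e^{-i\tau\Delta}(\phi_{\le C}u(t+\tau))$ that make that invocation work.
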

\begin{proof} The proof is a simple use of Fundamental Theory of Calculus.
\end{proof}

Using Lemma \ref{replace}, we bound \eqref{spt} as follows
\begin{align}
\|\eqref{spt}\|_{\ltrd}&=\|\phi_{>1}\int_0^{\frac
1N}\otg\propgto(\phi_{\le\frac
12}(i\partial_{\tau}+\Delta)u(t+\tau))d\tau\|_{\ltrd}\\
&\le \|\phi_{>1}\otg e^{-iN^{-1}\Delta}(\phi_{\le\frac 12}u(t+\frac
1N))\|_{\ltrd}\label{spt1}\\
&\quad+\|\phi_{>1}\otg(\phi_{\le\frac
12}u(t))\|_{\ltrd}\label{spt2}\\
&\quad+\|\phi_{>1}\int_0^{\frac 1N}\otg \propgto
(u(t+\tau)\Delta\phi_{\le\frac 12})d\tau\|_{\ltrd}\label{spt3}\\
&\quad+\|\phi_{>1}\int_0^{\frac 1N}\otg\propgto(\nabla\phi_{\le
\frac 12}\cdot\nabla u(t+\tau))d\tau\|_{\ltrd}\label{spt4}.
\end{align}
These four terms are going to be estimated in the same manner, so we
choose only to estimate \eqref{spt4} for the sake of simplicity.
From Proposition \ref{P:P properties}, the kernel obeys the estimate
\begin{align*}
|(\phi_{>1}\otg \propgto \chi_{\le \frac 12})(x,y)|&\lsm
N^d\langle N|x|-N|y|\rangle^{-m}\phi_{|x|>1}\chi_{|y|\le\frac 12}\\
&\lsm N^d\langle N(x-y)\rangle^{-m}\\
&\lsm N^d N^{-m/2}\langle
x-y\rangle^{-m/2}, \tau\in[0,\frac 1{N^2}],\\
|(\phi_{>1}\otg\propgto\chi_{\le\frac 12})(x,y)|&\lsm N^d\langle
N^2\tau+N|x|-N|y|\rangle^{-m}\phi_{|x|>1}\chi_{|y|\le\frac 12}\\
&\lsm N^d\langle N^2\tau+N|x|+N|y|\rangle^{-m}\phi_{|x|>1}\chi_{|y|\le\frac 12}\\
&\lsm N^d N^{-\frac m2}\langle x-y\rangle^{-\frac m2}, \
\tau\in[\frac 1{N^2},\frac 1N]
\end{align*}
for any $m>0$. We use this, Young's inequality and \eqref{weak_com}
to estimate \eqref{spt4} as
\begin{align*}
\eqref{spt4}&=\|\phi_{>1}\int_0^{\frac 1N}\otg\propgto\chi_{\le
\frac
12}\nabla\phi_{\le \frac 12}\nabla u(t+\tau)d\tau\|_{\ltrd}\\
&\lsm \sup_{\tau\in [0,\frac 1N]}\frac
1N\|\int(\phi_{>1}\otg\propgto\chi_{\le\frac
12})(x,y)\nabla\phi_{\le\frac 12}\cdot\nabla
u(t+\tau,y)dy\|_{\ltrd}\\
&\lsm N^{-10}\|\nabla u\phi_{>\frac 14}\|_{\ltrd}\\
&\lsm N^{-10}.
\end{align*}
The estimates of other terms give the same contribution, so finally
we have
\begin{equation*}
\eqref{spt}\lsm N^{-9}.
\end{equation*}

\vspace{0.2cm}

 \textit{Estimate of \eqref{mpm}}

Now we estimate \eqref{mpm}. We first write
$\phi_{>N\tau}F(u)=\phi_{>N\tau}F(\phi_{>N\tau/2}u)$, then further
decompose it by introducing a frequency projection:
\begin{align}
\eqref{mpm}&=i\phi_{>1}\int_{\frac
1N}^1\otg\propgto\phi_{>N\tau}P_{>\frac N8}F(\phi_{>N\tau/2}
u)(t+\tau)d\tau\label{mpm1}\\
&+i\phi_{>1}\int_{\frac 1N}^1\otg\propgto\phi_{>N\tau}P_{\le \frac
N8}F(\phi_{>N\tau/2})(t+\tau)d\tau.\label{mpm2}
\end{align}
\eqref{mpm1} can be estimated by weighted Strichartz estimate Lemma
\ref{L:wes}, Bernstein and \eqref{weak_com}. In 2-d, we have
\begin{align}
\|\eqref{mpm1}\|_{\ltrd}&\lsm \||x|^{-\frac
12}\phi_{>N\tau}P_{>\frac
N8}F(u\phi_{>N\tau/2})(t+\tau)\|_{L_{\tau}^{\frac 43}L_x^1([\frac
1N, 1]\times \R^2)}\\
&\lsm N^{-\frac 12}\|\tau^{-\frac 12}\|_{L_{\tau}^{\frac 43}([\frac
1N, 1])}\|P_{>\frac N8}
F(u\phi_{>N\tau/2})\|_{L_{\tau}^{\infty}L_x^1(\srt)}\\
&\lsm N^{-\frac 12}N^{-1}\|\nabla
(u\phi_{>N\tau/2})\|_{L_{\tau}^{\infty}L_x^2(\srt)}\|u\phi_{>N\tau/2}\|
_{L_{\tau}^{\infty}L_x^4(
\srt)}^2\\
&\lsm N^{-\frac 32}.
\end{align}

In three dimensions, we have
\begin{align}
\|\eqref{mpm1}\|_{\ltrd}&\lsm \||x|^{-\frac
12}\phi_{>N\tau}P_{>\frac
N8}F(u\phi_{>N\tau/2})(t+\tau)\|_{L_{\tau}^{\frac 87}L_x^{\frac
43}([\frac
1N, 1]\times \R^2)}\\
&\lsm N^{-\frac 12}\|\tau^{-\frac 12}\|_{L_{\tau}^{\frac 87}([\frac
1N, 1])}\|P_{>\frac N8}
F(u\phi_{>N\tau/2})\|_{L_{\tau}^{\infty}L_x^{\frac 43}(\srs)}\\
&\lsm N^{-\frac 12}N^{-1}\|\nabla
(u\phi_{>N\tau/2})\|_{L_{\tau}^{\infty}L_x^2(\srs)}\|u\phi_{>N\tau/2}\|
_{L_{\tau}^{\infty}L_x^{\frac {16}3}(
\srs)}^{\frac 43}\\
&\lsm N^{-\frac 32}.
\end{align}
Therefore, in two and three dimensions, we all have
\begin{equation*}
\eqref{mpm1}\lsm N^{-\frac 32}.
\end{equation*}

To estimate \eqref{mpm2}, we simply use the Strichartz and mismatch
estimate Lemma \ref{L:mismatch_fre} and \eqref{weak_com} to obtain
\begin{align*}
\eqref{mpm2}&\lsm \|\pntd\phi_{>N\tau}P_{\le
N/8}F(u\phi_{>{N\tau}/2})\|_{L_{\tau}^1L_x^2([t+\frac 1N,t+1]\times
\R^d)}\\
&\lsm
N^{-10}\|F(\phi_{>N\tau/2}u)\|_{L_{\tau}^{\infty}L_x^2(\srd)}\\
&\lsm N^{-10}\|\phi_{>N\tau/2}
u\|_{L_{\tau}^{\infty}L_x^{\frac{2(d+4)}d}(\srd)}^{\frac{d+4}d}\\
&\lsm N^{-10}.
\end{align*}
This completes the estimate of \eqref{mpm} so we get
\begin{align}
\|\eqref{mpm}\|_{\ltrd}\lsm N^{-\frac 32}.\label{mpm_est}
\end{align}

\vspace{0.2cm}

\textit{Estimate of \eqref{mpt}}

Now we treat the term \eqref{mpt}. The idea will be the same with
that of \eqref{spt}: we replace $F(u)$ by $(i\partial_t+\Delta )u$.
However, the situation here is a bit trickier since the cutoff
function also depends on $\tau$. By counting this factor, we improve
Lemma \ref{replace} to the following
\begin{lem}\label{replace_new}
For any $a, b\ge 0$, we have
\begin{align*}
&\quad\int_a^b \propgto(\phi_{\le
N\tau}(i\partial_\tau+\Delta)u(t+\tau))d\tau\\
&=i e^{-ib\Delta}(\phi_{\le Nb}u(t+b))-i e^{-ia\Delta}(\phi_{\le
Na}u(t+a))\\
&\quad -\int_a^b \propgto (u\Delta \phi_{\le N\tau}-2\nabla u\cdot
\nabla
\phi_{\le N\tau}))(t+\tau)d\tau\\
&\quad+i\int_a^b\propgto (\frac y{N\tau^2}\cdot(\nabla\phi_{\le
1})(\frac y{N\tau})u(t+\tau))d\tau.
\end{align*}
\end{lem}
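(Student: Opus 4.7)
The plan is straightforward: apply the fundamental theorem of calculus to the $\tau$-dependent quantity
\[
g(\tau):=e^{-i\tau\Delta}\bigl(\phi_{\le N\tau}\,u(t+\tau)\bigr),
\]
exactly as in the proof of Lemma \ref{replace}, the new feature being that the spatial cutoff $\phi_{\le N\tau}$ itself depends on $\tau$ and therefore produces an extra term upon differentiation.

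First I would differentiate $g$ via the product rule, obtaining three contributions: $-i\,e^{-i\tau\Delta}\Delta(\phi_{\le N\tau}u)$ from the propagator, $e^{-i\tau\Delta}(\partial_\tau\phi_{\le N\tau}\cdot u)$ from the cutoff, and $e^{-i\tau\Delta}(\phi_{\le N\tau}\,\partial_\tau u(t+\tau))$ from $u$. Writing $\phi_{\le N\tau}(y)=\phi_{\le 1}(y/(N\tau))$, the chain rule gives
\[
\partial_\tau\phi_{\le N\tau}(y)=-\tfrac{y}{N\tau^{2}}\cdot(\nabla\phi_{\le 1})\bigl(\tfrac{y}{N\tau}\bigr),
\]
which accounts for the appearance of the last term on the right-hand side of the statement (with the sign flip coming out correctly after multiplication by $i$).

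Next I would use the Leibniz identity $\Delta(\phi u)=\phi\Delta u+2\nabla\phi\cdot\nabla u+u\,\Delta\phi$ to rewrite
\[
\phi_{\le N\tau}(i\partial_\tau+\Delta)u
= i\phi_{\le N\tau}\partial_\tau u+\Delta(\phi_{\le N\tau}u)-u\,\Delta\phi_{\le N\tau}-2\nabla\phi_{\le N\tau}\cdot\nabla u.
\]
Apply $e^{-i\tau\Delta}$ and substitute the identity for $-i\,e^{-i\tau\Delta}\Delta(\phi_{\le N\tau}u)$ extracted from $\partial_\tau g$. The terms $ie^{-i\tau\Delta}(\phi_{\le N\tau}\partial_\tau u)$ cancel and what remains is
\[
e^{-i\tau\Delta}\bigl(\phi_{\le N\tau}(i\partial_\tau+\Delta)u\bigr)
= i\,\partial_\tau g(\tau)-i\,e^{-i\tau\Delta}\bigl(\partial_\tau\phi_{\le N\tau}\cdot u\bigr)
-e^{-i\tau\Delta}\bigl(u\,\Delta\phi_{\le N\tau}+2\nabla\phi_{\le N\tau}\cdot\nabla u\bigr).
\]

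The last step is pure bookkeeping: integrate from $\tau=a$ to $\tau=b$; the $i\,\partial_\tau g$ term yields the boundary contributions $ie^{-ib\Delta}(\phi_{\le Nb}u(t+b))-ie^{-ia\Delta}(\phi_{\le Na}u(t+a))$ by FTC, and substituting the explicit expression for $\partial_\tau\phi_{\le N\tau}$ turns the second integrand into $+i\,e^{-i\tau\Delta}\!\bigl(\tfrac{y}{N\tau^{2}}(\nabla\phi_{\le 1})(\tfrac{y}{N\tau})u(t+\tau)\bigr)$, matching the final line of the claimed formula. There is no substantive analytical obstacle: since $u\in H_x^{1}$ and $\phi_{\le N\tau}$ has compact support, all spatial integrations by parts implicit in the Leibniz expansion are legitimate, and the NLS equation makes $(i\partial_\tau+\Delta)u$ pointwise meaningful. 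The only care needed is to track the signs accurately so that the commutator piece $-\int_a^b e^{-i\tau\Delta}(u\,\Delta\phi_{\le N\tau}\pm2\nabla u\cdot\nabla\phi_{\le N\tau})\,d\tau$ matches the stated form.
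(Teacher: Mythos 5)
Your proof is correct and is exactly the Fundamental Theorem of Calculus argument the paper intends (the paper gives no explicit proof of Lemma~\ref{replace_new}, deriving it as the natural variant of Lemma~\ref{replace}, which is itself proved by ``simple use of FTC''). One remark: your careful computation yields the commutator term with a \emph{plus} sign, namely $-\int_a^b e^{-i\tau\Delta}\bigl(u\,\Delta\phi_{\le N\tau}+2\nabla\phi_{\le N\tau}\cdot\nabla u\bigr)\,d\tau$, which is consistent with Lemma~\ref{replace}; the minus sign (and the unbalanced parenthesis) appearing in the paper's statement of Lemma~\ref{replace_new} is a typographical error, so you should trust your own sign.
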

Having this Lemma, we are able to write \eqref{mpt} as follows
\begin{align}
\eqref{mpt}&=i\phi_{>1}\int_{\frac 1N}^1 \otg \propgto[\phi_{\le
N\tau/2}(i\partial_\tau+\Delta)u(t+\tau)]d\tau\notag\\
&=-\phi_{>1}\otg e^{-i\Delta}(\phi_{\le \frac N2}u(t+1))\notag\\
&\quad+\phi_{>1}\otg e^{-i\frac 1N\Delta}(\phi_{\le \frac12}u(t+\frac 1N))\notag\\
&\quad-i\phi_{>1}\int_{\frac 1N}^1\otg \propgto(u\Delta\phi_{\le
N\tau/2}+2\nabla u\cdot \nabla \phi_{\le
N\tau/2}))(t+\tau)d\tau\notag\\
&\quad-\phi_{>1}\int_{\frac 1N}^1 \otg\propgto (\frac
y{N\tau^2}\cdot(\nabla \phi_{\le\frac 12})(\frac
y{N\tau})u(t+\tau))d\tau.\label{mpt4}
\end{align}
In the estimates of these terms, we will use the decay estimate of
the kernel: for any $\frac 1N\le\tau\le 1$, we have
\begin{align*}
|\phi_{>1}\otg\propgto\chi_{\le N\tau/2}(x,y)|&\lsm N^d \langle
N^2 \tau+N|x|-N|y|\rangle^{-m}\\
&\lsm N^d N^{-\frac m2}\langle N(x-y)\rangle^{-\frac m2},\ \forall
m\ge 0.
\end{align*}
We only give the estimate of the \eqref{mpt4} since the other terms
will be the same in principle. Using this decay estimate and Young's
inequality we have
\begin{align*}
&\qquad\|\eqref{mpt4}\|_{\ltrd} \\
&=\|\int_{\frac
1N}^1\int_{\R^d}(\phi_{>1}\otg\propgto\chi_{\le N\tau/2}(x,y)(\frac
y{N\tau^2}\cdot(\nabla\phi_{\le\frac12})(\frac y{N\tau})
u(t+\tau,y)dyd\tau\|_{\ltrd}\\
&\lsm N^{-11}\int_{\frac 1N}^1\|\frac x{N\tau^2}\phi_{|x|>\frac 14
N\tau}u(t+\tau)\|_{L_x^2(\R^d)} d\tau\\
&\lsm N^{-11} N\|u\|_{L_t^{\infty}L_x^2(\R\times\R^d)}\\
&\lsm N^{-10}.
\end{align*}
To conclude, we have
\begin{equation}
\|\eqref{mpt}\|_{\ltrd}\lsm N^{-10}.\label{est_mpt}
\end{equation}

To deal with \eqref{lpm}, we will need the following lemma which
controls the weighted Strichartz norm of the solution with spatial
cutoff on a unit time interval. We have
\begin{lem}\label{lem_wsbdd}
Let $d=2,3$. Let $I$ be a unit time interval, ie, $|I|=1$. Let $u$
be an $H_x^1$ solution of \eqref{nls} satisfying the weak
compactness condition \eqref{weak_com}. Then we have
\begin{equation*}
\||x|^{\frac{2(d-1)}q}\phi_{>1}u\|_{L_t^qL_x^{\frac{2q}{q-4}}(I\times\R^d)}\lsm
1.
\end{equation*}
\end{lem}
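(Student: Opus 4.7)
Since $|I|=1$, it suffices to establish the pointwise-in-time bound
$$\||x|^{\frac{2(d-1)}{q}}\phi_{>1}u(t)\|_{L^{\frac{2q}{q-4}}_x(\R^d)} \lesssim 1, \quad \forall\, t \in I,$$
from which the $L^q_t$ estimate over $I$ follows by H\"older's inequality with a factor $|I|^{1/q}=1$. The plan is to exploit the spherical symmetry via the radial Sobolev embedding (Lemma \ref{L:radial_embed}).

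First, we verify that $\phi_{>1}u(t)$ is a spherically symmetric $H^1_x$ function with uniformly bounded norm. By mass conservation, $\|u(t)\|_{L^2_x}\lesssim 1$; the weak compactness \eqref{weak_com} gives $\|\phi_{\gtrsim 1}\nabla u(t)\|_{L^2_x}\lesssim 1$. Since $\nabla(\phi_{>1}u)=(\nabla\phi_{>1})u+\phi_{>1}\nabla u$ and $\nabla\phi_{>1}$ is bounded and supported in $\{|x|\sim 1\}$, we conclude $\|\phi_{>1}u(t)\|_{H^1_x(\R^d)}\lesssim 1$ uniformly in $t$.

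Next, we apply Lemma \ref{L:radial_embed} with the choice
$$\alpha = \tfrac{2(d-1)}{q}, \qquad p = \tfrac{2q}{q-4}, \qquad q_0 = 2, \qquad s = \tfrac{2}{q},$$
which satisfies the required scaling identity $\alpha+s=\tfrac{2d}{q}=d\bigl(\tfrac{1}{q_0}-\tfrac{1}{p}\bigr)$. For $q\in (4,\infty)$ all indices lie in the admissible range of the radial embedding, so
$$\||x|^{\frac{2(d-1)}{q}}\phi_{>1}u(t)\|_{L^{\frac{2q}{q-4}}_x} \lesssim \bigl\||\nabla|^{\frac{2}{q}}(\phi_{>1}u(t))\bigr\|_{L^2_x} \lesssim \|\phi_{>1}u(t)\|_{H^1_x}^{\frac{2}{q}}\|\phi_{>1}u(t)\|_{L^2_x}^{1-\frac{2}{q}}\lesssim 1,$$
the middle step being Sobolev interpolation (since $s=2/q\in(0,1)$). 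For the endpoint $q=4$ (where $p=\infty$), the first inequality of Lemma \ref{L:radial_embed} does not apply directly, but the Littlewood-Paley version $\||x|^{(d-1)/2}P_N f\|_{L^\infty_x}\lesssim N^{1/2}\|P_N f\|_{L^2_x}$ does; summing dyadically against $\|P_N(\phi_{>1}u)\|_{L^2_x}\lesssim \min(1,N^{-1})\|\phi_{>1}u\|_{H^1_x}$ gives a convergent geometric series bounded by $\|\phi_{>1}u\|_{H^1_x}\lesssim 1$.

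There is essentially no analytical obstacle here: the lemma is really just the observation that the weighted $L^p_x$ norm appearing on the left is in the precise scaling range of the radial Sobolev embedding, and that the hypothesis \eqref{weak_com} plus mass conservation supplies a uniform $H^1_x$ bound on $\phi_{>1}u$. The only point requiring care is the boundary exponent $q=4$, which is handled by Littlewood-Paley decomposition as above, after which time integration over the unit interval $I$ is trivial.
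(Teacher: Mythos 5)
Your proposal is correct, and the central ingredient is the same as the paper's: the radial Sobolev embedding Lemma \ref{L:radial_embed}, applied to $\phi_{>1}u(t)$, which is uniformly in $H^1_x$ by mass conservation together with \eqref{weak_com}. The only real divergence is in how the range $q\in(4,\infty)$ is handled. The paper proves only the endpoint $q=4$ (via the Littlewood-Paley form of the embedding, exactly as you do), notes $q=\infty$ is trivial, and then asserts the intermediate range ``by interpolation'' --- which in this setting is an interpolation between weighted mixed-norm spaces in which the power of $|x|$ varies with $q$, a Stein--Weiss-type step that is left implicit. You instead bypass that interpolation by checking the scaling condition $\alpha+s=d(\tfrac12-\tfrac1p)$ for each $q\in(4,\infty)$ and applying the non-endpoint form of Lemma \ref{L:radial_embed} directly, then closing with the elementary frequency-side interpolation $\||\nabla|^s f\|_{L^2}\le\|\nabla f\|_{L^2}^s\|f\|_{L^2}^{1-s}$. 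This is slightly longer to write but more self-contained; the paper's two-endpoint reduction is shorter to state but leaves the weighted interpolation to the reader. Both are sound, and your treatment of the boundary case $q=4$ via dyadic summation of $\min(N^{1/2},N^{-1/2})$ is identical to the paper's.
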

\begin{proof} The case $q=\infty$ is trivial. We only need to prove
the case $q=4$ since the general case follows by interpolation.

From radial Sobolev embedding Lemma \ref{L:radial_embed} we get
\begin{equation*}
\||x|^{\frac{d-1}2}\pn(\phi_{>1}u)\|_{L_x^{\infty}(\R^d)}\lsm
N^{\frac 12}\|\pn(\phi_{>1}u)\|_{\ltrd}\lsm \min(N^{\frac 12},
N^{-\frac 12}).
\end{equation*}
The desired estimate follows from the summation over dyadic pieces.
\end{proof}

Now we treat the $k$-th piece in \eqref{lpm} which we denote by
$\eqref{lpm}_k$. We put a frequency cutoff and further write it into
\begin{align}
\eqref{lpm}_k:&=i\phi_{>1}\int_{2^k}^{2^{k+1}}\otg\propgto\phi_{>N 2^k/2}
F(\phi_{>N2^k/4}u)(t+\tau)d\tau\\
&=i\phi_{>1}\int_{2^k}^{2^{k+1}}\otg\propgto\phi_{>N 2^k/2}P_{>\frac
N8}F(\phi_{>N2^k/4}u)
(t+\tau)d\tau\label{km}\\
&\qquad+i\phi_{>1}\int_{2^k}^{2^{k+1}}\otg\propgto\phi_{>N
2^k/2}P_{\le\frac N8}F(\phi_{>N2^k/4}u) (t+\tau)d\tau\label{kt}
\end{align}
We first look at \eqref{kt} which contains a frequency mismatch.
Hence from the $L^2_x$ boundedness of $\phi_{>1}\otg$, Strichartz
and mismatch estimate Lemma \ref{L:mismatch_fre} and
\eqref{weak_com}, we have
\begin{align*}
\|\eqref{kt}\|_{\ltrd}&\lsm \|\pntd \phi_{>N 2^k/2}P_{\le\frac
N8}F(\phi_{>N2^k/4}u)\|_{L_{\tau}^1L_x^2([t+2^k,
t+2^{k+1}]\times\R^d)}\\
&\lsm(N^2
2^k)^{-10}2^k\|\phi_{>N2^k/4}u\|_{L_{\tau}^{\infty}L_x^{\frac{2(d+4)}d}(\srd)}^{\frac{d+4}d}\\
&\lsm
N^{-20}2^{-9k}\|\phi_{>1}u\|_{L_t^{\infty}H_x^1(\srd)}^{\frac{d+4}d}\\
&\lsm (N2^k)^{-2}.
\end{align*}
For the another term \eqref{km}, we discuss the cases $d=2,3$
respectively. In 2 dimensions, using weighted Strichartz Lemma
\ref{L:wes}, Bernstein and Lemma \ref{lem_wsbdd} we have
\begin{align*}
\|\eqref{km}\|_{\ltrd}&\lsm \||x|^{-\frac 12}\phi_{>N2^k/2}P_{>\frac
N8}F(u\phi_{>N2^k/4})\|_{L_\tau^{\frac 43}L_x^1([t+2^k,
t+2^{k+1}]\times\R^2)}\\
&\lsm N^{-\frac 12}2^{-\frac k2}\|P_{>\frac
N8}F(u\phi_{>N2^k/4})\|_{L_\tau^{\frac 43}L_x^1([t+2^k,
t+2^{k+1}]\times\R^2)}\\
&\lsm N^{-\frac 32}2^{-\frac
k2}\|\nabla(u\phi_{>N2^k/4})\|_{L_\tau^\infty
L_x^2}\\
&\qquad\times\|u\phi_{>N2^k/4}\|_{L_\tau^4L_x^\infty([t+2^k,
t+2^{k+1}]\times\R^2)}\|u\phi_{>N2^k/4}\|_{L_\tau^2L_x^2([t+2^k,
t+2^{k+1}]\times\R^2)}\\
&\lsm N^{-2}2^{-\frac k2}\||x|^{\frac
12}u\phi_{>N2^k/4}\|_{L_\tau^4L_x^\infty([t+2^k,
t+2^{k+1}]\times\R^2)}\\
&\lsm N^{-2}2^{-\frac k4}.
\end{align*}
In 3 dimensions the corresponding estimates are as follows:
\begin{align*}
\|\eqref{km}\|_{\ltrd}&\lsm \||x|^{-\frac 12}\phi_{>N2^k/2}P_{>\frac
N8}F(u\phi_{>N2^k/4})\|_{L_\tau^{\frac 87}L_x^{\frac 43}([t+2^k,
t+2^{k+1}]\times\R^3)}\\
&\lsm N^{-\frac 12}2^{-\frac k2}N^{-1}\|\nabla
F(u\phi_{>N2^k/4})\|_{L_\tau^{\frac 87}L_x^{\frac 43}([t+2^k,
t+2^{k+1}]\times\R^3)}\\
&\lsm N^{-\frac 32}2^{-\frac k2}2^{\frac
78k-\frac{11}{72}k}\|\nabla(u\phi_{>N2^k/4})\|_{L_\tau^\infty
L_x^2}\\
&\qquad\times\|u\phi_{>N2^k/4}\|_{L_\tau^{\frac{72}{11}}L_x^{\frac{36}7}([t+2^k,
t+2^{k+1}]\times\R^3)}\|u\phi_{>N2^k/4}\|^{\frac 13}_{L_\tau^\infty
L_x^6([t+2^k,
t+2^{k+1}]\times\R^3)}\\
&\lsm N^{-\frac 32}2^{-\frac k2}2^{\frac
78k-\frac{11}{72}k}(N2^k)^{-\frac{11}{18}}\||y|^{\frac{11}{18}
}u\phi_{>N2^k/4}\|_{L_\tau^{\frac{72}{11}}L_x^{\frac{36}7}([t+2^k,
t+2^{k+1}]\times\R^3)}\\
&\lsm N^{-\frac 32}2^{-\frac {17}{72}k},
\end{align*}
which is acceptable. Collecting the estimates for \eqref{km},
\eqref{kt}, we obtain
\begin{align*}
\|\eqref{lpm}_k\|_{\ltrd}\lsm N^{-\frac 32}2^{-\frac 1{50}k}.
\end{align*}
Summing in $k$ gives that
\begin{align*}
\|\eqref{lpm}\|_{\ltrd}\lsm N^{-\frac 32}.
\end{align*}

\vspace{0.2cm}

\textit{Estimate of the last piece \eqref{lpt}}.

We have only one piece \eqref{lpt} left for which we will replace
$F(u)$ by $(i\partial_t+\Delta)u$ and use the kernel estimate.
Denote $\eqref{lpt}_k$ the $k$-th piece, we use Lemma \ref{replace}
to write
\begin{align*}
\eqref{lpt}_k&=-\phi_{>1}\otg e^{-i2^{k+1}\Delta}(\phi_{\le
N2^k/2}u(t+2^k))\\
&\quad+\phi_{>1}\otg e^{-2^k\Delta}(\phi_{\le N2^k/2}u(t+2^{k+1}))\\
&\quad-i\phi_{>1}\int_{2^k}^{2^{k+1}}\otg \propgto(u\Delta\phi_{\le
N2^k/2}+2\nabla u\cdot\nabla\phi_{\le N2^k/2})(t+\tau)d\tau
\end{align*}
Note for any $\tau\in[2^k, 2^{k+1}]$, from Lemma \ref{P:P
properties}, the kernel obeys
\begin{align*}
&\quad|\phi_{>1}\otg \propgto\phi_{\le N2^k/2}(x,y)|\\
&\lsm N^d\langle
N^2\tau+N|x|-N|y|\rangle^{-20}\phi_{|y|\le N2^k/2}\\
&\lsm N^d\langle N^2\tau+N|x|+N|y|\rangle^{-20}\\
&\lsm N^d(N^22^k)^{-10}\langle N(x-y)\rangle^{-10}.
\end{align*}
We immediately obtain
\begin{align*}
\|\eqref{lpt}_k\|_{\ltrd}\lsm N^{-10}2^{-k},
\end{align*}
after a simple application of Young's inequality.

Summing in $k$ gives us
\begin{equation*}
\|\eqref{lpt}\|_{\ltrd}\lsm N^{-10}.
\end{equation*}
We finish the estimates of all the six pieces
\eqref{spm}-\eqref{lpt}. Thus we succeed to estimate
$\|\eqref{out}\|_{\ltrd}\lsm N^{-1-\frac 14}$. Collecting the
estimates of \eqref{linear} to \eqref{out}, we establish Proposition
\ref{fre_decay} in 2,3 dimensions.

\subsection{Proof of Proposition \ref{local} in 2,3
dimensions}\label{local_23}

This subsection, we aim to prove that
\begin{align}
\|\phi_{>R}\pn u(t)\|_{\ltrd}\lsm \|\phi_{>R/2}
u_0\|_{\ltrd}+R^{-\frac 1{16}},\label{outside_est}
\end{align}
holds for all $t\ge 0$, $N\in[N_1,N_2]$ and $R>R_0(N_1, N_2,u)\gg
1$. We remark that since we have the freedom to choose sufficiently
large $R$, we will no longer accurately keep track of the power of
$N$, and often denote the power of $N$ by a constant $C$ which can
vary from one line to another.

To begin with, we use the in-out decomposition and forward, backward
Duhamel formula to write
\begin{align}
\phi_{>R}\pn u(t)&=\phi_{>R}P_N^-\propt u_0\label{r_linear}\\
&\quad -i\phi_{>R}\int_0^t P_N^- e^{i\tau\Delta}
F(u(t-\tau))d\tau\label{r_in}\\
&\quad +i\phi_{>R}\int_0^{\infty}\otg \propgto
F(u(t+\tau))d\tau.\label{r_out}
\end{align}
Again, the last integral should be understood in weak $\ltrd$ sense.
From Proposition \ref{P:P properties}, the kernel obeys:
\begin{align*}
|\phi_{>R}P_N^-\propt\phi_{\le R/2}(x,y)|\lsm N^d\langle
N(x-y)\rangle^{-20}\lsm N^C R^{-10}\langle N(x-y)\rangle^{-10}.
\end{align*}
Using this and the $L_x^2$ boundedness of $\phi_{>R}P_N^-$, we have
\begin{align*}
\eqref{r_linear}&=\|\phi_{>R}P_N^-\propt u_0\|_{\ltrd}\\
&\lsm
\|\phi_{>R}P_N^-\propt\phi_{>R/2}u_0\|_{\ltrd}+\|\phi_{>R}P_N^-\propt\phi_{\le
R/2}u_0\|_{\ltrd}\\
&\lsm \|\phi_{>R/2}u_0\|_{\ltrd}+N^C R^{-10}\\
&\lsm \|\phi_{>R/2} u_0\|_{\ltrd}+R^{-5}.
\end{align*}
Since \eqref{r_in}, \eqref{r_out} will be estimated in the same
manner, we choose to estimate one of them. We first split
\eqref{r_out} into time pieces:
\begin{align*}
\eqref{r_out}&=i\phi_{>R}\int_0^1\otg\propgto F(u(t+\tau))d\tau\\
&\quad+i\phi_{>R}\sum_{k=0}^{\infty}\int_{2^k}^{2^{k+1}}\otg\propgto
F(u(t+\tau))d\tau.
\end{align*}
Then introduce cutoff to further write it into
\begin{align}
\eqref{r_out}&=i\phi_{>R}\int_0^1\otg\propgto \phi_{\le
R/2}F(u(t+\tau))d\tau\label{rout_st}\\
&\quad+i\phi_{>R}\int_0^1 \otg\propgto
\phi_{>R/2}F(u(t+\tau))d\tau\label{rout_sm}\\
&\quad+i\sum_{k=0}^\infty \phi_{>R}\int_{2^k}^{2^{k+1}}\otg\propgto
\phi_{>\gamma R^{\frac 18}N^{\frac 78}2^{\frac
78k}}F(u(t+\tau))d\tau\label{rout_lm}\\
&\quad+i\sum_{k=0}^{\infty}\phi_{>R}\int_{2^k}^{2^{k+1}}\otg\propgto
\phi_{\le \gamma R^{\frac 18}N^{\frac 78}2^{\frac
78k}}F(u(t+\tau))d\tau.\label{rout_lt}
\end{align}
Here, $\gamma$ is a small constant, we can choose for example
$\gamma=\frac 1{100}$.

In the above four pieces, the main contribution comes from
\eqref{rout_sm}, \eqref{rout_lm} where $F(u)$ lies in large radius.
And we are going to estimate them by using weighted Strichartz. For
the tail part \eqref{rout_st}, \eqref{rout_lt}, we replace $F(u)$ by
$i\partial_t u+\Delta u$ and use the decay estimate of the kernel.
In what follows, we will only present the details of the last two
terms. The former two are much simpler so we skip them.

We first treat the $k$-th piece in \eqref{rout_lm}, which we denote
by $\eqref{rout_lm}_k$. For the sake of convenience, we write
$A:=\gamma R^{\frac 18}N^{\frac 78}2^{\frac 78k}$. In two
dimensions, using weighted Strichartz Lemma \eqref{L:wes}, we have
\begin{align*}
\|\eqref{rout_lm}_k\|_{\ltrd}&=\|\phi_{>R}\int_{2^k}^{2^{k+1}}\otg\propgto\phi_{>A}
F(\phi_{>A/2}u)(t+\tau)d\tau\|_{\ltrd}\\
&\lsm \||y|^{-\frac 12}F(\phi_{>A/2}u)\|_{L_\tau^{\frac
43}L_x^1([t+2^k,t+2^{k+1}]\times\R^2)}\\
&\lsm A^{-\frac 12}\|F(\phi_{>A/2}u)\|_{L_\tau^{\frac
43}L_x^1([t+2^k,t+2^{k+1}]\times \R^2)}\\
&\lsm A^{-1}2^{\frac k2}\||y|^{\frac
12}(\phi_{>A/2}u)\|_{L_\tau^4L_x^{\infty}([t+2^k,
t+2^{k+1}]\times\R^2)}\|\phi_{>A/2}u\|^2_{L_\tau^{\infty}L_x^2([t+2^k,t+2^{k+1}]\times\R^2)}.
\end{align*}
Using Lemma \ref{L:wes}, we continue the estimate
\begin{align*}
\|\eqref{rout_lm}_k\|_{\ltrd}
&\lsm A^{-1}2^{\frac 34k}\\
&\lsm R^{-\frac 18}2^{-\frac k8}N^{-\frac 78}\lsm R^{-\frac
1{16}}2^{-\frac k8}.
\end{align*}
In three dimensions, the corresponding computation is as follows:
applying weighted Strichartz estimate Lemma \ref{L:wes} with $q=8$,
Lemma \ref{lem_wsbdd} and \eqref{weak_com} we obtain
\begin{align*}
&\quad \|\eqref{rout_lm}_k\|_{\ltrd} \\
&\lsm \||y|^{-\frac
12}F(\phi_{>A/2}u)\|_{L_\tau^{\frac 87}L_x^{\frac
43}([t+2^k,t+2^{k+1}]\times \R^3)}\\
&\lsm A^{-\frac
12}\|\phi_{>A/2}u\|_{L_\tau^{4}L_x^\infty([t+2^k,t+2^{k+1}]\times\R^3)}\|\phi_{>A/2}u\|
_{L_\tau^{\frac
{32}{15}}L_x^{\frac{16}9}([t+2^k,t+2^{k+1}]\times\R^3)}^{\frac
43}.\\
&\lsm A^{-\frac 32}2^{\frac 58k}\||y|\phi_{>A/2} u\|_{L_\tau^4
L_x^\infty([t+2^k,t+2^{k+1}]\times\R^3)}\|u\phi_{>A/2}\|_{L_\tau^\infty
L_x^{\frac{16}9}(\srs)}^{\frac 43}\\
&\lsm A^{-\frac 32}2^{\frac 78k}\lsm R^{-\frac 3{16}}2^{-\frac
7{16}k}N^{-\frac {21}{16}}\lsm R^{-\frac 1{16}}2^{-\frac 7{16}k}.
\end{align*}
Summing in $k$ we obtain
\begin{equation}
\|\eqref{rout_lm}\|_{\ltrd}\lsm R^{-\frac 1{16}}.\label{est_rout_lm}
\end{equation}
This closes the estimates for \eqref{rout_lm}.

Now let us treat the $k$-th piece in \eqref{rout_lt}, we denote it
by $\eqref{rout_lt}_k$. Substituting $F(u)$ by $(i\partial_t+\Delta)
u$ and using Lemma \ref{replace} we have
\begin{align}
\eqref{rout_lt}_k&=-\phi_{>R}\otg e^{-i2^{k+1}\Delta}(\phi_{\le
A}u(t+2^{{k+1}}))\\
&\quad+\phi_{>R}\otg e^{-i2^k\Delta}(\phi_{\le A}u(t+2^k))\\
&\quad-i\phi_{>R}\int_{2^k}^{2^{k+1}}\otg\propgto(u\Delta \phi_{\le
A}+2\nabla u\cdot\nabla \phi_{\le A})(t+\tau)d\tau
\end{align}
Note for $|y|\le A$, $|x|>R$, $\tau\in[2^k,2^{k+1}]$, Young's
inequality gives that
\begin{align*}
|y|\le \gamma R^{\frac 18}2^{\frac 78k}N^{\frac 78}\ll R+N2^k\lsm
|x|+N\tau.
\end{align*}
we can control the kernel as: for any $\tau\in[2^k, 2^{k+1}]$,
\begin{align*}
|\phi_{>R}\otg\propgto \phi_{\le A}(x,y)|&\lsm N^d\langle
N^2\tau+N|x|-N|y|\rangle^{-20}\\
&\lsm N^C 2^{-10k}R^{-10}\langle N(x-y)\rangle^{-10}.
\end{align*}
Using this, Young's inequality and \eqref{weak_com} we immediately
get
\begin{align*}
\|\eqref{rout_lt}_k\|_{\ltrd}\lsm
2^{-5k}R^{-9}(\|u\|_{L_t^{\infty}L_x^2}+\|u\phi_{>1}\|_{L_t^\infty
H_x^1})\lsm 2^{-5k}R^{-5}.
\end{align*}
Summing in $k$ gives that
\begin{align}
\|\eqref{rout_lt}\|_{\ltrd}\lsm R^{-5}\label{est_rout_lt}.
\end{align}
Now we have estimates for all the pieces ready to conclude that
\begin{align}
\|\eqref{r_out}\|_{\ltrd}\lsm R^{-\frac 1{16}}.
\end{align}
Thus \eqref{outside_est} is established and we proved Proposition
\ref{local} in two and three dimensions.

\section{2+2 dimensions with splitting-spherical symmetry}\label{twotwo}

\subsection{Introduction and tools adapted to four dimensions}

In this section, we prove the frequency decay estimate Proposition
\ref{fre_decay} and spatial decay estimate Proposition \ref{local} in four
dimensions with admissible symmetry. From the definition, this
requires the initial data $u_0$ to be spherically symmetric in each of the
two two dimensional subspaces. More precisely, after a possible permutation or relabelling of the
coordinates, the solution $u$ satisfies
\begin{align*}
u(x_1,x_2,x_3,x_4,t)=u(x_1^2+x_2^2,x_3^2+x_4^2,t).
\end{align*}

Compared with the spherically symmetrical case, the situation here
is more complicated. As revealed in the 2,3-dimensional case, for any dyadic
number $N\gtrsim 1$, the
additional decay $N^{-1-\eps}$ is produced by those spherically
symmetric waves which travels at speed $N$, and which is supported
away from the origin.  In our $2+2$ splitting-spherically symmetric case,
due to the anisotropicity, one has to look at waves which
travels at the speed $N$ in one of the two subspaces in order to take
advantage of the partially spherical symmetry. However, a problem
one immediately encounters is that in the subpace where the waves are travelling,
these waves are not supported away from the origin. To treat this problem, we will give in
this section a uniform way (applying the in-out decomposition technique adapted to
the subspace) to deal with the close-to-origin piece and
away-from-origin piece. The price we pay is Lemma
\ref{Psmall_properties} which involves the kernel estimate close to
the origin. In the next section, we present a different way to deal
with this close-to-origin piece.

Besides the anisotropicity which makes the problem complicated in
disguise, the actual amount of computation is slightly lighter than the low dimensional
cases since in dimensions $d\ge 4$, one can always find a space so that the
nonlinearity in this space can be controlled by the mere mass of
$u$.

Before moving on, we introduce some notations that will be used
throughout this section.

\vspace{0.2cm}

We denote $\xf$ to be the two dimensional vector: $(\xft)$, and
$\xs$ as $(\xst)$. The differential operator $\nabla^{12}$,
$\nabla^{34}$, $\Delta_{12}$, $\Delta_{34}$ denote the
corresponding operators restricted to $(\xft)$ or $(\xst)$ plane.

By the same token, we define $P_N^{12}$, $P_N^{34}$ as two
dimensional Littlewood-Paley projection operators. $P_N^{12\pm}$,
$P_N^{34\pm}$ are the corresponding out-going and in-coming wave
projection operator. By augmenting with the identity operator on the other two variables, they can act
on a four-dimensional function.

We will also use the convention that when there is no superscript, the
notation should be understood as the normal one defined on $\R^4$. For
example, $P_N$ is the usual four-dimensional littlewood-Paley projection operator;
$\phi_{>R}$ should be understood as $\phi_{|x|>R}$ on $\R^4$ as well.

\vspace{0.2cm}

 We need the following weighted Strichartz estimate for splitting-spherically symmetric functions. We have

\begin{lem}(Weighted Strichartz estimate in splitting-spherical
symmetry case)\label{lem_wei_4d}.

Let $I$ be a time interval, $t_0\in I$. Let $u_0\in \ltrf$, $f\in
L_{t,x}^{\frac{2(d+2)}d}(I\times\R^4)$ be splitting-spherically
symmetric. Then the function $u(t,x)$ defined by
\begin{align*}
u(t):=e^{it\Delta }u_0-i\int_{t_0}^t e^{i(t-s)\Delta} f(s)ds
\end{align*}
is also splitting-spherically symmetrical and obeys the estimate.
For any $2< q\le \infty$,
\begin{align*}
\||\xf|^{\frac
2{3q}}u\|_{L_t^qL_x^{\frac{6q}{3q-4}}(I\times\R^4)}\lsm
\|u_0\|_{\ltrf}+\|f\|_{L_{t,x}^{\frac{2(d+2)}d}(I\times \R^4)}.
\end{align*}
The same conclusion holds true if we replace $\xf$ by $\xs$.
\end{lem}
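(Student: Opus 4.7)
That $u$ stays splitting-spherically symmetric is immediate: the Laplacian $\Delta=\Delta_{12}+\Delta_{34}$ commutes with orthogonal rotations in each $\R^2$-factor, so $e^{it\Delta}$ and the Duhamel integral of a splitting-spherically symmetric $f$ preserve the symmetry class. The content of the lemma is the weighted estimate.

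The strategy is to reduce to the two-dimensional radial weighted Strichartz estimate of Lemma~\ref{L:wes} in the first factor $\R^2_{\xf}$ via a partial Fourier transform. Let $\hat u(t,\xf,\xi^{34})$ denote the Fourier transform in the $x^{34}$-variable, and set $w(t,\xf,\xi^{34}):=e^{it|\xi^{34}|^2}\hat u$. A direct computation absorbs the $-|\xi^{34}|^2$-term coming from $\Delta_{34}$ into the phase, so that
$$
 iw_t+\Delta_{12}w=e^{it|\xi^{34}|^2}\hat f,
$$
i.e.\ $w$ solves, for each fixed $\xi^{34}$, a pure two-dimensional Schr\"odinger equation. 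The symmetry of $u$ guarantees that $w(t,\cdot,\xi^{34})$ is radial in $\xf$, so Lemma~\ref{L:wes} applies in dimension two with parameter $q'=3q$ (admissible since $q>2$ forces $q'>6\geq 4$), yielding pointwise in $\xi^{34}$
$$
\bigl\||\xf|^{2/(3q)} w(\cdot,\cdot,\xi^{34})\bigr\|_{L^{3q}_t L^{6q/(3q-4)}_{\xf}} \lesssim \|w(t_0,\cdot,\xi^{34})\|_{L^2_{\xf}} + (\text{Duhamel forcing}).
$$

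Next I would take $L^2_{\xi^{34}}$ norms of both sides: by Plancherel in $x^{34}\leftrightarrow\xi^{34}$ the initial-data contribution becomes $\|u_0\|_{L^2_x}$, and a double Minkowski swap (legitimate because both $L^{3q}_t$ and $L^{6q/(3q-4)}_{\xf}$ have exponents $\geq 2$) pushes $L^2_{\xi^{34}}$ into the innermost position, where Plancherel turns it into $L^2_{x^{34}}$. This produces a mixed-norm estimate
$$
\bigl\||\xf|^{2/(3q)} u\bigr\|_{L^{3q}_t L^{6q/(3q-4)}_{\xf} L^2_{x^{34}}} \lesssim \|u_0\|_{L^2_x}+\|f\|_{L^{2(d+2)/(d+4)}_{t,x}}.
$$
A symmetric run of the same argument with the two factors interchanged gives the companion estimate in which the roles of $\xf$ and $x^{34}$ are swapped. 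The desired isotropic-in-space, $L^q_t$ bound is then obtained by interpolating these two mixed-norm estimates against the trivial endpoint $\|u\|_{L^\infty_t L^2_x}\lesssim \|u_0\|_{L^2_x}+\|f\|$ (which corresponds to $q=\infty$ with no weight), using the splitting-spherical symmetry to ensure the two mixed norms interpolate correctly into the diagonal $L^p_x$ norm.

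\textbf{Main obstacle.} The delicate point is the last step: moving from the mixed-norm bound with time exponent $L^{3q}_t$ and spatial integrability $L^p_{\xf}L^2_{x^{34}}$ to the isotropic spatial norm $L^p_x$ with the sharper time exponent $L^q_t$. This is exactly where the splitting-spherical symmetry must be used in an essential way (it is what allows one to symmetrize the $\xf$ and $x^{34}$ factors), and it is the reason the lemma is restricted to $q>2$, i.e.\ to the range in which the two-dimensional weighted Strichartz of Lemma~\ref{L:wes} is available with parameter $q'=3q\geq 4$.
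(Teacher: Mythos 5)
The key failure is the time exponent, and it is not a bridgeable detail. Applying the two-dimensional Lemma~\ref{L:wes} fiberwise with parameter $q'=3q$ does produce the correct weight $|\xf|^{2/(3q)}$ and spatial exponent $\frac{6q}{3q-4}$, but the time exponent that comes out is $L^{3q}_t$, not $L^q_t$. This is a genuine loss: by taking $L^2_{\xi^{34}}$ (Plancherel) in the second factor you discard the dispersive decay coming from the $\xs$ direction entirely, so the weighted kernel underlying your mixed-norm estimate only decays like $|t|^{-1/2}$, whereas the four-dimensional weighted kernel decays like $|t|^{-3/2}$. Interpolating against $L^\infty_tL^2_x$ cannot recover this: that endpoint has no time decay, so interpolation can only push the time exponent upward from $3q$, never down to $q$. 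Trying a third endpoint such as the 4D Strichartz $L^2_tL^4_x$ can match the time exponent $q$, but a short computation shows the resulting weight exponent is then strictly less than $\frac 2{3q}$, and the spatial integrabilities in $\xf$ and $\xs$ stay anisotropic, so the interpolated estimate still misses the target. In short, the obstacle you flag in your final paragraph is not a technical gap to be filled but the point at which the strategy breaks.

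The paper instead keeps the full four-dimensional dispersion throughout. By Strichartz, Christ--Kiselev, and a $TT^*$ argument the lemma reduces to the fixed-time estimate
\begin{equation*}
\bigl\||\xf|^{\frac 2{3q}}e^{it\Delta}(|\yf|^{\frac 2{3q}}f)\bigr\|_{L_x^{\frac{6q}{3q-4}}(\R^4)}\lesssim |t|^{-\frac 2q}\|f\|_{L_x^{\frac{6q}{3q+4}}(\R^4)},
\end{equation*}
which is then established by interpolation with change of measure between the trivial $L^2\to L^2$ bound and the weighted $L^1\to L^\infty$ bound
$\bigl\||\xf|^{\frac 12}e^{it\Delta}(|\yf|^{\frac 12}f)\bigr\|_{L_x^\infty}\lesssim |t|^{-\frac 32}\|f\|_{L_x^1}$,
the latter proved by writing the explicit Schr\"odinger kernel on $\R^4$, passing to polar coordinates only in the $\xf$ variables, and running stationary phase on the angular integral; the $|t|^{-1}$ decay from the $\xs$ factor is retained and the $\xf$ factor contributes $|t|^{-1/2}$ after absorbing the weight. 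If you want to salvage a sub-dimensional reduction, you would need to retain genuine dispersive decay in $\xs$ rather than using $L^2$ there, which is essentially what the direct kernel computation accomplishes.
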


\begin{proof} The proof is an adaptation to that in \cite{ktv:2d, kvz:blowup}.
By the standard Strichartz estimate Lemma
\ref{L:strichartz} and Christ-Kiselev lemma, it suffices to prove
\begin{align}
\||\xf|^{\frac 2{3q}}\propt u_0\|_{L_t^q L_x^{\frac
{6q}{3q-4}}(\rrf)}\lsm \|u_0\|_{\ltrf}.\label{w1}
\end{align}
By $TT^*$ argument, \eqref{w1} is reduced to proving the following
\begin{align*}
\|\int |\xf|^{\frac
2{3q}}e^{i(s-\tau)\Delta}&|\yf|^{\frac2{3q}}f(\tau)d\tau\|_{L_t^qL_x^{\frac{6q}{3q-4}}(\rrf)}\\
&\lsm \|f\|_{L_t^{q'}L_x^{\frac{6q}{3q+4}}(\rrf)}.
\end{align*}
However, this is the consequence of Hardy-Littlewood-Sobolev
inequality and the following dispersive estimate
\begin{equation}
\||\xf|^{\frac 2{3q}}\propt (|\yf|^{\frac
2{3q}}f)\|_{L_x^{\frac{6q}{3q-4}}(\R^4)}\lsm |t|^{-\frac
2q}\|f\|_{L_x^{\frac{6q}{3q+4}}(\R^4)}.\label{disperse}
\end{equation}
We now prove \eqref{disperse} for all $\frac 43\le q\le\infty$. When
$q=\infty$, this is just the trivial estimate
\begin{equation}
\|\propt f\|_{\ltrf}\lsm \|f\|_{\ltrf}.\label{trivial}
\end{equation}
When $q=\frac 43$, this is the pointwise estimate
\begin{equation}
\||\xf|^{\frac 12}\propt |\yf|^{\frac 12}
f\|_{L_x^{\infty}(\R^4)}\lsm |t|^{-\frac
32}\|f\|_{L_x^1(\R^4)}\label{infty_one}
\end{equation}
for all $f$ being spherically symmetric in $(\xft)$ variable. By
passing to radial coordinate, we write
\begin{align*}
(|\xf|^{\frac 12}\propt |\yf|^{\frac12})(x,y)&=|\xf|^{\frac
12}|\yf|^{\frac 12}\frac 1{(4\pi
it)^2}e^{i[(x_3-y_3)^2+(x_4-y_4)^2]/4t}\\&e^{i(|\xf|^2+|\yf|^2)/4t}
\int_0^{2\pi}e^{-i|\xf||\yf|\cos\theta/2t}d\frac{\theta}{2\pi}.
\end{align*}
A standard stationary phase yields that
\begin{equation*}
|(|\xf|^{\frac 12}\propt|\yf|^{\frac 12})(x,y)|\lsm |t|^{-\frac 32},
\end{equation*}
from which \eqref{infty_one} follows.

Finally we obtain \eqref{disperse} by interpolating between
\eqref{trivial} and \eqref{infty_one}. Indeed, \eqref{trivial} and
\eqref{infty_one} imply that the operator
\begin{equation*}
|\xf|^{\frac 12}\propt: L_x^1(|\xf|^{-\frac 12}dx)\rightarrow
L_x^{\infty}(dx),
\end{equation*}
with bound $|t|^{-\frac 32}$, and
\begin{equation*}
|\xf|^{\frac 12}\propt: L_x^2(dx)\to L_x^2(|\xf|^{-1} dx),
\end{equation*}
with bound 1. Using interpolation between $L^p$ spaces with
changing of measures (see\cite{BL}, page 120), we obtain
\eqref{disperse}. Lemma \ref{lem_wei_4d} is proved.
\end{proof}

We need the following Lemma to control the nonlinearity when $F(u)$
is located at large radii.

\begin{lem}\label{lem_56est}
Let $N$ be dyadic number. Let $L>0$ be a constant such that
$LN\gtrsim 1$. Then we have
\begin{equation}\label{56est}
\|P_{>N} F(\phi_{|\xf|>LN}u)\|_{L_x^{\frac 65+}(\R^4)}\lsm N^{-\frac 59} (NL)^{-\frac 29}=N^{-\frac
79}L^{-\frac 29}.
\end{equation}
The same estimate will still hold if we replace $\xf$ by $\xs$ and
$P_{>N}$ by the fattened operator $P_{\gtrsim N}$.
\end{lem}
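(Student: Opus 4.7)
Since $d=4$, $F(u)=|u|^{4/d}u=|u|u$ is $C^1$. Writing $v:=\phi_{|\xf|>LN}u$, the left-hand side equals $\|P_{>N}F(v)\|_{L^{6/5+}(\R^4)}$. The plan is to use Bernstein to swap $P_{>N}$ for a fractional derivative of order $5/9$, apply the fractional chain rule (Lemma \ref{lem_chain}) to $F(v)$, and bound the two resulting factors: one using the uniform $H^1$-control on $v$ coming from \eqref{weak_com}, the other by combining the cutoff $\phi_{|\xf|>LN}$ with the 2D radial Sobolev embedding in the $\xf$-plane (Lemma \ref{L:radial_embed}), which is available because $u$ is spherically symmetric in $\xf$.

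First I would write $\|P_{>N}G\|_{L^{6/5+}}\lesssim N^{-5/9}\||\nabla|^{5/9}G\|_{L^{6/5+}}$ by Bernstein (Mikhlin multiplier), then apply Lemma \ref{lem_chain} to get
\[
\||\nabla|^{5/9}F(v)\|_{L^{6/5+}(\R^4)} \lsm \|v\|_{L^{3+}(\R^4)}\,\||\nabla|^{5/9}v\|_{L^{2+}(\R^4)},
\]
with exponents satisfying $\tfrac{1}{6/5+}=\tfrac{1}{3+}+\tfrac{1}{2+}$. For the $L^{2+}$ factor, note that $\operatorname{supp}v\subset\{|x|\ge|\xf|>LN\gtrsim 1\}$, so \eqref{weak_com} together with $M(u)=M(Q)$ gives $\|v\|_{H^1(\R^4)}\lsm 1$, and a small-loss 4D Sobolev embedding then yields $\||\nabla|^{5/9}v\|_{L^{2+}}\lsm\||\nabla|^{5/9+}v\|_{L^2}\lsm 1$.

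For the $L^{3+}$ factor I would extract $(LN)^{-2/9}$ from the support of $v$:
\[
\|v\|_{L^{3+}(\R^4)} \le (LN)^{-2/9}\,\||\xf|^{2/9}v\|_{L^{3+}(\R^4)}.
\]
Applying Lemma \ref{L:radial_embed} in the $\xf$-plane pointwise in $\xs$, with $q_1=2$, $p_1=3+$, $\alpha=2/9$, $s_1=1/9+$ obeying $\alpha+s_1=2(\tfrac{1}{q_1}-\tfrac{1}{p_1})$, gives
\[
\||\xf|^{2/9}v(\cdot,\xs)\|_{L^{3+}_{\xf}} \lsm \||\nabla^{12}|^{1/9+}v(\cdot,\xs)\|_{L^2_{\xf}}.
\]
Integrating in $\xs$ produces $\||\nabla^{12}|^{1/9+}v\|_{L^{3+}_{\xs}L^2_{\xf}}$; Minkowski for mixed norms (valid since $2<3+$) bounds this by $\||\nabla^{12}|^{1/9+}v\|_{L^2_{\xf}L^{3+}_{\xs}}$, after which the 2D Sobolev embedding $H^{1/3+}_{\xs}\hookrightarrow L^{3+}_{\xs}$ together with Plancherel (using $|\xi^{34}|^{a}|\xi^{12}|^{b}\le|\xi|^{a+b}$) yields
\[
\||\xf|^{2/9}v\|_{L^{3+}(\R^4)} \lsm \||\nabla|^{4/9+}v\|_{L^2(\R^4)} \lsm \|v\|_{H^1}\lsm 1.
\]
Collecting these bounds gives $\|P_{>N}F(v)\|_{L^{6/5+}}\lsm N^{-5/9}(LN)^{-2/9}$; the same argument goes through with $\xf\leftrightarrow\xs$ and with $P_{>N}$ replaced by the fattened $P_{\gtrsim N}$.

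The main obstacle is the tight exponent bookkeeping: the fractional chain rule, a 4D Sobolev for the smoother factor, a weighted 2D radial Sobolev in $\xf$, and a 2D plain Sobolev in $\xs$ must all line up so that the total derivative order $\tfrac{1}{9}+\tfrac{1}{3}=\tfrac{4}{9}$ (plus an arbitrarily small slack) remains strictly below $1$ and can be absorbed by the $H^1$-bound on $v$. The choice $r_1=3+$, $r_2=2+$, $q_1=2$ is what makes this budget close exactly and produces the sharp weight $(LN)^{-2/9}$ rather than a weaker power.
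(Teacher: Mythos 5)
Your argument is correct and follows essentially the same route as the paper's proof: Bernstein to trade $P_{>N}$ for $N^{-5/9}|\nabla|^{5/9}$, the fractional chain rule, the $(LN)^{-2/9}$ gain from the support combined with the $2$-dimensional radial Sobolev embedding in the $\xf$-plane, a Minkowski swap, a plain $2$-dimensional Sobolev embedding in $\xs$, and finally the uniform $H^1$ bound from \eqref{weak_com}. The only (cosmetic) deviation is that you take the second chain-rule factor in $L^{2+}$ and then absorb the loss via a small-gain $4$D Sobolev embedding, whereas the paper simply puts that factor directly in $L^2$.
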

\begin{proof}
This is the simple use of Bernstein, H\"older, radial Sobolev
embedding, and Lemma \ref{uni_bdd}. We give the estimates as
follows:
\begin{align*}
&\qquad\|P_{>N}F(\phi_{|\xf|>LN} u)\|_{L_x^{\frac 65+}(\R^4)}\\
&\lsm
N^{-\frac 59}\||\nabla|^{\frac 59}F(\phi_{|\xf|>LN} u)\|_{L_x^{\frac
65+}(\R^4)}\\
&\lsm N^{-\frac
59}\||\nabla|^{\frac 59}(\phi_{|\xf|>LN}u)\|_{\ltrf}\|\phi_{|\xf|>LN}u\|_{L_x^{3+}(\R^4)}\\
&\lsm N^{-\frac 59} (LN)^{-\frac 29}\|\phi_{\gtrsim 1}
u\|_{H_x^1(R^4)}\biggl\| \||\xf|^{\frac 29}\phi_{|\xf|>LN}
u\|_{L_{\xft}^{3+}(\R^2)}\biggl\|_{L_{\xst}^{3+}(\R^2)}\\
&\lsm N^{-\frac 79}L^{-\frac 29}\|\||\nabla^{12}|^{\frac
19+}(\phi_{|\xf|>LN}u)\|_{L_{\xft}^2(\R^2)}\|_{L_{\xst}^{3+}(\R^2)}\\
&\lsm N^{-\frac 79}L^{-\frac 29}\|\||\nabla^{12}|^{\frac
19+}(\phi_{>LN}u)\|_{L_{\xst}^{3+}(\R^2)}\|_{L_{\xft}^2(\R^2)}\\
&\lsm N^{-\frac 79}L^{-\frac 29}\||\nabla^{34}|^{\frac
13+}|\nabla^{12}|^{\frac 19+}(\phi_{>LN}u)\|_{\ltrf}\\
&\lsm N^{-\frac 79}L^{-\frac 29}\|\phi_{>LN} u\|_{H_x^1(\R^4)}\\
&\lsm N^{-\frac 79}L^{-\frac 29}.
\end{align*}
The lemma is proved.
\end{proof}

Now we are ready to give the

\subsection{Proof of Proposition \ref{fre_decay}}
In this subsection, we prove Proposition \ref{fre_decay} in four
dimensions with splitting-spherical symmetry. Note that for $|\xi|
\sim N$, $\xi \in \R^4$, since $|\xi|^2 = |\xi^{12}|^2 +
|\xi^{34}|^2$, we have either $|\xi^{12}| \sim N$, or $| \xi^{34}|
\sim N$ (or both hold), therefore
\begin{align*}
P_N \sim P_N \tilde P_N^{12} \tilde P_{\lsm N}^{34} + P_N \tilde P_N^{34} \tilde P_{\lsm N}^{12}.
\end{align*}
For notational simplicity, we shall drop the tildes and write
instead
\begin{align*}
\pn \sim \pn({\pnf}\pns+\pn^{34}P_{\le N}^{12}).
\end{align*}
Using triangle we obtain
\begin{align*}
\|\phi_{>10}\pn u(t)\|_{\ltrf}\lsm \|\phi_{>10}\pn{\pnf}\pns
u(t)\|_{\ltrf}+\|\phi_{>10}\pn{P_N^{34}}P^{12}_{\le N}\|_{\ltrf}
\end{align*}
By symmetry, we only need to consider the first one.
\begin{align}
\|\phi_{>10}\pn \pnf\pns u(t)\|_{\ltrf}\label{target}.
\end{align}
 Commuting the spatial and frequency cutoff in \eqref{target}, we
 have from Lemma \ref{L:mismatch_fre} that
\begin{align*}
&\quad \|\phi_{>10}\pn\pnf\pns u(t)\|_{\ltrf} \\
&\lsm
\|\phi_{>10}\pn\phi_{>5}\pnf\pns
u(t)\|_{\ltrf}\\
&\qquad+\|\phi_{>10}\pn\phi_{\le 5}\pnf\pns u(t)\|_{\ltrf}\\
&\lsm \|\phi_{>10}\pn\phi_{>5}\pnf\pns
u(t)\|_{\ltrf}+(5N)^{-10}\|u(t)\|_{\ltrf}\\&\lsm N^{-9}+
\|\phi_{>5}\pnf\pns u(t)\|_{\ltrf}.
\end{align*}
Therefore, Proposition \ref{fre_decay} will follow if we can prove
\begin{align}
\|\phi_{>5}\pnf \pns u(t)\|_{\ltrf}\lsm \|\pntd
u_0\|_{\ltrf}+N^{-1-\frac 1{10}}.\label{p12}
\end{align}

Like before, the additional decay factor of $N$ will either come
from the mismatch between the spatial cutoff and the linear
propagator or the spherical symmetry in $(\xft)$ (resp. $(x_3,x_4)$)
variables. To this end, we introduce a spatial cutoff in $\xf$
variables and further decompose the LHS of \eqref{p12} into:
\begin{align}
&\quad\|\phi_{>5}\pnf \pns u(t)\|_{\ltrf} \notag\\
&\le\|\phi_{>5}\phi_{|\xf|\le \frac 1{N^3}}\pnf\pns
u(t)\|_{\ltrf}\label{smb}\\
&\quad+\|\phi_{>5}\kmid\pnf\pns u(t)\|_{\ltrf}\label{mdb}\\
&\quad+\|\phi_{>5}\phi_{|\xf|>\frac 1N}\pnf\pns
u(t)\|_{\ltrf}\label{lgb}
\end{align}
As we said, the reason for doing so is that the Hankel function has
logarithmic singularity at the origin. So it is necessary to isolate
the singular regime where $|\xf|\le \frac 1{N^3}$ and deal with the
near-origin regime and away-from-origin regime separately.

\eqref{smb} can be estimated rather directly by the H\"older and Bernstein inequalities:
\begin{align*}
\eqref{smb}
&\lsm \|\phi_{|\xf|\le\frac 1{N^3}}\pnf\pns u(t)\|_{\ltrf}\\
&\lsm \|\frac 1{N^3}\|\pnf
u(t)\|_{L_{\xft}^{\infty}(\R^2)}\|_{L_{\xst}^2(\R^2)}\\
&\lsm \frac 1{N^2}\|\pnf u(t)\|_{\ltrf} \lsm \frac 1{N^2}.
\end{align*}
Thus the contribution due to \eqref{smb} is acceptable.
Next we estimate \eqref{mdb}. Using the in-out decomposition with
respect to $\xf$ variable and Duhamel formula, we estimate
\begin{align}
\eqref{mdb}&\lsm \|\phi_{>5}\kmid\pnp\pns u(t)\|_{\ltrf}\notag\\
&\qquad+
\|\phi_{>5}\kmid \pnn \pns u(t)\|_{\ltrf}\notag\\
&\lsm \|\phi_{>5}\kmid\pnn\pns \propt u_0\|_{\ltrf}\label{mdb_lin}\\
&\quad+\|\phi_{>5}\kmid\pnn\pns\int_0^t e^{i\tau\Delta}
F(u(t-\tau))d\tau\|_{\ltrf}\label{mdb_in}\\
&\quad+\|\phi_{>5}\kmid\pnp\pns\int_0^{\infty} \propgto
F(u(t+\tau))d\tau\|_{\ltrf}\label{mdb_out}.
\end{align}
The last integral should be understood to hold in the weak $L_x^2$
sense. For \eqref{mdb_lin}, we insert a fattened $\pn$ and use 
 the $L^2$ boundedness of $\pnn$ to get
\begin{align*}
\eqref{mdb_lin}& \lsm \|\kmid\pnn \pns\propt \pntd u_0\|_{\ltrf}\\
&\lsm \|\pntd u_0\|_{\ltrd},
\end{align*}
which is acceptable.

\eqref{mdb_in}, \eqref{mdb_out} will be treated in the same manner
so we only provide the details of the estimate of \eqref{mdb_out}.
Splitting different time pieces and inserting spatial cutoff, we
estimate
\begin{align}
\eqref{mdb_out}&\le
\|\phi_{>5}\kmid\pnp\pns\int_0^{\frac 1{50N}}\propgto\phi_{\le 1}F(u)(t+\tau)\tau\|_{\ltrf}\label{bdb}\\
&\quad+\|\phi_{>5}\kmid\pnp\pns\int_0^{\frac
1{50N}}\propgto\phi_{>1}
F(u)(t+\tau) d\tau\|_{\ltrf}\label{ssmb}\\
&\quad+\|\phi_{>5}\kmid\pnp\pns\int_{1}^\infty\propgto\phi_{>N\tau/2}
F(u)(t+\tau) d\tau\|_{\ltrf}\label{lm}\\
&\quad+\|\phi_{>5}\kmid\pnp\pns\int_{\frac
1{50N}}^1\propgto\phi_{>N\tau/2}F(u(t+\tau))d\tau\|_{\ltrf}\label{lms}\\
&\quad+\|\phi_{>5}\kmid\pnp\pns\int_{\frac
1{50N}}^\infty\propgto\phi_{\le N\tau/2} F(u)(t+\tau)
d\tau\|_{\ltrf}\label{lt}.
\end{align}
Then our task is reduced to controlling the these four terms which
we will do now.

\textbf{The estimate of \eqref{bdb}}

For this term, the additional decay in $N$ will come from the kernel
estimate Lemma \ref{L:kernel}. To proceed, we first use the $L^2$
boundedness of $\pnp$, Bernstein estimate, mismatch estimate Lemma
\ref{L:mismatch_real} and Strichartz to obtain:
\begin{align*}
\eqref{bdb}&\lsm \|\phi_{|\xs|>4}\pnp\pns\pntd\int_0^{\frac
1{50N}}\propgto\phi_{\le 1} F(u(t+\tau))d\tau\|_{\ltrf}\\
&\lsm \|\phi_{|\xs|>4}\pns\pntd\int_0^{\frac
1{50N}}\propgto\phi_{\le 1}F(u(t+\tau))d\tau\|_{\ltrf}\\
&\lsm \|\phi_{|\xs|>4}\pns\phi_{|\xs|\le 3}\pntd\int_0^{\frac
1{50N}}\propgto\phi_{\le 1}F(u(t+\tau))d\tau\|_{\ltrf}\\
&\quad+\|\phi_{|\xs|>3}\pntd\int_0^{\frac 1{50N}}\propgto \phi_{\le
1}F(u(t+\tau))d\tau\|_{\ltrf}\\
&\lsm N^{-10}\|\pntd\int_0^{\frac 1{50N}}\propgto\phi_{\le
1}F(u(t+\tau))d\tau\|_{\ltrf}\\
&\qquad+\|\int_0^{\frac 1{50N}}\phi_{>2}\pntd\propgto\phi_{\le
1}F(u(t+\tau))d\tau\|_{\ltrf}\\
&\lsm N^{-11}\|\pntd\phi_{\le
1}F(u)\|_{L_{\tau}^{\infty}L_x^2} \\
&\quad +\frac 1N\sup_{0\le\tau\le\frac
1{50N}}\|\phi_{>2}\pntd\propgto\phi_{\le 1}F(u(t+\tau))\|_{\ltrf}
\end{align*}
Now using Bernstein and the kernel estimate: for any
$\tau\in[0,\frac 1{50N}]$,
\begin{align*}
\phi_{>2}\pntd\propgto\phi_{\le 1}(x,y)&\lsm N^4\langle
N|x-y|\rangle^{-20}\phi_{|x|>2}\phi_{|y|\le 1}\\
&\lsm N^{-10}\langle|x-y|\rangle^{10},
\end{align*}
we continue the estimate of \eqref{bdb} as
\begin{align*}
\eqref{bdb}\lsm N^{-9}\|F(u)\|_{L_\tau^\infty L_x^1}\lsm N^{-5}.
\end{align*}

\textbf{The estimate of \eqref{ssmb}}

For this term, we simply use the Strichartz estimate and
weak localization of kinetic energy \eqref{weak_com},
\begin{align*}
\eqref{ssmb}&\lsm \|\pnp\pns\pntd\int_0^{\frac
1{50N}}\propgto\phi_{>1}F(u(t+\tau))d\tau\|_{\ltrf}\\
&\lsm \|\pntd\phi_{>1}F(u(t+\tau))\|_{L_\tau^2L_x^{\frac
43}([0,\frac 1N]\times\R^4)}\\
&\lsm N^{-\frac 32}\|\nabla(\phi_{>1}F(\phi_{>\frac
12}u))\|_{L_\tau^\infty L_x^{\frac 43}(\srf)}\\
&\lsm N^{-\frac 32}(\|\phi_{>\frac 12}u\|_{L_\tau^\infty
H_x^1(\srf)}\|\phi_{>\frac12} u\|_{L_\tau^\infty L_x^4(\srf)})\\
&\lsm N^{-\frac 32}.
\end{align*}

\textbf{The estimate of \eqref{lm}}

For this term, the decay comes from the partial spherical symmetry of the
solution. To this end, we first use the $L^2$ boundedness of $\pnp$
and mismatch estimate Lemma \ref{L:mismatch_fre} to simplify the
computation, then use the weighted Strichartz estimate Lemma
\ref{lem_wei_4d}, Lemma \ref{lem_56est} and the weak localization of kinetic energy
\eqref{weak_com} to obtain
\begin{align*}
\eqref{lm}&\lsm \|\pntd\int_{1}^\infty \propgto
\phi_{>N\tau/2}F(\phi_{>N\tau/4}u(t+\tau))d\tau\|_{\ltrf}\\
&\lsm \| \pntd\int_{1}^\infty \propgto\phi_{>N\tau/2}(P_{<\frac 18
N}+P_{>8N})F(u\phi_{>N\tau/4})(t+\tau)d\tau\|_{\ltrf}\\
&\quad+\|\pntd\int_{1}^\infty \propgto\phi_{>N\tau/2}P_{\frac
N8<\cdot\le
8N}F(u\phi_{>N\tau/4})(t+\tau)d\tau\|_{\ltrf}\\
&\lsm \int_{1}^\infty \|\pntd\phi_{>N\tau/2}(P_{<\frac
N8}+P_{>8N})F(u\phi_{>N\tau/4})(t+\tau)\|_{\ltrf}d\tau\\
&\quad+ \||x|^{-\frac 13+}\phi_{>N\tau/2}P_{\frac N8<\cdot\le
8N}F(u\phi_{>N\tau/4})\|_{L_\tau^{2-}L_x^{\frac
65+}([1,\infty)\times\R^4)}\\
&\lsm \int_{1}^{\infty}\|(N^2\tau)^{-10}F(u\phi_{>N\tau/4})(t+\tau)\|_{\ltrf}d\tau\\
&\quad+ N^{-\frac 13+}\biggl\|\tau^{-\frac 13+}\|P_{\frac
N8<\cdot\le 8N}F(u\phi_{>N\tau/4})\|_{L_x^{\frac
65+}(\R^4) }\biggr\|_{L_\tau^{2-}([1,\infty))}\\
&\lsm N^{-20}\int_{1}^\infty
\tau^{-10}d\tau\|F(u\phi_{>N\tau/4})\|_{L_\tau^\infty
L_x^2(\srf)} \\
& \quad+N^{-\frac 13+}\|\tau^{-\frac 13+}\tau^{-\frac
29}N^{-\frac 79}\|_{L_\tau^{2-}([1,\infty))}\\
&\lsm N^{-1-\frac 1{10}}.
\end{align*}

\textbf{The estimate of \eqref{lms}}

This term will be estimated in a similar way as \eqref{lm}. We first
use $L^2$ boundedness of $\pnp$, the mismatch estimate
\ref{L:mismatch_fre}, then use weighted Strichartz estimate
\ref{lem_wei_4d}, weak localization of kinetic energy energy \eqref{weak_com} and Bernstein to obtain
\begin{align*}
\eqref{lms}&\lsm \|\pntd\int_{\frac 1{50N}}^1
\propgto\phi_{>N\tau/2}F(\phi_{>N\tau/4}u)(t+\tau)d\tau\|_{\ltrf}\\
&\lsm \|\pntd\int_{\frac 1{50N}}^1\propgto\phi_{>N\tau/2}P_{\frac
N8<\cdot\le 8N}F(\phi_{>N\tau/4}u)(t+\tau)d\tau\|_{\ltrf}\\
&\quad+\|\pntd\int_{\frac 1{50N}}^1\propgto\phi_{>N\tau/2}(P_{\le
\frac N8}+P_{>8N})F(u)(t+\tau)d\tau\|_{\ltrf}\\
&\lsm \int_{\frac 1{50N}}^1\|\pntd\propgto\phi_{>N\tau/2}(P_{\le
\frac N8}+P_{>8N})F(u)(t+\tau)\|_{\ltrf}d\tau\\
&\quad+\||x|^{-\frac 13+}\phi_{>N\tau/2}P_{\frac N8<\cdot\le
8N}F(u\phi_{>N\tau/4})(t+\tau)\|_{L_\tau^{2-}L_x^{\frac 65+}([\frac
1{50N},1]\times \R^4)  }\\
&\lsm \int_{\frac 1{50N}}^1
(N^2\tau)^{-20}\|F(u\phi_{>N\tau/4})\|_{\ltrf} d\tau\\
&\quad+N^{-\frac 13+}\biggl\|\tau^{-\frac 13+}\|P_{\frac N8<\cdot\le
8N}F(u\phi_{>N\tau/4})\|_{L_x^{\frac
65+}}\biggr\|_{L_{\tau}^{2-}([\frac 1{50N},1])}\\
&\lsm N^{-10}\|u\phi_{>\frac
1{50}}\|_{L_{\tau}^{\infty}L_x^4(\srf)}^2 \\
&\qquad +N^{-\frac
13+}N^{-1}\biggl\|\tau^{-\frac 13+}\|\nabla
F(u\phi_{>N\tau})\|_{L_x^{\frac 65+}}\biggr\|_{L_\tau^{2-}([\frac
1{50N},1])}\\
& \lsm N^{-\frac 43+}\|\tau^{-\frac 13+}\|_{L_{\tau}^2([\frac
1{50N},1])}\|\nabla(u\phi_{>N\tau/4})\|_{L_\tau^\infty
L_x^2(\srf)} \cdot \\
& \quad \cdot \|u\phi_{>N\tau/4}\|_{L_\tau^\infty L_x^{3+}(\srf)}+ N^{-10}\\
&\lsm N^{-1-\frac 14}.
\end{align*}

\textbf{The estimate of \eqref{lt}}

We are left with this very last term for which we first control it
as
\begin{align*}
&\quad \eqref{lt} \\
&\lsm \|\kmid\pnp\int_{\frac 1{50N}}^\infty
\propgto\phi_{|\yf|\le N\tau/2}\pns\phi_{\le
N\tau/2}F(u(t+\tau))d\tau\|_{\ltrf}\\
&\lsm \int_{\frac 1{50N}}^\infty \|\kmid\pnp
e^{-i\tau\Delta_{12}}\phi_{|\yf|\le N\tau/2}\pns\phi_{\le
N\tau/2}F(u)(t+\tau)\|_{\ltrf}d\tau.
\end{align*}
Now we use the kernel estimate Lemma \ref{Psmall_properties} for
small $x$ regime:
\begin{align*}
&|\kmid\pnp e^{-i\tau\Delta_{12}}\phi_{|\yf| \le
N\tau/2}(\xf,\yf)|\\
&\quad\lsm N^2\log N\langle N^2\tau+N|y|\rangle^{-20}\kmid\phi_{|\yf|\le N\tau/2}\\
&\lsm N^2\log N\langle N^{2}\tau+N|y|+N|x|\rangle^{-20}\\
&\lsm N^2\log N |N^2\tau|^{-10}\langle N|x-y|\rangle^{-10}
\end{align*}
and Young's inequality to obtain
\begin{align*}
&\quad \eqref{lt} \\
&\lsm \int_{\frac 1{50N}}^\infty N^2\log N
(N^2\tau)^{-10}\biggl\|\| \langle
N|\cdot|\rangle^{-10}*_{12}(\pns\phi_{N\tau/2}F(u(t+\tau))\|_{L_{\xf}^2}\biggr\|_{L_{\xs}^2}d\tau\\
&\lsm N^2\log N N^{-20}\int_{\frac 1{50N}}^\infty \tau^{-10}
\biggl\| \|\pns\phi_{\le
N\tau/2}F(u(t+\tau))\|_{L_{\xf}^1}\|_{L_{\xs}^2} d\tau\\
&\lsm N^{-15}\int_{\frac 1{50N}}^\infty \tau^{-10} \| \|\pns\phi_{\le
N\tau/2}F(u(t+\tau))\|_{L_{\xs}^2}\|_{L_{\xf}^1} d\tau\\
&\lsm N^{-13}\int_{\frac 1{50N}}^\infty \tau^{-10}\|\phi_{\le
N\tau/2}F(u(t+\tau))\|_{L^1_x(\R^4)}d\tau\\
&\lsm N^{-2}\|F(u)\|_{L_\tau^\infty L_x^1}\\
&\lsm N^{-2}.
\end{align*}

Collecting the estimates for \eqref{bdb} through \eqref{lt}, we
conclude
\begin{align*}
\eqref{mdb_out}\lsm N^{-1-\frac 1{10}}.
\end{align*}
Therefore \eqref{mdb} gives the desired contribution $N^{-1-\frac
1{10}}$.

Finally we remark that after minor changes, the contribution from
\eqref{lgb} can be estimated in pretty much the same manner as for
\eqref{mdb}. More rigorously, in estimating \eqref{mdb}, we will
chop it into five pieces like from \eqref{bdb} to \eqref{lt}. Since
in the estimate of \eqref{ssmb}, \eqref{lm}, \eqref{lms}, we simply
throw away the cutoff in $\xf$ variable, these three estimates will
still be valid in this case. Recalling that in estimating
\eqref{bdb}, the decay essentially stems from the mismatch between
the spatial cutoff $\phi_{>5}$ and the propagator $\propgto
\phi_{\le 1}$ which is also available in this case, we are able to
handle the analogue to \eqref{bdb}. Finally, the analogue to the
very last term can also be treated by using the normal kernel
estimate Proposition \ref{P:P properties}. To conclude, we prove
\eqref{p12} and the proof of Proposition \ref{fre_decay} is
completed.

\subsection{Proof of Proposition \ref{local}}

In this subsection, we establish Proposition \ref{local} in 4
dimensions with admissible symmetry. More precisely we will show,
for any dyadic number $N_1, N_2>0$, there exist $R_0=R_0(u,N_1,N_2)$
such that
\begin{align*}
\|\phi_{>R}\pn u(t)\|_{\ltrf}\lsm \|\phi_{>\sqrt
R}u_0\|_{\ltrf}+R^{-\frac 1{100}}, \quad \forall\, R>R_0.
\end{align*}
\begin{proof}
Since $\pn \sim \pn\pnf\pns+\pn P_N^{34}P_{\le N}^{12}$, it suffices
for us to consider $$\|\phr\pn\pnf\pns u(t)\|_{\ltrf}.$$ Commuting
the spatial cut-off $\phr$ with the projector $\pn$, we have
\begin{align*}
&\qquad\|\phr\pn\pnf\pns u(t)\|_{\ltrf}\\
&\lsm \|\phr\pn\phrt\pnf\pns u(t)\|_{\ltrf}+\|\phr\pn\phi_{\le \frac
R2}\pnf\pns
u(t)\|_{\ltrf}\\
&\lsm \|P_N \phrt\pnf\pns u(t)\|_{\ltrf}+(RN)^{-10}\\
&\lsm \| \phrt\pnf\pns u(t)\|_{\ltrf}+R^{-5}.
\end{align*}
Let $c$ be a small number close to but smaller than 1. By
introducing spatial cutoffs, it reduces to controlling
\begin{align}
\|\phi_{|\xf|\le\frac 1R}\phrt\pnf\pns
u(t)\|_{\ltrf}\label{1}\\
\|\midd\phrt\pnf\pns u(t)\|_{\ltrf}\label{2}\\
\|\phi_{|\xf|>R^c}\phrt\pnf\pns u(t)\|_{\ltrf}\label{3}.
\end{align}
\eqref{1} is controlled simply by H\"older and Bernstein inequalities:
\begin{align*}
\eqref{1}&\lsm \|\phi_{|\xf|\le\frac 1R}\pnf u(t)\|_{\ltrf}\\
&\lsm \frac 1R\biggl\| \|\pnf
u(t)\|_{L_{\xft}^{\infty}(\R^2)}\biggr\|_{L_{\xst}^2(\R^2)}\\
&\lsm \frac NR\|u(t)\|_{\ltrf}\lsm R^{-\frac 12}.
\end{align*}
To control \eqref{2}, we use in-out decomposition and Duhamel
formula to get
\begin{align}
\eqref{2}&\le \|\midd\phrt P_N^{12-} \pns e^{it\Delta}
u_0\|_{\ltrf}\label{2.1}\\
&+\|\midd\phrt P_N^{12-} \pns\int_0^t
e^{i\tau\Delta}F(u)(t-\tau)d\tau\|_{\ltrf} \notag \\
&+\|\midd\phrt\pnp\pns\int_0^{\infty}\propgto
F(u)(t+\tau)d\tau\|_{\ltrf}.\label{2.2}
\end{align}
We first estimate \eqref{2.1} by writing
\begin{align}
\eqref{2.1}&\lsm \| \midd\phrt P_N^{12-} \pns \propt \frc
u_0\|_{\ltrf}\notag\\
&\qquad+\| \midd\phrt P_N^{12-} \pns\propt\frcs u_0\|_{\ltrf}\notag\\
&\lsm \|\frc u_0\|_{\ltrf}\notag\\
&+\| \midd\phrt P_N^{12-} \pns\propt\frcs u_0\|_{\ltrf}\label{2.1.1}.
\end{align}
This will give desired estimate once we establish
\begin{equation*}
\eqref{2.1.1}\lsm R^{-\frac 1{50}}.
\end{equation*}
We discuss two cases.

Case 1. $0\le t\le \frac R{100N}$. We write
\begin{align*}
\eqref{2.1.1}& \lsm \left\| \phi_{|x^{34}| >\frac R3} \tilde P_N^{12} P_{\le N}^{34} \tilde P_N e^{it\Delta}
\phi_{\le \frac 12 R^c} u_0 \right\|_{L^2(\R^4)} \\
& \lsm \left\| \phi_{|x^{34}| >\frac R3} P_{\le N}^{34} \tilde P_N e^{it\Delta} \phi_{\le \frac 12 R^c} u_0
\right\|_{L^2(\R^4)} \\
&\lsm \left\| \phi_{>\frac R4}  \tilde P_N e^{it\Delta}
 \phi_{\le \frac 12 R^c} u_0 \right\|_{\ltrf} \\
& \qquad + (NR)^{-10} \cdot \|u_0\|_{\ltrf}  \\
& \lsm R^{-5},
\end{align*}
where we have used the kernel estimate : $\forall\, t\le \frac R{100N}$
\begin{align} \label{one_star}
 \left| (\phi_{>\frac R4} \tilde P_N e^{it\Delta} \phi_{\le \frac 12 R^c} )(x,y) \right|
\lsm R^{-10} \langle N (x-y) \rangle^{-10}.
\end{align}

Case 2. $t>\frac R{100N}$. We first use triangle to split
\begin{align*}
\eqref{2.1.1} &\le \|\midd\pnn e^{it\Delta_{12}} \frcs u_0\|_{\ltrf}\\
&\le \|\phi_{\frac 1R<|\xf|\le\frac 1N}\pnn e^{it\Delta_{12}}\frcs u_0\|_{\ltrf}\\
&\qquad +\|\phi_{\frac 1N<|\xf|\le R^c}\pnn e^{it\Delta_{12}}\frcs
u_0\|_{\ltrf}.
\end{align*}
Since in both of the two terms, the kernels obey
\begin{align}
&\qquad|( \phi_{\frac 1R <|x^{12}|\le \frac 1N} P_N^{12-} e^{it\Delta_{12}} \phi_{|y^{12}|<\frac 12 R^c} )(\xf,\yf)|\notag\\
&\lsm (\log R)^{10} N^C\langle N^2t-N|\yf|\rangle^{-100}\phi_{\frac 1R<|\xf|\le\frac 1N} \phi_{|y^{12}|<\frac 12 R^c} \notag\\
&\lsm (\log R)^{10} N^C |N^2 t|^{-50}\langle N(\xf-\yf)\rangle^{-50}\notag\\
&\lsm R^{-10}|N^2t|^{-10}\langle N(\xf-\yf)\rangle^{-10}\lsm
R^{-10}\langle N(\xf-\yf)\rangle^{-10}\label{two_star}
\end{align}
and
\begin{align}
&|\phi_{\frac 1N<|\xf|\le R^c}\pnn e^{it\Delta_{12}}\phi_{|\yf|\le \frac 12 R^c}(\xf,\yf)|\notag\\
&\lsm N^C\langle N^2t+N|\xf|-N|\yf|\rangle^{-20}\phi_{\frac
1N<|\xf|\le R^c}\phi_{|\yf|\le
\frac 12 R^c}\notag\\
&\lsm N^C\langle N^2t+N|\xf|+N|\yf|\rangle^{-20}\notag\\
&\lsm N^C|N^2t|^{-10}\langle N(\xf-\yf)\rangle^{-10}\lsm
R^{-10}\langle N(\xf-\yf)\rangle^{-10}.\label{three_star}
\end{align}
Then
\begin{equation*}
\eqref{2.1.1}\lsm R^{-5}.
\end{equation*}
follows from these kernel estimate and Young's inequality. This
completes the estimate of \eqref{2.1}.

Now we estimate \eqref{2.2}. By splitting into time pieces and putting
spatial cutoff in front of $F(u)$, our task is reduced to bounding
the following four terms
\begin{align}
\|\midd\phrt\pnp\pns\int_0^{\frac R{100N}}\propgto\frc
F(u(t+\tau)) d\tau\|_{\ltrf}\label{2.2.1}\\
\|\midd\phrt\pnp\pns\int_0^{\frac R{100N}}\propgto\frcs
F(u(t+\tau))d\tau\|_{\ltrf}\label{2.2.2}\\
\|\midd\phrt\pnp\pns\int_{\frac R{100N}}^{\infty} \propgto\bnto
F(u(t+\tau))d\tau\|_{\ltrf}\label{2.2.3}\\
\|\midd\phrt\pnp\pns\int_{\frac R{100N}}^{\infty}\propgto\snto
F(u(t+\tau))d\tau\|_{\ltrf}\label{2.2.4}
\end{align}
We first estimate the tail part \eqref{2.2.2}, \eqref{2.2.4} where
the decay comes from the decay estimate \eqref{one_star},
\eqref{two_star}, \eqref{three_star}.

For \eqref{2.2.2}, we use \eqref{one_star}, Bernstein, the $L^2$
boundedness of $\tilde P_N^{12+}$ to obtain
\begin{align*}
\eqref{2.2.2}& \lsm \left \| \phi_{|x^{34}|>\frac R3} P_N^{12+} P^{34}_{\le N} \tilde P_N
\int_0^{\frac R{100N}} e^{-i\tau \Delta} \phi_{\le \frac 12 R^c} F(u)(t+\tau) d\tau \right\|_{\ltrf} \\
& \lsm \left\| \phi_{>\frac R4} \tilde P_N \int_0^{\frac R{100N}}
e^{-i\tau \Delta} \phi_{\le \frac 12 R^c} F(u)(t+\tau) d\tau \right\|_{\ltrf} \\
& \qquad + (RN)^{-10} \cdot \frac RN \cdot \| \tilde P_N (\phi_{\le \frac 12 R^c} F(u) ) \|_{L^\infty_\tau L_x^2(\srf)} \\
& \lsm R^{-5}.
\end{align*}

For \eqref{2.2.4}, we use \eqref{two_star}, \eqref{three_star},
Bernstein to get
\begin{align*}
\eqref{2.2.4} 
&\lsm \int_{\frac R{100N}}^{\infty}\|\midd\pnp\pns\propgto\snto
F(u(t+\tau))\|_{\ltrf}d\tau\\
&\lsm \int_{\frac R{100N}}^{\infty}\|\midd\pnp
e^{-i\tau\Delta_{12}}\phi_{|\xf|\le \frac{N\tau}{200}}\pns\snto
F(u(t+\tau))\|_{\ltrf} d\tau\\
&\lsm \int_{\frac R{100N}}^\infty \tau^{-10}N^C R^{-10}\biggl\| \|\pns\snto
F(u(t+\tau))\|_{L_{\xft}^1(\R^2)}\biggl\|_{L_{\xst}^2(\R^2)} d\tau\\
&\lsm N^C R^{-10}\int_{\frac
R{100N}}^{\infty}\tau^{-10}\|\|\pns\snto
F(u)\|_{L_{\xst}^2(\R^2)}\|_{L_{\xft}^1(\R^2)} d\tau\\
&\lsm N^C R^{-10}\int _{\frac R{100N}}^{\infty}
\tau^{-10}d\tau\|F(u)\|_{L_{\tau}^{\infty}L_x^1(\rrf)}\\
&\lsm R^{-5}.
\end{align*}

Now we treat the main contribution \eqref{2.2.1}, \eqref{2.2.3} for
which we will use weighted Strichartz and radial Sobolev embedding.
We start with \eqref{2.2.1}. We write
\begin{align}
\eqref{2.2.1}&\lsm \|\pnp\pns \tilde P_N \int_0^{\frac R{100N}}\propgto\frc
F(u\phi_{>R^c/4})(t+\tau) d\tau\|_{\ltrf}\notag\\
&\lsm \|\pntd\int_0^{\frac R{100 N}}\propgto \frc
F(u\phi_{>R^c/4})(t+\tau) d\tau\|_{\ltrf}\notag\\
&\lsm \|\pntd\int_0^{\frac R{100N}}\propgto \frc(P_{>16
N}+P_{\le\frac N{16}})F(u\phi_{>R^c/4})(t+\tau) d\tau\|_{\ltrf}\notag\\
& \quad +\|\pntd \int _0^{\frac R{100 N}}\propgto \frc P_{\frac
N{16}<\cdot\le 16 N} F(u\phi_{>R^c/4})(t+\tau)d\tau\|_{\ltrf}\notag\\
&\le \|\pntd \frc(P_{>16 N}+P_{\le \frac
N{16}})F(u\phi_{>R^c/4})\|_{L^1_{\tau}L_x^2([t,t+\frac R{100
N}]\times\R^4)}\notag\\
&+\|\pntd\int_0^{\frac R{100 N}} \propgto \frc P_{\frac
N{16}<\cdot\le 16 N}
F(u\phi_{>R^c/4})(t+\tau)d\tau\|_{\ltrf}.\label{2.2.1.2}
\end{align}
The first one is mismatched so gives us the bound
\begin{equation*}
(NR^c)^{-10}\frac RN\|\phi_{>R^c/4}
u\|_{L_{\tau}^{\infty}L_x^4(\srf)}^2\lsm R^{-5},
\end{equation*}
due to \eqref{weak_com}. The second term can be estimated as
follows. Since $\frc=\frc(\phi_{|\xf|>R^c/4}+\phi_{|\xs|>R^c/4})$,
we decompose \eqref{2.2.1.2} into four similar terms with the
following being one of the representatives
\begin{equation}
\|\pntd\int_0^{\frac R{100
N}}\propgto\frc\phi_{|\xf|>R^c/4}P_{N/16<\cdot\le 16
N}F(u\phi_{|\xs|>R^c/8})(t+\tau) d\tau\|_{\ltrf}\label{repsent}
\end{equation}
Using weighted Strichartz and Lemma \ref{lem_56est}, we control it
by
\begin{align*}
\eqref{repsent}&\le R^{-\frac c3+}\|P_{N/16<\cdot\le 16
N}F(\phi_{|\xs|>R^c/8}u)\|_{L_{\tau}^{2-}L_x^{\frac 65+}([t,t+\frac
R{100N}]\times \R^4)}\\
&\lsm R^{-\frac c3+}R^{\frac 12+}N^{-\frac 12-}\|P_{ N/16<\cdot \le
16 N} F(\phi_{|\xs|>R^c/8}u)\|_{L_{\tau}^{\infty}L_x^{\frac
65+}([t,t+\frac R{100N}]\times\R^4)}\\
&\lsm R^{-\frac c3+\frac 12+}N^{-\frac 12-}N^{-\frac 59}R^{-\frac
29c}.
\end{align*}
Since $c$ is sufficiently close to $1$ and $R$ sufficiently large,
we can have
\begin{equation*}
 \eqref{repsent}\lsm R^{-\frac 1{50}}
\end{equation*}
and therefore
\begin{align*}
 \eqref{2.2.1} \lsm R^{-\frac 1 {50}}
\end{align*}
which is acceptable.

Now we consider \eqref{2.2.3}. We first bound it as
\begin{align}
\eqref{2.2.3}&\lsm \|\int_{\frac R{100N}}^{\infty}\pntd
\propgto\bnto
F(u(t+\tau))d\tau\|_{\ltrf}\\
&\lsm \|\int_{\frac R{100N}}^\infty \pntd \propgto\bnto(P_{\le
N/16}+P_{>16N})F(u\phi_{>\frac{N\tau}{400}})d\tau\|_{\ltrf}\label{2.2.3.1}\\
&\qquad+\|\int_{\frac R{100N}}^{\infty}\pntd e^{-i\tau\Delta}\bnto
P_{ N/{16}<\cdot\le 16 N}
F(u\phi_{>\frac{N\tau}{400}})(t+\tau)d\tau\|_{\ltrf}.\label{2.2.3.2}
\end{align}
The first term contains a mismatch. Therefore by Minkowski and
mismatch estimate Lemma \ref{L:mismatch_fre}, we have
\begin{align*}
\eqref{2.2.3.1}&\lsm \int_{\frac R{100N}}^{\infty}\|\tilde P_{N}\bnto(P_{\le
N/16}+P_{>16N})F(u\phi_{>\frac{N\tau}{400}})(t+\tau)\|_{\ltrf}d\tau\\
&\lsm\int _{\frac
R{100N}}^{\infty}(N^2\tau)^{-10}\|F(u\phi_{>\frac{N\tau}{400}})(t+\tau)\|_{\ltrf}d\tau\\
&\lsm N^C
R^{-9}\|u\phi_{\gtrsim 1}\|_{L_{\tau}^{\infty}L_x^4([0,\infty)\times\R^4)}^2\\
&\lsm R^{-5}.
\end{align*}

Estimate of \eqref{2.2.3.2} will follow the similar way as for
\eqref{2.2.1.2}. Again we decompose it into four terms with an
example like the following:
\begin{equation}
\|\int_{\frac R{100 N}}^{\infty}\pntd
\propgto\bnto\phi_{|\xf|>\frac{N\tau}{400}}P_{N/16<\cdot\le 16
N}F(\phi_{|\xf|>\frac{N\tau}{800}}u)(t+\tau)d\tau\|_{\ltrf}.\label{repr2}
\end{equation}
Using Weighted Strichartz, Minkowski, Lemma \ref{lem_56est} we bound
it by
\begin{align*}
\eqref{repr2}&\lsm N^{-\frac 13+}\|\tau^{-\frac 13+}P_{N/16<\cdot\le
16 N}F(\phi_{|\xf|>\frac{N\tau}{800}}
u)(t+\tau)\|_{L_{\tau}^{2-}L_x^{\frac 65+}([\frac R{100
N},\infty)\times \R^4)}\\
&\lsm N^C \|\tau^{-\frac 13+}\tau^{-\frac
29}\|_{L_{\tau}^{2-}([\frac
R{100 N},\infty))}\\
&\lsm R^{-\frac 1{100}}.
\end{align*}

Collecting the estimates for \eqref{2.2.1} through \eqref{2.2.4}, we
get the bound for \eqref{2.2}
\begin{equation}
\eqref{2.2}\lsm R^{-\frac 1{100}}.
\end{equation}
Thus \eqref{2} gives the desired control:
\begin{equation*}
\eqref{2}\lsm \|\phi_{>\sqrt R} u_0\|_2+R^{-\frac 1{100}}.
\end{equation*}

 To finish the argument, we
still have to control \eqref{3}. However, modulo some modification,
the estimate of \eqref{3} will be essentially a repetition of that of \eqref{2}.
For this purpose, we only briefly outline the proof.

We first use in-out decomposition to reduce matters to control
\begin{align}
\|\phi_{|\xf|>R^c}\phrt\pnn\pns e^{it\Delta}
u_0\|_{\ltrf}\label{3.1}\\
\| \phi_{|\xf|>R^c}\phrt\pnn\pns\int_0^t
e^{i\tau\Delta}F(u(t-\tau))d\tau\|_{\ltrf}\label{3.2}\\
\|\phi_{|\xf|>R^c}\phrt\pnp\pns\int_0^{\infty} e^{-i\tau\Delta}
F(u(t+\tau))d\tau\|_{\ltrf}.\label{3.3}
\end{align}

For the linear part, the main contribution comes from $\phi_{>R^c/2}
u_0$ which gives us the bound
$$
\|\phi_{>R^c/2}u_0\|_{\ltrf}\le\|\phi_{>\sqrt R}u_0\|_{\ltrf}.
$$
The contribution due to another part $\phi_{\le R^c/2}$ is very
small, say smaller than $R^{-5}$ by using the decay estimate of the
kernel of the operator $\phi_{|\xf|>R^c}\pnn
e^{it\Delta_{12}}\phi_{|\xf|\le R^c/2}$.

The estimate of \eqref{3.2} and \eqref{3.3} will be the same, so we
look at \eqref{3.3}. We further decompose it into different time
pieces:
\begin{align}
\|P_N\phi_{|\xf|>R^c}\phrt\pnp\pns\int_0^{\frac RN}\propgto
F(u(t+\tau))d\tau\|_{\ltrf}\label{3.3.1}\\
\|P_N\phi_{|\xf|>R^c}\phrt\pnp\pns\int_{\frac RN}^{\infty}\propgto
F(u(t+\tau))d\tau\|_{\ltrf}\label{3.3.2}
\end{align}
We first consider \eqref{3.3.1} for which the main contribution
comes from $\frc F(u)$. We can estimate this part by weighted
Strichartz, radial Sobolev embedding and Lemma \ref{lem_56est}. The
contribution due to $\frcs F(u)$ is very small as will be deduced
from the kernel estimate. The same philosophy applies to
\eqref{3.3.2}, for this term the main contribution comes from
$\phi_{>N\tau/2} F(u) $. To conclude, we can handle all the pieces thus
get the bound
\begin{align*}
\eqref{3}\lsm \|\phi_{>\sqrt R}u_0\|_{\ltrf}+R^{-\frac 1{100}}.
\end{align*}

Collecting the estimate of \eqref{1}, \eqref{2} and \eqref{3}, we
finally prove the Proposition \ref{local} in $2+2$ dimensions is
completed.

\end{proof}

\section{Higher dimensional case with admissable symmetry}\label{highd}

In this section, we prove Proposition \ref{fre_decay} and
Proposition \ref{local} in high dimensions $d\ge 5$. The proof in
this case will be an adaptation of that in 2+2 case except for the
fact the numerology is more complicated. For this reason, we will
give a very brief outline of the proof, with emphasis on the
important changes.

To begin with, we explain the notations we will use in this section. We
denote $P^{d_1}_N$ as a Littlewood Paley projection on $d_1$-dimensional
space with the same explanation if we change the number
$d_1$ or replace $N$ by $\le N$, etc. Differential operators
$\nabla^{d_1}$, $\Delta_{d_1}$ should be understood acting on $d_1$
dimensional functions. We also use the convention that the operator
with no subscripts is the one defined on whole $\R^d$.

We first establish the following weighted Strichartz estimate for
solutions which are spherical symmetric only on subspaces of $\R^d$.

\begin{lem}\label{lem_wei_d}
Let the dimension $d\ge 5$. Let
$u_0(x)$, $f(t,x)$ be spherically symmetric on the subspace $\R^{d_1}$.
Then the function $u$ defined by
\begin{align*}
u(t)=e^{i(t-t_0)\Delta}u_0-i\int_{t_0}^t e^{i(t-s)\Delta}f(s)ds
\end{align*}
is also spherically symmetric on subspace $\R^{d_1}$, moreover
\begin{align*}
\||x^{d_1}|^{\frac{2(d_1-1)}{q(d+1-d_1)}}u\|_{L_t^qL_x^{\frac{2q(d+1-d_1)}{q(d+1-d_1)-4}}
(I\times\R^d)}\lsm
\|u_0\|_{\ltrd}+\|f\|_{L_{t,x}^{\frac{2(d+2)}{d+4} }(I\times\R^d)},
\end{align*}
holds $\forall\, q$ such that $q>2$ and $q\ge \frac 4{d+1-d_1}$.
\end{lem}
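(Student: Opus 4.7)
The plan is to follow exactly the same structure as the proof of Lemma~\ref{lem_wei_4d}, adapted to the general splitting into an $\R^{d_1}$ subspace (where spherical symmetry is available) and the transverse $\R^{d-d_1}$ subspace. First I observe that the asserted symmetry of $u$ is automatic: both $e^{i(t-t_0)\Delta}$ and the Duhamel operator commute with orthogonal rotations in $\R^{d_1}$, and the symmetry of $u_0$ and $f$ is preserved. For the estimate, I would invoke the standard (unweighted) Strichartz inequality Lemma~\ref{L:strichartz} together with the Christ--Kiselev lemma to reduce the claim to the purely linear estimate
\begin{equation*}
\bigl\| |x^{d_1}|^{\frac{2(d_1-1)}{q(d+1-d_1)}} e^{it\Delta} u_0 \bigr\|_{L^q_t L^{r}_x(\R\times\R^d)} \lesssim \|u_0\|_{L^2_x(\R^d)},
\end{equation*}
where $r=\frac{2q(d+1-d_1)}{q(d+1-d_1)-4}$. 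One checks the hypotheses of Christ--Kiselev hold since $q>2$ and the dual of the Strichartz norm on the inhomogeneous term has exponents strictly smaller than $q$.

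Next I would pass to the $TT^*$ formulation: the linear estimate above follows from the bilinear bound
\begin{equation*}
\Bigl\| \int_{\R} |x^{d_1}|^{\alpha} e^{i(t-s)\Delta} |y^{d_1}|^{\alpha} F(s)\,ds\Bigr\|_{L^q_t L^r_x} \lesssim \|F\|_{L^{q'}_t L^{r'}_x}, \qquad \alpha = \tfrac{2(d_1-1)}{q(d+1-d_1)},
\end{equation*}
which by Hardy--Littlewood--Sobolev in the time variable reduces further to the weighted dispersive estimate
\begin{equation}\label{eq:displan}
\bigl\| |x^{d_1}|^{\alpha} e^{it\Delta}\bigl(|y^{d_1}|^{\alpha} f\bigr)\bigr\|_{L^r_x(\R^d)} \lesssim |t|^{-2/q}\, \|f\|_{L^{r'}_x(\R^d)}.
\end{equation}
Here the exponents have been tuned precisely so that the dispersive decay $|t|^{-2/q}$ pairs with HLS in $t$ to produce $L^q_t$; this requires the condition $q \ge 4/(d+1-d_1)$ so that HLS applies (and the dispersive exponent does not exceed the permissible singular integral range).

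To prove \eqref{eq:displan} I would interpolate between two endpoints. The first is the trivial energy identity $\|e^{it\Delta}f\|_{L^2_x}=\|f\|_{L^2_x}$, corresponding to $q=\infty$, $\alpha=0$, $r=2$. The second is the endpoint $q=4/(d+1-d_1)$, at which the inequality reduces to the pointwise kernel bound
\begin{equation*}
\bigl| [|x^{d_1}|^{(d_1-1)/2} e^{it\Delta}\,|y^{d_1}|^{(d_1-1)/2}](x,y)\bigr| \lesssim |t|^{-(d+1-d_1)/2}.
\end{equation*}
After separating variables, the transverse $\R^{d-d_1}$ factor contributes $|t|^{-(d-d_1)/2}$ trivially, and the $d_1$-subspace factor is handled by writing the propagator in radial form and applying stationary phase to the resulting Bessel-type oscillatory integral
\begin{equation*}
|x^{d_1}|^{(d_1-1)/2} |y^{d_1}|^{(d_1-1)/2} \int_{S^{d_1-1}} e^{-i |x^{d_1}||y^{d_1}|\cos\theta/2t}\, d\sigma(\theta),
\end{equation*}
which gives the extra $|t|^{-1/2}$ (as in the 2+2 case). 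Interpolation between $L^2_x \to L^2_x$ and $L^1_x(|y^{d_1}|^{-(d_1-1)/2}dy) \to L^\infty_x$ with change of measures (Bergh--L\"ofstr\"om) yields \eqref{eq:displan} with exactly the announced $\alpha$ and $r$.

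The main obstacle is the endpoint stationary phase estimate on the $d_1$-subspace: one must verify that the expected Bessel asymptotic $J_{(d_1-2)/2}(\rho)\sim \rho^{-1/2}$ produces the clean pointwise bound both in the non-stationary regime and at the stationary point $|x^{d_1}|\sim |y^{d_1}|$ simultaneously for all $d_1\ge 2$; the condition $d_1\ge 2$ from Definition~\ref{spit} is what makes the angular integration truly oscillatory and provides the additional $|t|^{-1/2}$ gain. All other steps are the formal Strichartz/interpolation machinery already demonstrated in Lemma~\ref{lem_wei_4d}.
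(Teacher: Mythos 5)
Your proposal reproduces the paper's own proof essentially verbatim: the same reduction via standard Strichartz, Christ--Kiselev, and $TT^*$ to a bilinear bound, the same application of Hardy--Littlewood--Sobolev in time to pass to the weighted dispersive estimate, and the same interpolation (with change of measure) between the trivial $L^2\to L^2$ conservation and the endpoint pointwise kernel bound obtained by factoring the free propagator over $\R^{d_1}\times\R^{d-d_1}$ and applying stationary phase to the spherical integral over $S^{d_1-1}$. This is a correct proof and matches the paper's route.
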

\begin{proof} With several changes, the proof is in principle the
same as Lemma \ref{lem_wei_4d}. Here we only briefly sketch the
proof.

From standard Strichartz estimate, Christ-Kiselev Lemma and $TT^*$
argument, it is reduced to showing
\begin{align*}
&\left\|\int|x^{d_1}|^{\frac{2(d_1-1)}{q(d+1-d_1)}}e^{i(t-\tau)\Delta}|y^{d_1}|^{\frac{2(d_1-1)}
{q(d+1-d_1)}}f(\tau)d\tau\right\|_{L_t^qL_x^{\frac{2q(d+1-d_1)}{q(d+1-d_1)-4}}(\R\times\R^d)}\\
&\lsm
\|f\|_{L_t^{q'}L_x^{\frac{2q(d+1-d_1)}{q(d+1-d_1)+4}}(\R\times\R^d)}.
\end{align*}
This will be a consequence of the following decay estimate and
Hardy-Littlewood-Sobolev inequality:
\begin{align*}
&\left\||x^{d_1}|^{\frac{2(d_1-1)}{q(d+1-d_1)}}e^{it\Delta}|y^{d_1}|^{\frac{2(d_1-1)}
{q(d+1-d_1)}}f\right\|_{L_x^{\frac{2q(d+1-d_1)}{q(d+1-d_1)-4}}(\R^d)}\\
&\lsm |t|^{-\frac
2q}\|f\|_{L_x^{\frac{2q(d+1-d_1)}{q(d+1-d_1)+4}}(\R^d)}.
\end{align*}
As in the proof of Lemma \ref{lem_wei_4d}, this decay estimate will
follow from the interpolation between the trivial case $q=\infty$
and the pointwise estimate
\begin{align*}
\||x^{d_1}|^{\frac{d_1-1}2}e^{it\Delta}|y^{d_1}|^{\frac{d_1-1}2}f\|_{L_x^\infty(\R^d)}
&\lsm |t|^{-\frac{d+1-d_1}2}\|f\|_{L_x^1(\R^d)},
\end{align*}
where $f$ is spherically symmetric in $(x_1,\cdots,x_{d_1})$
variable. By passing to the radial coordinate, we can write the
kernel as
\begin{align*}
&(|x^{d_1}|^{\frac{d_1-1}2}e^{it\Delta}|y^{d_1}|^{\frac{d_1-1}2})(x,y)\\
&=|x^{d_1}|^{\frac{d_1-1}2}|y^{d_1}|^{\frac{d_1-1}2}(4\pi
it)^{-\frac d2}
e^{\frac{i|x^{d-d_1}-y^{d-d_1}|^2}{4t}}e^{\frac{i(|x^{d_1}|^2+|y^{d_1}|^2)}{4t}}
\int_{S^{d_1-1}} e^{\frac{i|y^{d_1}|x^{d_1} \cdot \omega}{2t}}d\sigma(w).
\end{align*}
Stationary phase or applying the property of Bessel function then
yields the desired estimate.
\end{proof}

As we can see from the calculation of the four dimensional case, the main
difference in high dimensions will arise from the pieces where
$F(u)$ lives on large radii. For these pieces, we will use the above
lemma and radial Sobolev embedding to take advantage of the decay
property of a splitting-spherically symmetric function. It is here
that we need a restriction on the minimal dimension on which the
solution is spherically symmetric. Technically, the restriction stems
from the following

\begin{lem}\label{adm}
Let $d_1=[\frac d2]$ or $\frac d3<d_1\le\frac d2$ for sufficiently
large $d$. Let $u$ be splitting-spherically symmetric with splitting subspaces $\R^{d_1}$ and
$\R^{d-d_1}$ respectively. Then there exist parameters $(\alpha, \beta,p,q)$ such
that
\begin{align*}
\alpha>0,\beta>0, \, 2 \le p, q<\infty,\\
\frac{4\alpha}d+\frac{d_1-1}{d-d_1+1}>\frac 12,\\
\beta+\frac{4\alpha}d+\frac{d_1-1}{d-d_1+1}>1,\\
\frac 1p+\frac 4{dq}=\frac{d-d_1+3}{2(d-d_1+1)}-,\\
\beta+d(\frac 12-\frac 1p)\le 1,\\
d(\frac 12-\frac 1q)-1\le \alpha< d_1(\frac 12-\frac 1q).
\end{align*}
Moreover, for $NL\gtrsim 1$,
\begin{align}
\|\pn
F(\phi_{>NL}u)\|_{L_x^{\frac{2(d-d_1+1)}{d-d_1+3}+}(\R^d)}&\lsm
N^{-\beta}(NL)^{-\frac{4\alpha}d}.\label{eq_244_a}
\end{align}
\end{lem}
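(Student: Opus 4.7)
The strategy is twofold: (i) exhibit exponents $(\alpha,\beta,p,q)$ satisfying the six numerological constraints under the admissibility hypothesis on $d_1$, and (ii) derive \eqref{eq_244_a} via a Bernstein--fractional chain rule--radial Sobolev embedding chain applied in each splitting-symmetric subspace, exactly in the spirit of Lemma~\ref{lem_56est}. The first two constraints (involving $\frac{4\alpha}{d}+\frac{d_1-1}{d-d_1+1}$) are not used in \eqref{eq_244_a} itself; they must hold so that the resulting exponents feed correctly into the weighted Strichartz machinery of Lemma~\ref{lem_wei_d} later in the paper.

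Granting admissible exponents for the moment, set $r':=\frac{2(d-d_1+1)}{d-d_1+3}+$. Bernstein gives
\[
\|P_N F(\phi_{>NL}u)\|_{L_x^{r'}}\lesssim N^{-\beta}\bigl\||\nabla|^\beta F(\phi_{>NL}u)\bigr\|_{L_x^{r'}}.
\]
Since $|F'(z)|\lesssim|z|^{4/d}$, the fractional chain rule (Lemma~\ref{lem_chain}) combined with H\"older's inequality via the exponent identity $\frac{1}{r'}=\frac{1}{p}+\frac{4}{dq}$ yields
\[
\bigl\||\nabla|^\beta F(\phi_{>NL}u)\bigr\|_{L_x^{r'}}\lesssim \bigl\||\nabla|^\beta(\phi_{>NL}u)\bigr\|_{L_x^{p}}\,\bigl\|\phi_{>NL}u\bigr\|_{L_x^{q}}^{4/d}.
\]
Sobolev embedding, whose hypothesis is precisely $\beta+d(\tfrac12-\tfrac1p)\le 1$, together with the weak localization \eqref{weak_com}, bounds the first factor by $O(1)$ (after a harmless commutator with the cutoff). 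For the second factor, the elementary bound $|x|^\alpha\lesssim |x^{d_1}|^\alpha+|x^{d-d_1}|^\alpha$ gives
\[
\phi_{>NL}(x)\lesssim (NL)^{-\alpha}\bigl(|x^{d_1}|^\alpha\phi_{|x^{d_1}|>NL/\sqrt 2}+|x^{d-d_1}|^\alpha\phi_{|x^{d-d_1}|>NL/\sqrt 2}\bigr),
\]
reducing matters to $\||x^{d_i}|^\alpha u\|_{L_x^q}$ for $i\in\{d_1,\,d-d_1\}$. Exploiting the radial symmetry of $u$ in the $\R^{d_i}$ subspace, the radial Sobolev embedding (Lemma~\ref{L:radial_embed}) converts the weight $|x^{d_i}|^\alpha$ into $s_i:=d_i(\tfrac12-\tfrac1q)-\alpha$ radial derivatives, which is legitimate because $\alpha<d_1(\tfrac12-\tfrac1q)\le d_i(\tfrac12-\tfrac1q)$ (upper half of (F), using $d_1\le d-d_1$). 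A further Sobolev embedding in the $(d-d_i)$ transverse variables and Minkowski's integral inequality absorb the remaining $(d-d_i)(\tfrac12-\tfrac1q)$ derivatives; the total derivative count $d(\tfrac12-\tfrac1q)-\alpha\le 1$ by the lower half of (F), so the output is dominated by $\|u\|_{H_x^1}$ and hence by $1$ via \eqref{weak_com}. Combining, $\|\phi_{>NL}u\|_{L_x^q}\lesssim (NL)^{-\alpha}$ and \eqref{eq_244_a} follows.

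The hard part is the parameter count. The radial-Sobolev window $[d(\tfrac12-\tfrac1q)-1,\,d_1(\tfrac12-\tfrac1q))$ for $\alpha$ is nonempty only if $(d-d_1)(\tfrac12-\tfrac1q)<1$, and the requirement $p\ge 2$ combined with the exponent identity pins $q$ to a narrow range depending on $d_1/d$. Once $\alpha$ is chosen near its admissible maximum, the two weighted-Strichartz inequalities $\frac{4\alpha}{d}+\frac{d_1-1}{d-d_1+1}>\frac12$ and $\beta+\frac{4\alpha}{d}+\frac{d_1-1}{d-d_1+1}>1$ reduce to a definite lower bound on $\tfrac{d_1-1}{d-d_1+1}$; this is precisely where the admissibility hypothesis $d_1=[d/2]$ or $d_1>d/3$ (for large $d$) enters, since under these conditions $\tfrac{d_1-1}{d-d_1+1}$ is uniformly bounded below by roughly $\tfrac13$. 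Concretely, I would fix $q$ with $d(\tfrac12-\tfrac1q)$ just below $1+d_1(\tfrac12-\tfrac1q)$, pick $\alpha$ near the upper endpoint of its window (so that $\tfrac{4\alpha}{d}$ is as large as possible), define $p$ through the exponent identity, and take $\beta>0$ small enough for the Sobolev condition and the second weighted-Strichartz bound to be simultaneously compatible. A dimension-by-dimension check under the admissibility hypothesis then verifies all six constraints and completes the proof.
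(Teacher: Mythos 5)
Your estimate chain mirrors the paper's exactly: Bernstein to pull out $N^{-\beta}$, the fractional chain rule with H\"older via $\tfrac1{r'}=\tfrac1p+\tfrac4{dq}$, Sobolev embedding controlled by condition (E) for the $L^p$ factor, and the split of the annular cutoff into the two subspace pieces followed by radial Sobolev in the $d_1$ (resp.\ $d-d_1$) variables plus Sobolev in the transverse variables, with (F) guaranteeing the total derivative count lies in $[0,1]$. Your explicit pointwise splitting $\phi_{>NL}\lesssim(NL)^{-\alpha}(|x^{d_1}|^\alpha\phi_{|x^{d_1}|\gtrsim NL}+|x^{d-d_1}|^\alpha\phi_{|x^{d-d_1}|\gtrsim NL})$ is a slightly cleaner way to organize what the paper does via ``WLOG $\phi_{>NL}=\phi_{>NL}\phi_{|x^{d_1}|>NL/2}$,'' and it correctly uses $d_1\le d-d_1$ so that the constraint $\alpha<d_1(\tfrac12-\tfrac1q)$ covers both branches. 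So the analytic part of your proposal is sound and identical in approach.

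The gap is in the parameter count, which is where the admissibility hypothesis actually bites and which the paper carries out explicitly. Your concrete prescription --- ``fix $q$ with $d(\tfrac12-\tfrac1q)$ just below $1+d_1(\tfrac12-\tfrac1q)$,'' i.e.\ $(d-d_1)(\tfrac12-\tfrac1q)$ close to $1$ --- is inconsistent with your own earlier (correct) observation that $p\ge2$ together with (D) pins $q$. Concretely, with $d_1=d/2$ your prescription gives $q$ near $\tfrac{2d}{d-4}$, whereas feeding $p=2$ into (D) forces $q\le\tfrac{2(d+2)}{d}$, which is strictly smaller; so your $q$ would push $p$ below $2$. The correct order, which is what the paper does, is to set $p=2$ (this also makes (E) essentially vacuous), solve (D) for $q$ (getting $q=\tfrac{2(d-d_1+1)}{\cdots}$, which for $d_1=d/2$ is $\tfrac{2(d+2)}{d}+$), then check that the $\alpha$-window $[d(\tfrac12-\tfrac1q)-1,\,d_1(\tfrac12-\tfrac1q))$ is still nonempty and that, with $\alpha$ near its upper endpoint and $\beta=\tfrac12$, conditions (B) and (C) hold; one finds $\tfrac{4\alpha}d+\tfrac{d_1-1}{d-d_1+1}=\tfrac{d-2}{d}$ when $d_1=d/2$, which is $>\tfrac12$ for $d\ge5$ and $>1-\beta$ with $\beta=\tfrac12$. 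The paper records the explicit tuples $(\alpha,\beta,p,q)=(\tfrac{d-2}{2(d+2)},\tfrac12,2,\tfrac{2(d+2)}{d}+)$ for $d$ even and $(\tfrac{3(d-1)}{4(d+3)}-,\tfrac12,2,\tfrac{2(d+3)}{d}+)$ for $d$ odd, and then does a separate asymptotic analysis for $d_1=\eta d$, $\eta>\tfrac13$. You should redo the numerology in this order; as written, your prescription does not produce admissible exponents.
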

\begin{proof}
We first remark the restrictions on the parameters
$(\alpha,\beta,p,q)$ appear naturally when we try to control the LHS
of \eqref{eq_244_a}. For example, the second and the third one
correspond to the power in $L$ and $N$ being negative enough, which,
consequently yield the integrability in $t$ and $1+\eps$ power of
$N$ needed in later computations (cf. the proof of \eqref{eq72tmp}). The last three conditions correspond to the possibility of
using the Sobolev embedding, radial Sobolev embedding Lemma
\ref{L:radial_embed} and the estimate \eqref{weak_com} to establish \eqref{eq_244_a}. In the following, we first assume these
parameters can be taken and quickly prove \eqref{eq_244_a}, then
verify the elementary computations at the very end.

From Bernstein, fractional chain rule Lemma \ref{lem_chain}, we have
\begin{align*}
\text{LHS of\eqref{eq_244_a}}\lsm
N^{-\beta}\|\nabla|^{\beta}(\phi_{>NL}u)\|_{L_x^p}\|\phi_{>NL}u\|_{L_x^q}^{\frac
4d}.
\end{align*}
Now note that by assumption $d_1 \le \frac d2$ and $u$ is spherically symmetric when restricting to
the subspaces $\R^{d_1}$ and $\R^{d-d_1}$ respectively, the cutoff function $\phi_{>NL}$ must have
nontrivial projection to a $d_1$-dimensional subspace on which the restriction of $u$ is spherically symmetric.
By relabelling the coordinates if necessary, we can assume without loss of generality that
\begin{align*}
 \phi_{>NL} = \phi_{>NL} \phi_{|x^{d_1}|>\frac {NL}2}.
\end{align*}
Note that after picking out this $d_1$-dimensional subspace, $u$ is no longer necessarily spherically symmetric
on the remaining $(d-d_1)$-dimensional subspace. Now by Sobolev embedding,  radial Sobolev embedding
Lemma \ref{L:radial_embed} and weak localization of kinetic energy estimate \eqref{weak_com}, we continue to
estimate
\begin{align*}
&\quad \text{LHS of\eqref{eq_244_a}} \\
&\lsm N^{-\beta}\||\nabla|^{\beta+d(\frac 12-\frac
1p)}(\phi_{>NL}u)\|_{L_x^2}
(NL)^{-\frac{4\alpha}d}\||\xd|^{\alpha}\phi_{>NL}u\|_{L_x^q}^{\frac 4d}\\
&\lsm N^{-\beta}(NL)^{-\frac{4\alpha}d}\||\nabla|^{\beta+d(\frac
12-\frac 1p)}(\phi_{>NL}u)\|_{L_x^2} \|\||\nabla|^{-\alpha+d_1(\frac
12-\frac 1q)}(\phi_{>NL}u)\|_{L_{\xd}^2}\|_{L_{x^{d-d_1}}^q}^{\frac 4d}\\
&\lsm N^{-\beta}(NL)^{-\frac{4\alpha}d}\||\nabla|^{\beta+d(\frac
12-\frac 1p)}(\phi_{>NL}u)\|_{L_x^2} \|\||\nabla|^{-\alpha+d_1(\frac
12-\frac 1q)}(\phi_{>NL}u) \|_{L_{x^{d-d_1}}^q}\|_{L_{\xd}^2}^{\frac 4d}\\
&\lsm N^{-\beta}(NL)^{-\frac{4\alpha}d}\||\nabla|^{\beta+d(\frac
12-\frac 1p)}(\phi_{>NL}u)\|_{L_x^2}\||\nabla|^{-\alpha+d(\frac
12-\frac 1q)}(\phi_{>NL}u)\|_{L_x^2}^{\frac 4d}\\
&\lsm N^{-\beta}(NL)^{-\frac {4\alpha}d},
\end{align*}
where the last inequality follows from the fact that the indices of differentiation are
sandwiched between $0$ and $1$, which in turn can be controlled by interpolating the $L_x^2$
mass and weak localization estimate of kinetic energy.
Now we verify the existence of the set of parameters satisfying the aforementioned
conditions. In the case when $d$ is even and $d_1=\frac d2$, we
simply take
\begin{align*}
(\alpha,\beta,p,q)=(\frac{d-2}{2(d+2)}, \frac 12, 2,
\frac{2(d+2)}d+).
\end{align*}
In the case when $d$ is odd and $d_1=\frac{d-1}2$, we take
\begin{align*}
(\alpha,\beta,p,q)=(\frac{3(d-1)}{4(d+3)}-,\frac 12,
2,\frac{2(d+3)}d+).
\end{align*}
The validity of the chosen set follows from direct computation which
we omit.

Now we look at the asymptotic result for sufficiently large $d$. Let
$d_1=\eta d$. We rewrite the conditions equivalently as follows
\begin{align*}
\frac{4\alpha} d>\frac{1-3\eta}{2(1-\eta)}+\frac 1{(1-\eta)^2}\frac
1d+O(\frac 1{d^2}),\\
\beta+\frac{4\alpha} d>\frac 12+\frac{1-3\eta}{2(1-\eta)}+\frac
1{(1-\eta)^2}\frac
1d+O(\frac 1{d^2}),\\
\frac 1p+\frac 4{dq}=\frac 12+\frac 1{1-\eta}\frac 1d-\frac
1{(1-\eta)^2}\frac 1{d^2}+O(\frac 1{d^3}),\\
\beta+d(\frac 12-\frac 1p)\le 1,\\
d(\frac 12-\frac 1q)-1\le \alpha<\eta d(\frac 12-\frac 1q).
\end{align*}
When $\eta>\frac 13$, the first inequality holds automatically for
large $d$. So the genuine restriction comes from the last four. Now
we look at the last inequality, to make it valid, we require $\frac
12-\frac 1q$ is of order $\frac 1d$. Note from the third one,
\begin{align*}
\frac 1q=(\frac 12-\frac 1p)\frac d4+\frac 1{4(1-\eta)}-\frac
1{4d(1-\eta)^2}+O(\frac 1{d^2}),
\end{align*}
thus this forces
\begin{align*}
(\frac 12-\frac 1p)\frac d4+\frac 1{4(1-\eta)}=\frac 12.
\end{align*}
Hence, $p$, $q$ are simultaneously determined:
\begin{align*}
\frac 1q=\frac 12-\frac 1{4d(1-\eta)^2}+O(\frac 1{d^2}),\\
\frac 1p=\frac 12-(\frac 12-\frac 1{4(1-\eta)})\frac 4d.
\end{align*}
 This in turn produce a condition on $\beta$ from the fourth:
\begin{align*}
\beta \le \frac \eta{1-\eta}.
\end{align*}
Since $\eta > \frac 13$, by taking $\beta=\frac 12$, this and the
second hold true. Finally, by choosing
\begin{align*}
\alpha=\eta d(\frac 12-\frac 1q)-=\frac{\eta}{4(1-\eta)^2}+O(\frac
1d).
\end{align*}
The last one holds, hence the five conditions hold for the chosen
parameters.

\end{proof}

Now we have collected enough information to prove Proposition
\ref{fre_decay} and Proposition \ref{local}. We begin with the
frequency decay estimate, that is to show there exists
$\eps=\eps(d)$ such that,
 $\forall N\ge 1$
\begin{align} \label{eq72tmp}
\|\phi_{>10} P_N u(t)\|_{\ltrd}\le \|\pntd
u_0\|_{\ltrd}+N^{-1-\eps}.
\end{align}

\begin{proof}
Note that $d_1\le \frac d2$. If a frequency $|\xi|\sim N$, then
there must be a $d_1$ dimensional vector $|\xi^{d_i}|\sim N$, so
morally we have
\begin{align*}
P_N\sim\pn\tilde P_N^{d_1}.
\end{align*}
Without loss of generality, it is reduced to considering
\begin{align}\label{hi1}
\|\phi_{>10}\pn\pn^{d_1} u(t)\|_{\ltrd}
\end{align}
Due to the strong singularity of $P^{\pm}$ operators at the origin,
in high dimensions, we adopt a slightly different strategy.
Introducing the spatial cutoff in $x^{d_1}$ variable, we use
triangle inequality to bound
\begin{align}
\eqref{hi1}&\le \|\phi_{>10}\phi_{|x^{d_1}|\le \frac 1N}\pn\pd
u(t)\|_{\ltrd}\label{hi2}\\
&+\|\phi_{>10}\phi_{|\xd|>\frac 1N}\pn\pd u(t)\|_{\ltrd}\label{hi3}
\end{align}
We first estimate \eqref{hi2}. Using forward Duhamel \eqref{duhamel}
and split it into time pieces, we write
\begin{align}
\eqref{hi2}&\lsm \|\phi_{>10}\phi_{|\xd|\le \frac 1N}\pn
\pd\int_0^\infty \propgto F(u(t+\tau))d\tau\|_{\ltrd}\notag\\
&\lsm \|\phi_{>10}\phi_{|\xd|\le \frac
1N}\pn\pd\int_0^{N^{-\frac{2d}{d+1}}} \propgto F(u(t+\tau))d\tau\|_{\ltrd}\label{hi4}\\
&+\|\phi_{>10}\phi_{|\xd|\le \frac
1N}\pn\pd\int_{N^{-\frac{2d}{d+1}}}^{\infty}\propgto
F(u(t+\tau))d\tau\|_{\ltrd}\label{hi5}
\end{align}

We first estimate the short time piece. We write
$F(u)=\phi_{>1}F(u)+\phi_{\le 1} F(u)$ and estimate the contribution
from both by Strichartz, kernel estimate and Young's inequality as
\begin{align*}
\eqref{hi4}&\lsm \|\phi_{>10}\phi_{|\xd|\le \frac
1N}\pn\pd\int_0^{N^{-\frac{2d}{d+1}}} \propgto\phi_{>1}
F(u(t+\tau))d\tau\|_{\ltrd}\\
&+\|\phi_{>10}\phi_{|\xd|\le \frac
1N}\pn\pd\int_0^{N^{-\frac{2d}{d+1}}}
\propgto \phi_{\le 1}F(u(t+\tau))d\tau\|_{\ltrd}\\
&\lsm \|\phi_{>1}
F(u(t+\tau))\|_{L_{\tau}^{\frac{2d}{d+4}}L_x^{\frac{2d^2}{d^2+2d-8}}([0,N^{-\frac{2d}{d+1}}]
\times\R^d)}\\
&\quad +N^{-\frac{2d}{d+1}}\sup_{0\le \tau\le
{N^{-\frac{2d}{d+1}}}}\|\phi_{>10}\pn\pd\propgto\phi_{\le
1}F(u(t+\tau))\|_{\ltrd}\\
&\lsm N^{-1-\frac 3{d+1}}\|\phi_{>\frac
12}u\|_{L_t^{\infty}L_x^{\frac{2d}{d-2}}(\srd)}^{\frac
{d+4}{d}}+N^{-10}\|F(u)\|_{L_\tau^\infty
L_x^{\frac{2d}{d+4}}(\srd)}\\
&\lsm N^{-1-\frac
3{d+1}}+N^{-10}\|u\|_{L_t^{\infty}L_x^2(\R\times\R^d)}^{\frac
{d+4}d}\lsm N^{-1-\frac 3{d+1}}.
\end{align*}
In the last line, we have used the Kernel estimate:\footnote{The
kernel estimate Lemma \ref{L:kernel} will also apply when we have
one more projection operator $\pn^{d_1}$.} $\forall
\tau\in[0,N^{-\frac{2d}{d+1}}]$
\begin{align*}
|\phi_{>10}\pn\pn^{d_1}e^{-i\tau\Delta}\phi_{\le 1}(x,y)|\lsm
N^{-10}\langle x-y\rangle^{-10d}
\end{align*}
and Young's inequality.

Now we estimate the contribution from the long time piece. By
writing $F(u)=\phi_{>N\tau/2}F(u)+\phi_{\le N\tau/2}F(u)$, we
further split \eqref{hi5} into two pieces. The estimate when $F(u)$
is supported within a ball is given as follows. Commuting $\pn$
and $\phi_{\le N\tau/2}$ (thus adding a $N^{-2}$ from mismatch
estimate), we get
\begin{align*}
&\qquad\|\phi_{>10}\phi_{|\xd|\le \frac
1N}\pn\pd\int_{N^{-\frac{2d}{d+1}}}^\infty \propgto \phi_{\le
N\tau/2}F(u(t+\tau))d\tau\|_{\ltrd}\\
&\lsm \|\phi_{|\xd|\le \frac
1N}\pn\pd\int_{N^{-\frac{2d}{d+1}}}^\infty \propgto \phi_{\le
N\tau/2}P_{\frac N8<\cdot\le
8N} F(u(t+\tau))d\tau\|_{\ltrd}+N^{-2}\\
&\lsm N^{-2}+\int_{N^{-\frac{2d}{d+1}}}^\infty \|\phi_{|\xd|\le
\frac 1N}\pn\pd e^{-i\tau\Delta_{d_1}} \phi_{|y^{d_1}|\le
N\tau/2}P_{\frac N8<\cdot\le
8N}F(u(t+\tau))\|_{\ltrd}d\tau\\
&\lsm N^{-2}+\int_{N^{-\frac{2d}{d+1}}}^\infty \|\phi_{|\xd|\le
\frac 1N}\pd e^{-i\tau\Delta_{d_1}}\phi_{|y^{d_1}|\le
N\tau/2}P_{\frac N8<\cdot\le 8N}
F(u(t+\tau))\|_{\ltrd}d\tau\\
\end{align*}
Now using the kernel estimate (Lemma \ref{L:kernel})
\begin{align*}
&|(\phi_{|\xd|\le \frac 1N}\pn^{d_1}
e^{-i\tau\Delta_{d_1}}\phi_{|\yd|\le
N\tau/2})(\xd,\yd)|\\
&\lsm N^{d-2m}\tau^{-m}\langle N|\xd-\yd|\rangle^{-m},
\end{align*}
and Young's inequality we continue to control it by
\begin{align*}
& N^{-2}+ N^{d_1}\int_{N^{-\frac{2d}{d+1}}}^\infty |N^2\tau|^{-100d}
\left\|\|P_{\frac N8<\cdot \le
8N}F(u)\|_{L_{x^{d_1}}^{\frac{2d}{d+4}}}\right\|_{L_{x^{d-d_1}}^2}d\tau\\
&\lsm  N^{-2}+ N^{-2}\|F(u)\|_{L_\tau^\infty
L_x^{\frac{2d}{d+4}}(\srd)}\\
&\lsm N^{-2}.
\end{align*}
To estimate the contribution where $F(u)$ is supported outside the
ball, we use Lemma \ref{adm} to obtain
\begin{align*}
&\|\phi_{>10}\phi_{|\xd|\le \frac
1N}\pd\pn\int_{N^{-\frac{2d}{d+1}}}^{\infty} \propgto
\phi_{>N\tau/2}
F(u(t+\tau))d\tau\|_{\ltrd}\\
&\lsm \|\pd\int_{{N^{-\frac{2d}{d+1}}}}^\infty \propgto
\phi_{>N\tau/2}\pntd F(\phi_{>
N\tau/4}u)(t+\tau)d\tau\|_{\ltrd}+N^{-2}\\
&\lsm N^{-2}+\|(N\tau)^{-\frac{d_1-1}{d-d_1+1}+}\pntd
F(\phi_{>N\tau/4}u(t+\tau))\|_{L_\tau^{2-}L_x^{\frac{2(d-d_1+1)}{d-d_1+3}+}([{N^{-\frac{2d}{d+1}}},\infty)\times\R^d)}\\
&\lsm N^{-2}+N^{-\frac{d_1-1}{d-d_1+1}+}N^{-\beta-\frac{4\alpha}d}
\|\tau^{-\frac{d_1-1}{d-d_1+1}+}\tau^{-\frac{4\alpha}
d}\|_{L_{\tau}^{2-}([{N^{-\frac{2d}{d+1}}},\infty))}\\
&\lsm N^{-1-}.
\end{align*}
To conclude, we have
\begin{align*}
\eqref{hi5}\lsm N^{-1-}.
\end{align*}
Collecting the estimates for \eqref{hi4}, \eqref{hi5}, we obtain
\begin{align*}
\eqref{hi2}\lsm N^{-1-}.
\end{align*}
This finishes the estimate of the piece where $|x^{d_1}|\lsm \frac
1N$. To estimate \eqref{hi3} where $|x^{d_1}|$ is large, we will
have to use the in-out decomposition technique as we have done in
the $2+2$ case. As the details have been fully demonstrated in the
2,3 dimensions and $2+2$ case, we will not repeat the argument here.
We also leave the details of the spatial decay estimate Proposition
\ref{local} to interested readers.
\end{proof}


\begin{thebibliography}{10}
\newcommand{\msn}[1]{\href{http://www.ams.org/mathscinet-getitem?mr=#1}{\sc MR#1}}

\bibitem{BL} J. Bergh $\&$ J. L\"ofstr\"om,
 \emph{Interpolation Spaces} Berlin-Heidelberg New-York, 1976.

\bibitem{BegoutVargas} P. Begout and A. Vargas,
\emph{Mass concentration phenomena for the $L^2$-critical nonlinear Schr\"odinger equation},
Trans. Amer. Math. Soc. \textbf{359} (2007), 5257--5282. \msn{2327030}

\bibitem{bourgain:concen} J. Bourgain, \emph{Refinements of
Strichartz inequality and applications to 2d-NLS with critical
nonlinearity}, Int. Math. Res. Not., (1998), 253-284.

\bibitem{bourgain-wang} J. Bourgain, W. Wang, \emph{Construction of
blowup solutions for the nonlinear Schr\"odinger equation with
critical nonlinearity}, Ann. Scuola Norm. Sup. Pisa Cl.Sci.
\textbf{25}(1997), 197-215, No. 1-2, 1998.

\bibitem{blions}
H. Berestycki and P.L. Lions, \emph{Existence d'ondes solitaires
dans des probl\`emes nonlin\'eaires du type Klein-Gordon}, C. R.
Acad. Sci. Paris S\'er. A-B \textbf{288} (1979), A395--A398. \msn{0552061}

\bibitem{cwI}
T. Cazenave and F.B. Weissler, \emph{The Cauchy problem for the critical nonlinear Schr\"odinger equation in $H\sp s$},
Nonlinear Anal. \textbf{14} (1990), 807--836.
\msn{1055532}

\bibitem{caz}
T. Cazenave, \textit{Semilinear Schr\"odinger equations.}
Courant Lecture Notes in Mathematics, \textbf{10}. American Mathematical Society, 2003.
\msn{2002047}

\bibitem{chris:weinstein} M. Christ, M. Weinstein, \emph{Dispersion of small amplitude solutions of the
generalized Korteweg-de Vries equation}, J. Funct. Anal. \textbf{100} (1991), 87-109.

\bibitem{gv:strichartz}
J. Ginibre and G. Velo, \emph{Smoothing properties and retarded estimates for some dispersive evolution equations,}
Comm. Math. Phys. \textbf{144} (1992), 163--188.
\msn{1151250}

\bibitem{glassey}
R. T. Glassey, \emph{On the blowing up of solutions to the Cauchy problem for nonlinear Schr\"odinger equations},
J. Math. Phys. \textbf{18} (1977), 1794--1797.
\msn{0460850}

\bibitem{hmidi-keraani}
T. Hmidi and S. Keraani, \emph{Blowup theory for the critical nonlinear Schr\"odinger equations revisited,}
Int. Math. Res. Not. \textbf{46} (2005), 2815--2828. \msn{2180464}

\bibitem{tao:keel}
M. Keel and T. Tao, \emph{Endpoint Strichartz Estimates}, Amer. J. Math. \textbf{120} (1998), 955--980.
\msn{1646048}

\bibitem{keraani-2}
S. Keraani, \emph{On the defect of compactness for the Strichartz estimates for the Schr\"odinger equations},
J. Diff. Eq. \textbf{175} (2001), 353--392.
\msn{1855973}

\bibitem{keraani}
S. Keraani, \emph{On the blow-up phenomenon of the critical nonlinear Schr\"odinger equation},
J. Funct. Anal. \textbf{235} (2006), 171--192.
\msn{2216444}

\bibitem{klvz} R. Killip, D. Li, M. Visan and X. Zhang,
\emph{Characterization of minimal-mass blowup solutions to the
focusing mass-critical NLS}, preprint, \texttt{arXiv:0804.1124}.

\bibitem{ktv:2d}
R. Killip, M. Visan, and T. Tao, \emph{The cubic nonlinear Schr\"odinger equation in two dimensions with radial data,}
preprint, \texttt{arXiv:0707.3188}.

\bibitem{kvz:blowup}
R. Killip, M. Visan, and X. Zhang, \emph{The mass-critical nonlinear Schr\"odinger equation with radial data in dimensions three and higher,}
preprint, \texttt{arXiv:0708.0849}.

\bibitem{kwong}
M. K. Kwong, \emph{Uniqueness of positive solutions of $\Delta u - u + u^p = 0$ in $\R^n$,}
Arch. Rat. Mech. Anal. \textbf{105} (1989), 243--266. \msn{0969899}

\bibitem{MM01} Y. Martel and F. Merle, Instability of solitons for the critical
generalized Korteweg-de Vries equation. Geom. funct. anal. vol 11(2001) 74--123.

\bibitem{merle1}
F. Merle, \emph{On uniqueness and continuation properties after blow-up time of self-similar solutions of nonlinear Schr\"odinger equation
with critical exponent and critical mass,} Comm. Pure Appl. Math. \textbf{45} (1992), 203--254. \msn{1139066}

\bibitem{merle2}
F. Merle, \emph{Determination of blow-up solutions with minimal mass for nonlinear Schr\"odinger equation with critical power,}
Duke Math. J. \textbf{69} (1993), 427--453. \msn{1203233}

\bibitem{merle-tsu}
F. Merle, Y. Tsutsumi, \emph{$L^2$ concentration of blow-up
solutions for the nonlinear Schr\"odinger equation with critical
power nonlinearity} J. Diff. Eq. \textbf{84}(1990), 205-214.

\bibitem{merle-raph} F. Merle and P. Raphael, \emph{The blow-up dynamic and upper bound on the
blow-up rate for critical nonlinear Schr\"odinger equation.} Ann.
Math. vol 161(2005), 157--222.

\bibitem{nawa} H. Nawa, \emph{Asymptotic and limiting profiles of
blowup solutions of the nonlinear Schr\"odinger equation with
critical power}, Comm. Pure. Appl. Math. \textbf{52}(1999), 193-270.

\bibitem{perelman} G. Perelman, \emph{On the blow-up phenomenon for
the critical nonlinear Schr\"odinger equation in 1D}, Ann. Henri.
Poincar$\acute{e}$. \textbf{2} (2001), 605-673.

\bibitem{staf}
G. Staffilani, \emph{On the generalized Korteweg-de Vries-type equations}, Differential Integral Equations \textbf{10} (1997), 777--796.

\bibitem{strichartz}
R. S. Strichartz, \emph{Restriction of Fourier transform to quadratic surfaces and decay of solutions of wave equations},
Duke Math. J. \textbf{44} (1977), 705--774.
\msn{0512086}

\bibitem{tvz:hd} T. Tao, M. Visan, X. Zhang, \emph{Global
well-posedness and scattering for the mass-critical defocusing NLS
with spherical symmetry in higher dimensions}, Duke Math. J. 140
(2007), 165-202.

\bibitem{compact}
T. Tao, M.~Visan, and X. Zhang, \emph{Minimal-mass blowup solutions of the mass-critical NLS,}
to appear in Forum Math.

\bibitem{tao:asym} T. Tao, \emph{On the asymptotic behavior of large
radial data for a focusing nonlinear Schr\"odinger equation.}
Dynamics of PDE 1, (2004), 1-48.

\bibitem{tao:attact} T. Tao, \emph{A (concentration-)compact attractor for high-dimensional nonlinear
Schr\"odinger equations.} Dynamics of PDE 4 (2007), 1-53.

\bibitem{taylor}
M. E. Taylor, \emph{Tools for PDE. Pseudodifferential operators, paradifferential operators, and layer potentials},
Mathematical Surveys and Monographs, \textbf{81}, American Mathematical Society, Providence, RI, 2000.

\bibitem{W1}
M. Weinstein, \emph{Nonlinear Schr\"odinger equations and sharp interpolation estimates,}
Comm. Math. Phys. \textbf{87} (1983), 567--576. \msn{0691044}

\bibitem{W2} M.I. Weinstein, \emph{Modulation stability of ground states of nonlinear Schr\"odinger equations},
Siam. J. Math. Anal. 16(1985), 472--491.

\bibitem{W3} M.I. Weinstein, \emph{Lyapunov stability of ground states of nonlinear dispersive evolution
equations}, Comm. Pure. Appl. Math. 39(1986), 51--68.

\bibitem{weinstein:charact}
M. Weinstein, \emph{On the structure and formation of singularities in solutions to nonlinear dispersive evolution equations,}
Comm. Partial Differential Equations \textbf{11} (1986), 545--565. \msn{0829596}

\bibitem{weinstein:contem}
M. Weinstein, \emph{The nonlinear Schr\"odinger equation-singularity
formation, stability and dispersion,} in "The connection between
infinite-dimensional and finite-dimensional dynamical systems
(Boulder, 1987)," 213-232, Contemp. Math. 99, Amer.Math.Soc.,
Providence, RI, 1989.


\end{thebibliography}
\end{document}